\newcolumntype{x}[1]{>{\centering\let\newline\\\arraybackslash\hspace{0pt}}m{#1}}
\theoremstyle{plain}
\newtheorem{thm}{Theorem}[section]
\newtheorem{lem}[thm]{Lemma}
\newtheorem{pro}[thm]{Proposition}
\newtheorem{cor}[thm]{Corollary}
\newtheorem{thm*}{Theorem}
\theoremstyle{definition}
\newtheorem{dfn}[thm]{Definition}
\newtheorem{exa}[thm]{Example}
\newtheorem{rem}[thm]{Remark}
\newtheorem{conj}[thm]{Conjecture}
\newtheorem*{notation-convention}{Notation and conventions}
\numberwithin{equation}{section}
\DeclareMathOperator{\Hom}{Hom}
\DeclareMathOperator{\End}{End}
\DeclareMathOperator{\Ext}{Ext}
\DeclareMathOperator{\modu}{mod}
\newcommand{\qhstr}{\mathsf{qh}\text{.}\mathsf{str}}
\newcommand{\Dec}{\mathsf{Dec}}
\newcommand{\Inc}{\mathsf{Inc}}
\newcommand{\lhdp}{\lhd^{\prime}}
\newcommand{\Deltap}{\Delta^{\prime}}
\newcommand{\xto}[1]{\xrightarrow{#1}}
\newcommand\flowsfroma{\mathrel{\vcenter{\hbox{\rotatebox{180}{$\leadsto$}}}}}
\newcommand{\old}[1]{{\color{red} #1}}
\newcommand{\new}[1]{{\color{blue} #1}}
\newcommand{\p}{\prime}
\newcommand{\ideal}[1]{\left\langle #1 \right\rangle}
\newcommand{\kk}{\mathbf{k}}
\DeclareMathOperator{\id}{id}
\renewcommand{\bar}{\overline}
\newcommand{\op}{\mathrm{op}}
\newcommand{\hl}{\mathrel{\widehat{\lhd}}}
\DeclareMathOperator{\Ker}{ker}
\DeclareMathOperator{\coKer}{coker}
\DeclareMathOperator{\im}{im}
\DeclareMathOperator{\soc}{soc}
\DeclareMathOperator{\Top}{top}
\DeclareMathOperator{\add}{add}
\newcommand{\La}{\Lambda}
\newcommand{\A}{\mathbb{A}}
\newcommand{\D}{\mathbb{D}}
\newcommand{\E}{\mathbb{E}}
\newcommand{\Z}{\mathbb{Z}}
\newcommand{\mfq}{\mathfrak{q}}
\newcommand{\mcC}{\mathcal{C}}
\newcommand{\mcF}{\mathcal{F}}
\newcommand{\YKeditcomment}[1]{\mbox{}\todo[inline, color=green!10]{YK: #1}}
\newcommand{\BReditcomment}[1]{\mbox{}\todo[inline, color=green!10]{BR: #1}}
\renewcommand{\epsilon}{\varepsilon}
\renewcommand{\tilde}{\widetilde}
\begin{document}
\title{Combinatorics of quasi-hereditary structures}
\author{Manuel Flores}
\author{Yuta Kimura}
\author{Baptiste Rognerud}

\address{Manuel Flores: Fakult\"at f\"ur Mathematik, Universit\"at Bielefeld, 33501 Bielefeld, Germany}
\email{mflores@math.uni-bielefeld.de}

\address{Yuta Kimura: Fakult\"at f\"ur Mathematik, Universit\"at Bielefeld, 33501 Bielefeld, Germany}
\email{ykimura@math.uni-bielefeld.de} 

\address{Baptiste Rognerud: Universit\'e Paris Diderot, Institut de math\'ematiques de Jussieu - Paris Rive Gauche, 75013 Paris, France}
\email{baptiste.rognerud@imj-prg.fr}


\keywords{Quasi-hereditary algebra, lattice, tilting module}

\thanks{The first author has been supported by the Consejo Nacional de Ciencia y Tecnolog\'ia, M\'exico, (CONACYT), the Deutscher Akademischer Austauschdienst (DAAD), and the  Bielefeld University through the Bielefelder Nachwuchsfonds programme. The second author has been supported by the Alexander von Humboldt Foundation 
in the framework of an Alexander von Humboldt Professorship 
endowed by the German Federal Ministry of Education and Research.}

\subjclass[2010]{Primary 16G20, 16W99, 05E10}

\begin{abstract}
A quasi-hereditary algebra is an Artin algebra together with a partial order on its set of isomorphism classes of simple modules which satisfies certain conditions. 
In this article we investigate all the possible choices that yield quasi-hereditary structures on a given algebra, in particular we introduce and study what we call the poset of quasi-hereditary structures. Our techniques involve certain quiver decompositions and idempotent reductions. For a path algebra of Dynkin type $\A$, we provide a full classification of its quasi-hereditary structures.
For types $\D$ and $\E$, we give a counting method for the number of quasi-hereditary structures. In the case of a hereditary incidence algebra, we present a necessary and sufficient condition for its poset of quasi-hereditary structures to be a lattice.

\end{abstract}
\maketitle
{\hypersetup{linkcolor=black}
\tableofcontents
}
\section{Introduction}
\noindent Quasi-hereditary algebras were introduced by Scott in \cite{scott} as an algebraic axiomatization of the theory of rational representations of semisimple algebraic groups. They were generalized to the concept of highest weight categories soon after in \cite{cline_parshall_scott}  allowing infinitely many simple objects. They form a huge class of algebras and categories  which appear in many areas of modern representation theory, including complex semisimple Lie algebras, Schur algebras, algebras of global dimension at most two and many more.

A quasi-hereditary algebra is a pair $(A,(I,\lhd))$ where $A$ is an Artin algebra and $(I,\lhd)$ is a partially ordered set indexing the isomorphism classes of simple $A$-modules which satisfies additional properties involving the filtrations of the projective modules by the so-called standard modules (see Definition \ref{dfn-qh-alg}). In particular, an algebra is \emph{not intrinsically} a quasi-hereditary algebra since the standard modules heavily depend on the partial order $(I,\lhd)$.

In the early examples of highest weight categories and quasi-hereditary algebras, the partial orders were easy to choose because they were related to the classical combinatorics of weights and roots and we already knew that these partial orderings were related to the representation theory of our examples. However, there are instances of quasi-hereditary algebras where there is no natural choice for the partial ordering and even if there is such a natural choice, one may wonder about the other possible orderings.

To our knowledge, there are two known results in this direction. The first, which is due to Dlab and Ringel, asserts that an Artin algebra is hereditary if and only if it is quasi-hereditary for any total ordering on $I$ (see \cite[Theorem~1]{Dlab-Ringel-89} for more details). The second, due to Coulembier, says that if an algebra has a simple preserving duality, then it has at most one `quasi-hereditary structure' (see \cite[Theorem~2.1.1]{C19} for more details, the notion of quasi-hereditary structure is explained below).

The main objective of this article is to continue the systematic study of all the possible choices of partially ordered sets that yield quasi-hereditary structures on a given Artin algebra. We start by known and easy remarks that will allow us to better define our objectives, and then give the foundations for our investigations. The organisation of this article can be summarised chronologically as follows.

The first main issue when one tries to classify all the partial orders $\lhd$ on $I$ such that $(A,(I,\lhd))$ is a quasi-hereditary algebra is that there are too many of them. However, as it has been proved by Conde (see \cite[Proposition ~1.4.12]{C16}), that if $(A,(I,\lhd))$ is a quasi-hereditary algebra, then the partial order $\lhd$ is an \emph{adapted poset to} $A$ in the sense of Dlab and Ringel (see Definition \ref{dfn_adapted}). The definition of adapted order is rather technical, but it reduces significantly the number of partial orders that we have to consider. 

The next step is to realize that we do not want to classify adapted partial orders, but we want to classify equivalence classes for an appropriate equivalence relation. Indeed, as it can be seen in Definition \ref{dfn-qh-alg}, the only place where the partial order $(I,\lhd)$ is involved is in the construction of the standard modules. As a consequence, if $(A,(I,\lhd_1))$ and $(A,(I,\lhd_2))$ are two quasi-hereditary algebras with same set of standard modules, it is then natural to say that the partial orders $\lhd_1$ and $\lhd_2$ are \emph{equivalent}, and we write $\lhd_1 \sim \lhd_2$. We call the equivalence classes of this relation the \emph{quasi-hereditary structures} on the algebra $A$, and these are exactly what we want to classify. We denote by $\qhstr(A)$ the set of quasi-hereditary structures on the algebra $A$. 

This equivalence relation was introduced by Dlab and Ringel in \cite{dlab-ringel} where they proved that any quasi-hereditary structure contains \emph{at least one total ordering}. This key fact is the reason why in most of the references the authors usually assume the ordering on $I$ to be total. However, for our purpose, we should not restrict ourselves to total orderings since one equivalence class may contain \emph{many different} total orderings. For example, when $A$ is a semisimple algebra with $n$ isomorphism classes of simple modules, it has a unique quasi-hereditary structure which contains $n!$ total orderings.

Let $(A,(I,\lhd))$ be a quasi-hereditary algebra with set of standard and costandard modules denoted by $\Delta$ and $\nabla$. We define subsets $\Dec(\lhd)$ and $\Inc(\lhd)$ of $I^2$ as follows:
\begin{align*}
\Dec(\lhd)\coloneqq\{ (i,j)\in I^2 \mid [\Delta(j):S(i)]\neq 0\}, \qquad 
\Inc(\lhd)\coloneqq \{(i,j)\in I^2 \mid [\nabla(j):S(i)]\neq 0\}.
\end{align*}
Then, we show in Lemma \ref{lem_Dec_Inc} that the transitive cover $\lhd_m$ of the relation $(\Dec(\lhd)\cup \Inc(\lhd))$ is equivalent to $\lhd$ and we prove in Proposition \ref{pro-mini-adapted} that it is the unique minimal element (with respect to the inclusion of relations) in the quasi-hereditary structure containing $\lhd$. These partial orders were already considered by Coulembier under the name of \emph{essential orderings} (see \cite[Definition~1.2.5]{C19}). Here, we call them \emph{minimal adapted orders}.  The relations $\Dec(\lhd)$ and $\Inc(\lhd)$ are called the \emph{decreasing} and \emph{increasing} relations of $\lhd$. These names have been chosen to highlight the relationship of our work and the work of Ch\^{a}tel, Pilaud and Pons on the weak order of integer posets (see \cite{CPP19}). It turns out that the combinatorics of the quasi-hereditary structures on an equioriented quiver of type $\A$ is the same as their combinatorics of the ``Tamari order element poset''(see Section \ref{section-An}). Moreover, in the case of a quiver algebra, these names are particularly relevant since an increasing relation follows the paths of the quiver, and a decreasing one goes against them.

In addition to standard and costandard modules, a quasi-hereditary algebra has a distinguished tilting module called the \emph{characteristic tilting module} (see Proposition \ref{pro-ch-tilt}). This leads to the following easy proposition.   

\begin{pro}[Lemma \ref{lem_two_qh_str_equiv}]
Let $\lhd_1$ and $\lhd_2$ be two partial orders on $I$ such that $(A, (I,\lhd_1))$ and $(A, (I,\lhd_2))$ are quasi-hereditary algebras.
Then the following statements are equivalent.
\begin{enumerate}
\item $\lhd_1 \sim \lhd_2$.
\item $(\lhd_1)_m = (\lhd_2)_m$. 
\item $T_1 \cong T_2$, where $T_i$ is the characteristic tilting module of $(A, (I,\lhd_i))$ for $i=1,2$. 
\end{enumerate}
\end{pro}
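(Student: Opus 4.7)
The plan is to handle the two equivalences $(1) \Leftrightarrow (2)$ and $(1) \Leftrightarrow (3)$ separately, exploiting the two pieces of structure already assembled in the introduction: the minimal adapted order $\lhd_m$ associated with $\lhd$, and Ringel's canonical construction of the characteristic tilting module $T$ recalled in Proposition~\ref{pro-ch-tilt}.

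The equivalence $(1) \Leftrightarrow (2)$ will follow almost directly from Proposition~\ref{pro-mini-adapted}, which asserts that $\lhd_m$ is the unique minimal element (for inclusion of relations) of the equivalence class of $\lhd$ under $\sim$. If $\lhd_1 \sim \lhd_2$, then both orders lie in a common class whose unique minimum must coincide, so $(\lhd_1)_m = (\lhd_2)_m$. Conversely, if $(\lhd_1)_m = (\lhd_2)_m$, then Lemma~\ref{lem_Dec_Inc} gives $\lhd_i \sim (\lhd_i)_m$ for $i = 1, 2$, and transitivity of $\sim$ yields $\lhd_1 \sim \lhd_2$.

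For $(1) \Rightarrow (3)$, I would first argue that fixing the set of standard modules also fixes the set of costandard modules: indeed, $\nabla(i)$ is determined up to isomorphism by the vanishing $\Ext^{\geq 1}_A(\Delta(j), \nabla(i)) = 0$ for all $j$ together with the Hom-dimension conditions built into the quasi-hereditary axioms. Once both families $\{\Delta(i)\}$ and $\{\nabla(i)\}$ are pinned down, the canonical construction recalled in Proposition~\ref{pro-ch-tilt}, through iterated universal extensions of standard modules, recovers $T$ uniquely as a basic multiplicity-free module, so that $T_1 \cong T_2$.

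The implication $(3) \Rightarrow (1)$ is the main obstacle, since one has to recover the set of standard modules from the tilting module alone. The decisive input is Ringel's theorem, providing the intrinsic description
\[
\mathcal{F}(\Delta) = \{ M \in \modu A \mid \Ext^{\geq 1}_A(M, T) = 0 \},
\]
so that $T_1 \cong T_2$ immediately forces $\mathcal{F}(\Delta_1) = \mathcal{F}(\Delta_2)$ as full subcategories of $\modu A$. It then remains to recover the set $\{\Delta(i) : i \in I\}$ from the exact category $\mathcal{F}(\Delta)$. My plan is to establish, as a short lemma, the following intrinsic characterization: $M \in \mathcal{F}(\Delta)$ is isomorphic to some $\Delta(i)$ if and only if $M$ is indecomposable and admits no short exact sequence $0 \to M' \to M \to M'' \to 0$ with both $M', M''$ non-zero and lying in $\mathcal{F}(\Delta)$. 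The forward implication uses that any non-zero $\Delta$-filtered quotient of $\Delta(i)$ has top $S(i)$, hence admits $\Delta(i)$ itself as a filtration quotient, which by a length argument forces equality with $\Delta(i)$; the converse is immediate by taking $M'$ to be the first step of a non-trivial $\Delta$-filtration of $M$. This characterization depends only on $\mathcal{F}(\Delta)$, hence only on $T$, and concludes the argument.
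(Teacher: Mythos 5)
Your proposal is correct, and its overall skeleton coincides with the paper's: the equivalence $(1)\Leftrightarrow(2)$ is exactly Proposition~\ref{pro-mini-adapted} (note that $\lhd_i\sim(\lhd_i)_m$ is part (1) of that proposition rather than Lemma~\ref{lem_Dec_Inc}), and the equivalence $(1)\Leftrightarrow(3)$ rests on the same two pillars the paper uses, namely $\add T=\mcF(\Delta)\cap\mcF(\nabla)$ and the perpendicular descriptions $\mcF(\Delta)={}^{\perp}T$, $\mcF(\nabla)=T^{\perp}$ from Proposition~\ref{pro-ch-tilt}.

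The one step where you genuinely diverge is the recovery of the individual standard modules from the subcategory $\mcF(\Delta)$. The paper does this by citing Ringel's description $\Delta(i)=P(i)/K(i)$, where $K(i)$ is the sum of the kernels of non-zero surjections $P(i)\to X$ with $X\in\mcF(\Delta)$; this is a one-line reduction to an external reference. You instead characterize the $\Delta(i)$ intrinsically as the indecomposable objects of $\mcF(\Delta)$ admitting no non-trivial short exact sequence with both ends non-zero and $\Delta$-filtered, and your justification (a non-zero $\Delta$-filtered quotient of $\Delta(i)$ has simple top $S(i)$, hence surjects onto $\Delta(i)$, hence equals it by length) is sound; the indexing is then recovered from the tops. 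This is self-contained and arguably cleaner. Two small points to tidy up: for $(1)\Rightarrow(3)$ you do not need to argue that $\Delta$ determines $\nabla$, since the relation $\sim$ is \emph{defined} by requiring both $\Delta_1=\Delta_2$ and $\nabla_1=\nabla_2$; and conversely, for $(3)\Rightarrow(1)$ you must also conclude $\nabla_1=\nabla_2$, which follows by applying the dual of your characterization lemma to $\mcF(\nabla_1)=T_1^{\perp}=T_2^{\perp}=\mcF(\nabla_2)$ — worth stating explicitly, but not a gap in substance.
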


It is now clear that we should see the set of quasi-hereditary structures on $A$ as a subposet of its poset of tilting modules in the sense of Happel and Unger (see \cite{HU,RS} for more details). 

The poset of tilting modules is most of the time infinite, but the poset of quasi-hereditary structures is always finite since any quasi-hereditary structure contains at least one total order. In Example \ref{ex-complete-quiver} we show that the weak order on the set of permutations on $n$ letters can be realized as the poset of quasi-hereditary structures on the path algebra of a suitable orientation of a complete graph. The Tamari lattices are obtained by considering path algebras of an equioriented orientation of a type $\A$ Dynkin diagram. More precisely, we prove the following result in Section \ref{section-An}.

\begin{thm}[Theorem \ref{thm_Baptiste_bijection}]
Let $n \in \mathbb{N}$, and $A_n$ be an equioriented quiver of type $\A$  with $n$ vertices, and $\Lambda_n = \kk A_n$. Then, there are explicit bijections between
\begin{enumerate}
\item Minimal adapted posets to $\Lambda_n$,
\item Binary trees with $n$ vertices,
\item Isomorphism classes of tilting modules over $\Lambda_n$.
\end{enumerate}
Moreover, the poset of quasi-hereditary structures on $\Lambda_n$ is isomorphic to the Tamari lattice. 
\end{thm}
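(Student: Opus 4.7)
The plan is to establish all three bijections via a combinatorial description of the characteristic tilting module attached to a total order on $I = \{1, \ldots, n\}$, and then to identify the resulting poset with the Tamari lattice. Since $\Lambda_n = \kk A_n$ is hereditary, the Dlab--Ringel theorem recalled in the introduction tells us that every total order on $I$ already yields a quasi-hereditary structure on $\Lambda_n$. The indecomposable $\Lambda_n$-modules are the interval modules $M[a,b]$ with $1 \le a \le b \le n$. By Lemma \ref{lem_two_qh_str_equiv}, the assignment sending a quasi-hereditary structure to its characteristic tilting module is injective, so the bijection (1)$\leftrightarrow$(3) reduces to showing that every tilting module over $\Lambda_n$ arises as a characteristic tilting module.

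For this I would compute the characteristic tilting module $T(\sigma) = \bigoplus_{i \in I} T(\sigma, i)$ explicitly for a total order $\sigma$. Writing $D_\sigma(i) = \{j \in I : j \le_\sigma i\}$ for the $\sigma$-downset of $i$, let $I_\sigma(i)$ denote the largest interval of consecutive integers that contains $i$ and is contained in $D_\sigma(i)$. The key claim is that $T(\sigma, i) \cong M[I_\sigma(i)]$; the plan is to prove this by induction on the $\sigma$-rank of $i$, using the recursive description of $T(\sigma, i)$ via $\Delta(i)$, $\nabla(i)$ and the particularly simple $\mathrm{Hom}$ and $\mathrm{Ext}^1$ patterns for interval modules in the equioriented orientation. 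This formula shows at once that $T(\sigma)$ depends only on the collection of intervals $\{I_\sigma(i)\}_{i \in I}$, so $\sigma \mapsto T(\sigma)$ descends to the equivalence classes of adapted orders. Conversely, any tilting module over $\Lambda_n$ is a direct sum of $n$ pairwise $\mathrm{Ext}^1$-orthogonal interval modules, and from such a family one can read off an admissible total order whose characteristic tilting module it is, yielding surjectivity and hence (1)$\leftrightarrow$(3).

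For (2)$\leftrightarrow$(3), the tilting modules of $\kk A_n$ in the equioriented orientation are classically in bijection with binary trees on $n$ vertices (equivalently, with triangulations of an $(n+2)$-gon), both sides enumerated by the Catalan number $C_n$. I would invoke this classical correspondence, which associates to a binary tree the family of intervals cut out by the in-order labelling of its subtrees, and compose it with the previous step. For the Tamari lattice assertion, the Happel--Unger partial order on tilting modules of $\kk A_n$ is known (or can be checked on the covering relations, given by right rotations on the tree side and local summand mutations on the tilting side) to agree with the Tamari order on binary trees. Since by Lemma \ref{lem_two_qh_str_equiv} the partial order on $\qhstr(\Lambda_n)$ is the restriction of the Happel--Unger order to characteristic tilting modules, the isomorphism with the Tamari lattice follows.

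The main obstacle is the combinatorial identification $T(\sigma, i) \cong M[I_\sigma(i)]$: the induction has to juggle the $\Delta$-filtration, the $\nabla$-filtration, and the non-split extensions between interval modules, and the argument has to be robust enough to handle all total orders, not only the well-behaved extremes such as the natural order or its reverse. Once this is established, the remaining steps are either formal consequences of Lemma \ref{lem_two_qh_str_equiv} or reduce to the classical combinatorics of binary trees and the Tamari lattice.
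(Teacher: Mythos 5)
Your proposal is correct and follows essentially the same route as the paper: your interval $I_\sigma(i)$ coincides with the set of labels of the subtree rooted at $i$ in the binary search tree attached to the minimal adapted order equivalent to $\sigma$, and the induction you outline for $T(\sigma,i)\cong M[I_\sigma(i)]$ is exactly the paper's induction via the two exact sequences $0\to\Delta(i)\to T(i)\to T(i_r)\to 0$ and $0\to T(i_l)\to T(i)\to\nabla(i)\to 0$ coming from Proposition \ref{pro-ch-tilt}. The remaining steps (injectivity of $\operatorname{char}$ from Lemma \ref{lem_two_qh_str_equiv}, the classical bijection between binary trees and tilting modules, and the identification of the tilting/Happel--Unger order with the Tamari order) likewise match the paper's argument.
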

This implies that the number of quasi-hereditary structures is given by the $n$-th Catalan number $c_n =\frac{1}{n+1} { \binom{2n}{n} }$. Furthermore, any tilting module for this algebra is a characteristic tilting module. 

The understanding of the other orientations of a type $\A$ quiver is obtained as a consequence of a more general argument involving sources and sinks of a quiver.

A \emph{(iterated) deconcatenation} of a quiver $Q$ is a disjoint union $Q^1\sqcup \dots \sqcup Q^{\ell}$ of full subquivers $Q^i$ of $Q$ satisfying certain properties, see Subsection \ref{subsection-qh-sink-source} for details. The quivers $Q^i$ are obtained by deconcatenating $Q$ at sources or sinks. Then in Section \ref{section-dec} we prove the following result.

\begin{thm}[Theorem \ref{thm_qhstr_divide}]\label{source_sink_princ}
Let $Q^1\sqcup Q^2 \sqcup \dots \sqcup Q^{\ell}$ be an iterated deconcatenation of $Q$.
Let $A$ be a factor algebra of $\kk Q$ modulo some admissible ideal and $A^i\coloneqq  A / \langle e_u \mid u \in Q_0\setminus Q_0^{i} \rangle$.
Then we have an isomorphism of posets 
\begin{align*}
\qhstr(A) \longrightarrow \prod_{i=1}^{\ell}\qhstr(A^i).
\end{align*}
\end{thm}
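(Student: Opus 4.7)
The plan is to reduce by induction on $\ell$ to the base case of a single deconcatenation $Q = Q^1 \sqcup Q^2$ (at a source or sink), and then to exhibit an explicit poset isomorphism $\Phi \colon \qhstr(A) \to \qhstr(A^1)\times\qhstr(A^2)$. The two main tools available from earlier in the paper are the characterization of a quasi-hereditary structure by its minimal adapted representative (Proposition~\ref{pro-mini-adapted}, in terms of $\Dec$ and $\Inc$) and the characterization by its characteristic tilting module (Lemma~\ref{lem_two_qh_str_equiv}).

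The technical heart of the argument is the following structural lemma: for any $\lhd \in \qhstr(A)$ with minimal adapted representative $\lhd_m$, one has
\begin{align*}
\Dec(\lhd_m) \cup \Inc(\lhd_m) \subseteq (Q_0^1 \times Q_0^1) \cup (Q_0^2 \times Q_0^2),
\end{align*}
so that $\lhd_m$ carries no relation between the two components. This is where the source/sink hypothesis enters: it guarantees that the composition factors of standard and costandard modules respect the partition $Q_0 = Q_0^1 \sqcup Q_0^2$, and hence that the minimal adapted order is the disjoint union of its restrictions $\lhd_m^k$ to $Q_0^k$.

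Granted this lemma, the map $\Phi(\lhd) \coloneqq (\lhd_m^1, \lhd_m^2)$ is well-defined: each $\lhd_m^k$ is adapted to $A^k$ because the standard and costandard $A$-modules indexed by $Q_0^k$ become $A^k$-modules under the canonical quotient $A \twoheadrightarrow A^k$, and they coincide with the standard and costandard modules of $A^k$ with respect to $\lhd_m^k$. The inverse $\Psi$ sends a pair $(\lhd_1, \lhd_2)$ of minimal adapted orders to their disjoint union on $Q_0$; verifying that this is adapted to $A$ amounts to building standard filtrations of the $A$-projectives out of standard filtrations of the $A^k$-projectives, the only non-trivial gluing step taking place at the deconcatenating source or sink.

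Mutual inverseness of $\Phi$ and $\Psi$ is then a direct comparison of minimal adapted orders. The poset structure on $\qhstr$ — whether taken as reverse inclusion of minimal adapted orders or as the Happel--Unger order on characteristic tilting modules, under which the decomposition $T \cong T^1 \oplus T^2$ corresponds to the product decomposition — is manifestly preserved by the disjoint-union construction, so $\Phi$ is a poset isomorphism. I expect the main obstacle to lie in the structural lemma above: turning the intuition ``composition factors of $\Delta(j)$ and $\nabla(j)$ respect the deconcatenation'' into a rigorous argument, using the precise notion of iterated deconcatenation set up in Subsection~\ref{subsection-qh-sink-source}.
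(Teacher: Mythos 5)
Your proposal follows essentially the same route as the paper: reduce to a single deconcatenation $Q=Q^1\sqcup Q^2$ at $v$, define $\Phi$ by restriction and its inverse $\Psi$ by gluing, and drive everything through the fact that standard and costandard modules respect the two components (the paper's Lemmas \ref{lem_Q_Ql} and \ref{lem_Ql_Q}, assembled in Proposition \ref{pro_qhstr_divide} and iterated).

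Two claims should be tightened. First, your displayed containment of $\Dec(\lhd_m)\cup\Inc(\lhd_m)$ is correct (for $i\neq v$ one has $P(i)\cong P^\ell(i)$ and $I(i)\cong I^\ell(i)$, and $v$ lies in both components), but the conclusion that $\lhd_m$ ``carries no relation between the two components'' and is the disjoint union of its restrictions is not literally true: the transitive closure produces relations $i\lhd_m j$ with $i\in Q_0^1\setminus\{v\}$ and $j\in Q_0^2\setminus\{v\}$ whenever $i\lhd_m v\lhd_m j$. So $\Psi$ must be the union of the two orders glued at $v$ \emph{followed by transitive closure}; this is the paper's order $\lhd(\lhd^1,\lhd^2)$, and the compatibility with minimal adapted orders is Lemma \ref{lem-sink-source-minimal}. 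Second, the assertion that the standard and costandard $A$-modules indexed by $Q_0^k$ become $A^k$-modules fails at $v$ itself: if $v$ is a sink then $\nabla(v)$ (if $v$ is a source, $\Delta(v)$) has composition factors in both components and is the amalgam of $\nabla^1(v)$ and $\nabla^2(v)$ (resp.\ $\Delta^1(v)$ and $\Delta^2(v)$) over $S(v)$. Well-definedness and injectivity of $\Phi$ survive because this module is determined by its two restrictions, but the vertex $v$ has to be handled as a separate case, as in Lemmas \ref{lem_Q_Ql}(2),(3) and \ref{lem_Ql_Q}(2),(3).
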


Under the isomorphism, we have an explicit construction of the characteristic tilting $A$-modules in terms of the characteristic tilting $A^i$-modules, see Theorem \ref{thm-dec-ch-tilt}.

A quiver $Q$ whose underlying graph is of type $\A$ has an iterated deconcatenation $Q^1\sqcup Q^2 \sqcup \dots \sqcup Q^{\ell}$ such that each $Q^i$ is an equioriented quiver of type $\A$. Therefore as a corollary we obtain a classification of the quasi-hereditary structures on the path algebras of type $\A$.

\begin{thm}[Theorems \ref{thm-A-dec}, \ref{thm-A-ch-tilting}]
Let $Q$ be a quiver whose underlying graph is of type $\A$. Let $Q^1\sqcup Q^2 \sqcup \dots \sqcup Q^{\ell}$ be an iterated deconcatenation of $ Q $ such that each $ Q^i $ is an equioriented quiver of type $ \A_{n_{i}} $ for some $ n_{i}\in \Z_{\geq 1} $. Then we have an isomorphism of posets
\[
\qhstr(\kk Q) \longrightarrow  \prod_{i=1}^{\ell}\qhstr(\Lambda_{n_{i}}).  \]
Moreover, if $\lhd$ is a minimal adapted order to $\kk Q$ and $T=\bigoplus_{i\in I}T(i)$ is the characteristic tilting module associated to $\lhd$, then for vertices $i,j\in Q_0$, a simple $\kk Q$-module  associated to $j$ is a composition factor of $T(i)$ if and only if $j\lhd i$ holds.
\end{thm}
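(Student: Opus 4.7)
The plan is to reduce the statement to the equioriented case, which is resolved by Theorem \ref{thm_Baptiste_bijection}, and then transport the conclusions via the deconcatenation machinery.

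For the poset isomorphism, I would first check that any quiver $Q$ with underlying graph of Dynkin type $\A$ admits an iterated deconcatenation $Q = Q^1 \sqcup \cdots \sqcup Q^\ell$ in which every piece $Q^i$ is equioriented of type $\A_{n_i}$. This is a straightforward induction on the number of vertices: walking along the linear underlying graph, every vertex at which the orientation changes is either a source or a sink of $Q$, and deconcatenating there splits $Q$ into two strictly smaller type $\A$ quivers; after finitely many steps no orientation change remains. Since $\kk Q$ is the path algebra, i.e.\ the factor of $\kk Q$ by the zero admissible ideal, Theorem \ref{thm_qhstr_divide} applies verbatim and produces the claimed isomorphism $\qhstr(\kk Q) \cong \prod_{i=1}^{\ell} \qhstr(\Lambda_{n_i})$.

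For the composition factor statement, fix a minimal adapted order $\lhd$ on $\kk Q$. Under the isomorphism just obtained, $\lhd$ corresponds to a tuple of minimal adapted orders $(\lhd^{(1)}, \ldots, \lhd^{(\ell)})$, one on each piece $A^k = \kk Q^k \cong \Lambda_{n_k}$. Let $T = \bigoplus_{i \in Q_0} T(i)$ be the characteristic tilting module associated to $\lhd$, and for each $k$ let $T^k = \bigoplus_{j \in Q^k_0} T^k(j)$ be the analogous object over $A^k$. By Theorem \ref{thm-dec-ch-tilt}, the summand $T(i)$ is glued explicitly from the summands $T^k(i)$ indexed by the pieces $Q^k$ containing $i$. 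Assuming the composition factor description is known in each equioriented piece, it then suffices to verify that the gluing simultaneously transports composition factors and the strict order $\lhd^{(k)}$ on each piece to the global $\lhd$.

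In the equioriented case $\Lambda_n$, the bijection of Theorem \ref{thm_Baptiste_bijection} between minimal adapted orders and binary trees can be made completely explicit, and under it each indecomposable summand $T(i)$ is an interval module whose support is precisely the principal down-set $\{j : j \unlhd i\}$ for the corresponding order. One inclusion of the composition factor claim, namely that every composition factor $j$ of $T(i)$ satisfies $j \unlhd i$, follows at once from the existence of standard and costandard filtrations of $T(i)$ combined with the definitions of $\Dec(\lhd)$ and $\Inc(\lhd)$. The reverse inclusion is precisely the content of the explicit interval description.

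The delicate step I expect is the transport under the gluing of Theorem \ref{thm-dec-ch-tilt}: after deconcatenating at a source or a sink $v$, the vertex $v$ lies in two pieces $Q^k$ and $Q^{k'}$, and one must check that the composition factors of the glued summand $T(v)$ are governed by the global order $\lhd$ rather than only by the individual $\lhd^{(k)}$ and $\lhd^{(k')}$. I expect to exploit here that, since $v$ is a source or sink, the simple module $S(v)$ is either projective or injective in one of the pieces, so that the explicit gluing prescription preserves the interval shape of indecomposable summands and, consequently, the characterization of their composition factors.
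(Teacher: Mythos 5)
Your proposal follows essentially the same route as the paper: reduce to the equioriented case via iterated deconcatenation (Theorem \ref{thm_qhstr_divide} for the poset isomorphism, with Lemma \ref{lem-sink-source-minimal} guaranteeing that restrictions of minimal adapted orders stay minimal), use Theorem \ref{thm_Baptiste_bijection} as the base case, handle sources via the duality of Lemma \ref{lem-dual}, and transport the composition-factor description through the push-out gluing of Theorem \ref{thm-dec-ch-tilt} (concretely, via $T(i)e^1\cong T^1(i)$ and $T(i)e^2\cong T^2(v)^{\oplus m}$ from Proposition \ref{pro-push-out-int}). The "delicate step" you flag is resolved exactly as you anticipate, by an induction on the number of pieces.
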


In \cref{section-DE-count} we count the number of quasi-hereditary structures on the path algebras of Dynkin types $\D$ and $\E$. For the quivers of type $\D$, by Theorem \ref{source_sink_princ} and the duality argument of Lemma \ref{lem-dual}, it is enough to consider only two orientations of $\D_n$, with $n\geq 4$, that is, the quivers $D_1$ and $D_2$ defined below.

\begin{thm}[Lemmas ~\ref{lem-counting-Q(n-3,1,1)}, \ref{lem-counting-Q(1,n-3,1)}]
Let $n\geq 3$. Then, 
\begin{enumerate}
    \item $|\qhstr(\kk D_1)| = 2c_{n} - 3c_{n-1}$,
    \item $|\qhstr(\kk D_2)| = 3c_{n-1} - c_{n-2}$,
\end{enumerate}
where $c_n$ is the $n$-th Catalan number and
\begin{align*}
\centering 
\begin{tikzpicture}[baseline=-.625ex, scale=0.8]
\node at (0.15,0) {{$ D_1 =$}};
\node(al)at(1,0){$1$};
\node(al-1)at(2,0){$2$};
\node(acdots)at(3.15,0)[inner sep=1pt]{$\,\cdots$};
\node(0)at(4.75,0){$n-2$};
\node(b1)at(6.75,0.5){$n-1\phantom{n}$};
\node(c1)at(6.75,-0.5){$n\phantom{n-1}$};
\draw[thick, ->] (al)--(al-1);
\draw[thick, ->] (al-1)--(acdots);
\draw[thick, ->] (acdots)--(0);
\draw[thick, ->] (0)--(b1.west);
\draw[thick, ->] (0)--(c1.west);
\end{tikzpicture}\hspace{-0.2cm},\quad
\begin{tikzpicture}[baseline=-.625ex,scale=0.8]
\node at (0.15,0) {$ D_2 =$};
\node(al)at(1,0){$1$};
\node(al-1)at(2,0){$2$};
\node(acdots)at(3.15,0)[inner sep=1pt]{$\,\cdots$};
\node(0)at(4.75,0){$n-2$};
\node(b1)at(6.75,0.5){$n-1\phantom{n}$};
\node(c1)at(6.75,-0.5){$n\phantom{n-1}$};
\draw[thick, ->] (al)--(al-1);
\draw[thick, ->] (al-1)--(acdots);
\draw[thick, ->] (acdots)--(0);
\draw[thick, ->] (0)--(b1.west);
\draw[thick, ->] (c1.west)--(0);
\end{tikzpicture}\hspace{-0.2cm}.
\end{align*}
\end{thm}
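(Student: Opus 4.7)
My plan is to count the quasi-hereditary structures on $\kk D_1$ and $\kk D_2$ via idempotent reductions to type $\A$ subquivers, combined with a case analysis based on the relation between the branching vertex $n-2$ and the end vertices $n-1, n$ in the minimal adapted order. For $\kk D_1$, the two sinks $n-1$ and $n$ are not connected by any path, and I would first show they are always incomparable in $\lhd_m$: every indecomposable module with both $S(n-1)$ and $S(n)$ among its composition factors contains $S(n-2)$ in its ``interior'', so the $\Dec$ and $\Inc$ relations propagate only through $n-2$ and never directly relate $n-1$ and $n$. Hence $\lhd_m$ is determined by its restrictions to the two equioriented $\A_{n-1}$ subquivers $Q_1 = \{1,\ldots,n-1\}$ and $Q_2 = \{1,\ldots,n-2,n\}$, which must agree on the common $\A_{n-2}$ subquiver on $\{1,\ldots,n-2\}$.

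I would prove that the resulting restriction map gives a bijection
\[
\qhstr(\kk D_1) \;\longleftrightarrow\; \bigl\{(q_1,q_2) \in \qhstr(\kk Q_1) \times \qhstr(\kk Q_2) \mid q_1|_{\A_{n-2}} = q_2|_{\A_{n-2}}\bigr\},
\]
with injectivity following from the incomparability above and surjectivity via explicit reconstruction of the characteristic tilting module from a compatible pair. Consequently $|\qhstr(\kk D_1)| = \sum_{q \in \qhstr(\kk \A_{n-2})} r(q)^2$, where $r(q)$ is the number of qh extensions of $q$ to $\A_{n-1}$. Using the bijection of Theorem \ref{thm_Baptiste_bijection} between qh structures on equioriented $\A_k$ and binary trees on $k$ vertices, the fibre $r(q)$ can be interpreted as the number of admissible positions to insert a new leaf into the binary tree associated to $q$. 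Summing the squares of these insertion counts over all binary trees on $n-2$ vertices and simplifying via the Catalan recursion yields the identity $\sum r(q)^2 = 2c_n - 3c_{n-1}$.

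For $\kk D_2$, the path $n \to (n-2) \to (n-1)$ permits $n \lhd_m (n-1)$ via the costandard module $\nabla(n-1)$, so I would partition $\qhstr(\kk D_2)$ into the case (A) where $n \lhd_m (n-1)$ and the case (B) where $n, n-1$ are incomparable in $\lhd_m$. Case (B) is handled by a restriction argument analogous to that for $\kk D_1$: deleting $n-1$ yields a quiver that deconcatenates at the sink $n-2$ (via Theorem \ref{thm_qhstr_divide}) into two equioriented $\A$-paths, and deleting $n$ yields equioriented $\A_{n-1}$; compatibility on $\A_{n-2}$ together with the two possible positions of $n$ relative to $n-2$ produces a contribution expressible as a small-coefficient combination of Catalan numbers. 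Case (A) is counted by observing that $n \lhd_m (n-1)$ forces $n$ and $n-2$ below $n-1$, so these structures are parametrised by qh structures on a suitably-truncated equioriented $\A$-quiver subject to a constraint on the position of $n-2$; summing both contributions and simplifying yields $3c_{n-1} - c_{n-2}$. The main obstacle is the identity $\sum r(q)^2 = 2c_n - 3c_{n-1}$ for $\kk D_1$, which requires a non-trivial binary-tree argument beyond the simple Segner recursion $c_n = \sum_{k} c_k c_{n-1-k}$; a secondary obstacle is verifying the surjective reconstruction for compatible pairs in $\kk D_1$ and the case (A) parametrisation for $\kk D_2$ through careful analysis of the standard and costandard filtrations at the branching vertex $n-2$.
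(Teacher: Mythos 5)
Your first key claim for $D_1$ --- that the two sinks $n-1$ and $n$ are always incomparable in the minimal adapted order --- is false, and this breaks the stated injectivity argument. It is true that no pair $(n,n-1)$ or $(n-1,n)$ lies in $\Dec(\lhd)\cup\Inc(\lhd)$, but $\lhd_m$ is the \emph{transitive closure} of this relation (Proposition \ref{pro-mini-adapted}), and the two sinks can become comparable through a vertex on the trunk. Already for $n=3$ (quiver $1\to 2$, $1\to 3$) the total order $3\lhd 1\lhd 2$ gives $\Dec=\{(3,1)\}$ and $\Inc=\{(1,2)\}$ up to the diagonal, so $3\lhd_m 1\lhd_m 2$ and hence $3\lhd_m 2$: the minimal adapted order is a total order in which the sinks are comparable. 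The bijection with compatible pairs of restrictions appears to be \emph{true} (standard and costandard modules here are thin, so they are determined by their supports, which are recovered from the two branches), but establishing that restriction is well defined on equivalence classes, injective and surjective requires an argument of the type carried out in Section \ref{qh-full-subposet} for full subposets, not the incomparability you invoke. The second, and larger, gap is the identity $\sum_{T} r(T)^2 = 2c_n-3c_{n-1}$, where the sum runs over binary trees $T$ with $n-2$ vertices and $r(T)$ is one plus the length of the right spine of $T$. You correctly flag this as the main obstacle but do not prove it; it does not follow from the Segner recursion alone and would need a separate generating-function or bijective argument (e.g.\ via the ballot-number refinement of $c_{n-2}$ by right-spine length). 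The $D_2$ analysis is likewise only an outline: neither the count in your case (A) nor the one in case (B) is actually computed.

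For comparison, the paper avoids both difficulties by a different decomposition: it deletes a vertex that is \emph{maximal} for the order (Lemmas \ref{lem_idempotent_adapted}--\ref{lem_idempotent_path_algebra}), obtains the disjoint-union formula of Proposition \ref{pro-counting-decomposition} with correction terms $\mcC_k$ controlled by Lemma \ref{lem-An-adapted-maximal}, and then closes the computation by induction on $n$ using only $c_{n+1}=\sum_i c_{n-i}c_i$. If you want to pursue your route, the two things to supply are a correct proof of the restriction bijection (replacing the incomparability claim) and a proof of the sum-of-squares identity; as it stands the proposal is not a complete proof.
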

The type $\E$ case is explained in Example \ref{typeE}.

So far, in all our examples, we have found a lattice of quasi-hereditary structures: in the complete graph case, we obtained the weak order and for quivers of type $\A$ we obtained products of Tamari lattices. As it can be seen in Proposition \ref{pro-an-zz}, this is no longer true in the case of affine quivers of type $\A$. The question of understanding which Artin algebras have a lattice of quasi-hereditary structures is both natural and very intriguing. We have the feeling that this will only happen in a few cases but our progress is rather modest: in Section \ref{section-lattice-qhstr} we prove the following result. 

\begin{thm}[Theorem \ref{tree-qh-lattice}]\label{intro-tree-qh-lattice}
Let $Q$ be a finite acyclic quiver whose underlying graph is a tree.
Then $\qhstr(\kk Q)$  is a lattice if and only if $Q$ does not have the following quiver as a subquiver for any $n\geq 4$:
\begin{align*}
\begin{tikzpicture}[baseline=-.625ex,xscale=.85]
\node(01+)at(0,0.5){$1$};
\node(01-)at(0,-0.5){$2$};
\node(al)at(1,0){$3$};
\node(al-1)at(2,0){$4$};
\node(acdots)at(3.07,0)[inner sep=.2pt]{$\,\cdots$};
\node(0)at(4.5,0){$n-1$};
\node(b1)at(6.5,0.5){$n\phantom{n+1}$};
\node(c1)at(6.5,-0.5){$n+1\phantom{n}$};
\draw[thick, ->] (01+.east)--(al);
\draw[thick, ->] (01-.east)--(al);
\draw[thick, ->] (al)--(al-1);
\draw[thick, ->] (al-1)--(acdots);
\draw[thick, ->] (acdots)--(0);
\draw[thick, ->] (0)--(b1.west);
\draw[thick, ->] (0)--(c1.west);
\end{tikzpicture}\hspace{-0.2cm}.
\end{align*}
\end{thm}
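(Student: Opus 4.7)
The plan is to reduce the theorem to a case analysis on the atomic pieces of $Q$ via the source/sink deconcatenation theorem. First, I would iteratively apply Theorem~\ref{thm_qhstr_divide} to split $Q$ at its sources and sinks until no further splitting is possible; the resulting pieces $Q^1, \dots, Q^\ell$ are full subtree quivers in which every leaf is a source or sink of the piece and every non-leaf carries both an incoming and an outgoing arrow. Call such subtrees \emph{atomic}. Theorem~\ref{thm_qhstr_divide} then gives $\qhstr(\kk Q) \cong \prod_i \qhstr(\kk Q^i)$ as posets, and since a finite product of finite posets is a lattice if and only if every factor is, it suffices to prove the criterion on each atomic piece. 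Moreover, because the underlying graph is a tree every subquiver equals the induced subquiver on its vertex set, and because the internal vertices $3, 4, \dots, n-1$ of the forbidden pattern all carry both in- and out-arrows in $Q$, they cannot be touched by a deconcatenation at a source or a sink of $Q$. Hence the forbidden pattern occurs in $Q$ if and only if it occurs inside some $Q^i$.

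Next I would analyse atomic trees. Call a branching vertex $v$ (of degree $\geq 3$) \emph{in-heavy} if $\deg^-(v) \geq 2$ and $\deg^+(v) = 1$, \emph{out-heavy} if $\deg^-(v) = 1$ and $\deg^+(v) \geq 2$, and \emph{mixed} otherwise. A single mixed vertex already realises the forbidden pattern for $n = 4$. In the absence of mixed vertices, any directed path between an in-heavy vertex $u$ and an out-heavy vertex $v$ is automatically equioriented, because every intermediate degree-$2$ vertex has, by atomicity, both in- and out-degree $1$; such a pair $(u,v)$ therefore yields the forbidden pattern for some $n \geq 5$. Consequently an atomic tree avoids the forbidden subquiver precisely when it has no mixed branching and no equioriented directed path from an in-heavy branching to an out-heavy branching.

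For the ``only if'' direction I would exhibit, inside each atomic piece $Q^i$ that contains the forbidden pattern, two quasi-hereditary structures possessing two distinct minimal common upper bounds in $\qhstr(\kk Q^i)$, so that the join fails to exist. The base case $n = 4$ (the $\widetilde{D}_4$ configuration $1, 2 \to 3 \to 4, 5$) is a finite computation using the $\Dec$/$\Inc$ description of minimal adapted orders from Lemma~\ref{lem_Dec_Inc}: the two structures given by the partial orders extending $\{1 \lhd 2,\ 4 \lhd 5\}$ and $\{2 \lhd 1,\ 5 \lhd 4\}$, both with $3$ declared minimal, are incomparable in $\qhstr$ and admit two incomparable minimal common upper bounds corresponding to the two opposite resolutions of the $\{1,2\}$ and $\{4,5\}$ orders. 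For general $n \geq 5$ the same obstruction survives by padding the middle chain $3 \to 4 \to \cdots \to n-1$, since its vertices are rigidly linearly ordered in every adapted order to $\kk Q^i$ and only the two source- and sink-pairs remain free.

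For the ``if'' direction I would prove that every atomic tree without the forbidden pattern has a lattice of quasi-hereditary structures. The good atomic trees admit a concrete description: along each maximal equioriented directed subchain of $Q^i$, branchings occur only on one side, either as in-heavy branchings at its sources or as out-heavy branchings at its sinks, but not both. Combining the description of the characteristic tilting modules in Theorem~\ref{thm-dec-ch-tilt} with the Tamari classification of minimal adapted orders on equioriented $\A$-type quivers from Theorem~\ref{thm_Baptiste_bijection}, I would construct an order-preserving bijection of $\qhstr(\kk Q^i)$ with a product of Tamari lattices, one factor per maximal equioriented subchain, hence exhibiting $\qhstr(\kk Q^i)$ as a lattice. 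The main obstacle, and the place where the ``no in-heavy-to-out-heavy equioriented path'' hypothesis is genuinely used, is to verify that this bijection is order preserving in both directions, which amounts to tracking how the $\Inc$ and $\Dec$ relations propagate through each branching vertex and showing that they decouple into independent contributions on the adjacent equioriented subchains.
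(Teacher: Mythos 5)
Your overall architecture (deconcatenate into atomic pieces, then analyse each piece) is reasonable as a starting point, but both halves of the argument on the atomic pieces contain genuine gaps, and the second one is fatal. For the ``if'' direction, you propose an order-preserving bijection between $\qhstr(\kk Q^i)$ and a product of Tamari lattices, one factor per maximal equioriented subchain. This cannot work even at the level of cardinalities: the quiver $Q(1,1,1)$ (four vertices, $1\to 2$, $2\to 3$, $2\to 4$) is atomic, avoids the forbidden pattern, and by Lemma \ref{lem-counting-Q(n-3,1,1)} satisfies $|\qhstr|=2c_4-3c_3=13$, which is not a product of Catalan numbers. So $\qhstr(\kk Q^i)$ is in general \emph{not} a product of Tamari lattices, and the lattice property must be established by other means. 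The paper does this by writing down an explicit candidate for the meet, namely $\bigl(\bigl(\Dec(\lhd_1)\cap\Dec(\lhd_2)\bigr)\cup\Inc(\lhd_1)\cup\Inc(\lhd_2)\bigr)^{\sf tc}$, proving it is antisymmetric precisely because no zigzag $Z_n$ embeds as a full subposet (Proposition \ref{pro-deltap-antisymmetric}), and then verifying it represents the meet (Proposition \ref{pro-deltap-meet}); the join is obtained by duality.

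For the ``only if'' direction, the base case is indeed a finite computation, but the passage to a general atomic piece containing the pattern is asserted rather than proved. Your justification — that the middle chain is ``rigidly linearly ordered in every adapted order'' and that ``only the two source- and sink-pairs remain free'' — is false (already on an equioriented $\A_m$ chain there are $c_m$ inequivalent adapted orders) and in any case ignores all vertices of $Q^i$ outside the forbidden pattern, of which there may be many. Transporting the failure of the join from the small configuration to the ambient algebra is exactly the hard part; the paper devotes Propositions \ref{qh_embedding} and \ref{interval-preserving} and Theorem \ref{thm-qh-not-lattice} to constructing a full, interval-preserving embedding $\qhstr(A(Q))\hookrightarrow\qhstr(A(P))$ for a full subposet $Q$ of $P$ and showing that non-lattice-ness is inherited upward along such embeddings. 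Note also that the paper's route is quite different from yours: it recasts the tree quiver as a diamond-free poset, replaces the forbidden subquiver condition by the condition that no zigzag poset $Z_n$ ($n\geq 4$) is a full subposet (Lemma \ref{lem-Dn-Zn}), proves the non-lattice statement for $\kk Z_n$ directly (Proposition \ref{pro-an-zz}), and never needs the deconcatenation reduction at all.
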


When it is a lattice, we give an explicit description of the partial orders which represent a meet or a join of given two quasi-hereditary structures, see Subsection \ref{subsection-lattice-tree}. 

As an immediate corollary of this theorem, we have the following one.
\begin{cor}
For a Dynkin quiver $Q$, $\qhstr(\kk Q)$ is a lattice.
\end{cor}

Curiously, Theorem \ref{intro-tree-qh-lattice} does not seem to have a nice generalization to the setting of finite acyclic quivers. However, it seems to generalize to the setting of incidence algebras of a finite poset. We denote by $Z_n$ the partially ordered set whose Hasse diagram is a zig-zag orientation of an affine quiver of type $\A_n$ (see diagram \ref{zigzag}) and we propose the following conjecture.  

\begin{conj}\label{intro-conj-qh-lattice-poset}
Let $(P,\leq)$ be a finite poset. Then, the poset of quasi-hereditary structures on the incidence algebra of $(P,\leq)$ is a lattice if and only if $Z_n$ is not isomorphic to a full subposet of $(P,\leq)$ for any $n\geq 4$.
\end{conj}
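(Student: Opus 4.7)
The plan is to prove the two directions of the conjecture separately, mirroring the argument structure of Theorem \ref{tree-qh-lattice} but replacing tree combinatorics with the combinatorics of Hasse diagrams of general finite posets. Throughout, I would work with the characteristic tilting module characterisation of $\qhstr$ from Lemma \ref{lem_two_qh_str_equiv}, which turns the problem into a question about a specific subposet of the poset of tilting modules.

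For the forward implication I would argue contrapositively. Suppose $Z_n$ embeds as a full subposet of $(P,\leq)$ for some $n\geq 4$, and set $e=\sum_{p\notin Z_n}e_p$. Because $Z_n$ is full, the idempotent quotient $\kk(P,\leq)/\langle e\rangle$ is canonically isomorphic to the incidence algebra $\kk Z_n$. By the reasoning behind Proposition \ref{pro-an-zz}, there exist $\lhd_1^{\prime},\lhd_2^{\prime}\in\qhstr(\kk Z_n)$ whose join does not exist. Lift these to $\lhd_1,\lhd_2\in\qhstr(\kk(P,\leq))$ by fixing any adapted extension on $P\setminus Z_n$; a routine check using Proposition \ref{pro-mini-adapted} shows that this produces genuine quasi-hereditary structures. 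The crucial step is to show that multiplication by $1-e$ induces a well-defined and order-preserving map $\qhstr(\kk(P,\leq))\to\qhstr(\kk Z_n)$ compatible with the characteristic tilting modules, and that a join of $\lhd_1,\lhd_2$ upstairs would restrict to a join downstairs, contradicting the choice of $\lhd_1^{\prime},\lhd_2^{\prime}$.

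For the reverse implication I would induct on $|P|$. If the Hasse diagram of $P$ admits a non-trivial deconcatenation at a source or sink, Theorem \ref{thm_qhstr_divide} gives a product decomposition $\qhstr(\kk(P,\leq))\cong\prod_i\qhstr(\kk P_i)$ into smaller factors; each $P_i$ is a full subposet of $P$ and therefore inherits the hypothesis of no $Z_n$ subposet, so the inductive hypothesis makes each factor a lattice and the product is again a lattice. The remaining case is when $P$ is indecomposable in the sense of Theorem \ref{thm_qhstr_divide}. Here, the natural strategy is to build the meet (respectively join) of two characteristic tilting modules $T_1,T_2$ by a local combinatorial rule: for each $i\in P$, define a new summand $T(i)$ from $T_1(i)$ and $T_2(i)$ in a way governed by the Hasse diamonds of $P$ passing through $i$, and then verify that $T=\bigoplus_i T(i)$ is a tilting module whose associated quasi-hereditary structure is the desired meet or join.

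The main obstacle is the reverse implication in the indecomposable case. A finite poset can avoid $Z_n$ as a full subposet while still having a rich Hasse diagram with many cycles (any finite distributive lattice provides diamonds, and iterated diamonds can nest in complicated ways). The meet/join formulas from Subsection \ref{subsection-lattice-tree} are tailored to trees and will not transport verbatim, since standard modules over an incidence algebra whose Hasse diagram contains commutative squares are not determined merely by a path, but by whole order ideals and filters. I expect the bulk of the work to be identifying the right local modification rule across each diamond, and then proving that the absence of $Z_n$ as a full subposet is exactly the combinatorial obstruction which prevents this rule from failing, in the same way that the absence of a zig-zag cycle prevents the affine type $\widetilde{\A}$ pathology from occurring.
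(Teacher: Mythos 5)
The statement you are trying to prove is stated in the paper as a \emph{conjecture}, and the paper does not prove it: it only establishes the special case where the incidence algebra $A(P)$ is hereditary, i.e.\ where $(P,\leq)$ is diamond-free (Theorem \ref{thm-qh-lattice-poset}). Your proposal does not close that gap. In the reverse implication you yourself concede that for an indecomposable poset whose Hasse diagram contains diamonds you only ``expect'' a local modification rule to exist and that ``the bulk of the work'' remains to be done; that expected work is precisely the open content of the conjecture, so this direction is a research programme, not a proof.

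The forward implication also has a concrete error and an unjustified step. Setting $e=\sum_{p\notin Z_n}e_p$, the quotient $A(P)/\langle e\rangle$ is \emph{not} canonically isomorphic to $A(Z_n)$ for a general full subposet: the basis element $[i,j]$ with $i,j\in Z_n$ dies in the quotient as soon as some $k\in[i,j]$ lies outside $Z_n$, so the quotient is $A(Z_n)$ only when $Z_n$ is convex (interval-closed) in $P$. The correct object is the corner algebra $e_{Z_n}A(P)e_{Z_n}\cong A(Z_n)$ (Lemma \ref{lem:centralizer-IncAlg}), and one must then work with the recollement functors $(-)e_{Z_n}$ and $L$ as in Subsection \ref{qh-full-subposet}. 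But even that machinery, as set up in Proposition \ref{qh_embedding}, uses heredity of $A(P)$ in an essential way: for a hereditary algebra every adapted order automatically yields a quasi-hereditary structure (Proposition \ref{pro-hered-adapted}), whereas for a general incidence algebra your ``routine check'' that the lifted order $\lhd$ defines a quasi-hereditary structure on $A(P)$ is not routine at all --- one must verify the $\Delta$-filtration conditions of Definition \ref{dfn-qh-alg}, and this can fail. So neither direction of your argument is complete outside the diamond-free case already treated in the paper.
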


Actually, the setting of incidence algebras is not only a good setting for a generalization of our result, it also simplifies our arguments.
So we prove Theorem \ref{intro-tree-qh-lattice} as a corollary of the following theorem. 

\begin{thm}[Theorem \ref{thm-qh-lattice-poset}]\label{intro-thm-qh-lattice-poset}
Let $(P,\leq)$ be a finite poset. We assume that the incidence algebra $A(P)$ of $(P,\leq)$ is hereditary. Then, the poset of quasi-hereditary structures on $A(P)$ is a lattice if and only if $Z_n$ is not isomorphic to a full subposet of $(P,\leq)$ for any $n\geq 4$.
\end{thm}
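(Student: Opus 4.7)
The plan is to prove the two implications separately, using the idempotent reduction technique for the forward direction and the explicit lattice construction of Subsection \ref{subsection-lattice-tree} for the backward direction.

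For the ``only if'' direction, suppose $(P,\leq)$ contains $Z_n$ as a full subposet for some $n\geq 4$, with underlying set $J\subseteq P$. Consider the idempotent $e=\sum_{j\in J}e_j\in A(P)$, so that $eA(P)e\cong A(Z_n)$ is the path algebra of a zig-zag orientation of an affine quiver of type $\tilde{\A}_{n-1}$. By Proposition \ref{pro-an-zz}, the poset $\qhstr(A(Z_n))$ fails to be a lattice, so there exist $\lhd_1,\lhd_2\in\qhstr(A(Z_n))$ admitting no join (or no meet). The plan is to show that the idempotent reduction $(-)\mapsto (-)\vert_J$ induces a well-defined order-preserving map $\rho\colon\qhstr(A(P))\to \qhstr(A(Z_n))$ which is moreover a retraction: one obtains a section $\sigma$ by taking any adapted order on $A(P)$ whose restriction to $J$ equals the given one and which is otherwise prescribed by a fixed adapted order on the complement. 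Given this, if $\sigma(\lhd_1)$ and $\sigma(\lhd_2)$ had a join $\hl$ in $\qhstr(A(P))$, then $\rho(\hl)$ would serve as a join of $\lhd_1$ and $\lhd_2$ in $\qhstr(A(Z_n))$, contradicting the choice of $\lhd_1,\lhd_2$.

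For the ``if'' direction, assume that no $Z_n$ with $n\geq 4$ occurs as a full subposet of $(P,\leq)$. The plan is to exhibit explicit meet and join operations on the set of minimal adapted orders. Given two minimal adapted orders $\lhd_1,\lhd_2$, one defines candidate orders $\lhd_1\wedge \lhd_2$ and $\lhd_1\vee\lhd_2$ as the transitive closures of suitable combinations of the decreasing and increasing relations $\Dec(\lhd_i)$ and $\Inc(\lhd_i)$, following the formulas prepared in Subsection \ref{subsection-lattice-tree}. Because $A(P)$ is hereditary, standard and costandard modules have an explicit combinatorial description in terms of intervals in $(P,\leq)$, so these candidates can be read off directly from the poset structure. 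The crux is to verify that the candidates are minimal adapted orders; the only way this can fail is that taking the transitive closure introduces a relation $i\lhd j$ together with $j\lhd i$, which forces the existence of a zig-zag pattern in $(P,\leq)$ linking $i$ and $j$. Unwinding this, one sees that such an obstruction occurs precisely when $(P,\leq)$ contains some $Z_n$ with $n\geq 4$ as a full subposet, which is excluded by hypothesis. Once well-definedness is established, the lattice axioms (commutativity, associativity, absorption) are routine to verify from the formulas.

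The main obstacle is the backward direction: identifying the precise combinatorial mechanism by which a $Z_n$-configuration obstructs well-definedness, and conversely showing that its absence suffices to close off every potential cycle that could be created when combining two minimal adapted orders. The forward direction is comparatively direct once the naturality of idempotent reduction with respect to the partial order on $\qhstr$ is in place, since Proposition \ref{pro-an-zz} already supplies the key non-lattice example.
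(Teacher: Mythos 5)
Your ``if'' direction follows the paper's strategy: the candidate meet and join are exactly the transitive closures $\bigl((\Dec(\lhd_1)\cap\Dec(\lhd_2))\cup\Inc(\lhd_1)\cup\Inc(\lhd_2)\bigr)^{\sf tc}$ and its dual, the hard point is indeed antisymmetry (the paper's Lemma \ref{lem-path-exist} and Proposition \ref{pro-deltap-antisymmetric}), and the obstruction to antisymmetry is a $Z_n$-configuration. One caveat: after well-definedness you still must show the candidate is adapted, defines a quasi-hereditary structure, and is the \emph{greatest} lower bound (Lemmas \ref{lem-comp-fac-deltap}, \ref{lem-delta12-filtered} and Proposition \ref{pro-deltap-meet}); ``verifying the lattice axioms for the operations'' is not the right formulation and hides genuine content, but the approach is sound.

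The ``only if'' direction has a genuine gap: the order-preserving retraction $\rho\colon\qhstr(A(P))\to\qhstr(A(Z_n))$ that your argument hinges on does not exist in the naive sense, because restriction of orders is not well defined on equivalence classes. Already for $P=\{c<b<a\}$ a $3$-chain and $Q=\{c<a\}$ a full subposet, the two equivalent adapted orders $\lhd_m$ (with $a\lhd_m b$, $c\lhd_m b$ and $a,c$ incomparable) and $\lhd'$ (with $a\lhd' c\lhd' b$) represent the same quasi-hereditary structure on $A(P)$, yet $\lhd_m|_Q$ leaves $a$ and $c$ incomparable and is therefore not even adapted to $A(Q)$ (the module $P^Q(c)$ has top $S(c)$ and socle $S(a)$ with no larger $k$ available), while $\lhd'|_Q$ is a total order defining a quasi-hereditary structure. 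So $\rho$ cannot be defined representative-by-representative, and even granting some well-defined surjection, order-preservation in both directions together with $\rho\sigma=\mathrm{id}$ is exactly what needs proof. The paper avoids a global retraction entirely: it constructs only the section $\lhd\mapsto\hl$ (with the rather delicate rules (3) and (4) for which relations $r\hl q$ to add), proves it is a \emph{full} embedding of posets (Proposition \ref{qh_embedding}), and then proves the key \emph{interval-preserving} property (Proposition \ref{interval-preserving}): any quasi-hereditary structure lying between two lifted ones is itself lifted. That local statement, whose proof uses the recollement functors $L$ and $(-)e_Q$ and the identification $\Delta(q)=L(\Delta(q)e_Q)$ for structures in such intervals, is what substitutes for your $\rho$ and is where the real work lies; your argument would go through if you replaced ``retraction'' by ``full, interval-preserving embedding'' and supplied a proof of that property.
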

The relationship between Theorem \ref{intro-tree-qh-lattice} and Theorem \ref{intro-thm-qh-lattice-poset} is explained in Remark \ref{comp-thms}.

Experiments were carried out using the GAP-package QPA \cite{qpa} and SageMath \cite{sage}. The code is available online \cite{code}.

\begin{notation-convention}
Throughout this paper, 
$\kk$ denotes a field of arbitrary characteristic.
A subcategory always means a full subcategory which is closed under isomorphisms.
An Artin algebra is an $R$-algebra $A$ such that $R$ is a commutative Artinian ring and $A$ is finitely generated as an $R$-module.
For an Artin algebra, we usually deal with finitely generated right modules.
We denote by $\modu A$ the category of finitely generated right $A$-modules.
For an element or subset $X$ of $A$, we denote by $\ideal{X}$ the two-sided ideal of $A$ generated by $X$.
For a quiver $Q$, let $\kk Q$ be the path algebra of $Q$.
For two arrows $\alpha, \beta$ of $Q$, if the terminating vertex of $\alpha$ equals the starting vertex of $\beta$, then $\alpha\beta$ indicates the composite of $\alpha$ with $\beta$. An order means a partially ordered set.
\end{notation-convention}

\section{Preliminaries}
In this section, we give some definitions and preliminary results which we use throughout this paper.

\subsection{Adapted posets}
We first consider the so-called \emph{adapted partial orders} introduced by Dlab and Ringel in \cite{dlab-ringel} on the set of isomorphism classes of simple modules over an Artin algebra $A$.

Let $(I,\lhd)$ be a finite partially ordered set with $\{S(i)\}_{i\in I}$ being a complete set of representatives of isomorphism classes of simple $A$-modules. For $i\in I$, we denote by $P(i)$ a projective cover of $S(i)$ and by $I(i)$ an injective envelope of $S(i)$.
If $M$ is an $A$-module, we denote by $[M:S(i)]$ the Jordan-H\"{o}lder multiplicity of $S(i)$ in $M$.

Let $\Theta$ be a class of $A$-modules which is closed under isomorphisms. We denote by $\mathcal{F}(\Theta)$ the subcategory of $\modu A$ consisting of all $A$-modules which have a $\Theta$-filtration, that is, an $A$-module $M$ for which there exists a chain of submodules
\[
0=M_n \subset M_{n-1}\subset \cdots \subset M_1 \subset M_0 = M
\]
such that $M_{i}/M_{i+1}$ belongs to $\Theta$.
Let $\Delta(i)$ be the largest quotient of $P(i)$ whose composition factors $S(j)$ are such that $j\lhd i$. Similarly let $\nabla(i)$ be the largest submodule of $I(i)$ whose composition factors $S(j)$ are such that $j\lhd i$. Set $\Delta=\{\Delta(i)\}_{i\in I}$ and $\nabla=\{\nabla(i)\}_{i\in I}$. The module $\Delta(i)$ (resp.\ $\nabla(i)$) is called the \emph{standard} (resp.\  \emph{costandard}) module with \emph{weight} $i$.

If $M\in \mathcal{F}(\Delta)$, the number of times that $\Delta(i)$ appears in a $\Delta$-filtration of $M$ does not depend on the choice of the filtration. We denote it by $\big(M:\Delta(i) \big)$. Similarly, the number of times that $\nabla(i)$ appears in $N\in \mathcal{F}(\nabla)$ is independent of the choice of the filtration and we denote it by $\big(N:\nabla(i)\big)$.

\begin{dfn}[Dlab-Ringel]\label{dfn_adapted}
Let $A$ be an Artin algebra with set of isomorphism classes of simple modules $\{S(i)\}_{i\in I}$. A partial order $\lhd$ on $I$ is \emph{adapted} to $A$ if for every $A$-module $M$ with top $S(i)$ and socle $S(j)$, where $i$ and $j$ are incomparable, there is $k \in I$ with $i \lhd k$ \underline{and} $j \lhd k$ and $[M:S(k)]\neq 0$. 

\end{dfn}
\begin{lem}[Dlab-Ringel]\label{weak-adapted}
Let $A$ be an Artin algebra with set of isomorphism classes of simple modules $\{S(i)\}_{i\in I}$. A partial order $\lhd$ on $I$ is adapted to $A$ if and only if for every $A$-module $M$ with top $S(i)$ and socle $S(j)$, where $i$ and $j$ are incomparable, there is $k \in I$ with $i \lhd k$ \underline{or} $j \lhd k$ and $[M:S(k)]\neq 0$. 
\end{lem}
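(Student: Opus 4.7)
The direction adapted $\Rightarrow$ weak is trivial, since the adapted conclusion ``$\exists k$ with $i\lhd k$ and $j\lhd k$'' is logically stronger than ``$\exists k$ with $i\lhd k$ or $j\lhd k$''. The content is the converse, which I would establish by induction on the length $\ell(M)$ of an $A$-module $M$ with simple top $S(i)$ and simple socle $S(j)$, where $i$ and $j$ are incomparable. The base case $\ell(M)=1$ is vacuous since then $i=j$.

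For the inductive step, apply the weak hypothesis to $M$ to obtain $k\in I$ with $[M:S(k)]\neq 0$ and $i\lhd k$ or $j\lhd k$. If both relations hold we are finished, so by symmetry I may assume $i\lhd k$ but $j\not\lhd k$. A brief case analysis using that $\lhd$ is a partial order and that $i,j$ are incomparable shows that $j$ and $k$ must themselves be incomparable: indeed $k=j$ would give $i\lhd j$, and $k\lhd j$ would give $i\lhd j$ by transitivity.

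The key step is then to produce a proper submodule $N\subset M$ with top $S(k)$ and socle $S(j)$. Since $[M:S(k)]\neq 0$, choose submodules $M_{1}\subsetneq M_{2}\subseteq M$ with $M_{2}/M_{1}\cong S(k)$, pick any $x\in M_{2}$ mapping to a generator of $S(k)$, and set $N\coloneqq xA$. The composition $N\hookrightarrow M_{2}\twoheadrightarrow S(k)$ is surjective, so $N$ is cyclic with top $S(k)$; since $N\neq 0$ embeds in $M$ whose socle is simple equal to $S(j)$, the socle of $N$ is $S(j)$. Because the top of $N$ is $S(k)\neq S(i)$, we have $N\subsetneq M$ and hence $\ell(N)<\ell(M)$. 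The inductive hypothesis applied to $N$ yields $k'\in I$ with $j\lhd k'$, $k\lhd k'$ and $[N:S(k')]\neq 0$; transitivity gives $i\lhd k\lhd k'$, and $[M:S(k')]\geq[N:S(k')]>0$, completing the proof. In the symmetric subcase ($j\lhd k$ with $i\not\lhd k$) I would dually form a quotient $N=(M/M_{1})/U$, where $U$ is maximal in $M/M_{1}$ with $U\cap (M_{2}/M_{1})=0$, so that $M_{2}/M_{1}\cong S(k)$ becomes an essential simple submodule of $N$; then $N$ is a proper quotient of $M$ with top $S(i)$ and socle $S(k)$, and the same inductive argument applies.

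\textbf{Main obstacle.} The only delicate point is the construction of $N$ and the verification that its top and socle are exactly as prescribed; this rests on the fact that a module with simple top is local (so all nonzero quotients share that top) and that nonzero submodules of a module with simple socle inherit that socle. Handling both orientations ($i\lhd k$ versus $j\lhd k$) symmetrically via the submodule/quotient duality is routine once the first case is set up.
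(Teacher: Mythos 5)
Your overall strategy is sound and is essentially the classical Dlab--Ringel argument that the paper does not reproduce (it only points to the bottom of page~3 of \cite{dlab-ringel}): one implication is formal, and for the converse one bootstraps the weak condition by passing to a strictly smaller sub- or quotient module whose top and socle are again indexed by incomparable elements, then concludes by induction (or, equivalently, by choosing $k$ extremal). The case analysis ($k$ incomparable to $j$ when $i\lhd k$ but $j\ntriangleleft k$), the identification of the socle of a nonzero submodule of $M$ with $S(j)$, the dual quotient construction, and the final transitivity step $i\lhd k\lhd k'$ are all correct.

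There is, however, one step that does not work as written: from ``pick any $x\in M_{2}$ mapping to a generator of $S(k)$ and set $N=xA$'' you cannot conclude that $N$ has top $S(k)$. A cyclic module is a quotient of $A=\bigoplus_l P(l)$, not of $P(k)$, so its top need not be simple; for a generic $x$ the module $xA$ can have top $S(k)\oplus S(l)$ for other $l$, and then the inductive hypothesis does not apply to it. The repair is standard: since $[M:S(k)]\neq 0$ one has $Me_k\neq 0$, so there is a nonzero homomorphism $f\colon P(k)\to M$ (equivalently, choose $x\in Me_k$), and $N\coloneqq \im f$ is a nonzero quotient of the local module $P(k)$, hence has simple top $S(k)$; its socle is $S(j)$ as you argue, and $N\neq M$ because its top differs from $S(i)$. (In the dual case you should likewise note that $M_1\neq 0$ --- otherwise $S(k)$ would embed into $\soc M=S(j)$, forcing $k=j$ --- so that the quotient is indeed of strictly smaller length.) With these adjustments the proof is complete.
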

\begin{proof}
See \cite{dlab-ringel} at the bottom of page $3$.
\end{proof}
As a nice property of adapted posets, we have the following result.
\begin{lem}[Dlab-Ringel]\label{extension_adapted}
Let $(I,\lhd_1)$ be an adapted poset to $A$. Let $\lhd_2$ be a refinement of $\lhd_1$. For $l=1,2$ let $\Delta_l(i)$ be the standard module with weight $i$ for the poset $\lhd_l$. Then 
\[
\Delta_1(i) = \Delta_2(i) \ \forall i\in I.
\]
\end{lem}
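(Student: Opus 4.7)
The plan is to show $\Delta_1(i) = \Delta_2(i)$ by establishing the inclusion one way and then ruling out any discrepancy via the adapted condition. First, since $\lhd_2$ refines $\lhd_1$, every $j$ with $j \trianglelefteq_1 i$ also satisfies $j \trianglelefteq_2 i$, so the defining constraint on composition factors is strictly stronger for $\Delta_1(i)$. Writing $\Delta_l(i) = P(i)/K_l(i)$ where $K_l(i)$ is the smallest submodule realizing the property, this yields $K_1(i) \supseteq K_2(i)$, hence $\Delta_1(i)$ is a canonical quotient of $\Delta_2(i)$. To prove equality it suffices to show that every composition factor $S(j)$ of $\Delta_2(i)$ already satisfies $j \trianglelefteq_1 i$.

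I would argue this by contradiction. Suppose $S(j)$ is a composition factor of $\Delta_2(i)$ with $j \not\trianglelefteq_1 i$. Then $j \neq i$, so $j \lhd_2 i$ strictly. The refinement hypothesis also excludes $i \lhd_1 j$: such an inequality would propagate to $i \lhd_2 j$ and produce a $\lhd_2$-cycle with $j \lhd_2 i$. Hence $i$ and $j$ are $\lhd_1$-incomparable. The next step, which I expect to be the most delicate part of the argument, is to manufacture a subquotient $M$ of $\Delta_2(i)$ having top $S(i)$ and socle exactly $S(j)$. Concretely, fix a composition series of $\Delta_2(i)$ with a step $M_{t-1}/M_t \cong S(j)$, and choose a submodule $K \supseteq M_t$ maximal subject to $K \cap M_{t-1} = M_t$. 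Set $M \coloneqq \Delta_2(i)/K$. Being a non-zero quotient of the local module $P(i)$, $M$ has top $S(i)$. For the socle, any simple submodule $L \subseteq M$ lifts to $L' \subseteq \Delta_2(i)$ with $L' \supsetneq K$; by the maximality of $K$, $L' \cap M_{t-1} \neq M_t$, and since $M_{t-1}/M_t$ is simple this forces $L' \supseteq M_{t-1}$, whence $L$ equals the image of $S(j) \cong M_{t-1}/(K \cap M_{t-1})$ in $M$.

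Finally, apply the adapted hypothesis on $(I,\lhd_1)$ to the module $M$: since $i$ and $j$ are $\lhd_1$-incomparable and $M$ has top $S(i)$ and socle $S(j)$, there exists $k \in I$ with $i \lhd_1 k$, $j \lhd_1 k$, and $[M : S(k)] \neq 0$. The composition factor $S(k)$ of $M$ is also a composition factor of $\Delta_2(i)$, which gives $k \trianglelefteq_2 i$. On the other hand, $i \lhd_1 k$ together with the refinement yields $i \lhd_2 k$. These two statements contradict each other (they would produce a $\lhd_2$-cycle), so no such $j$ exists. This forces $K_1(i) = K_2(i)$ and hence $\Delta_1(i) = \Delta_2(i)$ for every $i \in I$.
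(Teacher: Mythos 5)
Your proof is correct and follows essentially the same route as the paper's: both establish the canonical surjection $\Delta_2(i)\twoheadrightarrow\Delta_1(i)$ coming from the refinement hypothesis and then apply the adapted condition for $\lhd_1$ to a subquotient witnessing a hypothetical extra composition factor, deriving a contradiction from the antisymmetry of $\lhd_2$. If anything, your construction of the subquotient $M$ with simple top $S(i)$ and simple socle $S(j)$ is more careful than the paper's sketch, which applies adaptedness to a non-split extension without verifying that its socle is simple.
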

\begin{proof}
This is stated without proof at the top of page 4 of \cite{dlab-ringel}. We sketch the proof for the convenience of the reader. It is clear that there is a surjective map $\psi_j \colon \Delta_2(j)\to \Delta_1(j)$. If this is not an isomorphism, we consider $i$ such that $S(i)$ is a composition factor of $\Delta_2(j)$ at the top of the kernel of $\psi_j$. Then, there is a module $M$ which is a non-split extension of $\Delta_1(j)$ and $S(i)$. Since $\lhd_1$ is adapted to $A$ one of the following holds.
\begin{enumerate}
    \item $i$ and $j$ are comparable in $(I, \lhd_1)$.
    \item They are not comparable and there is a composition factor $S(k)$ of $\Delta_1(j)$ such that $j\lhd_1 k$ and $i\lhd_1 k$.
\end{enumerate}
Since the composition factors in $\Delta_1(j)$ are smaller than $j$ with respect to $\lhd_1$, the second condition cannot occur. For the first condition, since $i\vartriangleleft_2 j$ and $\lhd_2$ is a refinement of $\lhd_1$ the only possibility is that $i\lhd_1 j$ and this contradicts the fact that $S(i)$ is not a composition factor of $\Delta_1(j)$.
\end{proof} 
By duality we have a similar result for costandard modules.

\begin{dfn}\label{dfn-qh-algebra}
Let $A$ be an Artin algebra with set of isomorphism classes of simple modules $\{S(i)\}_{i\in I}$. Let $\lhd_1$ and $\lhd_2$ be two partial orders on $I$. Then $\lhd_1$ is \emph{equivalent} to $\lhd_2$, denoted by $\lhd_1 \sim \lhd_2$, if $\Delta_1 = \Delta_2$ and $\nabla_1 = \nabla_2$.
\end{dfn}

This equivalence relation is compatible with the notion of adapted poset in the following sense. 

\begin{lem}\label{equiv_adapted}
Let $A$ be an Artin algebra and $I$ be a set indexing the isomorphism classes of simple $A$-modules. Let $\lhd_1$ be an adapted poset to $A$. If $\lhd$ is equivalent to $\lhd_1$, then $\lhd$ is adapted to $A$. 
\end{lem}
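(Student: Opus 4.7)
The plan is to verify the adapted condition for $\lhd$ by first reducing to the case where $\lhd_1$ is a total order and then analyzing the socle of $I(j)/\nabla(j)$. Fix any total linear extension $\lhd_1^{\prime}$ of $\lhd_1$. By Lemma~\ref{extension_adapted} applied to $\lhd_1$ and its costandard dual, $\Delta_1 = \Delta_1^{\prime}$ and $\nabla_1 = \nabla_1^{\prime}$, so $\lhd_1^{\prime} \sim \lhd_1 \sim \lhd$. Since a total order is vacuously adapted (the incomparability hypothesis is empty), replacing $\lhd_1$ by $\lhd_1^{\prime}$ we may assume $\lhd_1$ is total.

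By Lemma~\ref{weak-adapted} it suffices to produce, for every $A$-module $M$ with top $S(i)$, socle $S(j)$, and $i,j$ $\lhd$-incomparable, a composition factor $S(k)$ of $M$ with $i \lhd k$ or $j \lhd k$. Since $\lhd_1$ is total, $i$ and $j$ are $\lhd_1$-comparable; by symmetry between the standard and costandard arguments, we may assume $i \lhd_1 j$. The key structural step is the following maximality observation: every simple summand of the socle of $I(j)/\nabla(j)$ is of the form $S(l)$ with $l \not\trianglelefteq j$ in $\lhd$. Indeed, if $S(l)$ with $l \trianglelefteq j$ occurred in this socle, its preimage in $I(j)$ would enlarge $\nabla(j)$ to a strictly bigger submodule whose composition factors remain $\trianglelefteq j$, contradicting the maximality defining $\nabla(j)$.

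Since $i \not\trianglelefteq j$ in $\lhd$, the simple $S(i)$ is not a composition factor of $\nabla(j)$, so $M \not\subseteq \nabla(j)$; thus $M/(M \cap \nabla(j))$ is a nonzero submodule of $I(j)/\nabla(j)$ whose socle contains $S(l)$ for some $l \not\trianglelefteq j$ in $\lhd$. By the equivalence $\nabla(j) = \nabla_1(j)$ applied in $\lhd_1$, the same $l$ satisfies $l \not\trianglelefteq_1 j$, and totality of $\lhd_1$ then forces $l \rhd_1 j \rhd_1 i$. If $l \rhd j$ or $l \rhd i$ in $\lhd$, we finish with $k = l$; otherwise $l$ is $\lhd$-incomparable to both $i$ and $j$, and I would iterate the argument on a subquotient of $M$ having simple top $S(l)$, using the dual maximality statement for the top of $\ker(P(l) \to \Delta(l))$. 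Each iteration yields a strictly $\lhd_1$-ascending chain of composition factors of $M$, so by finiteness of $I$ the process terminates. The main obstacle is to show that termination produces an element in the required $\lhd$-relation to $i$ or $j$ (and not merely a $\lhd_1$-relation); the crucial input is the equality $(\Delta,\nabla) = (\Delta_1,\nabla_1)$, which guarantees that $\Dec$ and $\Inc$ are shared between $\lhd$ and $\lhd_1$, so each composition-factor relation produced by the standard and costandard constructions lies in both orders simultaneously.
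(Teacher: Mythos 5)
Your construction of $l$ is correct as far as it goes: $S(l)$ lies in the socle of $M/(M\cap\nabla(j))\subseteq I(j)/\nabla(j)$, the maximality of $\nabla(j)$ and of $\nabla_1(j)$ gives $l\centernot{\lhd} j$ and $l\centernot{\lhd_1} j$, and totality of $\lhd_1$ yields $j\lhd_1 l$. The genuine gap is exactly the step you flag as ``the main obstacle'': nothing you have written converts $j\lhd_1 l$ into $j\lhd l$. Your closing appeal to the fact that $\Dec$ and $\Inc$ are shared between $\lhd$ and $\lhd_1$ does not apply, because the relation $j\lhd_1 l$ you produced has not been exhibited as a composition-factor relation of a standard or costandard module: knowing that $S(l)$ occurs in the socle of $I(j)/\nabla_1(j)$ says nothing a priori about $[\Delta_1(l):S(j)]$ or $[\nabla_1(l):S(j)]$. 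Consequently the proposed iteration only builds a $\lhd_1$-ascending chain whose terminal element has no reason to be $\lhd$-comparable to $i$ or $j$, so it cannot terminate usefully. (There is also a small slip: ``otherwise $l$ is $\lhd$-incomparable to both $i$ and $j$'' overlooks the possibility $l\lhd i$.)

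The missing step, which is the heart of the paper's proof in dual form, is this. Let $V\subseteq I(j)$ be the preimage of your $S(l)$, so that $0\to\nabla_1(j)\to V\to S(l)\to 0$ is non-split (a splitting would force $S(l)$ into $\soc I(j)=S(j)$). Lift the projective cover $P(l)\twoheadrightarrow S(l)$ through $V\twoheadrightarrow S(l)$ to get $\phi\colon P(l)\to V$. Every composition factor of $V$ other than $S(l)$ lies in $\nabla_1(j)$, hence is $\lhd_1 j\lhd_1 l$; so $\im\phi$ is a quotient of $P(l)$ with all composition factors $\lhd_1 l$, i.e.\ a quotient of $\Delta_1(l)=\Delta(l)$. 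Non-splitness forces $\im\phi\cap\nabla_1(j)\neq 0$, hence $\im\phi\supseteq\soc I(j)=S(j)$, so $[\Delta(l):S(j)]\neq 0$ and therefore $j\lhd l$ by the definition of $\Delta(l)$ with respect to $\lhd$. With this, $k=l$ finishes the proof in one step via Lemma \ref{weak-adapted}; no iteration is needed. The paper's own argument is the mirror image: it finds $S(k)$ at the top of $\ker(P(i)\to\Delta(i))$ surviving in $M$, deduces $i\lhd_1 k$ from adaptedness of $\lhd_1$, and then embeds the resulting non-split extension into $\nabla_1(k)=\nabla(k)$ to read off $i\lhd k$.
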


\begin{proof}
Let $M$ be an indecomposable module with simple top $S(i)$ and simple socle $S(j)$ with $i$ and $j$ incomparable with respect to $\lhd$.
Since $M$ has simple top $S(i)$ it is a quotient of $P(i)$.
We denote by $U(i)$ the kernel of the projection from $P(i)$ to $\Delta(i)=\Delta_1(i)$.
Since $i$ and $j$ are incomparable with respect to $\lhd$, the module $S(j)$ is a composition factor of $U(i)$.
Then, there is a composition factor $S(k)$ which is at the top of $U(i)$ and which is also a composition factor of $M$.
We denote by $N$ a non-split extension of $S(k)$ and $\Delta_1(i)$.
There is factor module $N'$ of $N$ with simple top $S(i)$ and simple socle $S(k)$.
In fact, the socle of $N$ is of the form $S(k)\oplus S'$ for some semisimple module $S'$.
Then a socle of a factor module $N/S'$ has $S(k)$ as a direct summand.
By the induction on the length of $N$, we have the desired factor module $N'$ of $N$.
Note that any composition factor of $N'/S(k)$ is a composition factor of $\Delta_1(i)$.

We show that $i \lhd_1 k$ holds.
If $i$ and $k$ are incomparable with respect to $\lhd_1$, then there exists $\ell\in I$ such that $i \lhd_1 \ell$ and $k \lhd_1 \ell$ with $[N' : S(\ell)]\neq 0$ since $\lhd_1$ is an adapted poset.
Then, as mentioned above, $S(\ell)$ is a composition factor of $\Delta_1(i)$ or $\ell=k$.
Both of them induce contradictions.
If $k \lhd_1 i$ holds, then since $S(k)$ is at the top of $U(i)$, $\Delta_1(i)$ should have a composition factor $S(k)$, which is a contradiction.
Therefore $i \lhd_1 k$ holds.

By transitivity, if $S(\ell)$ is a composition factor of $\Delta_1(i)$, then $\ell\lhd_1 k$ holds.
In particular, any composition factor $S(\ell)$ of $N'$ satisfies $\ell \lhd_1 k$.
This implies that, by taking an injective envelop $N' \to I(k)$, $N'$ is contained in $\nabla(k)=\nabla_1(k)$.
In particular, $i \lhd k$ holds.
Thus we complete the proof by Lemma \ref{weak-adapted}.
\end{proof}

Partial orders on a finite set $I$ can be ordered by inclusion of their sets of relations. This gives a `poset of posets' over $I$ where the minimal element is the equality relation on $I$ and the maximal elements are the total orders. The poset obtained by taking the intersection of the relations of two given posets $\lhd_1$ and $\lhd_2$ is called the \emph{intersection} of $\lhd_1$ and $\lhd_2$.

\begin{lem}\label{intersection_adapted}
Let $A$ be an Artin algebra with $I$ a set indexing a complete set of isomorphism classes of simple $A$-modules. Let $\lhd_1$ and $\lhd_2$ be two adapted orders in the same equivalence class.
Then the following statements hold.
\begin{enumerate}
    \item The intersection of $\lhd_1$ and $\lhd_2$ is an adapted order in the same equivalence class. 
    \item In each equivalence class of adapted posets to $A$ there is a unique minimal poset. 
\end{enumerate}
\end{lem}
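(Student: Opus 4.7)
The plan is to derive (2) from (1) by a finiteness/iteration argument, so the bulk of the work is (1). To prove (1), I will decompose it into two parts: (a) verifying that the intersection $\lhd$ of $\lhd_1$ and $\lhd_2$ is a partial order with the same standard and costandard modules as $\lhd_1$, i.e.\ $\lhd \sim \lhd_1$; and (b) deducing that $\lhd$ is adapted, for which I would invoke Lemma~\ref{equiv_adapted} rather than argue directly, avoiding a delicate case analysis of the possible incomparabilities under $\lhd_1$ versus $\lhd_2$.

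Showing that the intersection of two partial orders is again a partial order is routine: reflexivity, antisymmetry and transitivity are all preserved under intersection of relations. The key step is then to show $\Delta(i) = \Delta_1(i)$ for every $i \in I$, where $\Delta(i)$ denotes the standard module for $(A,(I,\lhd))$. I plan to do this by a sandwich argument. On one hand, every composition factor $S(j)$ of $\Delta_1(i)$ satisfies $j \lhd_1 i$ by definition, and by the hypothesis $\Delta_1(i) = \Delta_2(i)$ the same $j$ also satisfies $j \lhd_2 i$, so $j \lhd i$; hence $\Delta_1(i)$ is a quotient of $P(i)$ whose composition factors are all $\lhd i$, and therefore a quotient of the maximal such quotient, which is $\Delta(i)$. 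Conversely, every composition factor of $\Delta(i)$ is $\lhd_1 i$, so $\Delta(i)$ is a quotient of $\Delta_1(i)$. Combining, $\Delta(i) = \Delta_1(i)$, and a dual argument using $I(i)$ yields $\nabla(i) = \nabla_1(i)$. This gives $\lhd \sim \lhd_1$, and Lemma~\ref{equiv_adapted} then closes out (1).

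For (2), I would fix an equivalence class $\mathcal{C}$ of adapted posets to $A$; because $I$ is finite, $\mathcal{C}$ is finite. Let $\lhd_m$ be the intersection of all the elements of $\mathcal{C}$. By a straightforward induction on $|\mathcal{C}|$ using part (1), $\lhd_m$ lies in $\mathcal{C}$, and by construction it is contained in every element of $\mathcal{C}$. So $\lhd_m$ is the unique minimal element of $\mathcal{C}$.

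The one place where care is needed is the sandwich argument: it uses in an essential way that $\Delta_1$ and $\Delta_2$ coincide as sets of modules, so that a composition factor below $i$ for $\lhd_1$ is automatically also below $i$ for $\lhd_2$. Were one to attempt adaptedness of $\lhd$ directly, one would have to treat separately the subcases where a pair $i,j$ incomparable for $\lhd$ is in fact comparable for one of $\lhd_1$ or $\lhd_2$, which is why appealing to Lemma~\ref{equiv_adapted} via the equivalence $\lhd \sim \lhd_1$ is the cleanest route.
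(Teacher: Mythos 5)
Your proof is correct and follows essentially the same route as the paper: both establish $\Delta_{\mathrm{int}}=\Delta_1$ via the maximality characterization of standard modules together with the hypothesis $\Delta_1=\Delta_2$ (the paper phrases this as a contradiction on the top of a kernel, you as a two-sided sandwich of surjections), and both then conclude adaptedness from Lemma~\ref{equiv_adapted} and deduce (2) from (1) by finiteness.
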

\begin{proof}
It is clear that (1) implies (2). Let $\lhd_1$ and $\lhd_2$ be two posets in the same equivalence class. We denote by $\Delta=\Delta_1=\Delta_2$ the corresponding set of standard modules. We let $\lhd_{int}$ be the intersection of $\lhd_1$ and $\lhd_2$ and we denote by $\Delta_{int}$ the corresponding set of standard modules. 

Let $i\in I$. By definition $\Delta_{int}(i)$ is the largest quotient of $P(i)$ whose composition factors are $S(j)$ such that $j\lhd_1 i$ and $j\lhd_2 i$. So $\Delta_1(i)$ surjects onto $\Delta_{int}(i)$. If they are not isomorphic, at the top of the kernel there is a simple module $S(j)$ such that $j\lhd_1 i$ but $j$ is not smaller that $i$ for $\lhd_2$. This contradicts $\Delta_1(i)=\Delta_2(i)$. So, we have $\Delta_{int} = \Delta$ and by a dual argument, we see that $\nabla_{int}=\nabla$ and the poset $\lhd_{int}$ is equivalent to the posets $\lhd_1$ and $\lhd_2$. The result follows from Lemma \ref{equiv_adapted}
\end{proof}

\begin{rem}
Lemma \ref{intersection_adapted} is used in the codes of the first and last authors \cite{code}.
\end{rem}

For a partial order $\lhd$ on $I$, let $\Delta$ be the set of standard $A$-modules and $\nabla$ the set of costandard $A$-modules associated to $\lhd$.
We define subsets $\Dec(\lhd)$ and $\Inc(\lhd)$ of $I^2$ as follows:
\begin{align*}
\Dec(\lhd)\coloneqq \{ (i,j)\in I^2 \mid [\Delta(j):S(i)]\neq 0\}, \qquad 
\Inc(\lhd)\coloneqq \{(i,j)\in I^2 \mid [\nabla(j):S(i)]\neq 0\}.
\end{align*}
Clearly, $\Dec(\lhd)$ and $\Inc(\lhd)$ depend on only the equivalence class of $\lhd$.
For $i, j \in I$, we write $i \lhd^D j$ if $(i,j)\in\Dec(\lhd)$ and write $i\lhd^I j$ if $(i,j)\in\Inc(\lhd)$.
For $i\in I$, we have $i\lhd^D i$ and $i \lhd^I i$.

For a subset $R$ of $I^2$, we denote by $R^{\sf tc}$ the transitive closure of $R$.
Then the following lemma is easy to prove.
\begin{lem}\label{lem_Dec_Inc}
Let $\lhd_m=\left( \Dec(\lhd) \cup \Inc(\lhd) \right)^{\sf tc}$.
\begin{enumerate}
\item
If $i \lhd_m j$, then $i \lhd j$ holds.
\item
$\lhd_m$ is a partial order on $I$.
\end{enumerate}
\end{lem}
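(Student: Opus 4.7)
The plan is essentially to unwind the definitions; both parts follow quickly from what $\Delta(i)$ and $\nabla(i)$ are.

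For part (1), I would first show the containments $\Dec(\lhd) \subseteq {\lhd}$ and $\Inc(\lhd) \subseteq {\lhd}$ as subsets of $I^2$ (with $\lhd$ regarded reflexively, as is done in the paragraph preceding the lemma). Indeed, if $(i,j) \in \Dec(\lhd)$, then $[\Delta(j):S(i)] \neq 0$, and by definition $\Delta(j)$ is the largest quotient of $P(j)$ whose composition factors $S(k)$ satisfy $k \lhd j$; in particular $i \lhd j$. A dual argument using $\nabla(j) \subseteq I(j)$ handles $\Inc(\lhd)$. Since $\lhd$ is already transitive, the transitive closure of $\Dec(\lhd) \cup \Inc(\lhd)$, namely $\lhd_m$, is again contained in $\lhd$.

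For part (2), I would check the three defining properties of a partial order. Reflexivity is immediate from the fact that $(i,i) \in \Dec(\lhd)$: the module $\Delta(i)$ is a nonzero quotient of $P(i)$, so its top is $S(i)$ and $[\Delta(i):S(i)] \geq 1$. Transitivity of $\lhd_m$ is built into its definition as a transitive closure. Antisymmetry is where part (1) is needed: if $i \lhd_m j$ and $j \lhd_m i$, then by (1) we get $i \lhd j$ and $j \lhd i$, whence $i = j$ by antisymmetry of $\lhd$.

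I do not expect any real obstacle here; the argument is purely definitional. The one point requiring attention is the convention on $\lhd$: the paper works with the reflexive reading (the sentence just above the lemma notes $i \lhd^D i$ and $i \lhd^I i$), and one should match this convention when writing $\Dec(\lhd) \subseteq {\lhd}$, so that the diagonal pairs are handled correctly in both the reflexivity step of (2) and the containment step of (1).
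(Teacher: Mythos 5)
Your argument is correct and is essentially the paper's own proof, just written out in more detail: the paper likewise observes that $i \lhd^D j$ and $i \lhd^I j$ each imply $i \lhd j$, and deduces both assertions from this containment. Your explicit treatment of reflexivity via the diagonal pairs and of antisymmetry via part (1) simply fills in the steps the paper leaves implicit.
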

\begin{proof}
By definition, $i \lhd^D j$ implies $i\lhd j$, and $i \lhd^I j$ implies $i \lhd j$.
Thus the assertions hold.
\end{proof}
Then we have the following proposition.
\begin{pro}\label{pro-mini-adapted}
Let $A$ be an Artin algebra and $I$ be a set indexing the isomorphism classes of simple $A$-modules. For an adapted partial order $\lhd$ on $I$, let $\lhd_m=\left( \Dec(\lhd) \cup \Inc(\lhd) \right)^{\sf tc}$.
\begin{enumerate}
\item
The partial orders $\lhd$ and $\lhd_m$ are equivalent.
\item
$\lhd_m$ is the unique minimal partial order among partial orders $\lhdp$ on $I$ with $\lhd\sim\lhdp$.
\end{enumerate}
\end{pro}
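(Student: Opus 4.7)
The plan is to prove (1) by showing $\Delta(i) = \Delta_m(i)$ (and dually $\nabla(i) = \nabla_m(i)$) directly from the universal properties defining standard and costandard modules, and then to deduce (2) by observing that any partial order equivalent to $\lhd$ must contain $\Dec(\lhd) \cup \Inc(\lhd)$ as a subset of $I^2$.

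For (1), I would first produce a surjection $\Delta(i) \twoheadrightarrow \Delta_m(i)$: by Lemma \ref{lem_Dec_Inc}(1), $j \lhd_m i$ implies $j \lhd i$, so any quotient of $P(i)$ whose composition factors lie $\lhd_m$-below $i$ a fortiori has composition factors $\lhd$-below $i$, and the surjection follows from the universal property defining $\Delta(i)$. The reverse surjection is the key point: every composition factor $S(j)$ of $\Delta(i)$ satisfies $(j,i) \in \Dec(\lhd)$ by the very definition of $\Dec(\lhd)$, hence $j \lhd_m i$; thus $\Delta(i)$ is itself a quotient of $P(i)$ whose composition factors lie $\lhd_m$-below $i$, and the universal property of $\Delta_m(i)$ produces $\Delta_m(i) \twoheadrightarrow \Delta(i)$. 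Two mutually surjective finite length modules are isomorphic, so $\Delta(i) = \Delta_m(i)$. The case $\nabla(i) = \nabla_m(i)$ is formally dual, replacing quotients of $P(i)$ with submodules of $I(i)$ and $\Dec$ with $\Inc$.

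For (2), suppose $\lhdp$ is a partial order on $I$ with $\lhdp \sim \lhd$. By Definition \ref{dfn-qh-algebra} the standard and costandard modules for $\lhdp$ coincide with those for $\lhd$, and (as observed just before Lemma \ref{lem_Dec_Inc}) this yields $\Dec(\lhdp) = \Dec(\lhd)$ and $\Inc(\lhdp) = \Inc(\lhd)$. For any $(i,j) \in \Dec(\lhd) = \Dec(\lhdp)$, the non-vanishing of $[\Delta'(j):S(i)]$ combined with the defining property of $\Delta'(j)$ as a quotient of $P(j)$ with composition factors $\lhdp$-below $j$ forces $i \lhdp j$; the analogous argument with $\nabla'$ handles $\Inc(\lhd)$. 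Hence $\Dec(\lhd) \cup \Inc(\lhd) \subseteq \lhdp$ as subsets of $I^2$, and since $\lhdp$ is transitive, $\lhd_m = (\Dec(\lhd) \cup \Inc(\lhd))^{\sf tc} \subseteq \lhdp$. Thus $\lhd_m$ is the minimum, hence the unique minimal element, of its equivalence class. The whole argument is essentially formal once Lemma \ref{lem_Dec_Inc} is in hand, so I do not anticipate a real obstacle; the one subtlety worth flagging is that $\lhd_m$ itself need not be assumed adapted, since the equivalence relation $\sim$ and the standard/costandard modules are well-defined for arbitrary partial orders on $I$.
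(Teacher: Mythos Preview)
Your proof is correct and follows essentially the same approach as the paper's own proof: both directions of (1) are obtained exactly as you describe (using Lemma~\ref{lem_Dec_Inc}(1) for one surjection and the definition of $\Dec(\lhd)$ for the other), and (2) is deduced from $\Dec(\lhd)=\Dec(\lhdp)$, $\Inc(\lhd)=\Inc(\lhdp)$ together with Lemma~\ref{lem_Dec_Inc}. Your write-up simply spells out the application of Lemma~\ref{lem_Dec_Inc} in (2) a bit more explicitly than the paper does.
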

\begin{proof}
(1)
We denote by $\Delta_m$ the set of standard $A$-modules associated to $\lhd_m$.
Fix $i\in I$.
Since all composition factors $S(k)$ of $\Delta_m(i)$ satisfy $k\lhd_m i$, $k\lhd i$ holds by Lemma \ref{lem_Dec_Inc}.
We see that $\Delta_m(i)$ is a quotient of $\Delta(i)$. Conversely if $S(k)$ is a composition factor of $\Delta(i)$, then by construction we have $k\lhd_m i$. Since this holds for all the composition factors of $\Delta(i)$, we see that $\Delta(i)$ is a quotient of $\Delta_m(i)$. 

By a dual argument, we see that the set of costandard modules for $\lhd$ and $\lhd_m$ are equal. 

(2)
Since $\Dec(\lhd)=\Dec(\lhdp)$ and $\Inc(\lhd)=\Inc(\lhdp)$ hold, the assertion follows from Lemma \ref{lem_Dec_Inc}.
\end{proof}
\begin{dfn}
Let $A$ be an Artin algebra. A partial order of the form $\lhd_m$ for an adapted order  $\lhd$ to $A$ is called a \emph{minimal adapted order}. 
\end{dfn}
\subsection{Quasi-hereditary algebras}
Let $A$ be an Artin algebra, and $(I,\lhd)$ be a finite partially ordered set indexing the set $\{S(i)\}_{i\in I}$ of isomorphism classes of simple $A$-modules.

\begin{dfn}[Cline-Parshall-Scott \cite{cline_parshall_scott}]\label{dfn-qh-alg}
The pair $(A,(I,\lhd))$ is a \emph{quasi-hereditary algebra} if:
\begin{enumerate}
    \item $[\Delta(i):S(i)] = 1$.
    \item $P(i)\in \mathcal{F}(\Delta)$.
    \item $\big(P(i) : \Delta(i)\big) = 1$, and $\big(P(i): \Delta(j)\big)\neq 0$ implies $i\lhd j$.
\end{enumerate}
\end{dfn}
In \cite{dlab-ringel}, Dlab and Ringel restrict themselves to adapted posets in order to work with total orders (since an adapted poset is always equivalent to a total order). Using the next result  by Conde, we see that this is not a restriction.

\begin{lem}[{\cite{C16}}]
If $(A,(I,\lhd))$ is a quasi-hereditary algebra, then $(I,\lhd)$ is adapted to $A$.
\end{lem}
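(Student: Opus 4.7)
The plan is to verify the equivalent weak form of the adapted condition from Lemma~\ref{weak-adapted}. Let $M$ be an $A$-module with simple top $S(i)$ and simple socle $S(j)$, where $i$ and $j$ are incomparable with respect to $\lhd$. I will produce $k\in I$ with $i\lhd k$, $k\neq i$, and $[M:S(k)]\neq 0$, which by Lemma~\ref{weak-adapted} suffices to conclude that $\lhd$ is adapted.

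Since $\Top(M)=S(i)$, the module $M$ is a quotient of $P(i)$. Let $U(i)$ be the kernel of the surjection $P(i)\twoheadrightarrow \Delta(i)$. Conditions (2) and (3) of Definition~\ref{dfn-qh-alg} imply that $U(i)$ admits a $\Delta$-filtration whose factors are of the form $\Delta(k)$ with $i\lhd k$ and $k\neq i$. Let $N$ denote the image of $U(i)$ under the projection $P(i)\twoheadrightarrow M$, a submodule of $M$. If $N=0$, then $M$ is a quotient of $P(i)/U(i)=\Delta(i)$, so $S(j)$ would be a composition factor of $\Delta(i)$, forcing $j\lhd i$ and contradicting incomparability. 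Hence $N\neq 0$.

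Next, I identify which simples can appear in $\Top(N)$. Since $N$ is a quotient of $U(i)$, there is a surjection $\Top(U(i))\twoheadrightarrow \Top(N)$, so it suffices to analyse $\Top(U(i))$. Writing the $\Delta$-filtration as $0=U_0\subset U_1\subset\cdots\subset U_r=U(i)$ with $U_s/U_{s-1}\cong \Delta(k_s)$, I proceed by induction on $s$: any nonzero map $U_s\to S$ either restricts to zero on $U_{s-1}$, in which case it factors through $\Delta(k_s)$ and $S\cong S(k_s)$ because $\Top(\Delta(k_s))=S(k_s)$, or it restricts to a nonzero map on $U_{s-1}$, so by induction $S\cong S(k_t)$ for some $t<s$. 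Therefore every simple summand of $\Top(U(i))$ is of the form $S(k_s)$, and each such $k_s$ satisfies $i\lhd k_s$ with $k_s\neq i$.

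Pick any simple $S(k)$ occurring in $\Top(N)$; by the above, $k$ satisfies $i\lhd k$ and $k\neq i$. Since $S(k)$ is a composition factor of $N\subseteq M$, we have $[M:S(k)]\neq 0$, and Lemma~\ref{weak-adapted} concludes that $\lhd$ is adapted to $A$. The only step that requires real care is the inductive description of $\Top(U(i))$ in terms of the weights appearing in the $\Delta$-filtration; everything else follows directly from the definition of quasi-hereditary algebra and the incomparability of $i$ and $j$.
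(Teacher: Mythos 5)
Your proof is correct. It follows the same basic strategy as the paper's sketch (exploit the $\Delta$-filtration of $P(i)$ guaranteed by the quasi-hereditary axioms and the weight condition of Definition~\ref{dfn-qh-alg}(3)), but the bookkeeping is genuinely different and, in one respect, more complete. The paper locates the socle $S(j)$ inside some filtration factor $\Delta(k)$ of $P(i)$ and argues by cases ($k=i$, $S(j)$ on top of $\Delta(k)$, or not) to produce a $k$ with $i\lhd k$ \emph{and} $j\lhd k$; as a sketch it does not address why $S(k)$ is a composition factor of $M$ itself rather than merely of $P(i)$. You instead pass to the image $N$ of $U(i)=\ker(P(i)\twoheadrightarrow\Delta(i))$ inside $M$, show $N\neq 0$ using incomparability, and read off a simple $S(k)$ in $\Top(N)$ with $i\lhd k$, $k\neq i$, which by construction lives inside $M$; this settles the multiplicity condition $[M:S(k)]\neq 0$ cleanly, at the cost of only producing the one-sided relation $i\lhd k$ and therefore needing the weak form of adaptedness from Lemma~\ref{weak-adapted} (note $k\neq j$ automatically, since $k=j$ would give $i\lhd j$). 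The only step you state without proof is that $U(i)$ carries a $\Delta$-filtration with factors $\Delta(k)$, $i\lhd k$, $k\neq i$; this is standard (the top factor of any $\Delta$-filtration of $P(i)$ must be $\Delta(i)$ because $P(i)$ and the standard modules have simple tops, and the corresponding submodule is forced to equal $U(i)$), but it is the one place where a referee might ask you to spell out the argument.
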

\begin{proof}
The proof is a part of \cite[Proposition 1.4.12 ]{C16}. We sketch it for the convenience of the reader. Let $M$ be a module with  simple top $S(i)$ and  simple socle $S(j)$.
Since $M$ is a quotient of $P(i)$, $S(j)$ is a composition factor of $P(i)$.
Because the algebra is quasi-hereditary $P(i)$ has a $\Delta$-filtration.
So $S(j)$ must be a composition factor of a standard module $\Delta(k)$ with $i \lhd k$ by Definition \ref{dfn-qh-alg} (3).
If $k=i$ then $j\lhd i$. If $k\neq i$ and $S(j)$ is at the top of $\Delta(k)$ we have $i\lhd j$.
Finally, if $S(j)$ is not at the top of $\Delta(k)$ we have $i\lhd k$ and $j\lhd k$. 
\end{proof}

We can know considerably simplify the definition of quasi-hereditary algebras.

\begin{pro}\label{simple-def-qh}
Let $A$ be an Artin algebra with a poset $(I,\lhd)$ indexing the isomorphism classes of simple $A$-modules. Then $(A,(I,\lhd))$ is a quasi-hereditary algebra if and only if
\begin{enumerate}
\item $(I,\lhd)$ is adapted to $A$.
\item For every $i\in I$, $[\Delta(i):S(i)]=1$.
\item $P(i)\in \mathcal{F}(\Delta)$ for $i\in I$.
\end{enumerate}
\end{pro}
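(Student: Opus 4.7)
The forward direction is essentially by inspection: if $(A,(I,\lhd))$ is quasi-hereditary, then (2) is condition (1) of Definition~\ref{dfn-qh-alg}, (3) is condition (2), and (1) is the preceding lemma of Conde.

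For the converse, assume conditions (1), (2), (3); I need only verify the remaining condition (3) of Definition~\ref{dfn-qh-alg}. My plan is to proceed by induction on $|I|$, effectively building a heredity chain from the top. Pick $i_{0}\in I$ maximal with respect to $\lhd$. The central first step is to show $P(i_{0})=\Delta(i_{0})$, using adaptedness and maximality. Given any composition factor $S(k)$ of $P(i_{0})$, the standard Artin-algebra identity $[P(i_{0}):S(k)]=[I(k):S(i_{0})]$ yields a nonzero map $P(i_{0})\to I(k)$, whose image $M$ is an indecomposable module with simple top $S(i_{0})$ and simple socle $S(k)$. Maximality of $i_{0}$ forbids $i_{0}\lhd k$ with $i_{0}\neq k$; and if $i_{0}$ and $k$ were incomparable, the adapted condition would provide $l\in I$ with $i_{0}\lhd l$, $k\lhd l$ and $[M:S(l)]\neq 0$. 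But $i_{0}\lhd l$ together with the maximality of $i_{0}$ forces $l=i_{0}$, so $k\lhd i_{0}$, contradicting incomparability. Hence every composition factor of $P(i_{0})$ satisfies $k\lhd i_{0}$, and the very definition of $\Delta(i_{0})$ as the largest such quotient of $P(i_{0})$ gives $P(i_{0})=\Delta(i_{0})$. Combined with (2), this establishes condition (3) of Definition~\ref{dfn-qh-alg} at $i=i_{0}$.

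Once $P(i_{0})=\Delta(i_{0})$ is projective, the ideal $\tau=\ideal{e_{i_{0}}}$ is a heredity ideal, and the plan is to pass to $A'=A/\tau$ and apply the inductive hypothesis. Its simple modules are indexed by $I\setminus\{i_{0}\}$, the restriction of $\lhd$ remains adapted, and because $i_{0}$ is maximal one has $[\Delta_{A}(j):S(i_{0})]=0$ for every $j\neq i_{0}$, so the standard $A'$-modules coincide with the $A$-standards at weights different from $i_{0}$. A $\Delta_{A'}$-filtration of $P_{A'}(j)$ is obtained from a $\Delta_{A}$-filtration of $P_{A}(j)$ by deleting the $\Delta(i_{0})=P(i_{0})$ factors, so conditions (2) and (3) descend to $A'$. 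By induction, $(A',(I\setminus\{i_{0}\},\lhd))$ is quasi-hereditary, and together with the heredity ideal $\tau$ this forces $(A,(I,\lhd))$ to be quasi-hereditary.

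The main obstacle is checking rigorously that the hypotheses descend to $A'$; the delicate point is identifying the kernel of the projection $P_{A}(j)\twoheadrightarrow P_{A'}(j)$ as precisely the sum of the $\Delta(i_{0})$-factors appearing in the filtration. This uses essentially that $P(i_{0})=\Delta(i_{0})$ is projective, which is exactly what the first inductive step provides; the remaining verifications are then routine but require careful bookkeeping with the filtrations.
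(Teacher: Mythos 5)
Your proposal is correct, but note that the paper does not actually prove this proposition: its ``proof'' is the citation \emph{See \cite[Theorem~1]{dlab-ringel}}. What you have written is essentially a reconstruction of the standard Dlab--Ringel argument (induction on a maximal weight, i.e.\ building a heredity chain from the top), so in substance you and the source the paper relies on take the same route. Your key step is fully and correctly argued: the identity $[P(i_0):S(k)]=[I(k):S(i_0)]$ produces a module with top $S(i_0)$ and socle $S(k)$, and adaptedness plus maximality forces $k\lhd i_0$, whence $P(i_0)=\Delta(i_0)$. Two small points deserve emphasis. First, the assertion that $\ideal{e_{i_0}}$ is a heredity ideal is \emph{not} a formal consequence of $P(i_0)=\Delta(i_0)$ alone: projectivity of $Ae_{i_0}A$ requires hypothesis (3) together with the trace argument you defer to the end (each trace $\mathrm{tr}(P(i_0),P(j))$ is generated by $P(j)e_{i_0}$, hence has top a sum of copies of $S(i_0)$, and splits off copies of the projective $P(i_0)$ from a $\Delta$-filtration; condition (2) is what makes $e_{i_0}Ae_{i_0}$ a division ring). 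Second, closing the induction requires one further instance of the adaptedness-plus-maximality argument, dual to your first step: if $(P(j):\Delta(i_0))\neq 0$, i.e.\ $[P(j):S(i_0)]\neq 0$, one takes a module with top $S(j)$ and socle $S(i_0)$ to conclude $j\lhd i_0$, which is the remaining part of condition (3) of Definition \ref{dfn-qh-alg} not covered by the inductive hypothesis on $A'$. Both verifications are routine, as you say, but they are where the hypotheses (2) and (3) of the proposition actually get used beyond the first step, so they should appear explicitly in a complete write-up.
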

\begin{proof}
See \cite[Theorem~1]{dlab-ringel}.
\end{proof}

If an Artin algebra $A$ is hereditary, then any adapted order automatically defines a quasi-hereditary algebra.

\begin{pro}\label{pro-hered-adapted}
If $A$ is hereditary, then for any adapted order $(I,\lhd)$ to $A$, the pair $(A,(I,\lhd))$ is a quasi-hereditary algebra.
\end{pro}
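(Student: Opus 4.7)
My plan is to apply Proposition \ref{simple-def-qh}, which reduces the statement to verifying $[\Delta(i):S(i)]=1$ and $P(i)\in\mathcal{F}(\Delta)$ for every $i \in I$, the adapted hypothesis being already in place. For the multiplicity condition, I would invoke the classical fact that a hereditary Artin algebra has an acyclic Gabriel (valued) quiver: from the hereditary structure $\rad P(i) \cong \bigoplus_j P(j)^{a_{ij}}$, the Cartan matrix $M = ([P(i):S(j)])_{i,j}$ satisfies $M = I + NM$ with $N$ the adjacency matrix, and this forces $N$ to be nilpotent. In particular $[P(i):S(i)] = 1$ for every $i$, so since $\Delta(i)$ is a nonzero quotient of $P(i)$ retaining $S(i)$ at its top we obtain $1 \leq [\Delta(i):S(i)] \leq [P(i):S(i)] = 1$.

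For the filtration condition I would proceed by induction on $|I|$. First, I would replace $\lhd$ by a linear extension $\lhd'$: any total order is vacuously adapted, and by Lemma \ref{extension_adapted} the standard modules are unchanged, so it suffices to treat the total-order case. Enumerating $I$ so that $1 \lhd 2 \lhd \cdots \lhd n$, I would set $e = e_n$ and $J = AeA$. The quotient $A/J$ is again hereditary, since its Gabriel quiver is the full subquiver of that of $A$ on $I \setminus \{n\}$ and hence still acyclic. The inductive hypothesis applied to $(A/J, \lhd|_{I \setminus \{n\}})$ gives that this pair is quasi-hereditary; a direct comparison shows that for $j < n$ the standard modules coincide, $\Delta_{A/J}(j) = \Delta(j)$, so each $P'(j) \coloneqq P(j)/(e_jAe_nA)$ admits a $\Delta$-filtration as an $A$-module.

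Finally, I would lift this filtration through the short exact sequence
\[
0 \longrightarrow e_jAe_nA \longrightarrow P(j) \longrightarrow P'(j) \longrightarrow 0.
\]
The key computation is that $e_jAe_nA \cong e_jAe_n \otimes_{e_nAe_n} e_nA$ is a direct sum of copies of $P(n) = \Delta(n)$: acyclicity forces $e_nAe_n = \End_A(P(n))$ to be a division ring, since there are no loops or cycles at $n$ in the Gabriel quiver, so the tensor product is free of rank $\dim_{e_nAe_n}(e_jAe_n)$. Because $n$ is maximal we have $P(n) = \Delta(n)$, so this submodule already lies in $\mathcal{F}(\Delta)$; concatenating with a lift of the $\Delta$-filtration of $P'(j)$ then produces the required $\Delta$-filtration of $P(j)$. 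I expect the main technical step to be this identification of $e_jAe_nA$ as $\Delta(n)^{\ell}$, which rests on the semisimplicity of $e_nAe_n$ and hence on the acyclicity of the Gabriel quiver established in the first paragraph.
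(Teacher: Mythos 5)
Your route is genuinely different from the paper's: the paper disposes of this proposition in two lines, reducing to a total order via Lemma \ref{extension_adapted} and then citing \cite[Theorem~1]{Dlab-Ringel-89}, whereas you reduce via Proposition \ref{simple-def-qh} and then reprove the cited theorem by induction on $|I|$, peeling off the $\lhd$-maximal vertex. The skeleton of your induction is the standard Dlab--Ringel argument and is sound; the multiplicity computation via the Cartan matrix, the identification $\Delta_{A/J}(j)=\Delta(j)$ for $j\neq n$, and the concatenation of filtrations are all fine. However, the two places where you gesture at the key facts are precisely where the content of the cited theorem lives, and neither justification works as stated.

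First, ``$A/Ae_nA$ is hereditary because its Gabriel quiver is the full acyclic subquiver'' is a non sequitur: an acyclic Gabriel quiver does not imply heredity (e.g.\ $\kk(1\to 2\to 3)$ modulo the length-two path has acyclic quiver and global dimension $2$, and a priori the quotient $A/Ae_nA$ could acquire relations). What you need is that $Ae_nA$ is a heredity ideal --- it is projective as a one-sided module because it is a submodule of $A$ over a hereditary algebra, and $e_nAe_n$ is a division ring by acyclicity --- after which heredity of the quotient follows; for $A=\kk Q$ one can instead observe directly that no product of surviving arrows lies in $Ae_nA$, so the quotient is the full path algebra of the subquiver. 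Second, the isomorphism $e_jAe_n\otimes_{e_nAe_n}e_nA\cong e_jAe_nA$ does not follow from semisimplicity of $e_nAe_n$ alone: for the algebra with arrows $\alpha\colon 1\to 2$, $\beta\colon 2\to 1$ and relations $\alpha\beta=\beta\alpha=0$ one has $e_1Ae_1=\kk$, yet the multiplication map $Ae_1\otimes_{e_1Ae_1}e_1A\to Ae_1A$ has a one-dimensional kernel. Here too heredity is the essential input: $e_jAe_nA$ is a submodule of the projective module $e_jA$, hence projective; it is generated by $e_jAe_n$ and $e_n\rad(A)e_n=0$, so its top is a multiple of $S(n)$ and therefore $e_jAe_nA\cong P(n)^{\oplus\ell}=\Delta(n)^{\oplus\ell}$. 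With these two repairs your argument goes through; the trade-off versus the paper is self-containedness at the cost of redoing the heredity-ideal analysis that the citation packages up.
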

\begin{proof}
By Lemma \ref{extension_adapted}, we may assume that $\lhd$ is a total order.
Then the assertion follows from \cite[Theorem 1]{Dlab-Ringel-89}.
\end{proof}

We have the following well known property of quasi-hereditary algebras.

\begin{lem}\label{lem-qh-maximal}
Let $(A,(I,\lhd))$ be a quasi-hereditary algebra, and assume that $i$ is a maximal element of $(I,\lhd)$.
We denote by $\lhd|_{I\setminus\{i\}}$ the restriction of $\lhd$ on $I\setminus\{i\}$.
Then $(A/\ideal{e_i}, \lhd|_{I\setminus\{i\}})$ is a quasi-hereditary algebra such that the standard $A/\ideal{e_i}$-module with weight $k\in I\setminus\{i\}$ corresponds to the standard $A$-module with weight $k$.
\end{lem}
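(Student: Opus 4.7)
The plan is to first exploit the maximality of $i$ to pin down $\Delta(i)$, then identify the $\bar{A}$-module structure of the relevant quotient of $P(k)$, and finally verify the three axioms of Definition \ref{dfn-qh-alg} term by term.

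First I would observe that maximality of $i$ forces $\Delta(i)=P(i)$: by condition (3) of Definition \ref{dfn-qh-alg}, any standard factor $\Delta(j)$ occurring in a $\Delta$-filtration of $P(i)$ satisfies $i\lhd j$, hence $j=i$ by maximality, so the filtration collapses to $P(i)=\Delta(i)$. In particular $\Delta(i)$ is projective. Next I would recall the standard identification of $\modu \bar{A}$ with the full subcategory of $\modu A$ of modules $M$ with $Me_i=0$, equivalently, with no composition factor $S(i)$. Under this identification one checks directly that the projective cover $\bar{P}(k)$ of $S(k)$ over $\bar{A}$ equals $P(k)/P(k)\langle e_i\rangle$, which is the quotient of $P(k)$ by the trace of $P(i)$ in $P(k)$.

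The key technical step is a splitting argument. Choose a $\Delta$-filtration of $P(k)$; whenever a layer $\Delta(i)$ appears, the corresponding short exact sequence $0\to M_{j+1}\to M_j\to \Delta(i)\to 0$ splits because $\Delta(i)=P(i)$ is projective, so the $\Delta(i)$-layer can be lifted as a direct summand. Iterating, I obtain a decomposition $P(k)\cong P(k)'\oplus \Delta(i)^{m_k}$ where $m_k=\bigl(P(k):\Delta(i)\bigr)$ and $P(k)'$ carries a $\Delta$-filtration with layers $\Delta(j)$ for $j\neq i$, $k\lhd j$. Since every $\Delta(j)$ with $j\neq i$ has no composition factor $S(i)$ (because $i$ maximal and $j\neq i$ gives $i\not\lhd j$), the summand $P(k)'$ has no $S(i)$-composition factor, so $\mathrm{Hom}_A(P(i),P(k)')=0$ and the trace of $P(i)$ in $P(k)$ equals $\Delta(i)^{m_k}$. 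Hence $\bar{P}(k)\cong P(k)'$.

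It remains to match standard modules and verify the three axioms. For $k\in I\setminus\{i\}$, maximality of $i$ gives $i\not\lhd k$, so $\Delta(k)$ has no $S(i)$-composition factor and is already an $\bar{A}$-module; a two-line argument (both are the largest quotient of $\bar{P}(k)=P(k)/P(k)\langle e_i\rangle$ whose composition factors are $S(j)$ with $j\lhd k$ and $j\neq i$) gives $\bar{\Delta}(k)=\Delta(k)$. Axiom (1) is then immediate from $[\Delta(k):S(k)]=1$; axiom (2) follows because $\bar{P}(k)=P(k)'$ inherits a $\Delta$-filtration from $P(k)$ consisting of layers $\bar{\Delta}(j)=\Delta(j)$ with $j\neq i$; axiom (3) follows from $\bigl(\bar{P}(k):\bar{\Delta}(j)\bigr)=\bigl(P(k):\Delta(j)\bigr)$ for $j\neq i$, giving both the multiplicity one of $\bar{\Delta}(k)$ and the implication $k\lhd|_{I\setminus\{i\}}j$. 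The main obstacle is really only the splitting step, but since it rests on the projectivity of $\Delta(i)$ obtained from maximality, it is a clean one-line verification rather than a delicate homological argument.
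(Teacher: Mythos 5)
The key technical step of your argument is false as stated: the direct sum decomposition $P(k)\cong P(k)'\oplus \Delta(i)^{m_k}$ does not hold in general. Splitting the sequence $0\to M_{j+1}\to M_j\to \Delta(i)\to 0$ exhibits $\Delta(i)$ as a direct summand of the \emph{submodule} $M_j$, and this does not propagate up the filtration to a direct summand of $P(k)$. Concretely, take $A=\kk(1\to 2)$ with the order $1\lhd 2$: then $i=2$ is maximal, $\Delta(2)=P(2)=S(2)$, and $P(1)$ is indecomposable of length two with $\Delta$-filtration $0\subset S(2)\subset P(1)$; your claim would force $P(1)\cong P(1)'\oplus S(2)$, which is absurd. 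In this example the trace of $P(2)$ in $P(1)$ is the socle, a submodule that is not a direct summand.

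What you actually need, and what is true, is that the $\Delta(i)$-layers can be pushed to the \emph{bottom} of the filtration: since $\Delta(i)=P(i)$ is projective, any extension $0\to \Delta(j)\to E\to \Delta(i)\to 0$ splits, so a $\Delta$-filtration of $P(k)$ can be reordered so that its first $m_k$ layers are $\Delta(i)$. This produces a submodule $D\cong P(i)^{m_k}$ of $P(k)$ (not a complement) with $P(k)/D\in\mcF(\Delta(j)\mid j\neq i)$. Your identification of $D$ with the trace $P(k)\ideal{e_i}$ then works verbatim ($D$ is generated by $e_i$, and $P(k)/D$ has no composition factor $S(i)$ because no $\Delta(j)$ with $j\neq i$ does), so $\bar{P}(k)=P(k)/D$ inherits a $\bar{\Delta}$-filtration as a \emph{quotient} rather than as a summand. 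With this repair, your remaining steps --- $\Delta(i)=P(i)$ from maximality, $\bar{\Delta}(k)=\Delta(k)$, and the verification of the three axioms --- are all correct. For comparison, the paper sidesteps the computation entirely by noting that $I\setminus\{i\}$ is an order ideal and invoking \cite[Theorem 3.5]{cline_parshall_scott}.
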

\begin{proof}
Since $i$ is maximal, it is clear that the poset $(I\setminus \{i\},\lhd|_{I\setminus\{i\}})$ is an order ideal. The  subcategory of $\modu A$ consisting of all $A$-modules with composition factors $S(j)$ with $j\in I\setminus \{i\}$ coincides with $\modu (A/\ideal{e_i})$. Then the Lemma is a consequence of \cite[Theorem 3.5]{cline_parshall_scott}. 
\end{proof}

For a subcategory $\mcC$ of $\modu A$, we denote by $\mcC^{\perp}$ (${}^{\perp}\mcC$, respectively) the full subcategory of $\modu A$ consisting of modules $M$ with $\Ext_A^i(C,M)=0$ ($\Ext_A^i(M,C)=0$, respectively) for any $i>0$ and $C\in\mcC$.
For a module $M$, we denote by $\add M$ the full subcategory of $\modu A$ consisting of direct summands of the direct sum of finitely many copies of $M$.
For simplicity, write $(\add M)^{\perp}=M^{\perp}$ and so on.

We recall some properties of quasi-hereditary algebras which we use in this paper.
For a quasi-hereditary algebra $(A, (I,\lhd))$, we have the following equalities, see \cite[Theorems 4, $4^{\ast}$]{Ringel91}:
\begin{align}\label{eq-F-perp}
\mcF(\Delta) = {}^{\perp}\mcF(\nabla),
\qquad
\mcF(\nabla) = \mcF(\Delta)^{\perp}. 
\end{align}

An $A$-module $T$ is called a \emph{tilting module} if it satisfies the following three conditions:
\begin{itemize}
\item[(i)] the projective dimension of $T$ is finite,
\item[(ii)] $\Ext_A^i(T,T)=0$ for all $i>0$,
\item[(iii)] there exists an exact sequence $0\to A \to T_0 \to T_1 \to \dots \to T_{\ell} \to 0$, with $T_i\in\add T$.
\end{itemize}

Let $M$ be an $A$-module and $\mcC$ be a subcategory of $\modu A$.
A \emph{left $\mcC$-approximation of $M$} is a morphism $f \colon M \to C$ such that $C\in\mcC$ and the map $\Hom_A(f,C^\p) \colon \Hom_A(C,C^\p) \to \Hom_A(M,C^\p)$ is surjective for any $C^\p\in\mcC$.
A \emph{right $\mcC$-approximation of $M$} is defined dually.
 
\begin{pro}[\cite{Ringel91}]\label{pro-ch-tilt}
Let $(A,(I,\lhd))$ be a quasi-hereditary algebra.
For each $i\in I$, there exists an indecomposable $A$-module $T(i)$ and short exact sequences
\[
0\to \Delta(i) \xto{f} T(i) \to X(i) \to 0,
\qquad
0\to Y(i) \to T(i) \xto{g} \nabla(i) \to 0,
\]
where $X(i)$ belongs to $\mcF(\Delta(j) \mid j \lhd i, j\neq i)$ and $Y(i)$ belongs to $\mcF(\nabla(j) \mid j \lhd i, j\neq i )$ such that
\begin{enumerate}
\item
$f$ is a left $\mcF(\nabla)$-approximation of $\Delta(i)$.
\item
$g$ is a right $\mcF(\Delta)$-approximation of $\nabla(i)$.
\item
$T=\bigoplus_{i\in I}T(i)$ is a tilting $A$-module satisfying $\add T=\mcF(\Delta)\cap\mcF(\nabla)$.
\item
$\mcF(\Delta) = {}^{\perp}T$ and $\mcF(\nabla) = T^{\perp}$ hold.
\end{enumerate}
\end{pro}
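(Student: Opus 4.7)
The plan is to follow Ringel's classical construction in \cite{Ringel91}, proceeding by induction on the partial order $(I,\lhd)$. Before starting the induction, I first establish the fundamental orthogonality $\Ext^k_A(\Delta(i),\nabla(j))=0$ for all $k\geq 0$ unless $k=0$ and $i=j$, in which case the $\Hom$-space is a division ring. This follows from a direct analysis of composition factors combined with dimension shifting: $\Delta(i)$ has simple top $S(i)$ with all other factors $S(k)$, $k\lhd i$, $k\neq i$, while $\nabla(j)$ has simple socle $S(j)$ with all other factors strictly below $j$. Closure of $\mcF(\Delta)$ and $\mcF(\nabla)$ under extensions then promotes the orthogonality to the inclusions $\mcF(\Delta)\subseteq {}^{\perp}\mcF(\nabla)$ and $\mcF(\nabla)\subseteq \mcF(\Delta)^{\perp}$, giving one half of \eqref{eq-F-perp} immediately.

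For the inductive construction, if $i$ is minimal in $(I,\lhd)$ then $\Delta(i)=\nabla(i)=S(i)$ and I set $T(i)=S(i)$ with trivial sequences. For non-minimal $i$, I assume $T(j)$ satisfying (1)--(4) has been constructed for all $j\lhd i$, $j\neq i$, and write $T^{<i}=\bigoplus_{j\lhd i,\,j\neq i}T(j)$. I then define $T(i)$ as a universal extension: choose a minimal generating set of the finitely generated right $\End_A(T^{<i})$-module $\Ext^1_A(T^{<i},\Delta(i))$ and form the associated short exact sequence
\[
0\to \Delta(i)\to T(i)\to (T^{<i})^{n}\to 0,
\]
whose universality forces $\Ext^1_A(T^{<i},T(i))=0$. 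Indecomposability of $T(i)$ follows from $[T(i):S(i)]=1$ and the uniqueness of its simple top. That $T(i)\in \mcF(\Delta)$ is immediate from the sequence and the inductive hypothesis. That $T(i)\in \mcF(\nabla)$ is the subtle point, which I would establish by combining the orthogonality of the first paragraph with the universal property of the extension and the fact that $T^{<i}\in \mcF(\nabla)$, yielding a $\nabla$-filtration of $T(i)$ by induction on the order. The second sequence $0\to Y(i)\to T(i)\to \nabla(i)\to 0$ and its approximation property are obtained by a dual construction, producing the same indecomposable $T(i)$ by matching composition multiplicities.

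For the approximation property of $f$ in (1): any morphism $\Delta(i)\to N$ with $N\in \mcF(\nabla)$ extends along $f$ because the obstruction lies in $\Ext^1_A((T^{<i})^n,N)$, which vanishes since $T^{<i}\in \mcF(\Delta)\subseteq {}^{\perp}\mcF(\nabla)$. For (3), $T\in \mcF(\Delta)\cap \mcF(\nabla)$ gives $\Ext^k_A(T,T)=0$ for $k>0$ by orthogonality; the inclusion $\add T\subseteq \mcF(\Delta)\cap \mcF(\nabla)$ is obvious, and the reverse inclusion follows by peeling off summands inductively using the $f$- and $g$-sequences. Finite projective dimension of $T$ and a $T$-coresolution of $A$ are produced by iterating the $f$-sequences on each projective $P(i)\in \mcF(\Delta)$, which terminates because the $\Delta$-multiplicities in $P(i)$ are controlled by the partial order via Definition \ref{dfn-qh-alg}.

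Finally, for (4), the inclusions $\mcF(\Delta)\subseteq {}^{\perp}T$ and $\mcF(\nabla)\subseteq T^{\perp}$ follow from the orthogonality of the first paragraph together with $T\in \mcF(\Delta)\cap\mcF(\nabla)$. For the reverse inclusion $T^{\perp}\subseteq \mcF(\nabla)$, given $M\in T^{\perp}$ one uses the $g$-sequences to split off a costandard summand from $M$ at each step, constructing a $\nabla$-filtration of $M$ by induction on the order; the dual argument handles ${}^{\perp}T\subseteq \mcF(\Delta)$. The main obstacle I expect is the inductive step of paragraph two, namely verifying that the universally constructed $T(i)$ genuinely lies in $\mcF(\nabla)$ and not merely in $\mcF(\Delta)$: this is the crux of Ringel's argument, where the Ext-vanishing from the first paragraph must be interleaved carefully with the inductive hypothesis on $T^{<i}$ to control the position of the middle term of the extension sequence and guarantee the correct approximation property.
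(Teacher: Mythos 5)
The paper offers no proof of this proposition: it is quoted verbatim from Ringel's 1991 paper, and your outline does follow the architecture of Ringel's original argument (orthogonality of $\Delta$ against $\nabla$, then universal extensions going down the order, then the identification $\add T=\mcF(\Delta)\cap\mcF(\nabla)$). At that level your plan is the right one. Two points, however, need repair.

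First, your justification of indecomposability --- ``$[T(i):S(i)]=1$ and the uniqueness of its simple top'' --- fails because $T(i)$ need not have a simple top. For the quiver $1\to 2\leftarrow 3$ with an order in which $2$ is maximal, all standard modules are simple and $T(2)=I(2)$, whose top is $S(1)\oplus S(3)$; more generally, Theorem \ref{thm-dec-ch-tilt} of this paper realizes $T(i)$ as a push-out of $T^2(v)^{\oplus m}\leftarrow S(v)^{\oplus m}\to T^1(i)$, whose top is $\Top(T^1(i))\oplus\Top(T^2(v))^{\oplus m}$ and is decomposable whenever $m\geq 1$ and $T^2(v)$ is not simple. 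The multiplicity $[T(i):S(i)]=1$ is still the key, but it must be used as in Lemma \ref{lem-push-out-ind}: exactly one indecomposable summand can contain $S(i)$, and one shows $\End_A(T(i))$ is local by restricting endomorphisms to the distinguished copy of $\Delta(i)$ and arguing that non-isomorphisms are nilpotent.

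Second, the step you yourself flag as the crux --- that the universally constructed $T(i)$ lies in $\mcF(\nabla)$ --- is not a matter of careful interleaving but requires the hard half of \eqref{eq-F-perp}, namely that $\Ext^1_A(\Delta(j),M)=0$ for all $j$ forces $M\in\mcF(\nabla)$; your first paragraph only establishes the easy inclusions $\mcF(\Delta)\subseteq{}^{\perp}\mcF(\nabla)$ and $\mcF(\nabla)\subseteq\mcF(\Delta)^{\perp}$. Moreover, your universal extension only kills $\Ext^1_A(T^{<i},T(i))$, whereas the criterion needs $\Ext^1_A(\Delta(j),T(i))=0$ for \emph{every} $j$: for $j$ not strictly below $i$ one must invoke $\Ext^1_A(\Delta(j),\Delta(i))=0$, and for $j\lhd i$, $j\neq i$ one must descend from $T(j)$ to $\Delta(j)$ through $0\to\Delta(j)\to T(j)\to X(j)\to 0$, which requires controlling $\Ext^2_A(X(j),T(i))$ by a further dimension shift along $0\to U(k)\to P(k)\to\Delta(k)\to 0$. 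These are exactly the preparatory lemmas Ringel proves before the construction; as written, your proof is not self-contained at its central point, although the missing ingredients are standard rather than a wrong idea.
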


A tilting module $T=\bigoplus_{i\in I}T(i)$ in Proposition \ref{pro-ch-tilt} is called a \emph{characteristic tilting module} for a quasi-hereditary algebra $(A,(I,\lhd))$.

If the standard modules and the costandard modules have small projective or injective dimension, then categories $\mcF(\Delta)$ and $\mcF(\nabla)$ have good properties.
We only refer a statement about $\mcF(\Delta)$, the statement for $\mcF(\nabla)$ is dual. 

\begin{lem}[{\cite[Appendix]{Ringel10}}]\label{lem_hered_F}
Let $(A,(I,\lhd))$ be a quasi-hereditary algebra.
Then $\mcF(\Delta)$ is closed under submodules if and only if the projective dimension of any standard module is at most one.
\end{lem}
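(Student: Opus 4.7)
The plan is to exploit Proposition \ref{pro-ch-tilt}, in particular the identification $\mcF(\Delta) = {}^{\perp} T$ where $T = \bigoplus_{i \in I} T(i)$ is the characteristic tilting module, together with the fact that every $M \in \mcF(\Delta)$ has $\pd M \leq \max_i \pd \Delta(i)$ (since projective dimension is bounded along a $\Delta$-filtration).

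For the implication ``$\pd \Delta(i) \leq 1$ for all $i$ implies $\mcF(\Delta)$ is closed under submodules'', I would first note that $T \in \mcF(\Delta)$, so the hypothesis forces $\pd T \leq 1$. Let $N \hookrightarrow M$ with $M \in \mcF(\Delta)$ and consider the short exact sequence $0 \to N \to M \to M/N \to 0$. Applying $\Hom_A(-, T)$ produces the long exact sequence
\begin{equation*}
\cdots \to \Ext^k(M, T) \to \Ext^k(N, T) \to \Ext^{k+1}(M/N, T) \to \cdots,
\end{equation*}
in which $\Ext^k(M, T) = 0$ for $k \geq 1$ because $M \in {}^{\perp} T$. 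I would then combine the bound $\pd T \leq 1$ with a dimension-shifting argument on $M/N$ to conclude that $\Ext^k(N, T) = 0$ for all $k \geq 1$, and hence $N \in {}^{\perp} T = \mcF(\Delta)$.

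For the converse, I would proceed by induction along the partial order $(I, \lhd)$. The base case is when $i$ is $\lhd$-maximal: then $\Delta(i) = P(i)$ is projective (cf.\ the proof idea of Lemma \ref{lem-qh-maximal}), so $\pd \Delta(i) = 0$. For the inductive step, fix a non-maximal $i$, assume $\pd \Delta(j) \leq 1$ for all $j \neq i$ with $i \lhd j$, and consider
\begin{equation*}
0 \to K \to P(i) \to \Delta(i) \to 0,
\end{equation*}
where $K \in \mcF(\Delta)$ has a $\Delta$-filtration with factors $\Delta(j)$, $j \neq i$ and $i \lhd j$. The inductive hypothesis yields $\pd K \leq 1$ and thus $\pd \Delta(i) \leq 2$. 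The closure of $\mcF(\Delta)$ under submodules is what forces the stronger conclusion: since every first syzygy of an arbitrary module is a submodule of a projective (and projectives lie in $\mcF(\Delta)$), the hypothesis places $\Omega^2 \Delta(i)$ inside $\mcF(\Delta)$. Combining this with the structure of the standard filtration of $K$ should upgrade $\pd K \leq 1$ to $\pd K = 0$, i.e., $K$ is projective, whence $\pd \Delta(i) \leq 1$.

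The principal obstacle is the converse direction. The inductive bound on $\pd K$ alone only yields $\pd \Delta(i) \leq 2$; the crucial task is to extract the full strength of ``closed under submodules'' to rule out a nonzero second syzygy $\Omega^2 \Delta(i)$. This is the step that genuinely uses the closure assumption beyond what is already automatic for quasi-hereditary algebras (where $\Omega \Delta(i)$ is always in $\mcF(\Delta)$), and I would expect the argument to analyse the tops and socles of the iterated syzygies in terms of the $\Delta$-filtration inherited from $P(i)$.
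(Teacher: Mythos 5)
The paper offers no proof of this lemma: it is quoted directly from the appendix of \cite{Ringel10}. Judged on its own, your sketch has a genuine gap in each direction, and in both cases the gap sits exactly where the real content of the statement lies.

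For ``$\pd \Delta(i)\le 1$ implies closed under submodules'': the long exact sequence only gives you an injection $\Ext^k(N,T)\hookrightarrow \Ext^{k+1}(M/N,T)$, and the bound $\pd T\le 1$ cannot kill the right-hand term, because projective dimension of $T$ controls $\Ext^{\ge 2}(T,-)$, not $\Ext^{\ge 2}(-,T)$. What you would need is $\id T\le 1$, equivalently $\id\nabla(j)\le 1$ for all $j$ (since $T^{\perp}=\mcF(\nabla)$). Dimension shifting on $M/N$ only rewrites $\Ext^{k+1}(M/N,T)$ as $\Ext^{1}(\Omega^{k}(M/N),T)$, and the vanishing of that group is precisely the assertion that syzygies lie in ${}^{\perp}T=\mcF(\Delta)$ --- i.e.\ a special case of the conclusion --- so the argument is circular. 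The actual content of this direction is the implication ``$\pd\Delta\le 1\Rightarrow\id\nabla\le 1$''; one can prove it by induction on $|I|$ using the heredity ideal $J=Ae_iA$ for $i$ maximal (here $\nabla(i)=I(i)$ is injective, $YJ\in\add P(i)$ is projective for every module $Y$, and $\Ext^{k}_{A}$ agrees with $\Ext^{k}_{A/J}$ on $A/J$-modules). Once $\id\nabla(j)\le 1$ is available, the sequence $\Ext^1(M,\nabla(j))\to\Ext^1(N,\nabla(j))\to\Ext^2(M/N,\nabla(j))=0$ together with $\mcF(\Delta)={}^{\perp}\mcF(\nabla)$ finishes the proof.

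For the converse, the reduction to ``$K=\Omega\Delta(i)$ is projective'' is correct, but the only consequence you extract from the hypothesis is $\Omega^{2}\Delta(i)\in\mcF(\Delta)$, which is perfectly compatible with $\Omega^{2}\Delta(i)\neq 0$ (as you note, $\Omega\Delta(i)\in\mcF(\Delta)$ holds for every quasi-hereditary algebra, and nothing forces a module of $\mcF(\Delta)$ to vanish). The sentence ``should upgrade $\pd K\le 1$ to $\pd K=0$'' is exactly the missing proof, and the proposed analysis of tops and socles of iterated syzygies is not carried out. The efficient route is again through the costandard modules: closure under submodules places every first syzygy $\Omega Y$ inside $\mcF(\Delta)={}^{\perp}\mcF(\nabla)$, hence $\Ext^{\ge 2}(Y,\nabla(j))=0$ for all $Y$, i.e.\ $\id\nabla(j)\le 1$; applying the implication of the previous paragraph to $A^{\op}$ (via Lemma \ref{lem-dual}, which exchanges $\Delta$ with $\nabla$ and $\pd$ with $\id$) converts this into $\pd\Delta(i)\le 1$. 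In short, both halves of your argument avoid the one nontrivial equivalence, namely $\pd\Delta\le 1\iff\id\nabla\le 1\iff$ every submodule of a projective lies in $\mcF(\Delta)$.
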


\subsection{Poset of quasi-hereditary structures}

Let $I$ be a set indexing the set of isomorphism classes of simple $A$-modules. Let $\lhd_1$ and $\lhd_2$ be two partial orders on $I$. If $(A,(I,\lhd_1))$ is a quasi-hereditary algebra and $\lhd_2 \sim \lhd_1$ then $(A,(I,\lhd_2))$ is also a quasi-hereditary algebra since the definition of quasi-hereditary algebra only depends on the set of standard modules. In this case, we can give various characterizations of this equivalence relation.

\begin{lem}\label{lem_two_qh_str_equiv}
Let $\lhd_1$ and $\lhd_2$ be two partial orders on $I$ such that $(A, (I,\lhd_1))$ and $(A, (I,\lhd_2))$ are two quasi-hereditary algebras.
Then the following statements are equivalent.
\begin{enumerate}
\item $\lhd_1 \sim \lhd_2$.
\item $\Delta_1 = \Delta_2$.
\item $\nabla_1 = \nabla_2$.
\item $\mcF(\Delta_1)=\mcF(\Delta_2)$.
\item $\mcF(\nabla_1)=\mcF(\nabla_2)$.
\item $T_1 \cong  T_2$ where $T_i$ is the characteristic tilting module of $(A, (I,\lhd_i))$ for $i=1,2$. 
\end{enumerate}
\end{lem}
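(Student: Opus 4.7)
The plan is to reduce everything to the intrinsic characterisations of standards, costandards, and the characteristic tilting module provided in the preliminaries. By the very definition of the equivalence relation, condition (1) is the conjunction of (2) and (3), so it suffices to show that any of conditions (2)--(6) implies all the others. The implications (2) $\Rightarrow$ (4) and (3) $\Rightarrow$ (5) are immediate, since $\mcF(\Theta)$ depends only on the isomorphism classes in $\Theta$.

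Next, I would derive (4) $\Leftrightarrow$ (5) directly from the orthogonality identities in (1.1), namely $\mcF(\nabla_k) = \mcF(\Delta_k)^{\perp}$ and $\mcF(\Delta_k) = {}^{\perp}\mcF(\nabla_k)$, which hold for any quasi-hereditary structure on $A$.

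The main obstacle is to prove (4) $\Rightarrow$ (2), i.e.\ that $\mcF(\Delta)$ determines the individual standard modules. The key lemma I would establish is the following intrinsic characterisation: \emph{$\Delta(i)$ is, up to isomorphism, the unique module in $\mcF(\Delta)$ with simple top $S(i)$ onto which every module of $\mcF(\Delta)$ with top $S(i)$ surjects}. Given any $N \in \mcF(\Delta)$ with top $S(i)$, one takes a $\Delta$-filtration $0 = N_n \subset \cdots \subset N_1 \subset N_0 = N$; the topmost factor $N_0/N_1$ is some $\Delta(j)$ whose top agrees with that of $N$, forcing $j = i$ and yielding a surjection $N \twoheadrightarrow \Delta(i)$. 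Uniqueness then follows from a standard length argument, since two finitely generated modules each of which is a quotient of the other must have equal lengths and therefore be isomorphic. Because this characterisation uses only $\mcF(\Delta)$, condition (4) implies (2); dually, $\nabla(i)$ is the unique indecomposable module in $\mcF(\nabla)$ with simple socle $S(i)$ that embeds into every such object, yielding (5) $\Rightarrow$ (3).

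Finally, I would deduce the equivalence of (6) with the conjunction of (4) and (5) from Proposition \ref{pro-ch-tilt}: the identity $\add T = \mcF(\Delta) \cap \mcF(\nabla)$ shows that (4) and (5) together force $\add T_1 = \add T_2$, whence $T_1 \cong T_2$ since both characteristic tilting modules are basic, while conversely $T$ determines both $\mcF(\Delta) = {}^{\perp}T$ and $\mcF(\nabla) = T^{\perp}$. The only non-formal step in the whole argument is the minimality characterisation of standards described above; everything else is a direct manipulation of the identities collected in the preliminaries.
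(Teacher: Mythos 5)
Your proposal is correct and follows essentially the same route as the paper: the whole lemma is reduced to the implication (4) $\Rightarrow$ (2) (and its dual), with the remaining equivalences read off from the orthogonality identities $\mcF(\nabla)=\mcF(\Delta)^{\perp}$, $\mcF(\Delta)={}^{\perp}\mcF(\nabla)$ and from Proposition \ref{pro-ch-tilt}. The only difference is cosmetic: where the paper recovers $\Delta_1(i)$ from $\mcF(\Delta_1)$ as $P(i)/K(i)$ by citing the proof of Ringel's Corollary~4, you supply a short self-contained argument (top factor of a $\Delta$-filtration plus a length comparison) for the same intrinsic characterisation, and this argument is sound.
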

\begin{proof}
We show (4) implies (2).
For each $i\in I$, let $K(i)$ be the sum of the kernels of non-zero surjective maps $P(i) \to X$ with $X\in\mcF(\Delta_1)$.
Then $\Delta_1(i)=P(i)/K(i)$ holds by the proof of \cite[Corollary~4]{Ringel91}.
We have $\Delta_1(i) = P(i)/K(i) = \Delta_2(i)$ by the assumption.
Dually, (5) implies (3).

The other equivalences are induced from equation (\ref{eq-F-perp}) and Proposition \ref{pro-ch-tilt}.
\end{proof}

\begin{dfn}\label{dfn-minimal-adapted-order}
Let $A$ be an Artin algebra with an adapted poset $(I,\lhd)$ indexing the isomorphism classes of simple $A$-modules.
The equivalence class of $\lhd$ with respect to $\sim$ is a \emph{quasi-hereditary structure} on $A$ provided $(A,(I,\lhd))$ is a quasi-hereditary algebra.
\end{dfn}

We denote by $\qhstr(A)$ the set of all quasi-hereditary structures on $A$ and denote by $[\lhd]$ a quasi-hereditary structure represented by $\lhd$.

By Lemma \ref{lem_two_qh_str_equiv}, the equivalence class of $\lhd$ only depends on its characteristic tilting module. It is then natural to order the quasi-hereditary structures in the following way. Let $[\lhd_1]$ and $[\lhd_2]$ be two quasi-hereditary structures on $A$ with respective set of standard modules $\Delta_1$ and $\Delta_2$, then we set $[\lhd_1] \preceq [\lhd_2]$ if $\mathcal{F}(\Delta_2) \subseteq \mathcal{F}(\Delta_1)$.
By this ordering, we regard $(\qhstr(A), \preceq)$ as a poset.

As we see in the next lemma, this ordering is induced from a partial ordering on tilting modules \cite{HU, RS}.

\begin{lem}\label{lem-qhstr-poset}
Let $[\lhd_1]$ and $[\lhd_2]$ be two quasi-hereditary structures on $A$ and $T_i$ be the characteristic tilting module of $(A,\lhd_i)$ for $i=1,2$.
The following statements are equivalent.
\begin{enumerate}
    \item $[\lhd_1] \preceq [\lhd_2]$.
    \item $\mathcal{F}(\nabla_1) \subseteq \mathcal{F}(\nabla_2)$.
    \item $T_1^{\perp} \subseteq T_2^{\perp}$.
\end{enumerate}
\end{lem}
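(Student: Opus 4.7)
The plan is to reduce everything to the two identifications already available in the paper: equation~(\ref{eq-F-perp}), which says $\mcF(\Delta_i) = {}^{\perp}\mcF(\nabla_i)$ and $\mcF(\nabla_i) = \mcF(\Delta_i)^{\perp}$, and Proposition~\ref{pro-ch-tilt}(4), which says $\mcF(\Delta_i) = {}^{\perp}T_i$ and $\mcF(\nabla_i) = T_i^{\perp}$. Given these dictionaries, each of the three subcategory inclusions in (1), (2), (3) is merely a different description of the same relation, so the proof should be a short chain of three equivalences.

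For (1)~$\Leftrightarrow$~(2), I would argue that taking the right $\Ext$-perp reverses inclusions: since $\mcF(\nabla_i) = \mcF(\Delta_i)^{\perp}$, the inclusion $\mcF(\Delta_2) \subseteq \mcF(\Delta_1)$ yields $\mcF(\Delta_1)^{\perp} \subseteq \mcF(\Delta_2)^{\perp}$, i.e., $\mcF(\nabla_1) \subseteq \mcF(\nabla_2)$. For the converse direction, use $\mcF(\Delta_i) = {}^{\perp}\mcF(\nabla_i)$ and apply the same reversal with the left perp. No additional homological input is required beyond equation~(\ref{eq-F-perp}).

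For (2)~$\Leftrightarrow$~(3), I would plug in Proposition~\ref{pro-ch-tilt}(4): since $\mcF(\nabla_i) = T_i^{\perp}$ by definition of the characteristic tilting module, the inclusion $\mcF(\nabla_1) \subseteq \mcF(\nabla_2)$ is literally the inclusion $T_1^{\perp} \subseteq T_2^{\perp}$. This step is essentially a substitution.

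There is no real obstacle here; the only thing worth being careful about is the direction of the arrow when passing to perpendiculars, since it is this reversal that matches the natural definition $[\lhd_1]\preceq[\lhd_2] :\Leftrightarrow \mcF(\Delta_2)\subseteq\mcF(\Delta_1)$ with the covariant inclusion $T_1^{\perp}\subseteq T_2^{\perp}$. A clean write-up would be a two-sentence argument: first the $(1)\Leftrightarrow(2)$ step via equation~(\ref{eq-F-perp}), then the $(2)\Leftrightarrow(3)$ step via Proposition~\ref{pro-ch-tilt}(4).
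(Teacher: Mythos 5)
Your proposal is correct and is essentially the paper's own proof: the paper likewise derives all three equivalences directly from $\mcF(\nabla)=\mcF(\Delta)^{\perp}$, $\mcF(\Delta)={}^{\perp}\mcF(\nabla)$, $\mcF(\nabla)=T^{\perp}$ and $\mcF(\Delta)={}^{\perp}T$, with the inclusion-reversal under taking perpendiculars handled exactly as you describe. Nothing further is needed.
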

\begin{proof}
The assertions follow from $\mcF(\nabla)=\mcF(\Delta)^{\perp}$, $\mcF(\Delta)={}^{\perp}\mcF(\nabla)$, $\mcF(\nabla)=T^{\perp}$ and $\mcF(\Delta)={}^{\perp}T$ for a characteristic tilting module $T$.
\end{proof}

\begin{lem}\label{lem-surj-Dec}
Let $[\lhd_1]$ and $[\lhd_2]$ be two quasi-hereditary structures on $A$.
\begin{enumerate}
\item
The following are equivalent.
\begin{enumerate}
    \item There is a surjective morphism  $\Delta_{2}(i)\to \Delta_1(i)$ for all $i \in I$.
    \item $\Dec(\lhd_1)\subseteq\Dec(\lhd_2)$.
\end{enumerate}
\item
The following are equivalent.
\begin{enumerate}
    \item[(c)] There is an injective morphism  $\nabla_2(i)\to \nabla_1(i)$ for all $i \in I$.
    \item[(d)] $\Inc(\lhd_2)\subseteq\Inc(\lhd_1)$.
\end{enumerate}
\item
If $[\lhd_1] \preceq [\lhd_2]$ holds, then the statements in (1) and (2) hold.
Conversely, if $A$ is a hereditary algebra, then statements in (1) or (2) imply $[\lhd_1] \preceq [\lhd_2]$.
\end{enumerate}
\end{lem}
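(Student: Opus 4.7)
The three parts build on one another.

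For part (1), the equivalence follows from the universal property of standard modules. A surjection $\Delta_2(j)\twoheadrightarrow\Delta_1(j)$ gives $[\Delta_1(j):S(i)]\leq[\Delta_2(j):S(i)]$ for every $i$, so (a)$\Rightarrow$(b). Conversely, if $\Dec(\lhd_1)\subseteq\Dec(\lhd_2)$, every composition factor $S(i)$ of $\Delta_1(j)$ satisfies $(i,j)\in\Dec(\lhd_2)$, so $S(i)$ is a composition factor of $\Delta_2(j)$, and hence $i\lhd_2 j$. Thus $\Delta_1(j)$ is a quotient of $P(j)$ whose composition factors all have weight $\lhd_2 j$, and the maximality characterising $\Delta_2(j)$ yields the surjection $\Delta_2(j)\twoheadrightarrow\Delta_1(j)$, giving (b)$\Rightarrow$(a). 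Part (2) is the evident dual, replacing quotients of $P(j)$ by submodules of $I(j)$, and using that $\nabla_k(j)$ has simple socle $S(j)$.

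For the forward implication of part (3), assume $[\lhd_1]\preceq[\lhd_2]$. Since $\mcF(\Delta_2)\subseteq\mcF(\Delta_1)$ we have $\Delta_2(j)\in\mcF(\Delta_1)$, and because $\Delta_2(j)$ has simple top $S(j)$, the first step of any $\Delta_1$-filtration is forced to be $\Delta_1(j)$; this produces the surjection of (1)(a). The injection of (2)(c) is obtained dually, using $\mcF(\nabla_1)\subseteq\mcF(\nabla_2)$ (provided by \cref{lem-qhstr-poset}) and the simple socle of $\nabla_1(j)$.

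For the converse in the hereditary case I treat (1); the case (2) is entirely dual. I will prove by induction on $\lhd_2$ that $\Delta_2(i)\in\mcF(\Delta_1)$ for every $i$, which yields $\mcF(\Delta_2)\subseteq\mcF(\Delta_1)$, i.e.\ $[\lhd_1]\preceq[\lhd_2]$. The key input is that when $A$ is hereditary, both $\mcF(\Delta_1)$ and $\mcF(\Delta_2)$ are closed under submodules, since every standard module has projective dimension at most one. In the base case $i$ is $\lhd_2$-minimal, so $\Delta_2(i)=S(i)$ and the surjection forces $\Delta_1(i)=S(i)=\Delta_2(i)$. For the inductive step consider the short exact sequence $0\to K\to\Delta_2(i)\to\Delta_1(i)\to 0$. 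The kernel $K$ is a submodule of $\Delta_2(i)\in\mcF(\Delta_2)$, hence $K\in\mcF(\Delta_2)$; its composition factors are $S(k)$ with $k\lhd_2 i$, $k\neq i$ (using $[\Delta_2(i):S(i)]=[\Delta_1(i):S(i)]=1$), so a $\Delta_2$-filtration of $K$ involves only factors $\Delta_2(k)$ with $k\lhd_2 i$ and $k\neq i$, each of which lies in $\mcF(\Delta_1)$ by the inductive hypothesis. Closure under extensions then gives $\Delta_2(i)\in\mcF(\Delta_1)$. The main obstacle is precisely this converse: without the hereditary hypothesis the kernel $K$ need not lie in $\mcF(\Delta_2)$, and the induction collapses; the hereditary assumption is exactly the leverage that makes the two orderings talk to each other through the filtration structure rather than merely through composition multiplicities.
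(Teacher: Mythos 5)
Your proof is correct and follows essentially the same route as the paper: parts (1) and (2) come straight from the maximality characterisation of (co)standard modules, the forward direction of (3) from the fact that the top factor of a $\Delta_1$-filtration of $\Delta_2(j)$ must be $\Delta_1(j)$, and the converse by the same induction on $(I,\lhd_2)$ using that $\mcF(\Delta_2)$ is closed under submodules when $A$ is hereditary. You merely spell out some steps the paper dispatches as "clear from the definitions"; no gaps.
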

\begin{proof}
(1), (2)
By the definitions of standard and costandard modules, (a) is equivalent to (b), and (c) is equivalent to (d).

(3)
It is clear that $\mcF(\Delta_2)\subseteq\mcF(\Delta_1)$ implies (a).
Assume that $A$ is hereditary and (a) holds.
We prove by induction on $(I,\lhd_2)$ that all standard modules in $\Delta_2$ are $\Delta_1$-filtered.
If $i$ is minimal in $(I,\lhd_2)$, then $\Delta_2(i)$ is isomorphic to $S(i)$.
Since there is a surjection from $\Delta_2(i)$ to $\Delta_1(i)$ and $\Delta_1(i)$ is non-zero module, the surjection is an isomorphism.
For $i\in I$, assume that $\Delta_2(j)$ is $\Delta_1$-filtered for any $j$ which is strictly smaller than $i$ with respect to $\lhd_2$.
By assumption there is an exact sequence $0 \to X \to \Delta_2(i)\to \Delta_1(i)\to 0$ for some module $X$.
Since $A$ is hereditary, the category $\mcF(\Delta_2)$ is closed under submodules by Lemma \ref{lem_hered_F}.
Thus $X$ is $\{\Delta_2(j)\}$-filtered, where $j$ is strictly smaller than $i$ with respect to $\lhd_2$.
By induction hypothesis, $X$ is $\Delta_1$-filtered and so is $\Delta_2(i)$.
\end{proof}

Note that if $I$ is a set indexing the simple $A$-modules, then $I$ also indexes simple $A^{\rm op}$-modules.
Since $A$ is an Artin $R$-algebra, there exists a duality between $\modu A$ and $\modu A^{\op}$ induced from $D=\Hom_R(-,E)$, where $E$ is an injective envelop of the direct sum of representatives of all simple $R$-modules up to isomorphisms.
For a partial order $\lhd$ on $I$, we denote by $\Delta^{\op}$ (by $\nabla^{\op}$, respectively) the set of standard (costandard, respectively) $A^{\op}$-modules associated to the partial order $\lhd$.
This induces the following lemma, which is explained in Section 1 of \cite{Dlab-Ringel-89} without proof.
\begin{lem}\label{lem-dual}
Let $\lhd$ be a partial order on $I$.
Then the following statements hold.
\begin{enumerate}
\item
We have $\Delta^{\rm op}(i) \cong  D\nabla(i)$ and $\nabla^{\rm op}(i) \cong  D\Delta(i)$ for any $i\in I$.
In particular, $(A,(I,\lhd))$ is a quasi-hereditary algebra if and only if $(A^{\op},(I,\lhd))$ is a quasi-hereditary algebra.
\item
An assignment $\lhd \mapsto \lhd$ induces an anti-isomorphism between $\qhstr(A)$ and $\qhstr(A^{\rm op})$.
\end{enumerate}
\end{lem}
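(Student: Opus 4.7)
The plan is to derive both parts from the fact that $D = \Hom_R(-,E) \colon \modu A \to \modu A^{\op}$ is an exact contravariant equivalence that interchanges projective covers with injective envelopes (so $DP^{\op}(i) \cong I(i)$ and $DP(i) \cong I^{\op}(i)$), preserves composition-factor multiplicities under the identification $S(i) \leftrightarrow DS(i) = S^{\op}(i)$, and exchanges tops with socles.

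For part (1), I would first establish the module-level identifications. By definition, $\Delta^{\op}(i)$ is the largest quotient of $P^{\op}(i)$ whose composition factors $S^{\op}(j)$ satisfy $j \lhd i$. Applying $D$ turns this into the largest submodule of $I(i)$ whose composition factors are $S(j)$ with $j \lhd i$, which is exactly $\nabla(i)$; hence $\Delta^{\op}(i) \cong D\nabla(i)$, and the other isomorphism is symmetric. For the ``in particular'' assertion I would verify the three axioms of Proposition~\ref{simple-def-qh} for $(A^{\op},(I,\lhd))$. The adapted condition is self-dual because $D$ sends an $A$-module with top $S(i)$ and socle $S(j)$ to an $A^{\op}$-module with top $S^{\op}(j)$ and socle $S^{\op}(i)$ while preserving composition-factor multiplicities. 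The axiom $[\Delta^{\op}(i):S^{\op}(i)]=1$ becomes $[\nabla(i):S(i)]=1$ via $D$, and $P^{\op}(i) \in \mcF(\Delta^{\op})$ becomes $I(i) \in \mcF(\nabla)$; both translated statements are classical dual forms of the quasi-hereditary axioms and follow from \cite{Ringel91} together with equation~(\ref{eq-F-perp}).

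For part (2), the equivalence relation transfers cleanly: by Lemma~\ref{lem_two_qh_str_equiv}, $\lhd_1 \sim_A \lhd_2$ iff $\nabla_1 = \nabla_2$, and applying $D$ together with $\Delta_\ell^{\op} \cong D\nabla_\ell$ from part~(1) shows this is equivalent to $\Delta_1^{\op} = \Delta_2^{\op}$, i.e.\ $\lhd_1 \sim_{A^{\op}} \lhd_2$. So the identity assignment on partial orders descends to a well-defined bijection $\qhstr(A) \to \qhstr(A^{\op})$. For the order-reversal, I would apply Lemma~\ref{lem-qhstr-poset}: $[\lhd_1] \preceq_A [\lhd_2]$ iff $\mcF(\nabla_1) \subseteq \mcF(\nabla_2)$, and the exact duality $D$ turns this containment into $\mcF(\Delta_1^{\op}) \subseteq \mcF(\Delta_2^{\op})$, which by the definition of the ordering for $A^{\op}$ is exactly $[\lhd_2] \preceq_{A^{\op}} [\lhd_1]$.

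The one delicate point is the ``in particular'' step of part~(1): the dual conditions $[\nabla(i):S(i)]=1$ and $I(i) \in \mcF(\nabla)$ are not listed in Definition~\ref{dfn-qh-alg}, so I would either cite Ringel's characterization directly or, as a self-contained alternative, proceed by induction on $(I,\lhd)$ using Lemma~\ref{lem-qh-maximal} together with its socle-side analogue. All remaining steps are routine transport of structure along the exact duality $D$.
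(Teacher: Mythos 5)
Your proposal is correct and follows essentially the same route as the paper's proof: everything is transported along the exact duality $D$, with the isomorphisms $\Delta^{\op}(i)\cong D\nabla(i)$ read off from the definitions and the order reversal deduced from $D\mcF(\Delta)=\mcF(\nabla^{\op})$. The paper states these steps more tersely (in particular it does not spell out the verification that the dual conditions characterize quasi-heredity, which you rightly flag as the one point needing a citation to Ringel or a short induction), but the underlying argument is the same.
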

\begin{proof}
A dual of a simple $A$-module indexed by $i$ is a simple $A^{\op}$-module indexed by $i$.
Thus the isomorphisms in (1) are obtained from the definition of (co)standard modules.
By (1), the assignment induces a bijection from $\qhstr(A)$ to $\qhstr(A^{\rm op})$.
By the isomorphisms in (1), we have $D\mcF(\Delta)=\mcF(\nabla^{\op})$.
This implies that the map reverses the ordering on quasi-hereditary structures.
\end{proof}
For a path algebra over a field, the following proposition says that we may assume that the quiver has no multiple arrows if we want to study its poset of quasi-hereditary structures.

Let $Q$ be a finite acyclic quiver.
We say that $Q$ \emph{has multiple arrows} if there exist vertices $i, j$ of $Q_0$ such that the number of arrows between $i$ and $j$ is greater than one.
In this case, the arrows between $i$ and $j$ are said to be \emph{multiple arrows}.
If $Q$ has multiple arrows, we denote by $Q^{\rm mf}$ a finite acyclic quiver such that $Q^{\rm mf}_0=Q_0$ and draw exactly one arrow from $i$ to $j$ in $Q^{\rm mf}$ if and only if there is an arrow from $i$ to $j$ in $Q$.
In particular, $Q^{\rm mf}$ has no multiple arrows.
\begin{pro}\label{prop-multiple-free}
Let $Q$ be a finite acyclic quiver. Then we have 
\[\qhstr(\kk Q) = \qhstr(\kk Q^{\rm mf}).\]


\end{pro}
\begin{proof}

If $Q$ has no multiple arrows, then $Q=Q^{\rm mf}$ holds, so there is nothing to show.
Assume that there exist multiple arrows form $i'$ to $j'$ in $Q$.
We denote by $Q'$ a quiver obtained by removing one arrow $\alpha$ between $i'$ and $j'$ in $Q$.
We show that $\qhstr(\kk Q) = \qhstr(\kk Q')$ holds.
Since $\kk Q'=\kk Q/\ideal{\alpha}$, a canonical surjection from $\kk Q$ to $\kk Q'$ induces a fully faithfull functor from $\modu\kk Q'$ to $\modu\kk Q$.
By this functor, we regard $\kk Q'$-modules as $\kk Q$-modules.

Let $\lhd$ be a partial order on $I=Q_0=Q'_0$.
We claim that $(I, \lhd)$ is adapted to $\kk Q$ if and only if it is adapted to $\kk Q'$.
Assume that $(I, \lhd)$ is adapted to $\kk Q$.
Let $M$ be a $\kk Q'$-module with top $S(i)$ and socle $S(j)$.
Since $Q'$ is acyclic, there is a path from $i$ to $j$ in $Q'$.
By regarding $M$ as a $\kk Q$-module, there is a vertex $k$ such that $i\lhd k$, $j\lhd k$ and $[M : S(k)]\neq 0$.
Therefore $(I, \lhd)$ is adapted to $\kk Q'$.
Conversely, assume that $(I, \lhd)$ is adapted to $\kk Q'$.
Let $N$ be a $\kk Q$-module with top $S(i)$ and socle $S(j)$.
Since $Q$ is acyclic, there is a path $p$ from $i$ to $j$ in $Q$ such that $S(\ell)$ is a composition factor of $N$ for each vertex $\ell$ where $p$ passes.
By the construction of $Q'$, we can take such path in $Q'$.
There is a $\kk Q'$-module $X(p)$ along with the path $p$, that is, $X(p)$ is a uniserial $\kk Q'$-module with top $S(i)$, socle $S(j)$ and $S(\ell)$ is a composition factor of $X(p)$ if and only if $p$ passes through $\ell$.
Since $(I, \lhd)$ adapted to $\kk Q'$, there is a vertex $k$ where $p$ passes  such that $i\lhd k$, $j\lhd k$ and $[X(p) : S(k)]\neq 0$.
Since $S(k)$ is also a composition factor of $N$, we have that $(I, \lhd)$ is adapted to $\kk Q$.

Let $\lhd_1$ and $\lhd_2$ be partial order of $I$ which are adapted to $\kk Q$ and $\kk Q'$.
By Proposition \ref{pro-hered-adapted}, both of then define quasi-hereditary structures of $\kk Q$ and $\kk Q'$.
Since we need to distinguish quasi-hereditary structures of $\kk Q$ or $\kk Q'$, we write $\lhd_1$ and $\lhd_2$ when we regard them as partial orders of $Q_0$, and write $\lhd_1'$ and $\lhd_2'$ when we regard them as partial orders of $Q_0'$.

First we show that $\Dec(\lhd_{\ell}) = \Dec(\lhd_{\ell}')$ for $\ell = 1,2$. Indeed, $(i,j)\in\Dec(\lhd_{\ell})$ if and only if there is a path $p$ in $Q$ from $j$ to $i$ such that $k\lhd_{\ell} j$ holds for each vertex $k$ where passes $p$. If $p$ contains $\alpha$, then by replacing by another arrow from $i'$ to $j'$, we have that a path in $Q'$, so we get $(i, j)\in\Dec(\lhd_{\ell}')$. The converse is trivial because a path in $Q'$ is also a path in $Q$. 

By Lemma \ref{lem-surj-Dec}, two adapted orders are equivalent if and only if they have the same sets of decreasing relations. Hence, two adapted orders for $kQ$ are equivalent if and only if they are equivalent for $kQ'$. Moreover, since both algebras are hereditary, by Lemma \ref{lem-surj-Dec}, the partial ordering of the quasi-hereditary structures only involves the set of decreasing relations. Hence, $[\lhd_1] \preceq [\lhd_2]$ in $\qhstr(\kk Q)$ if and only if $[\lhd_1'] \preceq [\lhd_2']$ in $\qhstr(\kk Q')$. Therefore, we have $\qhstr(\kk Q)=\qhstr(\kk Q')$.
\end{proof}




\begin{rem}
Proposition \ref{prop-multiple-free} is elementary from the point of view of equivalence classes of adapted orders, but it is more surprising from the point of view of characteristic tilting modules. For example, if $Q$ is a generalized Kronecker quiver with $m$ arrows. The tilting theory of $Q$ is very simple if $m=1$: there are only two tilting modules and they both are characteristic tilting modules for some quasi-hereditary structures. However, when $m>1$, there are infinitely many tilting modules but only two characteristic tilting modules. 
\end{rem}
\begin{exa}
Let $Q$ be a finite acyclic quiver and $\kk Q$ be the path algebra of $Q$.
Then $\qhstr(\kk Q)$ always admits both a unique maximal element and a unique minimal element.
In fact, it is easy to construct a total order $\lhd$ on $Q_0$ such that all corresponding standard modules are projective by Lemma \ref{lem-qh-maximal}.
Then $[\lhd]$ is a unique maximal element in $\qhstr(\kk Q)$.
\end{exa}
\begin{exa}\label{ex-complete-quiver}
Let $K_n$ be a quiver such that the set of vertices is $I=\{1,2,\dots,n\}$ and there is a unique arrow from $i$ to $j$ whenever $i>j$.
In particular, the underlying graph of $K_n$ is a complete graph.
It is easy to see that any adapted order to $\kk K_n$ is a total order on $I$, and two distinct total orders on $I$ induce different quasi-hereditary structures on $\kk K_n$.

Let $S_n$ be the permutation group on $I$.
For $v,w\in S_n$, we write $v \leq_S w$ if every pair $i,j$ in $I$ such that $i< j$ and $w^{-1}(i)> w^{-1}(j)$ also satisfies $v^{-1}(i)> v^{-1}(j)$.
It is well known that this $\leq_S$ induces a partial order on $S_n$ called the weak (Bruhat) order (we refer to \cite[Proposition 3.1.3]{coxeter_combinatorics} for the proof and to \cite[Chapter 3]{coxeter_combinatorics} for historical comments on the weak order.) 

We have a bijection from $S_n$ to $\qhstr(\kk K_n)$ by $w \mapsto w(1)\lhd_w w(2) \lhd_w\dots\lhd_w w(n)$.
This bijection gives an isomorphism of posets:
\[
(S_n, \leq_S) \cong  (\qhstr(\kk K_n), \preceq).
\]
In fact, by Lemma \ref{lem-surj-Dec}, it is enough to show that $v \leq_S w$ holds if and only if $\Dec(\lhd_v) \subseteq \Dec(\lhd_w)$ holds.
This holds because, by the construction of $K_n$, a pair $(i, j)\in I^2$ belongs to $\Dec(\lhd)$ if and only if $i\leq j$ and $i\lhd j$ hold for a total order $\lhd$ on $I$.
\end{exa}
\section{Quasi-hereditary structures and deconcatenations}\label{section-dec}
\subsection{Deconcatenations at a sink or a source}\label{subsection-qh-sink-source}
Throughout this section, let $Q$ be a finite connected quiver and $v$ be a sink or a source of $Q$.
All algebras are assumed to be finite dimensional over a field.
\begin{dfn}\label{dfn-deconcatenation}
A \emph{deconcatenation} of $Q$ at a sink or a source $v$ is a disjoint union $Q^1 \sqcup Q^2 \sqcup \dots \sqcup Q^{\ell}$ of proper full subquivers $Q^i$ of $Q$ satisfying the following properties:
\begin{enumerate}
\item
each $Q^i$ is a connected full subquiver of $Q$ having a vertex $v$,
\item
$Q_0=\left(Q^1_0\setminus\{v\}\right) \sqcup \dots \sqcup \left(Q^{\ell}_0\setminus\{v\}\right) \sqcup \{v\}$ and $Q^i_0 \cap Q^j_0 = \{v\}$ hold, and
\item
there are no arrows between $u$ and $w$ in $Q$, where $u \in Q^i_0\setminus\{v\}$ and $w\in Q^j_0\setminus\{v\}$ for $1\leq i \neq j \leq \ell$.
\end{enumerate}
\end{dfn}

\begin{exa}\label{exa-dec}
Let $Q=1 \to 2 \leftarrow 3 \leftarrow 4 \to 5$.
Then we have two deconcatenations of $Q$.
\[
(1 \to 2) \sqcup (2 \leftarrow 3 \leftarrow 4 \to 5),
\qquad 
(1 \to 2 \leftarrow 3 \leftarrow 4) \sqcup (4 \to 5).
\]
Moreover, the former has a deconcatenation at $4$ and the latter has a deconcatenation at $2$, and the resulting quiver is the same as follows:
\[
(1\to 2)\sqcup(2\leftarrow 3 \leftarrow 4)\sqcup(4\to 5).
\]
\end{exa}

In this section, for a given deconcatenation $Q^1 \sqcup Q^2 \sqcup \dots \sqcup Q^{\ell}$ of $Q$, we compare quasi-hereditary structures on algebras whose Gabriel quivers are $Q$ and $Q^i$.
It is easy to see that if $Q^1 \sqcup Q^2$ is a deconcatenation of $Q$ at a vertex $v$, and $Q^3 \sqcup Q^4$ is a deconcatenation of $Q^2$ at the vertex $v$, then $Q^1 \sqcup Q^3 \sqcup Q^4$ is a deconcatenation of $Q$.
Therefore, we consider a deconcatenation which is the disjoint union of two full subquivers.

Let $Q^1 \sqcup Q^2$ be a deconcatenation of $Q$ at a sink or a source $v$.
Let $A$ be a factor algebra of $\kk Q$ modulo some admissible ideal.
For each $\ell=1,2$, let
$$
A^{\ell} \coloneqq  \frac{A}{\langle e_u \mid u \in Q_0\setminus Q_0^{\ell} \rangle}.
$$
Thus we have a surjective morphism of algebras $A \to A^{\ell}$, and this induces a fully faithful exact functor $\modu A^{\ell} \to \modu A$.
By this functor, we regard $\modu A^{\ell}$ as a full subcategory of $\modu A$.
Therefore, an $A$-module $M$ is an $A^{\ell}$-module if and only if $Me_u=0$ for any $u\in Q_0\setminus Q_0^{\ell}$.
For a vertex $i\in Q_0^{\ell}$, let $P^{\ell}(i)$, $I^{\ell}(i)$, $S^{\ell}(i)$ be the indecomposable projective, indecomposable injective and the simple $A^{\ell}$-module associated to the vertex $i$, respectively.
The following lemma is easy and we omit the proof.
\begin{lem}\label{lem_sink_module}
Let $Q^1 \sqcup Q^2$ be a deconcatenation of $Q$ at a sink or a source $v$.
Let ${\ell}=1,2$.
\begin{enumerate}
\item
For any vertex $i\in Q_0^{\ell}$, we have $S(i) \cong  S^{\ell}(i)$.
\item
For any $i\in Q_0^{\ell}\setminus\{v\}$, we have $P(i) \cong  P^{\ell}(i)$ and $I(i)\cong  I^{\ell}(i)$.
\item
If $v$ is a sink, then we have $S(v) \cong  P(v) \cong  P^{\ell}(v)$ for $\ell=1,2$.
\item
If $v$ is a source, then we have $S(v)\cong  I(v) \cong  I^{\ell}(v)$ for $\ell=1,2$.
\item
Let $M$ be a non-zero $A$-module.
If both of the top and the socle of $M$ are simple, then one of $M\in\modu A^1$ or $M\in \modu A^2$ holds.
\item
Let $M\in\modu A^{\ell}$ and $i\in Q_0$.
If $[M : S(i)]\neq 0$, then $i\in Q_0^{\ell}$ holds.
\end{enumerate}
\end{lem}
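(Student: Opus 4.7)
The plan is to verify the six items essentially by unwinding definitions, with the only non-trivial point being item (5).

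First I would handle (1)--(4), which follow directly from the deconcatenation hypothesis. For (1), the simple $A$-module $S(i)$ is annihilated by every $e_u$ with $u\neq i$, in particular by every $e_u$ with $u\in Q_0\setminus Q_0^{\ell}$, so it is an $A^{\ell}$-module, and it is clearly simple there. For (2), the key combinatorial observation is that, because of condition (3) in \cref{dfn-deconcatenation} and the fact that $v$ is a sink or a source, any (non-trivial) path in $Q$ starting at a vertex $i\in Q_0^{\ell}\setminus\{v\}$ is entirely contained in $Q^{\ell}$: if $v$ is a sink, a path leaving $i$ cannot continue through $v$, and if $v$ is a source, no path of positive length meets $v$ at all. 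Consequently $P(i)=e_iA$ has composition factors only in $Q_0^{\ell}$, which makes it an $A^{\ell}$-module, and since it is projective over $A$ it is also projective over $A^{\ell}$. The injective statement is dual, using $I(i)\cong D(Ae_i)$. Items (3) and (4) follow from the same observation: when $v$ is a sink there is no path of positive length starting at $v$ in $Q$, hence $P(v)=S(v)$; dually when $v$ is a source $I(v)=S(v)$.

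For (5), I split into cases. If $i\neq v$, let $\ell$ be the unique index with $i\in Q_0^{\ell}\setminus\{v\}$. Since $M$ has top $S(i)$ it is a quotient of $P(i)$, which by (2) lies in $\modu A^{\ell}$; since $\modu A^{\ell}$ is closed under quotients (it is the full subcategory of $\modu A$ consisting of modules annihilated by $e_u$ for $u\in Q_0\setminus Q_0^{\ell}$), we obtain $M\in\modu A^{\ell}$. Dually, if $j\neq v$, then $M$ embeds into $I(j)$, which by (2) lies in $\modu A^{\ell'}$ with $j\in Q_0^{\ell'}\setminus\{v\}$, and $\modu A^{\ell'}$ is closed under submodules, giving $M\in\modu A^{\ell'}$. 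The only remaining case is $i=j=v$; here if $v$ is a sink then $M$ is a quotient of $P(v)=S(v)$, so $M\cong S(v)$ and belongs to both $\modu A^1$ and $\modu A^2$, and dually if $v$ is a source then $M\hookrightarrow I(v)=S(v)$ forces $M\cong S(v)$.

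Finally, (6) is immediate from the description of $\modu A^{\ell}$ recalled above: $M\in\modu A^{\ell}$ means $Me_u=0$ for every $u\in Q_0\setminus Q_0^{\ell}$, equivalently every composition factor of $M$ is $S(u)$ for some $u\in Q_0^{\ell}$; so $[M:S(i)]\neq 0$ forces $i\in Q_0^{\ell}$.

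The main (and really only) point requiring care is item (5), specifically ensuring that the degenerate case $i=j=v$ is controlled; everything else is a direct consequence of the path-combinatorics of deconcatenations and the sink/source hypothesis on $v$.
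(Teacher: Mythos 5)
Your proof is correct. The paper explicitly omits the proof of this lemma as "easy", and your argument -- path combinatorics through the sink/source $v$ for (1)--(4) and (6), plus the case split on whether the top or socle vertex equals $v$ for (5) -- is exactly the intended routine verification, with the degenerate case $i=j=v$ properly handled.
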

Let $\lhd$ be a partial order on $Q_0$.
By restricting this order, we have a partial order $\lhd|_{Q_0^{\ell}}$ on $Q_0^{\ell}$ for $\ell=1,2$.
We first compare standard and costandard modules associated to these orders.
\begin{lem}\label{lem_Q_Ql}
Let $Q^1 \sqcup Q^2$ be a deconcatenation of $Q$ at a sink or a source $v$.
Let $\lhd$ be a partial order on $Q_0$ and $\Delta$ (\,$\nabla$, respectively) the set of standard (costandard, respectively) $A$-modules associated to $\lhd$.
Let $\ell=1,2$.
We denote by $\Delta^{\ell}$ (\,$\nabla^{\ell}$, respectively) the set of standard (costandard, respectively) $A^{\ell}$-modules associated to $\lhd|_{Q_0^{\ell}}$.
Then we have the following statements.
\begin{enumerate}
\item
For any $i\in Q_0^{\ell}\setminus\{v\}$, we have $\Delta(i)\cong  \Delta^{\ell}(i)$ and $\nabla(i)\cong  \nabla^{\ell}(i)$.
\item
If $v$ is a sink, then we have $S(v) \cong  \Delta(v)\cong  \Delta^{\ell}(v)$.
\item
If $v$ is a source, then we have $S(v) \cong  \nabla(v) \cong  \nabla^{\ell}(v)$.
\item
If $\lhd$ defines a quasi-hereditary structure on $A$, then $\lhd|_{Q_0^{\ell}}$ defines a quasi-hereditary structure on $A^{\ell}$ for each ${\ell}=1,2$.
\end{enumerate}
\end{lem}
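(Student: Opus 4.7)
The plan is to tackle the four parts sequentially. Parts (1)--(3) are identifications of (co)standard modules that follow quickly from Lemma \ref{lem_sink_module}, while (4) reduces to verifying the three conditions of Proposition \ref{simple-def-qh}.

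For (1), Lemma \ref{lem_sink_module}(2) gives $P(i)\cong P^{\ell}(i)$ and $I(i)\cong I^{\ell}(i)$ for every $i\in Q_0^{\ell}\setminus\{v\}$, whether $v$ is a sink or a source. Since all composition factors of $P^{\ell}(i)$ and $I^{\ell}(i)$ lie in $Q_0^{\ell}$ by Lemma \ref{lem_sink_module}(6), the condition ``$j\lhd i$ with $j\in Q_0$'' and the condition ``$j \lhd|_{Q_0^{\ell}} i$ with $j\in Q_0^{\ell}$'' cut out the same set of composition factors. Hence the largest quotient of $P(i)$ with composition factors $S(j)$ satisfying $j\lhd i$ coincides with the analogous construction for $P^{\ell}(i)$, giving $\Delta(i)\cong\Delta^{\ell}(i)$; dually $\nabla(i)\cong\nabla^{\ell}(i)$. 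For (2), when $v$ is a sink, Lemma \ref{lem_sink_module}(3) yields $P(v)\cong S(v)\cong P^{\ell}(v)$, so both $\Delta(v)$ and $\Delta^{\ell}(v)$ collapse to $S(v)$; part (3) is proved by the dual argument using Lemma \ref{lem_sink_module}(4).

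For (4), I will first reduce to the case where $v$ is a sink via Lemma \ref{lem-dual}: a deconcatenation of $Q$ at a source $v$ corresponds to a deconcatenation of $Q^{\op}$ at the sink $v$, the algebra $A^{\op}$ is a factor of $\kk Q^{\op}$ modulo an admissible ideal, and $(A^{\ell})^{\op} = (A^{\op})^{\ell}$. Assuming then that $v$ is a sink, I verify the three conditions of Proposition \ref{simple-def-qh} for $(A^{\ell},(Q_0^{\ell},\lhd|_{Q_0^{\ell}}))$. Adaptedness is immediate: for $M \in \modu A^{\ell}$ with simple top $S(i)$ and simple socle $S(j)$, with $i,j\in Q_0^\ell$ incomparable, adaptedness of $\lhd$ on $A$ produces some $k\in Q_0$ with $i \lhd k$, $j \lhd k$, and $[M:S(k)]\neq 0$, and Lemma \ref{lem_sink_module}(6) forces $k\in Q_0^{\ell}$. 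The multiplicity condition $[\Delta^{\ell}(i):S^{\ell}(i)]=1$ follows from part (1) combined with the quasi-heredity of $A$ for $i\neq v$, and from part (2) for $i=v$.

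The main obstacle is showing $P^{\ell}(i)\in\mcF(\Delta^{\ell})$ for $i\in Q_0^{\ell}\setminus\{v\}$ (the case $i=v$ being trivial, since $P^{\ell}(v)=S(v)=\Delta^{\ell}(v)$). Here I will start with a $\Delta$-filtration of $P^{\ell}(i) = P(i)$ furnished by the quasi-heredity of $A$ and argue that it is automatically a $\Delta^{\ell}$-filtration. The crux is that each subquotient $\Delta(k_s)$ of such a filtration is itself a subquotient of $P(i)\in\modu A^{\ell}$, hence lies in $\modu A^{\ell}$; by Lemma \ref{lem_sink_module}(6) this forces $k_s\in Q_0^{\ell}$, and then parts (1) and (2) give $\Delta(k_s)\cong\Delta^{\ell}(k_s)$. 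Once this identification is made, the filtration is a $\Delta^{\ell}$-filtration of $P^{\ell}(i)$, completing the verification of quasi-heredity.
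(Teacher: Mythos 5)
Your proposal is correct and follows essentially the same route as the paper: parts (1)--(3) via the identifications of Lemma \ref{lem_sink_module}, and part (4) by reducing to the sink case and observing that a $\Delta$-filtration of $P(i)=P^{\ell}(i)$ is automatically a $\Delta^{\ell}$-filtration because every subquotient lies in $\modu A^{\ell}$. The only cosmetic difference is that you verify quasi-heredity through the adaptedness criterion of Proposition \ref{simple-def-qh}, whereas the paper checks axiom (3) of Definition \ref{dfn-qh-alg} directly; these are interchangeable here.
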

\begin{proof}
(1)
Since $P(i) \cong  P^{\ell}(i)$ for $i\in Q_0^{\ell}\setminus\{v\}$, a composition factor $S(j)$ of $P(i)$ satisfies $j\in Q_0^{\ell}$.
Since $\lhd^{\ell}$ is a restriction of $\lhd$, for any $j\in Q_0^{\ell}$, $j\lhd i$ if and only if $j\lhd^{\ell} i$.
Thins implies that $\Delta^{\ell}(i) \cong  \Delta(i)$.
Similarly, we have $\nabla^{\ell}(i)\cong  \nabla(i)$.
(2) (3) The assertions are clear.
(4)
Assume that $v$ is a sink.
Let $i\in Q_0^{\ell}$.
By (1), $\Delta(i) \cong  \Delta^{\ell}(i)$ holds.
Thus we have $[\Delta^{\ell}(i):S^{\ell}(i)]=1$ by Lemma \ref{lem_sink_module}.
Since any composition factor of $P(i)$ is a simple $A^{\ell}$-module, a $\Delta$-filtration of $P(i)$ in $\modu A$ gives a $\Delta^{\ell}$-filtration of $P^{\ell}(i)$ in $\modu A^{\ell}$.
Clearly, this filtration satisfies the axiom (3) of Definition \ref{dfn-qh-alg}.
Thus $\lhd|_{Q_0^{\ell}}$ defines a quasi-hereditary structure on $A^{\ell}$.
If $v$ is a source, then by Lemma \ref{lem-dual}, the assertion holds.
\end{proof}
Set $\overline{1}=2$ and $\overline{2}=1$.
Next we construct a partial order on $Q_0$ from partial orders on $Q_0^{\ell}$.
Let $\lhd^{\ell}$ be partial orders on $Q_0^{\ell}$ for $\ell=1, 2$.
Then we have a partial order $\lhd = \lhd(\lhd^1, \lhd^2)$ on $Q_0$ as follows: for $i, j\in Q_0$, $i\lhd j$ if one of the following two statements holds:
\begin{enumerate}
\item
$i,j\in Q_0^{\ell}$ and $i\lhd^{\ell} j$ holds for some $\ell$,
\item
$i\in Q_0^{\ell}$, $j\in Q_0^{\overline{\ell}}$, $i \lhd^{\ell} v$ and $v \lhd^{\overline{\ell}} j$ hold.
\end{enumerate}
\begin{lem}\label{lem_Ql_Q}
Let $Q^1 \sqcup Q^2$ be a deconcatenation of $Q$ at a sink or a source $v$.
Let $\lhd^{\ell}$ be a partial order on $Q_0^{\ell}$ and $\Delta^{\ell}$ (\,$\nabla^{\ell}$, respectively) the set of standard (costandard, respectively) $A^{\ell}$-modules associated to $\lhd^{\ell}$ for $\ell=1,2$.
We denote by $\Delta$ (\,$\nabla$, respectively) the set of standard (costandard, respectively) $A$-modules associated to $\lhd=\lhd(\lhd^1, \lhd^2)$.
Let $\ell=1,2$.
Then we have the following statements.
\begin{enumerate}
\item
For any $i\in Q_0^{\ell}\setminus\{v\}$, we have $\Delta(i)\cong  \Delta^{\ell}(i)$ and $\nabla(i)\cong  \nabla^{\ell}(i)$.
\item
If $v$ is a sink, then we have $S(v) \cong  \Delta(v)\cong  \Delta^{\ell}(v)$.
\item
If $v$ is a source, then we have $S(v) \cong  \nabla(v) \cong  \nabla^{\ell}(v)$.
\item
If $\lhd^{\ell}$ defines a quasi-hereditary structure on $A^{\ell}$ for both $\ell=1,2$, then $\lhd=\lhd(\lhd_1, \lhd_2)$ defines a quasi-hereditary structure on $A$.
\end{enumerate}
\end{lem}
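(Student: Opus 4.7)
The plan is to verify each claim by tracking how the deconcatenation $Q=Q^1\sqcup Q^2$ controls the relevant (co)standard modules. The constant feature of $\lhd=\lhd(\lhd^1,\lhd^2)$ that I will use throughout is that its restriction to each $Q_0^\ell$ agrees with $\lhd^\ell$, while any comparison between elements of $Q_0^\ell\setminus\{v\}$ and $Q_0^{\overline{\ell}}\setminus\{v\}$ must pass through the pivot $v$.

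For (1), I would first observe, using Lemma \ref{lem_sink_module}(2) and (6), that for $i\in Q_0^\ell\setminus\{v\}$ the projective $P(i)\cong P^\ell(i)$ has all its composition factors $S(k)$ with $k\in Q_0^\ell$. Since $\lhd$ and $\lhd^\ell$ agree on $Q_0^\ell$, the largest quotient of $P(i)$ with composition factors $\lhd$-below $i$ coincides with the analogous quotient under $\lhd^\ell$, giving $\Delta(i)\cong\Delta^\ell(i)$; the statement for $\nabla(i)$ is dual. Parts (2) and (3) are immediate from $P(v)=S(v)$, respectively $I(v)=S(v)$, in Lemma \ref{lem_sink_module}(3),(4).

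For (4), I would verify the three conditions of Proposition \ref{simple-def-qh}. Adaptedness of $\lhd$ follows from Lemma \ref{lem_sink_module}(5): a module with simple top $S(i)$ and simple socle $S(j)$ lies in some $\modu A^\ell$, so $i,j\in Q_0^\ell$, and if they are $\lhd$-incomparable they are also $\lhd^\ell$-incomparable, so the adaptedness of $\lhd^\ell$ (automatic because $\lhd^\ell$ defines a quasi-hereditary structure on $A^\ell$) provides a witness $k\in Q_0^\ell$ that also witnesses the adaptedness condition for $\lhd$. The multiplicity condition $[\Delta(i):S(i)]=1$ is immediate from (1)--(3). For the filtration condition $P(i)\in\mcF(\Delta)$, the case $i\neq v$ reduces directly to $P^\ell(i)\in\mcF(\Delta^\ell)$: when $v$ is a sink any section $\Delta^\ell(v)=S(v)=\Delta(v)$ is unchanged, and when $v$ is a source no $\Delta^\ell(v)$-section can appear since paths in $Q^\ell$ starting at $i\neq v$ cannot reach the source $v$, so $S(v)$ is not a composition factor of $P^\ell(i)$. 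The case $i=v$ with $v$ a sink is trivial since $P(v)=S(v)=\Delta(v)$.

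The hard part will be the case $i=v$ with $v$ a source, where $\Delta(v)$ genuinely interleaves data from both components and coincides with neither $\Delta^\ell(v)$. The crucial structural input is that the absence of arrows between $Q_0^1\setminus\{v\}$ and $Q_0^2\setminus\{v\}$ forces the decomposition $\rad P(v)=\rad P^1(v)\oplus\rad P^2(v)$ as $A$-submodules of $P(v)$. Setting $K^\ell\coloneqq\ker(P^\ell(v)\to\Delta^\ell(v))\subseteq\rad P^\ell(v)$ and $K\coloneqq K^1\oplus K^2\subseteq\rad P(v)$, I would show $\Delta(v)=P(v)/K$ in two steps: a direct composition-factor count shows $P(v)/K$ has factors exactly $\{S(j):j\lhd v\}$, yielding a surjection $\Delta(v)\twoheadrightarrow P(v)/K$; conversely, for any $L\subseteq P(v)$ with $P(v)/L$ having only such factors, the quotient $P^\ell(v)/(L\cap\rad P^\ell(v))$ has composition factors bounded by $\lhd^\ell v$, so the maximality of $\Delta^\ell(v)$ forces $L\cap\rad P^\ell(v)\supseteq K^\ell$ and hence $L\supseteq K$. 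A $\Delta$-filtration of $P(v)$ is then supplied by the chain $0\subseteq K^1\subseteq K^1\oplus K^2\subseteq P(v)$, with top section $\Delta(v)$ and with each $K^\ell$ carrying a $\Delta^\ell$-filtration inherited from any $\Delta^\ell$-filtration of $P^\ell(v)$ whose top section is $\Delta^\ell(v)$; by (1), these sections $\Delta^\ell(j)$ with $j\neq v$ coincide with $\Delta(j)$, closing the argument.
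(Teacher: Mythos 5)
Your proof is correct, and for parts (1)--(3) and the sink case of (4) it coincides with the paper's argument (the paper likewise deduces $\Delta(i)\cong\Delta^{\ell}(i)$ from $P(i)\cong P^{\ell}(i)$ together with the fact that $\lhd$ and $\lhd^{\ell}$ agree on the composition factors of $P(i)$, and then transports a $\Delta^{\ell}$-filtration of $P^{\ell}(i)$ to a $\Delta$-filtration of $P(i)$). Where you diverge is in two places. First, you verify quasi-heredity via Proposition \ref{simple-def-qh}, which forces you to check adaptedness of $\lhd$ explicitly (via Lemma \ref{lem_sink_module}(5)); the paper instead checks axiom (3) of Definition \ref{dfn-qh-alg} directly, noting that the multiplicities $(P(i):\Delta(j))$ are filtration-independent, and never mentions adaptedness. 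Both routes are legitimate. Second, and more substantially, for the source case of (4) the paper simply says ``the dual argument works'' --- i.e.\ it implicitly passes to $A^{\op}$, where $v$ becomes a sink, and invokes the self-duality of quasi-heredity (Lemma \ref{lem-dual}(1)) --- whereas you construct the $\Delta$-filtration of $P(v)$ directly from the decomposition $\rad P(v)=\rad P^{1}(v)\oplus\rad P^{2}(v)$ and the identification $\Delta(v)=P(v)/(K^{1}\oplus K^{2})$. Your direct argument is sound (the two-step verification that $K^{1}\oplus K^{2}$ is exactly the kernel of $P(v)\to\Delta(v)$ is carried out correctly, and the sections $K^{\ell}$ are $\Delta$-filtered because the standard short exact sequence $0\to U^{\ell}(v)\to P^{\ell}(v)\to\Delta^{\ell}(v)\to 0$ has $U^{\ell}(v)$ filtered by $\Delta^{\ell}(j)$ with $j\neq v$, which by (1) equal $\Delta(j)$); it costs more work than the duality reduction but has the advantage of exhibiting $\Delta(v)$ concretely, which is the exact analogue of the pushout description of $\nabla(v)$ that the paper uses later in the characteristic-tilting subsection.
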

\begin{proof}
(1)
Since $P(i) \cong  P^{\ell}(i)$ for $i\in Q_0^{\ell}\setminus\{v\}$, a composition factor $S(j)$ of $P(i)$ satisfies $j\in Q_0^{\ell}$.
Then by the definition of $\lhd$, we have that, for a composition factor $S(j)$ of $P(i)$, $j\lhd i$ holds if and only if $j \lhd^{\ell} i$ holds.
This implies the assertion.
This implies that $\Delta(i)\cong  \Delta^{\ell}(i)$.
(2) (3) The assertions are clear.
(4)
Assume that $v$ is a sink.
Let $i\in Q_0^{\ell}$.
By (1), we have $\Delta(i) \cong  \Delta^{\ell}(i)$.
In particular, we have $[\Delta(i): S(i)]=1$.
By Lemma \ref{lem_sink_module}, $P(i)\cong  P^{\ell}(i)$ holds for $i\in Q_0^{\ell}$.
Since $\lhd^{\ell}$ defines a quasi-hereditary structure on $A^{\ell}$, $P(i)$ has a $\Delta^{\ell}$-filtration.
Since the number $(P(i):\Delta(j))$ does not depend on the choice of the filtration, the axiom (3) of Definition \ref{dfn-qh-alg} is satisfied.
If $v$ is a source, then by Lemma \ref{lem-dual}, the assertion holds.
\end{proof}

Let $Q^1 \sqcup Q^2$ be a deconcatenation of $Q$ at a sink or a source $v$.
Let $A$ be a factor algebra of $\kk Q$ modulo some admissible ideal.
By Lemmas \ref{lem_Q_Ql} and \ref{lem_Ql_Q}, we have the following map.
\begin{align*}
\Phi \colon \qhstr(A) \longrightarrow \qhstr(A^1) \times \qhstr(A^2), \qquad [\lhd] \mapsto \left( [\lhd|_{Q_0^1}], [\lhd|_{Q_0^2}] \right).
\end{align*}
We also have an inverse map
\begin{align*}
\Psi \colon \qhstr(A^1) \times \qhstr(A^2) \longrightarrow \qhstr(A), \qquad \left( [\lhd^1], [\lhd^2] \right) \mapsto [\lhd(\lhd^1, \lhd^2)].
\end{align*}
For two posets $(A,\leq_A), (B,\leq_B)$ and $(a_1,b_1),(a_2,b_2)\in A\times B$, we write $(a_1,b_1)\leq (a_2,b_2)$ if $a_1\leq_A a_2$ and $b_1\leq_B b_2$ hold.
Then $(A\times B, \leq)$ is a poset, called the \emph{product poset}.
\begin{pro}\label{pro_qhstr_divide}
The map $\Phi$ is an isomorphism of posets and its inverse is $\Psi$.
\end{pro}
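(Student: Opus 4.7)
The plan is to verify three things in order: that $\Phi$ and $\Psi$ are well-defined, that they are mutually inverse as set maps, and finally that both preserve the ordering $\preceq$. Well-definedness of $\Phi$ follows from Lemma \ref{lem_Q_Ql}(4) and of $\Psi$ from Lemma \ref{lem_Ql_Q}(4), so I would only need to address the remaining two points.

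For $\Psi \circ \Phi = \id_{\qhstr(A)}$, I fix $[\lhd]\in\qhstr(A)$ and set $\lhdp \coloneqq \lhd(\lhd|_{Q_0^1},\lhd|_{Q_0^2})$. Applying Lemma \ref{lem_Q_Ql} to $\lhd$ and Lemma \ref{lem_Ql_Q} to $\lhdp$ identifies the $A$-standard modules (and, symmetrically, the costandard modules) for $\lhd$ and $\lhdp$ at every vertex $i\in Q_0^{\ell}\setminus\{v\}$ via the common $A^{\ell}$-module. Parts (2)-(3) of both lemmas handle $v$: when $v$ is a sink, both standards at $v$ equal $S(v)$; when $v$ is a source, both costandards at $v$ equal $S(v)$. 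So either all standards or all costandards of $\lhd$ and $\lhdp$ on $A$ coincide, and Lemma \ref{lem_two_qh_str_equiv} gives $\lhd\sim\lhdp$. The verification $\Phi\circ\Psi=\id$ proceeds by the same dictionary in reverse, starting from $([\lhd^1],[\lhd^2])$ and comparing $\lhd^{\ell}$ with the restriction of $\lhd(\lhd^1,\lhd^2)$ to $Q_0^{\ell}$.

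For order-preservation I would separate the sink and source cases, the latter following from the former by the duality of Lemma \ref{lem-dual} applied to $Q^{\op}$, which swaps sinks and sources. Assume $v$ is a sink and use Lemma \ref{lem-qhstr-poset}: $[\lhd_1]\preceq[\lhd_2]$ if and only if $\mcF(\Delta_2)\subseteq\mcF(\Delta_1)$. For $\Phi$, if $\mcF(\Delta_2)\subseteq\mcF(\Delta_1)$ and $M\in\mcF(\Delta_2^{\ell})$, then since $\Delta_2^{\ell}(i)\cong\Delta_2(i)$ as $A$-modules for every $i\in Q_0^{\ell}$ (Lemma \ref{lem_Q_Ql} in the sink case), the module $M$ lies in $\mcF(\Delta_2)\subseteq\mcF(\Delta_1)$; by Lemma \ref{lem_sink_module}(6) every composition factor of $M$ is indexed in $Q_0^{\ell}$, so the $\Delta_1$-filtration uses only $\Delta_1(j)\cong\Delta_1^{\ell}(j)$ with $j\in Q_0^{\ell}$, whence $M\in\mcF(\Delta_1^{\ell})$. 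For $\Psi$, it suffices to check $\Delta_2(i)\in\mcF(\Delta_1)$ for every $i\in Q_0$; writing $\ell$ for the component containing $i$, Lemma \ref{lem_Ql_Q} identifies $\Delta_2(i)\cong\Delta_2^{\ell}(i)\in\mcF(\Delta_1^{\ell})$ by hypothesis, and the $\Delta_1^{\ell}$-filtration lifts to a $\Delta_1$-filtration via the same identification.

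The main subtlety is the asymmetry between the sink and source cases: when $v$ is a source, $P(v)$ mixes the two components, and the $A$-standard $\Delta(v)$ need not coincide with the $A^{\ell}$-standard $\Delta^{\ell}(v)$, so the argument above would break if blindly transposed. This is exactly where I would invoke the dual equality $\nabla(v)=S(v)=\nabla^{\ell}(v)$, either by rerunning the argument with $\mcF(\nabla)$ (using the other equivalence in Lemma \ref{lem-qhstr-poset}) or, more economically, by reducing to the sink case through Lemma \ref{lem-dual}.
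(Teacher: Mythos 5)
Your proposal is correct and follows essentially the same route as the paper, whose proof is a one-line appeal to Lemmas \ref{lem_two_qh_str_equiv}, \ref{lem_Q_Ql} and \ref{lem_Ql_Q} on the grounds that the maps preserve (co)standard modules. You have simply filled in the details the paper leaves implicit — including the order-preservation step via Lemma \ref{lem-qhstr-poset} and the sink/source asymmetry handled by duality — and all of these verifications are sound.
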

\begin{proof}
Consider the product poset on $ \qhstr(A^1) \times \qhstr(A^2)$.
Then the assertion directly follows from Lemmas \ref{lem_two_qh_str_equiv}, \ref{lem_Q_Ql} and \ref{lem_Ql_Q}, since (co)standard modules are preserved by the maps.
\end{proof}
Let $Q^1\sqcup Q^2 \sqcup \dots \sqcup Q^{\ell}$ be a deconcatenation of $Q$ at a vertex $v$.
If $Q^{\ell+1}\sqcup \dots \sqcup Q^{m}$ is a deconcatenation of $Q^{\ell}$ at a vertex $u$,
then we have a disjoint union $Q^1\sqcup Q^2 \sqcup \dots \sqcup Q^{\ell-1} \sqcup Q^{\ell+1}\sqcup \dots \sqcup Q^{m}$ of full subquivers of $Q$, and so on for each connected quiver $Q^i$.
We call a disjoint union $Q^1\sqcup Q^2 \sqcup \dots \sqcup Q^{\ell^{\prime}}$ of full subquivers of $Q$ obtained by iterated operations as above an \emph{iterated deconcatenation} of $Q$.

Then we have the following theorem.
\begin{thm}\label{thm_qhstr_divide}
Let $Q^1\sqcup Q^2 \sqcup \dots \sqcup Q^{\ell}$ be an iterated deconcatenation of $Q$ at sink or source vertices.
Let $A$ be a factor algebra of $\kk Q$ modulo some admissible ideal and $A^i\coloneqq  A / \langle e_u \mid u \in Q_0\setminus Q_0^i \rangle$.
Then we have an isomorphism of posets 
\begin{align*}
\qhstr(A) \longrightarrow \prod_{i=1}^{\ell}\qhstr(A^i),
\end{align*}
which is given by $[\lhd] \mapsto \left( [\lhd|_{Q_0^i}] \right)_{i=1}^{\ell}$.
\end{thm}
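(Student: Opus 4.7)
The plan is to prove the theorem by induction on the number of elementary deconcatenation steps needed to build the iterated deconcatenation $Q^1 \sqcup \cdots \sqcup Q^{\ell}$ from $Q$. The base case corresponds to a single deconcatenation $Q = Q^1 \sqcup Q^2$ at one sink or source, which is exactly the content of Proposition \ref{pro_qhstr_divide}. So the core work is to set up the induction so that Proposition \ref{pro_qhstr_divide} can be applied repeatedly and that the resulting composite isomorphism has the claimed explicit description.

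For the inductive step, I would argue as follows. By definition of iterated deconcatenation, there is some index $i_0$ and a single deconcatenation $Q = Q^{i_0} \sqcup Q'$ of $Q$ at a sink or source $v$ such that the remaining disjoint union $\bigsqcup_{i \neq i_0} Q^i$ is an iterated deconcatenation of $Q'$ (requiring strictly fewer elementary steps). Writing $A' \coloneqq A/\langle e_u \mid u \in Q_0 \setminus Q'_0\rangle$, Proposition \ref{pro_qhstr_divide} yields an isomorphism of posets
\[
\Phi \colon \qhstr(A) \xrightarrow{\ \sim\ } \qhstr(A^{i_0}) \times \qhstr(A'), \qquad [\lhd] \mapsto \bigl([\lhd|_{Q_0^{i_0}}], [\lhd|_{Q'_0}]\bigr).
\]
The key compatibility observation is that for each $i \neq i_0$, reducing by the idempotents outside $Q_0^i$ can be done in two stages: first passing from $A$ to $A'$, then from $A'$ to $(A')^i \coloneqq A'/\langle e_u \mid u \in Q'_0 \setminus Q^i_0\rangle$. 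Since $Q_0 \setminus Q^i_0 = (Q_0 \setminus Q'_0) \sqcup (Q'_0 \setminus Q^i_0)$, we obtain a canonical isomorphism $(A')^i \cong A^i$, and moreover for any partial order $\lhd$ on $Q_0$ the twofold restriction $(\lhd|_{Q'_0})|_{Q^i_0}$ coincides with $\lhd|_{Q^i_0}$. Applying the inductive hypothesis to $A'$ with the iterated deconcatenation $\bigsqcup_{i \neq i_0} Q^i$ gives an isomorphism
\[
\Psi \colon \qhstr(A') \xrightarrow{\ \sim\ } \prod_{i \neq i_0} \qhstr(A^i), \qquad [\lhd'] \mapsto \bigl([\lhd'|_{Q_0^i}]\bigr)_{i \neq i_0}.
\]
Composing $\Phi$ with $\mathrm{id} \times \Psi$ gives an isomorphism of posets $\qhstr(A) \to \prod_{i=1}^{\ell}\qhstr(A^i)$, and by the restriction compatibility just noted this composite sends $[\lhd]$ to $\bigl([\lhd|_{Q_0^i}]\bigr)_{i=1}^{\ell}$, as required.

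Two small points deserve explicit checking. First, the product of posets is associative in the obvious sense, so the iterated application of $\Phi$ and the inductive hypothesis produces an honest isomorphism onto the full product $\prod_{i=1}^{\ell}\qhstr(A^i)$ rather than a nested product; this is routine. Second, the fact that $(A')^i \cong A^i$ and that standard/costandard modules are preserved under this identification follows from Lemma \ref{lem_sink_module} applied inductively, so that the explicit formulas for the map and its inverse descend correctly.

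The main (and only) obstacle is purely bookkeeping: verifying that at each induction step the distinguished sink or source $v$ used to deconcatenate $Q$ into $Q^{i_0} \sqcup Q'$ remains a sink or source of $Q'$ whenever it is shared with later pieces, and that the iterated restriction of partial orders is associative and compatible with the iterated idempotent reduction. Conceptually, nothing beyond Proposition \ref{pro_qhstr_divide} is needed; the theorem is simply its transitive closure.
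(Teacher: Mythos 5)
Your proposal is correct and follows exactly the paper's strategy: the paper proves the theorem by applying Proposition \ref{pro_qhstr_divide} iteratively, which is precisely your induction on the number of elementary deconcatenation steps. The additional bookkeeping you spell out (compatibility of iterated restriction and idempotent reduction) is the routine content the paper leaves implicit.
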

\begin{proof}
By applying Proposition \ref{pro_qhstr_divide} iteratively, we have the assertion.
\end{proof}
\begin{exa}
Let $Q=1 \to 2 \leftarrow 3 \leftarrow 4 \to 5$.
We have an isomorphism of posets
\[
\qhstr(\kk Q) \longrightarrow \qhstr(\La_2)\times\qhstr(\La_3)\times\qhstr(\La_2),
\]
where $\La_n$ is the path algebra of an equioriented quiver of type $\A_n$.
We study $\qhstr(\La_n)$ precisely in Section \ref{section-An}.
\end{exa}
We use the following lemma later.
\begin{lem}\label{lem-sink-source-minimal}
Let $Q^1 \sqcup Q^2$ be a deconcatenation of $Q$ at a sink or a source $v$.
Let $A$ be a factor algebra of $\kk Q$ modulo some admissible ideal.
Let $\lhd$ ($\lhd^1$, $\lhd^2$ respectively) be a partial order on $Q_0$ ($Q_0^1$, $Q_0^2$, respectively) defining a quasi-hereditary structure on $A$ ($A^1$, $A^2$, respectively).
Then the following statements hold.
\begin{enumerate}
\item
If $\lhd$ is a minimal adapted order, then both $\lhd|_{Q_0^1}$ and $\lhd|_{Q_0^2}$ are minimal adapted orders.
\item
If $\lhd^1$ and $\lhd^2$ are minimal adapted orders, then $\lhd(\lhd^1, \lhd^2)$ is a minimal adapted order.
\end{enumerate}
In particular, if $\lhd$ is minimal, then $\lhd=\lhd(\lhd|_{Q_0^1}, \lhd|_{Q_0^2})$ holds.
\end{lem}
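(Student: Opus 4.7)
The plan is to reduce both statements to a careful analysis of how the decreasing and increasing relations decompose under the deconcatenation, and then to invoke the transitive-closure characterization of minimality from Proposition \ref{pro-mini-adapted}. Writing $R(\lhd) \coloneqq \Dec(\lhd) \cup \Inc(\lhd)$, a partial order is minimal if and only if $\lhd = R(\lhd)^{\sf tc}$. The whole argument will hinge on the two identities
\[
R(\lhd) \subseteq (Q_0^1)^2 \cup (Q_0^2)^2, \qquad R(\lhd) = R(\lhd|_{Q_0^1}) \cup R(\lhd|_{Q_0^2}),
\]
valid for any quasi-hereditary order $\lhd$ on $Q_0$, from which (1), (2), and the final statement will follow by elementary manipulation of transitive closures.

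My first step would be the computational heart of the proof: establishing these two identities. For $j \in Q_0^{\ell}\setminus\{v\}$, Lemmas \ref{lem_Q_Ql} and \ref{lem_Ql_Q} already yield $\Delta(j) = \Delta^{\ell}(j)$ and $\nabla(j) = \nabla^{\ell}(j)$, and then Lemma \ref{lem_sink_module}(6) forces any pair $(i,j)$ in $\Dec(\lhd)$ or $\Inc(\lhd)$ to have $i \in Q_0^{\ell}$. The nontrivial case is $j = v$: when $v$ is a sink, $\Delta(v) = S(v)$ is automatic and the question reduces to controlling $\nabla(v)$; when $v$ is a source, it is the other way around. I would focus on $\Delta(v)$ with $v$ a source, the other case being dual, and use the ``no cross arrows'' condition (3) of Definition \ref{dfn-deconcatenation} to obtain the crucial decomposition
\[
P(v) \;\cong\; P^1(v) \oplus_{S(v)} P^2(v),
\]
i.e.\ $\rad P(v) = \rad P^1(v) \oplus \rad P^2(v)$ inside $P(v)$. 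Modding out the simples $S(j)$ with $j \not\lhd v$ on each side independently then yields the composition-factor identity $[\Delta(v):S(i)] = [\Delta^{\ell}(v):S(i)]$ for $i \in Q_0^{\ell}$, which delivers both identities at the vertex $v$.

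For (1), I would then run a detour argument: assuming $\lhd = R(\lhd)^{\sf tc}$ and $a, c \in Q_0^{\ell}$ with $a \lhd c$, pick a chain in $R(\lhd)$ from $a$ to $c$; any step ``entering'' $Q_0^{\bar\ell}\setminus\{v\}$ must, by the first identity, originate at $v$, and similarly any ``exit'' step must terminate at $v$. Antisymmetry of $\lhd$ then collapses every intermediate vertex to $v$, contradicting the detour. Hence the chain stays in $Q_0^{\ell}$ and, by the second identity, uses only $R(\lhd|_{Q_0^{\ell}})$-steps, so $\lhd|_{Q_0^{\ell}} = R(\lhd|_{Q_0^{\ell}})^{\sf tc}$ is minimal. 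For (2), a similar antisymmetry argument shows $\lhd(\lhd^1, \lhd^2) = (\lhd^1 \cup \lhd^2)^{\sf tc}$; combining this with $\lhd^{\ell} = R(\lhd^{\ell})^{\sf tc}$ and the second identity gives $\lhd(\lhd^1,\lhd^2) = (R(\lhd^1) \cup R(\lhd^2))^{\sf tc} = R(\lhd(\lhd^1,\lhd^2))^{\sf tc}$, proving minimality. The final statement then follows from (1), (2), uniqueness of the minimum in each equivalence class (Proposition \ref{pro-mini-adapted}), and the equality of quasi-hereditary structures $[\lhd] = [\lhd(\lhd|_{Q_0^1},\lhd|_{Q_0^2})]$ guaranteed by Proposition \ref{pro_qhstr_divide}.

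The main technical obstacle is the composition-factor identity at $v$, whose verification really relies on the absence of cross arrows in the deconcatenation; once that is in place, everything else reduces to formal bookkeeping on relations and their transitive closures using the antisymmetry of the partial orders involved.
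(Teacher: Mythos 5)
Your proposal is correct, but it takes a genuinely different route from the paper. The paper's proof is a short formal argument: to show $\lhd_1\coloneqq\lhd|_{Q_0^1}$ is minimal, take any $\lhd_1'\sim\lhd_1$, form $\lhd'\coloneqq\lhd(\lhd_1',\lhd|_{Q_0^2})$, note $\lhd'\sim\lhd$ by Proposition \ref{pro_qhstr_divide}, use minimality of $\lhd$ to get $\lhd\subseteq\lhd'$, and restrict to $Q_0^1$ to conclude $\lhd_1\subseteq\lhd_1'$; statement (2) and the last assertion follow similarly and from uniqueness of the minimum. In other words, the paper never touches $\Dec$ and $\Inc$ here — it only uses the definition of minimality as ``contained in every equivalent order'' together with the fact that $\Phi$ and $\Psi$ are mutually inverse. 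You instead invoke the characterization $\lhd=(\Dec(\lhd)\cup\Inc(\lhd))^{\sf tc}$ from Proposition \ref{pro-mini-adapted} and decompose the relations across the deconcatenation. This forces you to do real module-theoretic work at the vertex $v$ (the identity $[\Delta(v):S(i)]=[\Delta^{\ell}(v):S(i)]$ when $v$ is a source is \emph{not} covered by Lemmas \ref{lem_Q_Ql} and \ref{lem_Ql_Q}, which only treat $\Delta(v)$ for $v$ a sink and $\nabla(v)$ for $v$ a source, so your pushout/radical-splitting argument for $P(v)$ is genuinely needed), and your detour-collapsing argument via antisymmetry is a correct but more laborious substitute for the paper's one-line restriction step. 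What your approach buys is the explicit identity $\Dec(\lhd)\cup\Inc(\lhd)=\bigcup_{\ell}\bigl(\Dec(\lhd|_{Q_0^{\ell}})\cup\Inc(\lhd|_{Q_0^{\ell}})\bigr)$, which is structural information not recorded in the paper; what the paper's approach buys is brevity and independence from any computation of standard modules at $v$.
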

\begin{proof}
We show only (1) and the last assertion.
The assertion (2) is shown similarly.
We show that $\lhd_1 \coloneqq \lhd|_{Q_0^1}$ is a minimal adapted order.
Let $\lhd_1^{\prime}$ be a partial order on $Q_0^1$ such that $\lhd_1^{\prime} \sim \lhd_1$.
Let $i,j\in Q_0^1$ and assume that $i \lhd_1 j$ holds.
This implies that $i \lhd j$.
By Proposition \ref{pro_qhstr_divide}, we have $\lhd^{\prime}\coloneqq  \lhd(\lhd^{\prime}_1, \lhd|_{Q_0^2})\sim \lhd$.
Since $\lhd$ is minimal, $i\lhd^{\prime} j$ holds.
By the definition of $\lhd^{\prime}$, we have $i \lhd_1^{\prime}j$.
The last assertion follows from a uniqueness of a minimal adapted order.
\end{proof}
\subsection{Characteristic tilting modules and deconcatenations}
In this subsection, we compare characteristic tilting modules via deconcatenations.
Throughout this subsection let $Q^1 \sqcup Q^2$ be a deconcatenation of $Q$ at a sink $v$.
We denote by $A$ a factor algebra of $\kk Q$ modulo some admissible ideal.
Let $A^{\ell}=A/\ideal{e_u \mid u\in Q_0\setminus Q_0^{\ell}}$ for $\ell=1,2$.
Since $A^1$ and $A^2$ are factor algebras of $A$, we regard $A^1$-modules and $A^2$-modules as $A$-modules.

Let $\lhd$ be a partial order on $Q_0$ defining a quasi-hereditary structure on $A$ and $\lhd^{\ell}\coloneqq \lhd|_{Q^{\ell}_0}$ for $\ell=1,2$.
By Lemma \ref{lem_Q_Ql}, $\lhd^{\ell}$ defines a quasi-hereditary structure on $A^{\ell}$.
We denote by $\Delta^{\ast}(i)$, $\nabla^{\ast}(i)$, $T^{\ast}(i)$ a standard module, a costandard module and an indecomposable direct summand of the characteristic tilting module of a quasi-hereditary algebra $(A^{\ast},(Q_0^{\ast}, \lhd^{\ast}))$ for $\ast=\emptyset, 1, 2$.
Note that $\nabla(v)$ is obtained by the push-out of $\nabla^1(v) \leftarrow S(v) \to \nabla^2(v)$.
By Lemma \ref{lem_Q_Ql}, we have that $\mcF(\Delta^{\ell})$ is contained in $\mcF(\Delta)$ and $\mcF(\nabla^{\ell}(u) \mid u\neq v)$ is contained in $\mcF(\nabla)$ for $\ell=1,2$.
Since $v$ is a sink, $\Delta^1(v)=\Delta^2(v)=S(v)$ is a simple projective $A^{\ast}$-module for $\ast=\emptyset, 1, 2$.

Let $e^{\ell}=\sum_{u\in Q_0^{\ell}} e_u$.
Then it is easy to see that $A^{\ell}\cong  e^{\ell}Ae^{\ell}$ as algebras.
Thus we have an exact functor $\modu A \to \modu A^{\ell}$ defined by $M \mapsto Me^{\ell}$.
Moreover, we have $\Delta(i)e^{\ell} \cong  \Delta^{\ell}(i)$ if $i \in Q_0^{\ell}$ and $\Delta(i)e^{\ell}=0$ if $i\notin Q_0^{\ell}$.
This restricts to functors $\mcF(\Delta) \to \mcF(\Delta^{\ell})$ and $\mcF(\nabla) \to \mcF(\nabla^{\ell})$, since the functor is exact and $\nabla(v)e^{\ell}=\nabla^{\ell}(v)$ holds.
Note that by regarding $Me^{\ell}$ as an $A$-module via a map $A \to A^{\ell}\cong  e^{\ell}Ae^{\ell}$, we have an injective morphism $Me^{\ell} \to M$ of $A$-modules.

Throughout this subsection, we fix one $i\in Q_0^1$.
Let $T^1(i)$ be an indecomposable direct summand of the characteristic tilting $A^1$-module.
We denote by $m$ the length of an \emph{$S(v)$-socle} $\soc_v(T^1(i))$ of $T^1(i)$, that is, the maximal direct summand of $\soc(T^1(i))$ which is a direct sum of copies of $S(v)$.
So there exists an injective morphism $s\colon  S(v)^{\oplus m} \to T^1(i)$.
Recall that by Proposition \ref{pro-ch-tilt} we have a short exact sequence
\[
0\to S(v) \xto{f} T^2(v) \to X^2(v) \to 0,
\]
where $X^2(v)$ belongs to $\mcF(\Delta^2(j) \mid j \lhd v, j\neq v)$.
Consider the following push-out diagram.
\begin{align}\label{diagram-push-out-U}
\begin{tikzpicture}[baseline=-30,xscale=.8]
\node(S)at(0,0){$S(v)^{\oplus m}$};
\node(Y)at(3,0){$T^1(i)$};
\node(T)at(0,-2){$T^{2}(v)^{\oplus m}$};
\node(Q)at(3,-2){$U(i)$};
\draw[thick, ->] (S)--(T) node[midway, left]{$f^{\oplus m}$};
\draw[thick, ->] (S)--(Y) node[midway, above]{$s$};
\draw[thick, ->] (Y)--(Q) node[midway, right]{$\beta$};
\draw[thick, ->] (T)--(Q); 
\end{tikzpicture}
\end{align}
In Theorem \ref{thm-dec-ch-tilt} we show that $U(i)$ is isomorphic to $T(i)$.
Since the cokernel of $\beta$ is isomorphic to $X^2(v)^{\oplus m}$, $U(i)$ belongs to $\mcF(\Delta)$.
We have the following proposition.
\begin{pro}\label{pro-push-out-int}
The push-out $U(i)$ belongs to $\mcF(\Delta)\cap\mcF(\nabla)$.
Moreover, we have $U(i)e^1 \cong  T^1(i)$ and $U(i)e^2 \cong  T^2(v)^{\oplus m}$.
\end{pro}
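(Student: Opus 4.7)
The proposition has three parts: $U(i)\in\mcF(\Delta)$, the two restriction identifications $U(i)e^\ell$, and $U(i)\in\mcF(\nabla)$. I would treat them in this order, since the first two are quick consequences of the push-out construction and the third is the substantive content. For $U(i)\in\mcF(\Delta)$, the short exact sequence $0\to T^1(i)\to U(i)\to X^2(v)^{\oplus m}\to 0$ noted before the statement (arising from the push-out, whose leg $s$ is injective) already does it: both terms lie in $\mcF(\Delta)$ via the inclusions $T^1(i)\in\mcF(\Delta^1)\subseteq\mcF(\Delta)$ and $X^2(v)\in\mcF(\Delta^2(j)\mid j\neq v)\subseteq\mcF(\Delta)$ recorded at the start of the subsection, and $\mcF(\Delta)$ is closed under extensions.

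For the restriction identifications I would apply the exact functor $(-)e^\ell$ to the push-out square, aiming to show one of its legs becomes an isomorphism. The crucial computations are $T^2(v)e^1=T^2(v)e_v\cong S(v)$ and $T^1(i)e^2=T^1(i)e_v=S(v)^{\oplus m}$. For the first, $v$ being a sink identifies the $e_v$-component with the $v$-socle; applying $\Hom(S(v),-)$ to $0\to S(v)\to T^2(v)\to X^2(v)\to 0$ and noting that $S(v)$ is not a composition factor of any $\Delta^2(j)$ with $j\lhd^2 v$, $j\neq v$ (its composition factors $S(k)$ satisfy $k\lhd^2 v$ strictly, so $k\neq v$) gives $\soc_v T^2(v)\cong S(v)$. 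The second equality is the definition of $m$. Under these identifications both $fe_v$ (for $\ell=1$) and $se_v$ (for $\ell=2$) become identities on $S(v)^{\oplus m}$, so the restricted push-outs collapse, yielding $U(i)e^1\cong T^1(i)$ and $U(i)e^2\cong T^2(v)^{\oplus m}$.

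For the substantive step $U(i)\in\mcF(\nabla)$, my plan is to build an explicit $\nabla$-filtration. Because $v$ is a sink of $Q^1$, no path in $Q^1$ begins at $v$, so $(I^1(u):S(v))=0$ for $u\neq v$, and $S(v)$ is not a composition factor of any $\nabla^1(u)$ with $u\neq v$; hence in any $\nabla^1$-filtration of $T^1(i)$ the $v$-socle arises entirely from the $\nabla^1(v)$-layers, of which there must be exactly $m$. Rearranging the filtration would produce a submodule $W\subseteq T^1(i)$ with $W\cong\nabla^1(v)^{\oplus m}$, $\soc W=\soc_v T^1(i)$, and $T^1(i)/W\in\mcF(\nabla^1(u)\mid u\neq v)=\mcF(\nabla(u)\mid u\neq v)\subseteq\mcF(\nabla)$ using Lemma~\ref{lem_Q_Ql}. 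Under the push-out, $W$ is replaced by $W'\coloneqq\bigl(\nabla^1(v)\cup_{S(v)}T^2(v)\bigr)^{\oplus m}$, while $U(i)/W'\cong T^1(i)/W$ is unchanged. To close the argument I would identify $T^2(v)\cong\nabla^2(v)$ (at a sink the module $\nabla^2(v)$ already lies in $\mcF(\Delta^2)\cap\mcF(\nabla^2)$ with $\Delta^2(v)=S(v)$ as its socle, and by indecomposability of characteristic tilting summands it must be the one at $v$); then the push-out description $\nabla(v)=\nabla^1(v)\cup_{S(v)}\nabla^2(v)$ recorded in the subsection gives $W'\cong\nabla(v)^{\oplus m}\in\mcF(\nabla)$, so $U(i)$ is an extension of two modules in $\mcF(\nabla)$. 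The main obstacle is the rearrangement producing $W$ cleanly as $\nabla^1(v)^{\oplus m}$ with complementary quotient avoiding weight $v$; intuitively the $\nabla^1(v)$-trace splits off at the bottom of the filtration, but rigorously it requires $\Ext$-vanishing within $\mcF(\nabla^1)$. The identification $T^2(v)\cong\nabla^2(v)$ is a shorter but necessary lemma, which would be established by constructing a $\Delta^2$-filtration of $\nabla^2(v)$ with $\Delta^2(v)$ at its socle and the quotient filtered by $\Delta^2(j)$ for $j\neq v$.
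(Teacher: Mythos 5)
Your handling of $U(i)\in\mcF(\Delta)$ and of the restrictions $U(i)e^1\cong T^1(i)$, $U(i)e^2\cong T^2(v)^{\oplus m}$ is correct and agrees with the paper, which simply multiplies the short exact sequence $0\to S(v)^{\oplus m}\to T^2(v)^{\oplus m}\oplus T^1(i)\to U(i)\to 0$ by $e^1$ and $e^2$. The difficulty lies in the step you flag as "the main obstacle", and it is not merely a technicality: the rearrangement fails. You need a submodule $W\subseteq T^1(i)$ with $W\cong\nabla^1(v)^{\oplus m}$ and $T^1(i)/W\in\mcF(\nabla^1(u)\mid u\neq v)$, which would follow from $\Ext^1_{A^1}(\nabla^1(v),\nabla^1(u))=0$ for $u\neq v$; but the general constraint only gives $\Ext^1(\nabla^1(v),\nabla^1(u))\neq 0\Rightarrow u\lhd^1 v$, and such $u$ do occur. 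Concretely, take $Q^1\colon 1\to 2\leftarrow 3\to 4$ with $v=2$ and the total order $4\lhd 1\lhd 3\lhd 2$. Then $\nabla^1(2)=I(2)$, $\nabla^1(4)=S(4)$, and $T^1(2)$ is the indecomposable representation with dimension vector $(1,1,1,1)$, which is a non-split extension of $\nabla^1(2)$ by $\nabla^1(4)$; one checks directly that $T^1(2)$ has \emph{no} submodule isomorphic to $I(2)$ (any subrepresentation containing the vector space at vertex $3$ must contain the one at vertex $4$). So the $\nabla^1(v)$-layers cannot in general be collected at the bottom, even though $v$ is a sink and $\soc\nabla^1(v)=S(v)$ is projective. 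A second, independent error is the claimed identification $T^2(v)\cong\nabla^2(v)$: being a sink does not make $\nabla^2(v)$ $\Delta^2$-filtered (the same quiver and order, read as $Q^2$, give $\nabla^2(2)=I(2)\notin\mcF(\Delta^2)$ since $\Delta^2(3)$ has $S(4)$ as a composition factor while $I(2)$ does not). This second point is repairable -- the push-out $\nabla^1(v)\cup_{S(v)}T^2(v)$ still lies in $\mcF(\nabla)$ because it surjects onto $\nabla(v)$ with kernel $Y^2(v)\in\mcF(\nabla^2(u)\mid u\neq v)$ -- but the rearrangement is not.

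Two ways to close the gap. One is to keep the filtration idea but process the $\nabla^1(v)$-layers one at a time \emph{in the positions where they occur}, forming the push-out with one copy of $T^2(v)$ at each such layer and checking that each step replaces a $\nabla^1(v)$-subquotient by a $\mcF(\nabla)$-subquotient; this works (using projectivity of $S(v)$ to lift the relevant maps) but is lengthy. The paper instead avoids filtrations of $U(i)$ altogether: by \cite[Theorem 1]{dlab-ringel}, $U(i)\in\mcF(\nabla)$ iff $\Ext^1_A(\Delta(j),U(i))=0$ for all $j$, and any extension $0\to U(i)\to X\to\Delta(j)\to 0$ is shown to split by applying $(-)e^{\ell}$ (for the $\ell$ with $j\in Q_0^{\ell}$), using that $U(i)e^{\ell}$ is a direct sum of characteristic tilting summands of $A^{\ell}$ so the restricted sequence splits, and then lifting the splitting back to $A$-modules via $e^{\ell}A(1-e^{\ell})=0$. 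You should adopt one of these routes.
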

\begin{proof}
The diagram induces a short exact sequence $0 \to S(v)^{\oplus m}\to T^2(v)^{\oplus m}\oplus T^1(i) \to U(i) \to 0$.
Note that multiplication by an idempotent yields an exact functor.
Thus by multiplying $U(i)$ on the right by $e^1$ and $e^2$, we have $U(i)e^1 \cong  T^1(i)$ and $U(i)e^2 \cong  T^2(v)^{\oplus m}$.

We show that $U(i)$ belongs to $\mcF(\nabla)$.
By \cite[Theorem 1]{dlab-ringel}, $\mcF(\nabla)=\{Y\in\modu A \mid \Ext_A^1(\Delta, Y)=0\}$ holds.
Therefore we show that $\Ext_A^1(\Delta(j), U(i))=0$ for any $j\in Q_0$.
Indeed, if $j=v$, then $\Ext_A^1(\Delta(v),U(i))=0$ holds, since $\Delta(v)$ is a projective $A$-module.
Take a short exact sequence 
\begin{align}\label{ses-push-out}
0 \to U(i) \to X \xto{g} \Delta(j) \to 0
\end{align}
in $\modu A$. 
Assume that $j\in Q_0^1\setminus\{v\}$.
We have a short exact sequence $0 \to U(i)e^1 \to Xe^1 \xto{g e^1} \Delta(j)e^1 \to 0$ in $\modu(e^1A e^1)$.
Since $U(i)e^1\cong  T^1(i)$ and $\Delta(j)e^1 \cong  \Delta^1(j)$ and $T^1(i)$ belongs to $\mcF(\nabla^1)=\{Y\in\mod A^1 \mid \Ext_{A^1}^1(\Delta^1, Y)=0\}$, then this sequence splits.
Thus there exists an $(e^1 A e^1)$-morphism $h \colon \Delta(j)e^1 \to Xe^1$ such that $g e^1\circ h = \id$.
By Lemma \ref{lem_Q_Ql}, $\Delta(j) = \Delta(j)e^1$ holds.
So a map $h \colon \Delta(j) \to X$ given by $d \mapsto h(d)$ is well-defined.
We show that $h$ is a morphism of $A$-modules.
Let $a\in A$ and $d\in \Delta(j)$.
Note that $da=de^1ae^1$ holds, since $\Delta(j) = \Delta(j)e^1$ holds.
Then we have that $h(da)=h(d)e^1ae^1$.
On the other hand, since $\im(h)\subset Xe^1$, we have $h(d)a=h(d)e^1ae^1 + h(d)e^1a(1-e^1)$.
However, $e^1 A (1-e^1)=0$ holds, since $Q^1\sqcup Q^2$ is a deconcatenation at a sink.
It is easy to see that $g\circ h=\id$, that is, the short exact sequence (\ref{ses-push-out}) splits.
If $j\in Q_0^2\setminus\{v\}$, then by using the fact that $U(i)e^2 \cong  T^2(v)^{\oplus m}$, $\Delta(j)=\Delta(j)e^2 \cong  \Delta^2(j)$ and $e^2 A (1-e^2)=0$, we can show that $g$ is a split morphism in a similar way as before.
\end{proof}
\begin{lem}\label{lem-push-out-appro}
In the diagram (\ref{diagram-push-out-U}), the morphism $\beta$ is a left $\mcF(\nabla)$-approximation of $T^1(i)$.
\end{lem}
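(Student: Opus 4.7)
The plan is to read a short exact sequence directly off the push-out square and then invoke the standard $\Ext$-vanishing supplied by equation~(\ref{eq-F-perp}). First, I observe that the left vertical map $f^{\oplus m}$ of diagram~(\ref{diagram-push-out-U}) is a monomorphism: indeed, $f \colon S(v) \to T^2(v)$ is injective because it is the first map in the short exact sequence $0 \to S(v) \xto{f} T^2(v) \to X^2(v) \to 0$ furnished by Proposition~\ref{pro-ch-tilt} applied to the quasi-hereditary algebra $A^2$. Push-outs preserve monomorphisms, so $\beta$ is injective, and a standard diagram chase identifies $\coker \beta$ with $\coker f^{\oplus m} \cong X^2(v)^{\oplus m}$.

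Next I need to see that this cokernel belongs to $\mcF(\Delta)$, not merely $\mcF(\Delta^2)$. By construction $X^2(v) \in \mcF\bigl(\Delta^2(j)\mid j\lhd v,\,j\neq v\bigr)$, and Lemma~\ref{lem_Q_Ql}(1) identifies $\Delta^2(j)$ with $\Delta(j)$ for every $j\in Q_0^2\setminus\{v\}$. Hence $X^2(v)^{\oplus m} \in \mcF(\Delta)$ as an $A$-module, and I obtain a short exact sequence of $A$-modules
\begin{equation*}
0 \longrightarrow T^1(i) \xto{\beta} U(i) \longrightarrow X^2(v)^{\oplus m} \longrightarrow 0
\end{equation*}
whose right-hand term lies in $\mcF(\Delta)$.

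Now I fix an arbitrary $N \in \mcF(\nabla)$ and apply $\Hom_A(-,N)$ to this sequence, producing an exact piece
\begin{equation*}
\Hom_A(U(i),N) \longrightarrow \Hom_A(T^1(i),N) \longrightarrow \Ext_A^1\bigl(X^2(v)^{\oplus m},N\bigr).
\end{equation*}
By equation~(\ref{eq-F-perp}) we have $\mcF(\Delta) = {}^{\perp}\mcF(\nabla)$, so the $\Ext^1$ term vanishes. Consequently every $A$-morphism $T^1(i) \to N$ factors through $\beta$. Combining this with the fact that $U(i) \in \mcF(\nabla)$, which was already established in Proposition~\ref{pro-push-out-int}, shows that $\beta$ has the required approximation property with respect to $\mcF(\nabla)$.

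There is no substantive obstacle: the argument is essentially formal once the cokernel of $\beta$ has been placed inside $\mcF(\Delta)$, which is precisely where Lemma~\ref{lem_Q_Ql}(1) (and hence the hypothesis that $Q^1 \sqcup Q^2$ is a deconcatenation at a sink) is used. The only small subtlety worth double-checking is that $X^2(v)$, a priori defined as an $A^2$-module, is transported across the inclusion $\modu A^2 \hookrightarrow \modu A$ without altering its $\Delta$-filtration; this is exactly the content of Lemma~\ref{lem_sink_module} together with Lemma~\ref{lem_Q_Ql}(1).
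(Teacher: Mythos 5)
Your argument is correct, but it reaches the factorization property by a different route than the paper. The paper's proof is constructive: given $g\colon T^1(i)\to M$ with $M\in\mcF(\nabla)$, it observes that $\im(gs)$ is a sum of copies of $S(v)$ and hence lands in $Me^2\in\mcF(\nabla^2)$, uses that $f\colon S(v)\to T^2(v)$ is a left $\mcF(\nabla^2)$-approximation to produce $h\colon T^2(v)^{\oplus m}\to M$ with $hf^{\oplus m}=gs$, and then invokes the universal property of the push-out to factor $g$ through $\beta$. You instead note that $\beta$ is a monomorphism with $\coker\beta\cong X^2(v)^{\oplus m}\in\mcF(\Delta)$ (via Lemma~\ref{lem_Q_Ql}(1)) and kill the obstruction to factorization with $\Ext_A^1\bigl(X^2(v)^{\oplus m},N\bigr)=0$, which follows from $\mcF(\Delta)={}^{\perp}\mcF(\nabla)$. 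Both arguments rely on Proposition~\ref{pro-push-out-int} for $U(i)\in\mcF(\nabla)$, and there is no circularity since that proposition does not use the present lemma. Your version is shorter and more conceptual, outsourcing the work to the Ext-orthogonality of equation~(\ref{eq-F-perp}); the paper's version is more elementary in that it only uses the push-out's universal property together with the defining approximation property of $T^2(v)$, and it exhibits the lift explicitly in a form that is reused in the proof of Lemma~\ref{lem-delta-hatT-appro}.
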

\begin{proof}
By Proposition \ref{pro-push-out-int}, $U(i)\in\mcF(\nabla)$.
Let $M\in\mcF(\nabla)$ and $g \colon T^1(i) \to M$ be a morphism.
Since the image of $gs$ is a direct sum of copies of $S(v)$, the image is contained in $Me^2$.
Since $Me^2\in\mcF(\nabla^2)$ and $S(v)\to T^2(v)$ is a left $\mcF(\nabla^2)$-approximation of $S(v)$, there exists a morphism $h \colon T^2(v)^{\oplus m} \to M$ such that $hf^{\oplus m} = gs$.
Because the diagram is a push-out diagram, there exists a morphism $g^\p \colon U(i) \to M$ such that $g^{\p}\beta=g$.
\end{proof}
Recall that $\Delta(i)=\Delta^1(i)$ holds for $i\in Q^1_0$.
\begin{lem}\label{lem-delta-hatT-appro}
Let $f_1 \colon \Delta(i)\to T^1(i)$ be a morphism obtained by Proposition \ref{pro-ch-tilt}.
Then the composite $\Delta(i) \xto{f_1} T^1(i) \xto{\beta} U(i)$ is a left $\mcF(\nabla)$-approximation of $\Delta(i)$.
\end{lem}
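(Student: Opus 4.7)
The plan is to verify the lifting property: given $N\in\mcF(\nabla)$ and $g\colon\Delta(i)\to N$, to produce $h\colon U(i)\to N$ with $h\beta f_1=g$ (the containment $U(i)\in\mcF(\nabla)$ is already in Proposition \ref{pro-push-out-int}). The strategy is dictated by the fact that $U(i)$ is a push-out in diagram \ref{diagram-push-out-U}: constructing $h$ amounts to specifying a compatible pair $(h_1\colon T^1(i)\to N,\ b\colon T^2(v)^{\oplus m}\to N)$ satisfying $h_1 s=b f^{\oplus m}$, with the additional requirement $h_1 f_1=g$ to control the composite with $\beta f_1$. I would produce $h_1$ and $b$ by restricting the problem to one of the two sides of the deconcatenation and invoking the appropriate approximation property there.

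For $h_1$: since $v$ is a sink, $e^1 A(1-e^1)=0$ (as already observed in the proof of Proposition \ref{pro-push-out-int}), so $Ne^1$ is a right $A$-submodule of $N$ annihilated by $1-e^1$, hence naturally an $A^1$-module, and exactness of $M\mapsto Me^1$ together with $\nabla(v)e^1=\nabla^1(v)$ shows $Ne^1\in\mcF(\nabla^1)$. Since $\Delta(i)=\Delta(i)e^1$, the image of $g$ lies in $Ne^1$, and viewing the resulting morphism as an $A^1$-morphism we can apply the left $\mcF(\nabla^1)$-approximation property of $f_1$ (Proposition \ref{pro-ch-tilt} applied to $A^1$) to obtain $h_1\colon T^1(i)\to Ne^1\hookrightarrow N$ with $h_1 f_1=g$. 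For $b$: the composite $h_1 s\colon S(v)^{\oplus m}\to N$ lands in $Ne^2$ because $S(v)=S(v)e^2$, and the analogous reasoning (using $e^2 A(1-e^2)=0$) places $Ne^2$ in $\mcF(\nabla^2)$; applying the left $\mcF(\nabla^2)$-approximation property of $f\colon S(v)\to T^2(v)$ componentwise then yields $b\colon T^2(v)^{\oplus m}\to Ne^2\hookrightarrow N$ with $bf^{\oplus m}=h_1 s$.

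By construction the pair $(h_1,b)$ is compatible on $S(v)^{\oplus m}$, so the push-out property of \ref{diagram-push-out-U} produces a (unique) $h\colon U(i)\to N$ with $h\beta=h_1$, whence $h\beta f_1=h_1 f_1=g$, which is what we need. The only non-routine point is the observation that $Ne^{\ell}$ is actually an $A$-submodule of $N$ that lies in $\mcF(\nabla^{\ell})$, which is precisely where the sink hypothesis on $v$ enters; once this is in hand the rest is a mechanical combination of push-out universality with the two approximation properties on the deconcatenated pieces, and I do not anticipate a serious obstacle.
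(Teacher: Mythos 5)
Your argument is correct and is essentially the paper's own proof: factor $g$ through $Ne^1\in\mcF(\nabla^1)$ using $\Delta(i)=\Delta(i)e^1$, apply the left $\mcF(\nabla^1)$-approximation property of $f_1$, and then factor the resulting map $T^1(i)\to N$ through $\beta$ via the push-out. The only difference is that you re-derive this last factorization by hand (constructing the compatible pair on $T^2(v)^{\oplus m}$ via the left $\mcF(\nabla^2)$-approximation $S(v)\to T^2(v)$), whereas the paper quotes it as Lemma \ref{lem-push-out-appro}.
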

\begin{proof}
We denote by $\iota \colon Me^1 \to M$ the inclusion map.
Let $M\in\mcF(\nabla)$ and $h \colon \Delta(i) \to M$ be a morphism.
Since $\Delta^1(i)=\Delta(i)$ holds, there exists a morphism $h' \colon \Delta(i) \to Me^1$ with $h=\iota h^\p$.
Since $f_1$ is a left $\mcF(\nabla^1)$-approximation of $\Delta(i)$ and $Me^1\in\mcF(\nabla^1)$, there exists a morphism $f_1^\p \colon T^1(i) \to Me^1$ with $h^{\p}=f_1^{\p}f_1$.
By Lemma \ref{lem-push-out-appro}, there exists a morphism $f^{\p\p}_1 \colon  U(i) \to M$ with $\iota f_1^\p=f_1^{\p\p}\beta$.
Then we have $h=\iota h^\p=\iota f_1^{\p}f_1 = f_1^{\p\p}\beta f_1$.
\[
\begin{tikzcd}[arrow style=tikz, arrows={thick},row sep=12mm,baseline=-35]
& \Delta(i) \arrow[r, "f_1"] \arrow[dl, "h^{\p}"', densely dashed, end anchor={[xshift=-1.2ex]}] \arrow[d, "h" pos=.73] & T^1(i) \arrow[r, "\beta"] \arrow[dll, "{f_1^\p}" description, pos=.22, densely dashed, preaction={draw=white, -,line width=4pt}] & U(i) \arrow[dll, "f_1^{\p\p}", densely dashed] \\
Me^1 \arrow[r, "\iota"'] & M 
\end{tikzcd}\qedhere
\]
\end{proof}
Let $R$ be a ring with a surjective ring morphism $R \to R/I$.
It is easy to see that if $R/I$ is a local ring and $I$ is contained in the radical of $R$, then $R$ is a local ring.
The following lemma implies that $U(i)$ is indecomposable.
\begin{lem}\label{lem-push-out-ind}
Let $A$ be a finite dimensional algebra and $S$ be a simple $A$-module.
Let $X,Y$ be $A$-modules such that the $S$-socles are $S^{\oplus m}$ and $S$, respectively, with $m\geq 1$.
Consider the following push-out diagram of $A$-modules
\[
\begin{tikzcd}[arrow style=tikz, arrows={thick}]
S^{\oplus m} \arrow[r, "a"] \arrow[d, "b"] & X \arrow[d, "\beta"] \\
Y^{\oplus m} \arrow[r, "\alpha"] & Z
\end{tikzcd}
\]
such that $a$ and $b$ are inclusion morphisms associated to $S$-socles.
Assume that the following conditions hold:
\begin{enumerate}
\item there exist no non-zero morphisms from $X$ to the cokernel of $b$,
\item there exist no non-zero morphisms from $Y$ to the cokernel of $a$,
\item $\beta$ is a left $(\add Z)$-approximation of $X$,
\item $X$ and $Y$ are indecomposable.
\end{enumerate}
Then $Z$ is indecomposable.
\end{lem}
\begin{proof}
Note that since we have a push-out diagram, $\coKer(a) \simeq \coKer(\alpha)$ and $\coKer(b) \simeq \coKer(\beta)$.
Moreover, as $a$ and $b$ are injective, $\alpha$ and $\beta$ are also injective.
By this injective morphisms, we regard $Y^{\oplus m}$ and $X$ as submodules of $Z$.

We show that the endomorphism algebra of $Z$ is a local algebra.
Let $\pi : Z \to \coKer(\beta)$ be a canonical morphism.
By the assumption (1), for each $\phi\in\End_A(Z)$, $\pi\phi\beta=0$.
Thus we have a morphism of algebras $R_X \colon  \End_A(Z) \to \End_A(X)$ given by $R_X(\phi)= \phi|_{X}$.
By the assumption (3), this $R_X$ is surjective.
We have an exact sequence
\[
0 \to \Ker(R_X) \to \End_A(Z) \to \End_A(X) \to 0.
\]
By the assumption (4), $\End_A(X)$ is a local algebra.
Thus it is enough to show that $\Ker(R_X)$ is contained in the radical of $\End_A(Z)$.

Let $\phi$ be an endomorphism of $Z$ with $R_X(\phi)=0$.
We show that $\phi$ belongs to the radical of $\End_A(Z)$.
By the assumption (2), in an analogous way as the definition of $R_X$, we have a morphism of algebras $R_{Y^{\oplus m}} \colon  \End_A(Z) \to \End_A(Y^{\oplus m})$ given by $R_{Y^{\oplus m}}(\psi)=\psi|_{Y^{\oplus m}}$.
The endomorphism $R_{Y^{\oplus m}}(\phi)$ is presented by a $(m\times m)$-matrix with entries in $\End_A(Y)$.
Since the $S$-socle of $Y$ is $S$ and $\phi|_{S^{\oplus m}}=(\phi|_X)|_{S^{\oplus m}}=0$, all the entries of a matrix presentation of $R_{Y^{\oplus m}}(\phi)$ are not isomorphisms and so these morphisms belongs to the radical of a local algebra $\End_A(Y)$.
This implies that $R_{Y^{\oplus m}}(\phi)$ belongs to the radical of $\End_A(Y^{\oplus m})$.
In particular, $R_{Y^{\oplus m}}(\phi)$ is nilpotent.
Let $f\in\End_A(Z)$ be any morphism. Then, as before, $(f\phi)|_{S^{\oplus m}}=0$ holds since $R_X(\phi)=0$.
Thus by the same argument, $R_{Y^{\oplus m}}(f\phi)$ is nilpotent.
By the universality of the push-out diagram, $f\phi$ is nilpotent for any $f\in \End_A(Z)$.
This implies that $\id_Z-f\phi$ is an isomorphism, and so $\phi$ belongs to the radical of $\End_A(Z)$.
\end{proof}
Then we have the following result.
\begin{thm}\label{thm-dec-ch-tilt}
For $i\in Q_0^1$, let $m$ be the length of an $S(v)$-socle of $T^1(i)$.
Then the push-out $U(i)$ of $T^2(v)^{\oplus m} \leftarrow S(v)^{\oplus m} \to T^1(i)$ as (\ref{diagram-push-out-U}) is isomorphic to $T(i)$.
\end{thm}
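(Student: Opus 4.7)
The plan is to verify that $U(i)$ satisfies the characterisation of the indecomposable summand $T(i)$ of the characteristic tilting module from Proposition~\ref{pro-ch-tilt}: it should be indecomposable, lie in $\mcF(\Delta)\cap\mcF(\nabla)$, contain $\Delta(i)$ as a submodule, and have the corresponding quotient filtered by $\Delta(j)$ with $j\lhd i$, $j\neq i$. Together with $\add T = \mcF(\Delta)\cap\mcF(\nabla)$, these properties will force $U(i) \cong T(i)$.

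Proposition~\ref{pro-push-out-int} already supplies $U(i)\in\mcF(\Delta)\cap\mcF(\nabla)$. The composite $\beta\circ f_1\colon \Delta(i) \to U(i)$ is injective, since $f_1$ is injective and $\beta$ is injective as the push-out of the injection $f^{\oplus m}$. Splicing the short exact sequences $0\to\Delta(i)\to T^1(i)\to X^1(i)\to 0$ and $0\to T^1(i)\xto{\beta}U(i)\to X^2(v)^{\oplus m}\to 0$ yields $0\to X^1(i)\to U(i)/\Delta(i)\to X^2(v)^{\oplus m}\to 0$. Using the identifications $\Delta^{\ell}(j)=\Delta(j)$ for $j\in Q_0^{\ell}$ from Lemma~\ref{lem_Q_Ql}, $X^1(i)$ already lies in $\mcF(\Delta(j)\mid j\lhd i, j\neq i)$. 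When $m>0$, $S(v)$ is a composition factor of $T^1(i)$, which forces $v\lhd^1 i$ and hence $v\lhd i$; by transitivity this places $X^2(v)^{\oplus m}$ in $\mcF(\Delta(j)\mid j\lhd i, j\neq i)$ as well, so $U(i)/\Delta(i)$ has the required filtration.

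For indecomposability, we invoke Lemma~\ref{lem-push-out-ind} with $X=T^1(i)$, $Y=T^2(v)$, $S=S(v)$. Hypothesis~(4) is clear; hypothesis~(3) follows from Lemma~\ref{lem-push-out-appro}, because $U(i)\in\mcF(\nabla)$ makes any left $\mcF(\nabla)$-approximation automatically a left $(\add U(i))$-approximation. The inclusion-of-socle condition for $s$ is the definition of $m$; for $f^{\oplus m}$ we need $[T^2(v):S(v)]=1$, which holds because $\Delta^2(v)=S(v)$ is the only $\Delta^2$-piece of $T^2(v)$ carrying $S(v)$ (any $\Delta^2(j)$ with $j\lhd^2 v$, $j\neq v$ and $S(v)$ as a composition factor would yield $v\lhd^2 j$ contradicting $j\lhd^2 v$), and since $v$ is a sink the simple $S(v)$ is projective, so every $S(v)$-composition factor lies in the socle. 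Hypotheses (1) and (2) reduce to a support argument: projectivity of $S(v)$ puts all $S(v)$-composition factors of $T^1(i)$ in the socle, so $T^1(i)/S(v)^{\oplus m}$ is supported on $Q_0^1\setminus\{v\}$; dually $X^2(v)^{\oplus m}$ is supported on $Q_0^2\setminus\{v\}$ since $[X^2(v):S(v)]=0$. The absence of arrows between $Q_0^1\setminus\{v\}$ and $Q_0^2\setminus\{v\}$ built into the definition of a deconcatenation forces both Hom-spaces to vanish.

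Combining these ingredients, $U(i)$ is an indecomposable module in $\mcF(\Delta)\cap\mcF(\nabla)=\add T$ with the required submodule and quotient, so $U(i)\cong T(k)$ for some $k$; the observation that $i$ is the unique maximal weight among the composition factors of $U(i)$ then pins down $k=i$. The degenerate case $m=0$ collapses the push-out to $U(i)=T^1(i)$, which is itself an indecomposable $A$-module supported on $Q_0^1\setminus\{v\}$, and the same characterisation applies directly. The main obstacle is the support bookkeeping behind hypotheses~(1) and~(2) of Lemma~\ref{lem-push-out-ind}, together with the identification of $f^{\oplus m}$ as the inclusion of the full $S(v)$-socle of $T^2(v)^{\oplus m}$; the decisive leverage in both steps is that $v$ is a sink, which makes $S(v)$ simple projective.
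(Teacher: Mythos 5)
Your proposal is correct, and its skeleton is the same as the paper's: the key inputs are Proposition~\ref{pro-push-out-int} (membership of $U(i)$ in $\mcF(\Delta)\cap\mcF(\nabla)$), Lemma~\ref{lem-push-out-appro} (to get hypothesis (3) of the indecomposability criterion), and Lemma~\ref{lem-push-out-ind} (indecomposability of the push-out). You diverge from the paper only in the last step: the paper identifies $U(i)$ with $T(i)$ by invoking Lemma~\ref{lem-delta-hatT-appro}, i.e.\ the composite $\Delta(i)\to T^1(i)\to U(i)$ is a left $\mcF(\nabla)$-approximation, and then matching it against the approximation defining $T(i)$ in Proposition~\ref{pro-ch-tilt}; you instead argue that an indecomposable object of $\mcF(\Delta)\cap\mcF(\nabla)=\add T$ must be some $T(k)$, and pin down $k=i$ from the $\Delta$-filtration $0\to\Delta(i)\to U(i)\to U(i)/\Delta(i)\to 0$ with $U(i)/\Delta(i)\in\mcF(\Delta(j)\mid j\lhd i,\ j\neq i)$ via the maximal-weight composition factor. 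Both endings are valid; yours trades the approximation-theoretic uniqueness (which implicitly uses left minimality of $\Delta(i)\to T(i)$) for a more elementary multiplicity count. A further merit of your write-up is that you actually verify hypotheses (1), (2) and the socle conditions of Lemma~\ref{lem-push-out-ind} — the disjoint-support argument resting on $v$ being a sink, so that $S(v)$ is projective and all $S(v)$-composition factors of $T^1(i)$ and $T^2(v)$ sit in the socle, giving $[T^2(v):S(v)]=1$ and the vanishing of the two Hom-spaces — whereas the paper cites the lemma without spelling these checks out. You also correctly handle the degenerate case $m=0$, which the paper leaves implicit.
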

\begin{proof}
By Lemmas \ref{lem-push-out-appro}, \ref{lem-delta-hatT-appro} and \ref{lem-push-out-ind}, $U(i)$ is indecomposable.
By Lemma \ref{lem-delta-hatT-appro}, $U(i)$ gives a left $\mcF(\nabla)$-approximation of $\Delta(i)$.
Therefore, $U(i)$ is isomorphic to $T(i)$.
\end{proof}
Note that if $v$ is a source, then an analogous results can be derived with $T(i)$ being obtained through a pull-back.

\if() 
Throughout this subsection let $Q^1 \sqcup Q^2$ be a deconcatenation of $Q$ at a sink $v$.
We denote by $A$ a factor algebra of $\kk Q$ modulo some admissible ideal, and denote by $A^1$ and $A^2$ the corresponding  factor algebras of $A$ associated to the deconcatenation, see Subsection \ref{subsection-qh-sink-source}.
Since $A^1$ and $A^2$ are factor algebras of $A$, we regard $A^1$-modules and $A^2$-modules as $A$-modules.

Let $\lhd$ be a partial order on $Q_0$ defining a quasi-hereditary structure on $A$ and $\lhd^{\ell}\coloneqq \lhd|_{Q^{\ell}_0}$ for $\ell=1,2$.
By Lemma \ref{lem_Q_Ql}, $\lhd^{\ell}$ defines a quasi-hereditary structure on $A^{\ell}$ for $\ell=1,2$.
We denote by $\Delta^{\ast}$, $\nabla^{\ast}$, $T^{\ast}(i)$ standard modules, costandard modules and indecomposable direct summands of the characteristic tilting module of a quasi-hereditary algebra $(A^{\ast},(Q_0^{\ast}, \lhd^{\ast}))$ for $\ast=\emptyset, 1, 2$.
Note that $\nabla(v)$ is obtained by the push-out of $\nabla^1(v) \leftarrow S(v) \to \nabla^2(v)$.
It is easy to see that $\mcF(\Delta^{\ell})$ is contained in $\mcF(\Delta)$ and $\mcF(\nabla^{\ell}(u) \mid u\neq v)$ is contained in $\mcF(\nabla)$ for $\ell=1,2$.
Since $v$ is a sink, $\Delta^1(v)=\Delta^2(v)=S(v)$ is a simple projective $A^{\ast}$-module for $\ast=\emptyset, 1, 2$.

In this subsection, we construct a characteristic tilting module of $(A,(Q_0,\lhd))$ from those of $(A^1,(Q_0^1,\lhd^1))$ and $(A^2,(Q_0^2,\lhd^2))$, see Proposition \ref{pro-ch-tilt-push-out} and Theorem \ref{thm-dec-ch-tilt}.
We begin with the following lemma.
\begin{lem}\label{lem-push-out}
\new{In $\modu A$,} consider the following \old{left} commutative diagram
\begin{align*}
\begin{tikzpicture}
\node(Sl)at(0,0){$S$};
\node(Sr)at(4,0){$S$};
\node(X)at(1,-1){$X$};
\node(Y)at(5,-1){$Y$};
\node(Tl)at(0,-2){$T$};
\node(Tr)at(4,-2){$T$};
\node(P)at(1,-3){$P$};
\node(Q)at(5,-3){$Q$};
\draw[thick, ->] (Sl)--(Sr) node[midway, above]{$\id$};
\draw[thick, ->] (Sl)--(X);
\draw[thick, ->] (Sr)--(Y);
\draw[thick, ->] (Sl)--(Tl);
\draw[thick, ->] (Sr)--(Tr);
\draw[thick, ->] (Tl)--(Tr) node[midway, above]{$\id$};
\draw[thick, ->, preaction={draw=white,-,line width=4pt}] (X)--(P);
\draw[thick, ->] (Tl)--(P);
\draw[thick, ->] (Y)--(Q);
\draw[thick, ->] (P)--(Q);
\draw[thick, ->, preaction={draw=white,-,line width=4pt}] (X)--(Y);
\draw[thick, ->] (Tr)--(Q);
\end{tikzpicture}
\end{align*}
where $P$ is the push-out of $T \leftarrow S \to X$.
Then $Q$ is the push-out of $P \leftarrow X \to Y$ if and only if $Q$ is the push-out of $T \leftarrow S \to Y$.
Moreover, if $S\to T$ is injective and $Q$ is a push-out, then the front square is a pull-back diagram.
\old{and $Q$ is the push-out of $P \leftarrow X \to Y$.
Let $f\colon  T \to Q$ be the composite of $T \to P \to Q$.
Then the right side square is commutative and is a push-out diagram.}
\end{lem}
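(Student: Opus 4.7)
The statement is a two-part fact that I would derive from standard diagram-lemmas in an abelian category (here $\modu A$). The first part is the horizontal pasting lemma for push-outs applied to the left and right squares of the front face, and the second part uses the fact that in an abelian category a push-out along a monomorphism is also a pull-back (and preserves the monomorphism on the opposite side).

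\emph{First part (the equivalence).} The hypothesis makes the left square with vertices $S,X,T,P$ a push-out. The outer rectangle obtained by gluing the left square to the right square (vertices $X,Y,P,Q$) along the common edge $X\to P$ is precisely the square with vertices $S,Y,T,Q$, where the top edge is the composite $S \to X \to Y$ and the bottom edge is the composite $T \to P \to Q$. By the horizontal pasting lemma for push-outs (which is formal: a co-cone on the outer rectangle corresponds bijectively to a co-cone on the right square via the universal property of the left push-out), the outer rectangle is a push-out if and only if the right square is. This is exactly the claimed equivalence.

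\emph{Second part (the pull-back conclusion).} Assume $S\to T$ is a monomorphism. The standard lemma in abelian categories states that if a push-out square has one side a monomorphism, then the opposite side is also a monomorphism and the square is simultaneously a pull-back (this can be verified directly by computing kernels of the push-out presentation, or by quoting Freyd--Mitchell). Applying this to the left square (which is a push-out with $S\to T$ mono) shows that $X\to P$ is a monomorphism. Now the right square $X,Y,P,Q$ is a push-out by assumption, and by what we just established, $X\to P$ is a monomorphism; applying the same abelian-category fact to this right square shows that it is also a pull-back, which is the stated conclusion about the front face.

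\emph{Expected difficulties.} There is really no substantial obstacle: once the diagram is parsed and one identifies the two relevant squares of the cube, both parts are immediate consequences of well-known categorical facts. The only care needed is bookkeeping: making sure that the composites $S\to X\to Y$ and $T\to P\to Q$ agree with the maps given in the cube (which follows from the commutativity of the side faces), so that the outer rectangle obtained from pasting really is the square $S,Y,T,Q$ that appears in the other push-out description.
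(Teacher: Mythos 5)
Your proof is correct: the first claim is exactly the pasting lemma for push-outs applied to the left and front faces (whose outer rectangle coincides with the right face by commutativity of the top and bottom faces), and the second follows from the standard fact that in an abelian category a push-out along a monomorphism is again a monomorphism on the parallel edge and the square is simultaneously a pull-back. The paper's own proof is just the one-line remark that the claim follows by diagram chasing, so your argument is a valid (and essentially the intended) way of carrying out that chase.
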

\begin{proof}
The claim is shown by a diagram chasing.
\end{proof}
By using Lemma \ref{lem-push-out}, we have the following two lemmas.
Recall that by Proposition \ref{pro-ch-tilt} we have two short exact sequences
\[
0\to S(v) \to T^2(v) \to X^2(v) \to 0,
\qquad
0\to Y^2(v) \to T^2(v) \to \nabla^2(v) \to 0,
\]
where $X^2(v)$ belongs to $\mcF(\Delta^2(j) \mid j \lhd v, j\neq v)$ and $Y^2(v)$ belongs to $\mcF(\nabla^2(j) \mid j \lhd v, j\neq v )$.
\begin{lem}\label{lem-T-nabla}
Let $T_{\nabla}$ be the push-out of $T^2(v) \leftarrow S(v) \to \nabla^1(v)$.
Then $T_{\nabla}$ belongs to $\mcF(\nabla)$.
\end{lem}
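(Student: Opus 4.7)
The plan is to exhibit $T_{\nabla}$ as an extension of $\nabla(v)$ by $Y^2(v)$ and then invoke closure of $\mcF(\nabla)$ under extensions. By Proposition~\ref{pro-ch-tilt}(4), $\mcF(\nabla) = T^{\perp}$, which is closed under extensions. Both $\nabla(v)$ and $Y^2(v)$ lie in $\mcF(\nabla)$: the former tautologically, and the latter because $Y^2(v)\in\mcF(\nabla^2(j) \mid j\lhd v,\ j\neq v)$ and each such $\nabla^2(j)$ coincides with $\nabla(j)$ by Lemma~\ref{lem_Q_Ql}.

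First I would verify that the composite $S(v)\xrightarrow{f}T^2(v)\twoheadrightarrow \nabla^2(v)$ is the socle inclusion. Since $\soc\nabla^2(v)=S(v)$ is simple, $\Hom(S(v),\nabla^2(v))$ is one-dimensional, so it suffices to show the composite is nonzero. If it vanished, then $f$ (which is injective by Proposition~\ref{pro-ch-tilt}) would factor through $Y^2(v)=\ker(T^2(v)\twoheadrightarrow \nabla^2(v))$; but a straightforward induction on filtration length shows that any simple submodule of a module filtered by $\{\nabla^2(j)\mid j\neq v\}$ is isomorphic to some $S(j)$ with $j\neq v$, so $S(v)$ does not embed in $Y^2(v)$. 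With this in hand, the two morphisms $T^2(v)\twoheadrightarrow \nabla^2(v)\hookrightarrow\nabla(v)$ and $\nabla^1(v)\hookrightarrow \nabla(v)$ agree on $S(v)$ (both restrict to the canonical socle inclusion into $\nabla(v)$), so the universal property of the push-out defining $T_{\nabla}$ produces a canonical morphism $\varphi\colon T_{\nabla}\to \nabla(v)$.

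By construction $Y^2(v)\subseteq \ker\varphi$. To see that this inclusion is an equality, I would apply the five lemma to the diagram
\[
\begin{tikzcd}[arrow style=tikz, arrows={thick}]
0 \arrow[r] & \nabla^2(v) \arrow[r] \arrow[d, equal] & T_{\nabla}/Y^2(v) \arrow[r] \arrow[d, "\overline{\varphi}"] & \nabla^1(v)/S(v) \arrow[r] \arrow[d, equal] & 0 \\
0 \arrow[r] & \nabla^2(v) \arrow[r] & \nabla(v) \arrow[r] & \nabla^1(v)/S(v) \arrow[r] & 0
\end{tikzcd}
\]
whose top row is obtained by taking the short exact sequence $0\to T^2(v)\to T_{\nabla}\to \nabla^1(v)/S(v)\to 0$ (which exists because the parallel arrows in a push-out have the same cokernel, and $S(v)\hookrightarrow \nabla^1(v)$ is injective with cokernel $\nabla^1(v)/S(v)$) and quotienting by $Y^2(v)\subseteq T^2(v)$, and whose bottom row is the analogous cokernel sequence for the push-out defining $\nabla(v)$. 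The five lemma then forces $\overline{\varphi}$ to be an isomorphism, yielding the desired short exact sequence $0\to Y^2(v)\to T_{\nabla}\to \nabla(v)\to 0$. The main obstacle is the compatibility check for the two push-out diagrams, but once the set-up is in place the conclusion is immediate; Lemma~\ref{lem-push-out} should make this bookkeeping essentially automatic.
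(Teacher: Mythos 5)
Your proof is correct and follows essentially the same route as the paper: both arguments exhibit $T_{\nabla}$ as an extension $0\to Y^2(v)\to T_{\nabla}\to\nabla(v)\to 0$ and conclude by closure of $\mcF(\nabla)$ under extensions, the paper obtaining this sequence from its general push-out bookkeeping lemma while you derive it by hand via the universal property and the five lemma. The extra care you take in checking that $S(v)\to T^2(v)\to\nabla^2(v)$ is the socle inclusion is a point the paper leaves implicit, and it is handled correctly.
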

\begin{proof}
Recall that $S(v)$ is a projective $A^2$-module.
Thus by Lemma \ref{lem-push-out}, we have the following commutative diagram
\begin{align*}
\begin{tikzpicture}
\node(Sl)at(0,0){$S(v)$};
\node(Sr)at(4,0){$S(v)$};
\node(X)at(1,-1){$T^2(v)$};
\node(Y)at(5,-1){$\nabla^2(v)$};
\node(Tl)at(0,-2){$\nabla^1(v)$};
\node(Tr)at(4,-2){$\nabla^1(v)$};
\node(P)at(1,-3){$T_{\nabla}$};
\node(Q)at(5,-3){$\nabla(v)$};
\node(Yu)at(-3,-1){$Y^2(v)$};
\node(Yd)at(-3,-3){$Y^2(v)$};
\draw[thick, ->] (Sl)--(X);
\draw[thick, ->] (Sr)--(Y);
\draw[thick, ->] (Sr)--(Tr);
\draw[thick, ->, preaction={draw=white,-,line width=4pt}] (X)--(Y);
\draw[thick, ->] (Sl)--(Sr) node[midway, above]{$\id$};
\draw[thick, ->] (Sl)--(Tl);
\draw[thick, ->] (Tl)--(Tr) node[midway, above]{$\id$};
\draw[thick, ->, preaction={draw=white,-,line width=4pt}] (X)--(P);
\draw[thick, ->] (Tl)--(P);
\draw[thick, ->] (Y)--(Q);
\draw[thick, ->] (P)--(Q);
\draw[thick, ->] (Tr)--(Q);
\draw[thick, ->, preaction={draw=white,-,line width=4pt}] (Yu)--(X);
\draw[thick, ->] (Yu)--(Yd) node[midway, left]{$\id$};
\draw[thick, ->] (Yd)--(P);
\end{tikzpicture}
\end{align*}
where horizontal sequences in the front side are short exact sequences.
The upper short exact sequence in the front side is given by Proposition \ref{pro-ch-tilt}. 
Since $Y^2(v)$ belongs to $\mcF(\nabla^2(u) \mid u \lhd v, u \neq v)$, the bottom short exact sequence implies that $T_{\nabla}\in\mcF(\nabla)$.
\end{proof}
The following lemma is a key lemma.
\begin{lem}\label{lem-push-out-XYZ}
Let $Z$ be an $A^1$-module and let $0 \subset X \subset Y \subset Z$ be a sequence of submodules  of $Z$.
Assume that $Y/X$ is isomorphic to $\nabla^1(v)$.
Since $S(v)$ is projective, a non-zero morphism $S(v) \to Z/X$ lifts to $S(v) \to Z$.
Then the push-out $Z^\p$ of $T^2(v) \leftarrow S(v) \to Z$ satisfies the following properties.
\begin{itemize}
\item[(i)]
There exists a short exact sequence $0 \to Z \xto{\beta} Z^\p \to X^2(v) \to 0$ of $A$-modules.
\item[(ii)]
There exists a sequence of submodules $0 \subset X^\p \subset Y^\p \subset Z^\p$ of $Z^\p$ such that $X^\p=\beta(X)$, $Y^\p/X^\p \cong  T_{\nabla}$ and $Z^\p/Y^\p \cong  Z/Y$.
\end{itemize}
\end{lem}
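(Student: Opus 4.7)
The plan is to derive both statements from the functoriality of push-outs together with Lemma \ref{lem-push-out}, which lets us factor a push-out of $T^2(v) \leftarrow S(v) \to Z/X$ through the intermediate subobject $Y/X \cong \nabla^1(v)$.

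For (i), observe that the map $S(v) \to T^2(v)$ is injective with cokernel $X^2(v)$. Since pushing out an injection along any morphism yields an injection with the same cokernel, the morphism $\beta \colon Z \to Z^\p$ is injective and $\coKer(\beta) \cong X^2(v)$, giving the required short exact sequence.

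For (ii), set $X^\p \coloneqq \beta(X)$; by (i) the map $\beta$ is injective, so $X^\p \cong X$. The composition $S(v) \to Z \to Z/X$ equals the prescribed morphism $S(v) \to Z/X$, which (by the role played by $Y/X$ in the hypothesis) factors as the socle embedding $S(v) \hookrightarrow \nabla^1(v) \cong Y/X$ followed by the inclusion $Y/X \hookrightarrow Z/X$. Quotienting the original push-out square by the submodule $X \subset Z$ (which maps to $X^\p$ in $Z^\p$) shows that $Z^\p/X^\p$ is the push-out of $T^2(v) \leftarrow S(v) \to Z/X$. Now I apply Lemma \ref{lem-push-out} to this factorization: since the push-out of $T^2(v) \leftarrow S(v) \to \nabla^1(v)$ is $T_\nabla$ by construction (see Lemma \ref{lem-T-nabla}), the module $Z^\p/X^\p$ is equivalently the push-out of $T_\nabla \leftarrow \nabla^1(v) \to Z/X$. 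Because $\nabla^1(v) \hookrightarrow Z/X$ is injective, so is the induced map $T_\nabla \to Z^\p/X^\p$, and its cokernel agrees with the cokernel of $\nabla^1(v) \to Z/X$, i.e.\ with $(Z/X)/(Y/X) \cong Z/Y$. Defining $Y^\p$ to be the preimage of $T_\nabla \subset Z^\p/X^\p$ under the projection $Z^\p \twoheadrightarrow Z^\p/X^\p$ then yields a submodule with $Y^\p/X^\p \cong T_\nabla$ and $Z^\p/Y^\p \cong Z/Y$, as required.

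The main technical obstacle is the bookkeeping of iterated push-outs: first verifying that quotienting the push-out square by $X$ yields again a push-out (which is the standard fact that push-outs commute with taking cokernels in the `$Z$'-component), and then correctly invoking Lemma \ref{lem-push-out} to ``peel off'' the $T_\nabla$ layer from the larger push-out $Z^\p/X^\p$. A minor point to record is that the hypothesis on the morphism $S(v) \to Z/X$ is to be read as the one induced by the socle of $\nabla^1(v) \cong Y/X$, since this factorization through $\nabla^1(v)$ is exactly what makes the second application of Lemma \ref{lem-push-out} available.
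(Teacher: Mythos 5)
Your proof is correct and follows essentially the same route as the paper's: both establish (i) from the fact that pushing out the injection $S(v)\to T^2(v)$ along $S(v)\to Z$ gives an injection with cokernel $X^2(v)$, and both prove (ii) by passing to $Z^\p/X^\p$ as the push-out of $T^2(v)\leftarrow S(v)\to Z/X$ and then using Lemma \ref{lem-push-out} to refactor it through $\nabla^1(v)\cong Y/X$, identifying the middle layer with $T_\nabla$ and the top quotient with $Z/Y$. Your closing remark about reading the morphism $S(v)\to Z/X$ as the one induced by the socle of $Y/X$ is exactly the implicit convention in the paper's argument.
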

\begin{proof}
The factor module $Z/X$ has a submodule $\nabla^1(v)$, so there exists a non-zero morphism $S(v) \to Z/X$.
Since $S(v)$ is projective, the morphism factors through a canonical surjection $Z\to Z/X$.
By Lemma \ref{lem-push-out}, we have the following commutative diagram
\begin{align*}
\begin{tikzpicture}
\node(Sl)at(0,0){$S(v)$};
\node(Sr)at(4,0){$S(v)$};
\node(X)at(1,-1){$Z$};
\node(Y)at(5,-1){$Z/X$};
\node(Tl)at(0,-2){$T^2(v)$};
\node(Tr)at(4,-2){$T^2(v)$};
\node(P)at(1,-3){$Z^\p$};
\node(Q)at(5,-3){$Z^\p/X$};
\node(Yu)at(-3,-1){$X$};
\node(Yd)at(-3,-3){$X$};
\draw[thick, ->] (Sl)--(Sr) node[midway, above]{$\id$};
\draw[thick, ->] (Sl)--(X);
\draw[thick, ->] (Sr)--(Y);
\draw[thick, ->] (Sr)--(Tr);
\draw[thick, ->, preaction={draw=white,-,line width=4pt}] (X)--(Y);
\draw[thick, ->] (Sl)--(Tl);
\draw[thick, ->] (Tl)--(Tr) node[midway, above]{$\id$};
\draw[thick, ->, preaction={draw=white,-,line width=4pt}] (X)--(P) node[near start ,left]{$\beta$};
\draw[thick, ->] (Tl)--(P);
\draw[thick, ->] (Y)--(Q);
\draw[thick, ->] (P)--(Q);
\draw[thick, ->] (Tr)--(Q);
\draw[thick, ->, preaction={draw=white,-,line width=4pt}] (Yu)--(X);
\draw[thick, ->] (Yu)--(Yd) node[midway, left]{$\id$};
\draw[thick, ->] (Yd)--(P);
\end{tikzpicture}
\end{align*}
where horizontal sequences are short exact sequences.
Since $S(v) \to T^2(v)$ is injective, so is $Z\to Z^\p$, and cokernels of these morphisms are isomorphic to each other, which are isomorphic to $X^2(v)$.
Therefore, we have an short exact sequence $0 \to Z \to Z^\p \to X^2(v) \to 0$.

On the other hand, by Lemma \ref{lem-push-out}, we have the following commutative diagram
\begin{align*}
\begin{tikzpicture}
\node(Sl)at(-4,0){$S(v)$};
\node(Sr)at(0,0){$S(v)$};
\node(X)at(1,-1){$Z/X$};
\node(Y)at(5,-1){$Z/Y$};
\node(Tl)at(-4,-2){$T^2(v)$};
\node(Tr)at(0,-2){$T^2(v)$};
\node(P)at(1,-3){$Z^\p/X$};
\node(Q)at(5,-3){$Z/Y$};
\node(Yu)at(-3,-1){$Y/X$};
\node(Yd)at(-3,-3){$T_{\nabla}$};
\draw[thick, ->] (Sl)--(Sr) node[midway, above]{$\id$};
\draw[thick, ->] (Sl)--(Yu);
\draw[thick, ->] (Sr)--(X);
\draw[thick, ->] (X)--(Y);
\draw[thick, ->] (Sl)--(Tl);
\draw[thick, ->] (Sr)--(Tr);
\draw[thick, ->] (Tl)--(Tr) node[midway, above]{$\id$};
\draw[thick, ->] (X)--(P);
\draw[thick, ->] (Tl)--(Yd);
\draw[thick, ->] (Y)--(Q) node[midway, left]{$\id$};
\draw[thick, ->] (P)--(Q);
\draw[thick, ->] (Tr)--(P);
\draw[thick, ->, preaction={draw=white,-,line width=4pt}] (Yu)--(X);
\draw[thick, ->, preaction={draw=white,-,line width=4pt}] (Yu)--(Yd);
\draw[thick, ->] (Yd)--(P);
\end{tikzpicture}
\end{align*}
where horizontal sequences are short exact sequences.
Thus $T_{\nabla}$ is a submodule of $Z^\p/X$.
Let $Y^\p=\pi^{-1}(T_{\nabla})$, where $\pi \colon  Z^\p \to Z^\p/X$ is a canonical morphism.
Then we have $Z^\p/Y^\p \cong  Z/Y$ and $Y^\p/X \cong  T_{\nabla}$.
\end{proof}
We need the following fundamental observation.
For $i=1,2$, let $f_i\colon X_i \to Y$ and $g_i\colon X_i\to Z_i$ be morphisms of modules over a ring.
We have the following two push-out diagrams
\begin{align}\label{push-out-P}
\begin{tikzpicture}[baseline=-30]
\node(S)at(0,0){$X_1$};
\node(Y)at(2,0){$Y$};
\node(T)at(0,-2){$Z_1$};
\node(Q)at(2,-2){$P$};
\draw[thick, ->] (S)--(Y) node[midway, above]{$f_1$};
\draw[thick, ->] (S)--(T) node[midway, left]{$g_1$};
\draw[thick, ->] (Y)--(Q) node[midway, right]{$\beta_1$};
\draw[thick, ->] (T)--(Q) node[midway, above]{$\alpha_1$};
\end{tikzpicture}
\end{align}
\begin{align}\label{push-out-Q}
\begin{tikzpicture}[baseline=-30]
\node(S)at(0,0){$X_1\oplus X_2$};
\node(Y)at(3,0){$Y$};
\node(T)at(0,-2){$Z_1\oplus Z_2$};
\node(Q)at(3,-2){$Q$};
\draw[thick, ->] (S)--(Y) node[midway, above]{$(f_1\,\, f_2)$};
\draw[thick, ->] (S)--(T) node[midway, left]{$g_1\oplus g_2$};
\draw[thick, ->] (Y)--(Q) node[midway, right]{$\beta_2$};
\draw[thick, ->] (T)--(Q) node[midway, above]{$(\alpha_2^1 \,\,\alpha_2^2)$};
\end{tikzpicture}
\end{align}
Since $P$ is a push-out, there exists a morphism $\gamma\colon  P \to Q$ such that $\gamma\alpha_1=\alpha_2^1$ and $\gamma\beta_1=\beta_2$.
Thus we have the following commutative diagram
\begin{align}\label{push-out-PQ}
\begin{tikzpicture}[baseline=-30]
\node(S)at(0,0){$X_2$};
\node(Y)at(2,0){$P$};
\node(T)at(0,-2){$Z_2$};
\node(Q)at(2,-2){$Q$};
\draw[thick, ->] (S)--(Y) node[midway, above]{$\beta_1f_2$};
\draw[thick, ->] (S)--(T) node[midway, left]{$g_2$};
\draw[thick, ->] (Y)--(Q) node[midway, right]{$\gamma$};
\draw[thick, ->] (T)--(Q) node[midway, above]{$\alpha_2^2$};
\end{tikzpicture}
\end{align}
Then our claim is the following.
\begin{lem}\label{lem-push-out-ass}
Under the above notation, the diagram (\ref{push-out-PQ}) is a push-out diagram.
\end{lem}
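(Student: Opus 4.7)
The plan is to verify the universal property of the push-out directly for diagram (\ref{push-out-PQ}). The statement is essentially a ``pasting law'' for push-outs built from a coproduct decomposition: once one of the two pairs $(f_1, g_1)$ has been absorbed into $P$, the remaining data $(f_2, g_2)$ should produce $Q$ from $P$.

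I would start by fixing a test datum, that is, an object $N$ together with morphisms $u \colon P \to N$ and $v \colon Z_2 \to N$ satisfying $u\beta_1 f_2 = v g_2$. The morphism $u$ yields, by composition, a pair $u\beta_1 \colon Y \to N$ and $u\alpha_1 \colon Z_1 \to N$ which agree on $X_1$ since $\beta_1 f_1 = \alpha_1 g_1$ in the push-out $P$. Combining $u\alpha_1$ with $v$ into a morphism $(u\alpha_1,\, v) \colon Z_1\oplus Z_2 \to N$, I obtain a pair of morphisms out of $Y$ and $Z_1\oplus Z_2$; their compatibility with $(f_1,f_2)$ and $g_1\oplus g_2$ splits into two components, one on $X_1$ (which holds as just noted) and one on $X_2$ (which is exactly the hypothesis $u\beta_1 f_2 = v g_2$). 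The universal property of $Q$ therefore produces a unique $\phi \colon Q \to N$ with $\phi\beta_2 = u\beta_1$, $\phi\alpha_2^1 = u\alpha_1$ and $\phi\alpha_2^2 = v$.

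Next I would check that this $\phi$ satisfies $\phi\gamma = u$ by precomposing with $\beta_1$ and $\alpha_1$: the identities $\phi\gamma\beta_1 = \phi\beta_2 = u\beta_1$ and $\phi\gamma\alpha_1 = \phi\alpha_2^1 = u\alpha_1$, combined with the universal property of $P$, force $\phi\gamma = u$. For uniqueness, any $\phi'\colon Q \to N$ satisfying $\phi'\gamma = u$ and $\phi'\alpha_2^2 = v$ automatically fulfils the three identities $\phi'\beta_2 = u\beta_1$, $\phi'\alpha_2^1 = u\alpha_1$ and $\phi'\alpha_2^2 = v$ that characterised $\phi$ via the universal property of $Q$, so $\phi' = \phi$.

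No module-theoretic input beyond the naturality of push-outs enters the argument; it is a purely categorical pasting calculation that works in any category with the relevant colimits. The main challenge is therefore purely notational: three push-out squares are in play simultaneously and the verification requires invoking the universal properties of $P$ and of $Q$ in opposite directions, so careful bookkeeping of the maps $\beta_1, \alpha_1, \beta_2, \alpha_2^1, \alpha_2^2, \gamma$ is essential to avoid confusion.
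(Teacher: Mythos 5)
Your proof is correct and follows essentially the same route as the paper's: both verify the universal property of (\ref{push-out-PQ}) directly by assembling the test maps into a cocone for the push-out square (\ref{push-out-Q}), invoking its universal property for existence, and then using the uniqueness clause of the universal property of $P$ to conclude $\phi\gamma = u$, with uniqueness following from the characterisation of maps out of $Q$. Only the notation differs.
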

\begin{proof}
Let $W$ be a module and $a\colon Z_2 \to W$ and $b\colon P\to W$ be two morphisms satisfying $ag_2=b\beta_1f_2$.
We show that there exists a unique morphism $c\colon Q\to W$ satisfying $c\alpha_2^2=a$ and $c\gamma=b$.

First we show the uniqueness.
Assume that there exists a morphism $c\colon Q\to W$ satisfying $c\alpha_2^2=a$ and $c\gamma=b$.
Then this $c$ satisfies $c\beta_2=b\beta_1$ and $c(\alpha_2^1\,\,\alpha_2^2)=(b\alpha_1\,\,a)$.
In fact, $c\alpha_2^2=a$ is clear, and $c\beta_2=c\gamma\beta_1=b\beta_1$ and $c\alpha_2^1=c\gamma\alpha_1=b\alpha_1$ hold.
By the uniqueness property of the push-out diagram (\ref{push-out-Q}), $c$ is a unique morphism under the given assumption.

Next we show the existence property.
We have an equality $(b\alpha_1\,\,a)(g_1\oplus g_2) = b\beta_1 (f_1\,\,f_2)$.
Since $(\ref{push-out-Q})$ is a push-out diagram, there exists a morphism $c\colon Q\to W$ satisfying $c(\alpha_2^1\,\,\alpha_2^2)=(b\alpha_1\,\,a)$ and  $c\beta_2=b\beta_1$, see the following diagram.
\begin{align*}
\begin{tikzpicture}[baseline=-30]
\node(S)at(0,0){$X_1\oplus X_2$};
\node(Y)at(3,0){$Y$};
\node(T)at(0,-2){$Z_1\oplus Z_2$};
\node(Q)at(3,-2){$Q$};
\node(W)at(4,-3){$W$};
\draw[thick, ->] (S)--(Y) node[midway, above]{$(f_1\,\, f_2)$};
\draw[thick, ->] (S)--(T) node[midway, left]{$g_1\oplus g_2$};
\draw[thick, ->] (Y)--(Q) node[midway, left]{$\beta_2$};
\draw[thick, ->] (T)--(Q) node[midway, above]{$(\alpha_2^1 \,\,\alpha_2^2)$};
\draw[thick, ->] (T)--(W) node[midway, xshift=-7mm,yshift=-3.5mm]{$(b\alpha_1\,\,a)$};
\draw[thick, ->] (Y)--(W) node[midway, right]{$b\beta_1$};
\draw[thick, ->] (Q)--(W) node[midway, above]{$c$};
\end{tikzpicture}
\end{align*}
Then we show that this $c$ satisfies $c\alpha_2^2=a$ and $c\gamma=b$.
By comparing the second term of $c(\alpha_2^1\,\,\alpha_2^2)=(b\alpha_1\,\,a)$, we have $c\alpha_2^2=a$.
We have that $c\gamma\alpha_1=c\alpha_2^1=b\alpha_1$ and $c\gamma\beta_1=c\beta_2=b\beta_1$.
Then by the uniqueness property of the push-out diagram (\ref{push-out-P}), we have $c\gamma=b$.
\end{proof}
For $i\in Q_0^1$, let $T^1(i)$ be an indecomposable direct summand of the characteristic tilting $A^1$-module.
Assume that $(T^1(i):\nabla^1(v))=\ell$, that is, $\nabla^1(v)$ appears $\ell$ times in a $\nabla^1$-filtration of $T^1(i)$.
We have the following sequence of submodules of $T^1(i)$
\begin{align*}
0 \subset M_{2\ell} \subset M_{2\ell-1} \subset M_{2\ell-2} \subset \dots \subset M_{2} \subset M_{1} \subset M_{0} = T^1(i)
\end{align*}
such that each $M_{j-1}/M_j$ is isomorphic to $\nabla^1(v)$ if $j$ is even, and belongs to $\mcF(\nabla^1(u)\mid u\neq v)$ if $j$ is odd.

For each $k=1,2,\dots,\ell$, there exists a non-zero morphism $S(v) \to M_{2k-1}/M_{2k}$ and this morphism lift to a morphism $f_{2k} \colon  S(v) \to T^1(i)$, since $S(v)$ is projective.
This implies that $\ell$ corresponds to the length of an $S(v)$-socle $\soc_v(T^1(i))$ of $T^1(i)$, that is, the maximal direct summand of $\soc(T^1(i))$ which is a direct sum of copies of $S(v)$.
For each $k=1,2,\dots,\ell$, we have a map $g_k=(f_2\,\,f_4\,\,\cdots f_{2k}) \colon  S(v)^{\oplus k} \to T^1(i)$.
We denote by $T^{(k)}(i)$ the module induced from the following push-out diagram
\begin{align}\label{push-out-hatT}
\begin{tikzpicture}[baseline=-30,xscale=1.2]
\node(S)at(0,0){$S(v)^{\oplus k}$};
\node(Y)at(2,0){$T^1(i)$};
\node(T)at(0,-2){$T^2(v)^{\oplus k}$};
\node(Q)at(2,-2){$T^{(k)}(i)$};
\draw[thick, ->] (S)--(Y) node[midway, above]{$g_k$};
\draw[thick, ->] (S)--(T);
\draw[thick, ->] (Y)--(Q) node[midway, right]{$\beta_k$};
\draw[thick, ->] (T)--(Q);
\end{tikzpicture}
\end{align}
where $\Delta^2(v)=S(v)\to T^2(v)$ is the injective morphism in Proposition \ref{pro-ch-tilt}.

Let $T^{(0)}(i)\coloneqq T^1(i)$.
Then by Lemma \ref{lem-push-out-ass}, $T^{(k+1)}(i)$ fits into the following push-out diagram
\begin{align*}
\begin{tikzpicture}[baseline=-30]
\node(S)at(0,0){$S(v)$};
\node(Y)at(3,0){$T^{(k)}(i)$};
\node(T)at(0,-2){$T^2(v)$};
\node(Q)at(3,-2){$T^{(k+1)}(i)$};
\draw[thick, ->] (S)--(Y) node[midway, above]{$\beta_kf_{2k+2}$};
\draw[thick, ->] (S)--(T);
\draw[thick, ->] (Y)--(Q) node[midway, right]{$\gamma_k$};
\draw[thick, ->] (T)--(Q);
\end{tikzpicture}
\end{align*}
and we have $\beta_k=\gamma_{k-1}\gamma_{k-2}\dots\gamma_{0}$.
\begin{pro}\label{pro-ch-tilt-push-out}
Under the above notation, $T^{(\ell)}(i)$ belongs to $\mcF(\Delta)\cap\mcF(\nabla)$.
\end{pro}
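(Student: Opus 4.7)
The plan is to establish $T^{(\ell)}(i)\in\mcF(\Delta)$ and $T^{(\ell)}(i)\in\mcF(\nabla)$ separately, exploiting the preparatory lemmas above.

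For membership in $\mcF(\Delta)$ I would use the all-at-once push-out diagram (\ref{push-out-hatT}) with $k=\ell$. Since $S(v)^{\oplus \ell}\to T^2(v)^{\oplus \ell}$ is injective with cokernel $X^2(v)^{\oplus \ell}$, the push-out yields a short exact sequence
\[
0\longrightarrow T^1(i)\xrightarrow{\beta_{\ell}} T^{(\ell)}(i)\longrightarrow X^2(v)^{\oplus \ell}\longrightarrow 0.
\]
By Lemma \ref{lem_Q_Ql}, $T^1(i)\in\mcF(\Delta^1)\subseteq\mcF(\Delta)$; by Proposition \ref{pro-ch-tilt} applied to $A^2$, $X^2(v)\in\mcF(\Delta^2(j)\mid j\lhd v,\ j\neq v)\subseteq\mcF(\Delta)$. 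Closure of $\mcF(\Delta)$ under extensions gives $T^{(\ell)}(i)\in\mcF(\Delta)$.

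For membership in $\mcF(\nabla)$ I would proceed by induction on $k$, using the iterated description from Lemma \ref{lem-push-out-ass}, which identifies $T^{(k)}(i)$ with the push-out of $T^2(v)\leftarrow S(v)\xrightarrow{\beta_{k-1} f_{2k}} T^{(k-1)}(i)$. The inductive hypothesis is that $T^{(k)}(i)$ carries a filtration whose subquotients are the $\ell$ pieces $M_{2j}/M_{2j+1}\in\mcF(\nabla^1(u)\mid u\neq v)$ for $j=0,\dots,\ell-1$, together with $k$ copies of $T_{\nabla}$ (replacing the top $k$ copies of $\nabla^1(v)$) and $\ell-k$ copies of $\nabla^1(v)$. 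The base case $k=0$ is the given $\nabla^1$-filtration of $T^1(i)$. For the inductive step I would apply Lemma \ref{lem-push-out-XYZ} to $Z=T^{(k-1)}(i)$, $X=\beta_{k-1}(M_{2k})$, $Y=\beta_{k-1}(M_{2k-1})$: since $f_{2k}$ factors through $M_{2k-1}$ and induces a non-zero map $S(v)\to M_{2k-1}/M_{2k}\cong\nabla^1(v)$, the morphism $\beta_{k-1} f_{2k}$ serves as the projective lift of a non-zero morphism $S(v)\to Z/X$ to $Z$ required by the lemma, and the resulting push-out coincides with $T^{(k)}(i)$. The lemma then replaces the $k$-th copy of $\nabla^1(v)$ (from the top) by $T_{\nabla}$ and preserves all other subquotients, including the $T_{\nabla}$'s produced at earlier stages. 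After $\ell$ iterations every $\nabla^1(v)$ has been replaced by $T_{\nabla}$, and since $T_{\nabla}\in\mcF(\nabla)$ by Lemma \ref{lem-T-nabla} while the $M_{2j}/M_{2j+1}$ already lie in $\mcF(\nabla)$, we conclude $T^{(\ell)}(i)\in\mcF(\nabla)$.

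The main delicate point is the bookkeeping in the inductive step: one must verify, using the injectivity of each $\beta_k$ (which follows from the fact that iterated push-outs of injections remain injective), that $\beta_{k-1}(M_{2k})\subseteq\beta_{k-1}(M_{2k-1})$ witnesses a filtration of $T^{(k-1)}(i)$ with precisely the claimed subquotients, and that the lift of $S(v)\to Z/X$ supplied by projectivity of $S(v)$ in Lemma \ref{lem-push-out-XYZ} may indeed be taken to equal $\beta_{k-1} f_{2k}$; this last point is exactly what ties the abstract push-out of Lemma \ref{lem-push-out-XYZ} to the concrete $T^{(k)}(i)$ appearing in the statement.
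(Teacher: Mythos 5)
Your proposal is correct and follows essentially the same route as the paper: the $\mcF(\Delta)$ part via the short exact sequence $0\to T^1(i)\to T^{(\ell)}(i)\to X^2(v)^{\oplus\ell}\to 0$ coming from the push-out of an injection, and the $\mcF(\nabla)$ part by induction on $k$, using Lemma \ref{lem-push-out-ass} to realize $T^{(k)}(i)$ as a single push-out over $T^{(k-1)}(i)$ and Lemma \ref{lem-push-out-XYZ} to replace one copy of $\nabla^1(v)$ in the filtration by $T_{\nabla}$ at each step. The bookkeeping concerns you flag (injectivity of the $\beta_k$, identifying the projective lift with $\beta_{k-1}f_{2k}$) are exactly the points the paper handles implicitly via the same filtration $0\subset M_{2\ell}\subset\cdots\subset M_0=T^1(i)$.
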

\begin{proof}
Since the cokernel of $\beta_{\ell}$ is isomorphic to $X^2(v)^{\oplus\ell}$, we have $T^{(\ell)}(i) \in \mcF(\Delta)$.

We apply Lemma \ref{lem-push-out-XYZ} to $0\subset M_{2}\subset M_{1} \subset M_{0}=T^{(0)}(i)$.
We have the following push-out diagram 
\begin{align*}
\begin{tikzpicture}[baseline=-30]
\node(S)at(0,0){$S(v)$};
\node(Y)at(2,0){$T^{(0)}(i)$};
\node(T)at(0,-2){$T^2(v)$};
\node(Q)at(2,-2){$T^{(1)}(i)$};
\draw[thick, ->] (S)--(Y) node[midway, above]{$f_{2}$};
\draw[thick, ->] (S)--(T);
\draw[thick, ->] (Y)--(Q) node[midway, right]{$\beta_1=\gamma_0$};
\draw[thick, ->] (T)--(Q); 
\end{tikzpicture}
\end{align*}
where $\beta_1$ is injective.
By Lemma \ref{lem-push-out-XYZ}, $M_2$ can be regarded as a submodule of $T^{(1)}(i)$ by the composite $M_2\subset T^{(0)}(i) \xto{\beta_1} T^{(1)}(i)$.
Then there exists a sequence of submodules $0 \subset M_{2} \subset M^\p_{1}\subset T^{(1)}(i)$ with $M^\p_{1}/M_{2}\cong  T_{\nabla}$, and $T^{(1)}(i)/M^\p_{1} \cong  M_{0}/M_{1}$.
Since $T_{\nabla}$ belongs to $\mcF(\nabla)$, $T^{(1)}(i)/M_{3}$ also belongs to $\mcF(\nabla)$.

For $2 \leq k \leq \ell$, assume that there exists a sequence of submodules
\begin{align*}
0 \subset M_{2\ell} \subset M_{2\ell-1} \subset \dots \subset M_{2k} \subset M_{2k-1} \subset T^{(k-1)}(i)
\end{align*}
such that $T^{(k-1)}(i)/M_{2k-1}\in\mcF(\nabla)$, where the inclusion is given by the composite $M_{2k-1} \subset T^1(i) \xto{\beta_{k-1}} T^{(k-1)}(i)$.
We apply Lemma \ref{lem-push-out-XYZ} to $0 \subset M_{2k} \subset M_{2k-1} \subset T^{(k-1)}(i)$.
We have the following push-out diagram 
\begin{align*}
\begin{tikzpicture}[baseline=-30,xscale=.8]
\node(S)at(0,0){$S(v)$};
\node(Y)at(4,0){$T^{(k-1)}(i)$};
\node(T)at(0,-2){$T^2(v)$};
\node(Q)at(4,-2){$T^{(k)}(i)$};
\draw[thick, ->] (S)--(Y) node[midway, above]{$\beta_{k-1}f_{2k}$};
\draw[thick, ->] (S)--(T);
\draw[thick, ->] (Y)--(Q) node[midway, right]{$\gamma_{k-1}$};
\draw[thick, ->] (T)--(Q); 
\end{tikzpicture}
\end{align*}
Then by the same argument as above, $M_{2k}$ can be regarded as a submodule of $T^{(k)}(i)$ by $\gamma_{k-1}\beta_{k-1}=\beta_{k}  \colon T^{(0)}(i) \to T^{(k)}(i)$, and $T^{(k)}(i)/M_{2k+1}$ belongs to $\mcF(\nabla)$.
Thus finally, we have $T^{(\ell)}(i)\in\mcF(\nabla)$.
\end{proof}
Since $Q^1\sqcup Q^2$ is a deconcatenation of $Q$ at a sink $v$, we have an isomorphism of algebras $A^1 \cong  e^1 A e^1$, where $e^1=\sum_{i\in Q^1_0}e_i$.
Thus we have an exact functor $\modu A \to \modu A^1$ defined by $M \mapsto Me^1$.
This restricts to functors $\mcF(\Delta) \to \mcF(\Delta^1)$ and $\mcF(\nabla) \to \mcF(\nabla^1)$, since the functor $M\mapsto Me^1$ is exact and $\nabla(v)e^1=\nabla^1(v)$ holds.
Note that by regarding $Me^1$ as an $A$-module by a canonical map $A \to A^1\cong  e^1Ae^1$, we have an injective morphism $Me^1 \to M$ of $A$-modules.
By a similar construction, that is, by using $e^2=\sum_{i\in Q_0^2}e_i$, we have the functor $\modu A \to \modu A^2$.
\begin{lem}\label{lem-push-out-appro}
In the diagram (\ref{push-out-hatT}) for $k=\ell$, 
\begin{align*}
\begin{tikzpicture}[baseline=-30]
\node(S)at(0,0){$S(v)^{\oplus \ell}$};
\node(Y)at(2,0){$T^1(i)$};
\node(T)at(0,-2){$T^2(v)^{\oplus \ell}$};
\node(Q)at(2,-2){$T^{(\ell)}(i)$};
\draw[thick, ->] (S)--(Y) node[midway, above]{$g_{\ell}$};
\draw[thick, ->] (S)--(T) node[midway, left]{$\iota$};
\draw[thick, ->] (Y)--(Q) node[midway, right]{$\beta_{\ell}$};
\draw[thick, ->] (T)--(Q);
\end{tikzpicture}
\end{align*}
the morphism $\beta_{\ell}$ is a right $\mcF(\nabla)$-approximation of $T^1(i)$.
\end{lem}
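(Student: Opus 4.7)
The plan is to verify the defining factorization property of the approximation: for every $M\in\mcF(\nabla)$ and every morphism $g\colon T^1(i)\to M$, I want to produce $g^{\prime}\colon T^{(\ell)}(i)\to M$ with $g^{\prime}\beta_{\ell}=g$. The target $T^{(\ell)}(i)$ already lies in $\mcF(\nabla)$ by Proposition~\ref{pro-ch-tilt-push-out}, so only the lifting remains, and the push-out shape of diagram~(\ref{push-out-hatT}) dictates exactly what extra data is needed: a compatible morphism $h\colon T^2(v)^{\oplus\ell}\to M$ with $h\iota=gg_{\ell}$, after which the universal property produces $g^{\prime}$.

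The strategy for constructing $h$ is a reduction to the quasi-hereditary algebra $(A^2,(Q_0^2,\lhd^2))$, where Proposition~\ref{pro-ch-tilt} supplies that $\Delta^2(v)=S(v)\to T^2(v)$ is a left $\mcF(\nabla^2)$-approximation of $S(v)$. First I would note that the image of $gg_{\ell}\colon S(v)^{\oplus\ell}\to M$ is contained in $Me^2$, because $S(v)e^2=S(v)$ as $v\in Q_0^2$. Since $v$ is a sink of $Q$, the deconcatenation structure forces $e^2A(1-e^2)=0$, which makes $Me^2$ an $A$-submodule of $M$; moreover $Me^2$ inherits an $A^2$-module structure through $A^2\cong e^2Ae^2$, and the exact functor $(-)e^2\colon\modu A\to\modu A^2$ restricts to $\mcF(\nabla)\to\mcF(\nabla^2)$, so $Me^2\in\mcF(\nabla^2)$.

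Consequently $gg_{\ell}$ may be regarded as an $A^2$-morphism $S(v)^{\oplus\ell}\to Me^2$, and the approximation property lifts it to an $A^2$-morphism $h_0\colon T^2(v)^{\oplus\ell}\to Me^2$ satisfying $h_0\iota=gg_{\ell}$. Post-composing with the $A$-linear inclusion $Me^2\hookrightarrow M$ yields an $A$-morphism $h\colon T^2(v)^{\oplus\ell}\to M$ with $h\iota=gg_{\ell}$. The pair $(g,h)$ thus agrees on $S(v)^{\oplus\ell}$, and feeding it into the universal property of the push-out~(\ref{push-out-hatT}) delivers the required $g^{\prime}\colon T^{(\ell)}(i)\to M$ with $g^{\prime}\beta_{\ell}=g$.

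The main technical point to handle carefully is the bridge between $A$-module and $A^2$-module structures: it is precisely the sink hypothesis on $v$ that gives $e^2A(1-e^2)=0$, and hence both that $Me^2\hookrightarrow M$ is $A$-linear and that the restriction $(-)e^2$ preserves being $\nabla$-filtered. Once this bridge is in place, everything else reduces to a formal manipulation of the push-out and of the left approximation property on $A^2$.
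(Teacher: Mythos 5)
Your proof is correct and takes essentially the same route as the paper's: observe that the image of $gg_{\ell}$ lands in $Me^{2}$, lift through the left $\mcF(\nabla^{2})$-approximation $S(v)\to T^{2}(v)$ over $A^{2}$, and conclude via the universal property of the push-out. The only difference is that you spell out why $Me^{2}\hookrightarrow M$ is $A$-linear and why $(-)e^{2}$ preserves $\nabla$-filtrations (both via $e^{2}A(1-e^{2})=0$ from the sink hypothesis), details the paper leaves implicit.
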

\begin{proof}
Let $M\in\mcF(\nabla)$ and $f\colon T^1(i) \to M$ be a morphism.
Since the image of $fg_{\ell}$ is a direct sum of copies of $S(v)$, the image is contained in $Me^2$.
Since $Me^2\in\mcF(\nabla^2)$ and $S(v)\to T^2(v)$ is a left $\mcF(\nabla^2)$-approximation of $S(v)$, there exists a morphism $h \colon  T^2(v)^{\oplus \ell} \to M$ such that $h\iota = fg_{\ell}$.
Because the diagram is a push-out diagram, there exists a morphism $f^\p \colon  T^{(\ell)}(i) \to M$ such that $f^{\p}\beta_{\ell}=f$.
\end{proof}
Recall that $\Delta(i)=\Delta^1(i)$ holds.
\begin{lem}\label{lem-delta-hatT-appro}
Let $f\colon \Delta(i)\to T^1(i)$ be a morphism obtained by Proposition \ref{pro-ch-tilt}.
Then the composite $\Delta(i) \xto{f} T^1(i) \xto{\beta_{\ell}} T^{(\ell)}(i)$ is a left $\mcF(\nabla)$-approximation of $\Delta(i)$.
\end{lem}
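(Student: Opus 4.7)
The plan is to verify the left-$\mcF(\nabla)$-approximation property directly. Given any morphism $h \colon \Delta(i) \to M$ with $M \in \mcF(\nabla)$, I want to produce $g \colon T^{(\ell)}(i) \to M$ satisfying $g\beta_{\ell}f = h$. The strategy is to factor through two layers: first use the left $\mcF(\nabla^1)$-approximation property of $f$ inside $\modu A^1$, then apply the left $\mcF(\nabla)$-approximation property of $\beta_{\ell}$ established in Lemma \ref{lem-push-out-appro} to lift the result to $T^{(\ell)}(i)$.

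The first step is to descend $h$ to $\modu A^1$. Since $\Delta(i) = \Delta^1(i)$, every $x \in \Delta(i)$ satisfies $x = xe^1$, hence $h(x) = h(xe^1) = h(x)e^1 \in Me^1$. The identity $e^1 A(1-e^1) = 0$, which holds because $Q^1 \sqcup Q^2$ is a deconcatenation at the sink $v$ and was already exploited in the proof of Proposition \ref{pro-push-out-int}, shows that $Me^1$ is actually an $A$-submodule of $M$ whose $A$-action coincides with the one induced from the $A^1 \cong e^1 A e^1$-structure via $A \to A^1$. Thus $h$ factors as $h = \iota h'$, where $\iota \colon Me^1 \hookrightarrow M$ is the canonical inclusion and $h' \colon \Delta(i) \to Me^1$ is a morphism of $A^1$-modules. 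Since the restriction functor $(-)e^1$ sends $\mcF(\nabla)$ into $\mcF(\nabla^1)$, the module $Me^1$ lies in $\mcF(\nabla^1)$, and the left $\mcF(\nabla^1)$-approximation property of $f$ (Proposition \ref{pro-ch-tilt} applied to the quasi-hereditary algebra $(A^1,(Q_0^1,\lhd^1))$) yields $f' \colon T^1(i) \to Me^1$ with $f' f = h'$.

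For the second step, the composite $\iota f' \colon T^1(i) \to M$ has target in $\mcF(\nabla)$, so Lemma \ref{lem-push-out-appro} provides $f'' \colon T^{(\ell)}(i) \to M$ with $f''\beta_{\ell} = \iota f'$. Combining these factorizations gives $h = \iota h' = \iota f' f = f''\beta_{\ell} f$, which is the required factorization, and exhibits $\beta_{\ell}\circ f$ as a left $\mcF(\nabla)$-approximation of $\Delta(i)$.

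The main subtlety, though not a serious obstacle, is the correct passage between $\modu A$ and $\modu A^1$: checking that $Me^1 \hookrightarrow M$ is an $A$-module morphism, rather than merely a set-theoretic inclusion carrying two a priori different actions, is what forces the hypothesis that $v$ is a sink of the deconcatenation. Once that translation is in place, the rest is a clean two-layer factorization diagram.
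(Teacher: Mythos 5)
Your proof is correct and follows essentially the same route as the paper's: factor $h$ through $Me^1$ using $\Delta(i)=\Delta^1(i)$, apply the left $\mcF(\nabla^1)$-approximation property of $f$, and then lift along $\beta_{\ell}$ via Lemma \ref{lem-push-out-appro}. Your extra care in checking that $Me^1\hookrightarrow M$ is genuinely a morphism of $A$-modules (using $e^1A(1-e^1)=0$) is a detail the paper treats more briefly, but the argument is the same.
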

\begin{proof}
We denote by $\iota \colon  Me^1 \to M$ an inclusion map.
Let $M\in\mcF(\nabla)$ and $h \colon  \Delta(i) \to M$ be a morphism.
Since $\Delta^1(i)=\Delta(i)$ holds, there exists a morphism $\iota : \Delta(i) \to Me^1$ with $h=\iota h^\p$.
Since $f$ is a left $\mcF(\nabla^1)$-approximation of $\Delta(i)$ and $Me^1\in\mcF(\nabla^1)$, there exists a morphism $f^\p \colon  T^1(i) \to Me^1$ with $h^{\p}=f^{\p}f$.
By Lemma \ref{lem-push-out-appro}, there exists a morphism $f^{\p\p} \colon  T^{(\ell)}(i) \to M$ with $\iota f^\p=f^{\p\p}\beta_{\ell}$.
Then we have $h=\iota h^\p=\iota f^{\p}f = f^{\p\p}\beta_{\ell}$.
\[
\begin{tikzcd}[arrow style=tikz, arrows={thick},row sep=12mm]
& \Delta(i) \arrow[r, "f"] \arrow[dl, "h^{\p}"', densely dashed, end anchor={[xshift=-1.2ex]}] \arrow[d, "h" pos=.73] & T^1(i) \arrow[r, "\beta_{\ell}"] \arrow[dll, "{f^\p}" description, pos=.22, densely dashed, preaction={draw=white, -,line width=4pt}] & T^{(\ell)}(i) \arrow[dll, "f^{\p\p}", densely dashed] \\
Me^1 \arrow[r, "\iota"'] & M 
\end{tikzcd}
\]
\end{proof}
Let $R$ be a ring with a surjective ring morphism $R \to R/I$.
It is easy to see that if $R/I$ is a local ring and $I$ is contained in the radical of $R$, then $R$ is a local ring.
The following lemma implies that $T^{(\ell)}(i)$ is indecomposable.
\begin{lem}\label{lem-push-out-ind}
Let $A$ be a finite dimensional algebra and $S$ be a simple $A$-module.
Let $X,Y$ be $A$-modules such that the $S$-socles are $S^{\oplus m}$ and $S$, respectively, with $m\geq 1$.
Consider the following push-out diagram of $A$-modules
\[
\begin{tikzcd}[arrow style=tikz, arrows={thick}]
S^{\oplus m} \arrow[r, "a"] \arrow[d, "b"] & X \arrow[d, "\beta"] \\
Y^{\oplus m} \arrow[r, "\alpha"] & Z
\end{tikzcd}
\]
such that $a$ and $b$ are inclusion morphisms associated to $S$-socles.
Assume that the following conditions hold:
\begin{enumerate}
\item there exist no morphisms from $X$ to the cokernel of $b$,
\item there exist no morphisms from $Y$ to the cokernel of $a$,
\item $\beta$ is a left $(\add Z)$-approximation of $X$,
\item $X$ and $Y$ are indecomposable.
\end{enumerate}
Then $Z$ is indecomposable.
\end{lem}
\begin{proof}
We show that the endomorphism algebra of $Z$ is a local algebra.
By the assumption (1), we have a morphism of algebras $R_X \colon  \End_A(Z) \to \End_A(X)$ given by $R_X(\phi)= \phi|_{X}$.
By the assumption (3), this $R_X$ is surjective.
We have an exact sequence
\[
0 \to \Ker(R_X) \to \End_A(Z) \to \End_A(X) \to 0.
\]
By the assumption (4), $\End_A(X)$ is a local algebra.
Thus it is enough to show that $\Ker(R_X)$ is contained in the radical of $\End_A(Z)$.

Let $\phi$ be an endomorphism of $Z$ with $R_X(\phi)=0$.
We show that $\phi$ belongs to the radical of $\End_A(Z)$.
By the assumption (2), we have a morphism of algebras $R_{Y^{\oplus m}} \colon  \End_A(Z) \to \End_A(Y^{\oplus m})$ given by $R_{Y^{\oplus m}}(\phi)=\phi|_{Y^{\oplus m}}$.
The endomorphism $R_{Y^{\oplus m}}(\phi)$ is presented by a $(m\times m)$-matrix with entries in $\End_A(Y)$.
Since the $S$-socle of $Y$ is $S$ and $\phi|_{S^{\oplus m}}=(\phi|_X)|_{S^{\oplus m}}=0$, all entries of a matrix presentation of $R_{Y^{\oplus m}}(\phi)$ are not isomorphisms and so these morphisms belongs to the radical of a local algebra $\End_A(Y)$.
This implies that $R_{Y^{\oplus m}}(\phi)$ belongs to the radical of $\End_A(Y^{\oplus m})$.
In particular, $R_{Y^{\oplus m}}(\phi)$ is nilpotent.
Let $f\in\End_A(Z)$ be any morphism. Then $(f\phi)|_{S^{\oplus m}}=0$ holds since $R_X(\phi)=0$.
Thus by the same argument, $R_{Y^{\oplus m}}(f\phi)$ is nilpotent.
By the universality of the push-out diagram, $f\phi$ is nilpotent for any $f\in \End_A(Z)$.
This implies that $\id_Z-f\phi$ is an isomorphism, and so $\phi$ belongs to the radical of $\End_A(Z)$.
\end{proof}
Then we have the following result.
\begin{thm}\label{thm-dec-ch-tilt}
For $i\in Q_0^1$, let $\ell$ be the length of an $S(v)$-socle of $T^1(i)$.
Then the push-out $T^{(\ell)}(i)$ of $T^2(v)^{\oplus \ell} \leftarrow S(v)^{\oplus\ell} \to T^1(i)$ is isomorphic to $T(i)$.
\end{thm}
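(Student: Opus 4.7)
The plan is to identify $U(i)$ with $T(i)$ by showing (a) $U(i)$ is indecomposable, (b) $U(i)\in\mcF(\Delta)\cap\mcF(\nabla)=\add T$, and (c) the embedding $\Delta(i)\hookrightarrow U(i)$ is a left $\mcF(\nabla)$-approximation. Points (b) and (c) are already delivered by Proposition \ref{pro-push-out-int} and Lemma \ref{lem-delta-hatT-appro}; the substantive work is (a), which I will obtain by applying Lemma \ref{lem-push-out-ind} with $X=T^1(i)$, $Y=T^2(v)$, $S=S(v)$, and $Z=U(i)$, and after that a short uniqueness-of-approximation argument identifies $j=i$ in $U(i)\cong T(j)$.

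For the hypotheses of Lemma \ref{lem-push-out-ind}: condition (4) for $T^1(i)$ is by definition; for $T^2(v)$ I would note that $T^2(v)\in\mcF(\nabla^2)$ has simple socle, because any $\nabla^2$-filtration bottom term has simple socle, and since $\Delta^2(v)=S(v)$ sits inside $T^2(v)$ this bottom term is forced to be $\nabla^2(v)$, whose socle is $S(v)$. Condition (3) is immediate from Lemma \ref{lem-push-out-appro} (as corrected to a left approximation statement) together with $U(i)\in\mcF(\nabla)$ from Proposition \ref{pro-push-out-int}. For condition (1), the cokernel of $b$ is $X^2(v)^{\oplus m}\in \mcF(\Delta^2(j) \mid j\lhd v, j\neq v)$, whose composition factors lie in $Q_0^2\setminus\{v\}$, while $T^1(i)$ has composition factors in $Q_0^1$; these sets are disjoint, so $\Hom_A(T^1(i),X^2(v)^{\oplus m})=0$.

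The one point that needs real care is condition (2). Here the cokernel of $a$ is $T^1(i)/S(v)^{\oplus m}$, whose composition factors lie in $Q_0^1$, while $T^2(v)$ has composition factors in $Q_0^2$, so any image must have composition factors in $Q_0^1\cap Q_0^2=\{v\}$, i.e.\ be a direct sum of copies of $S(v)$. If such an image were non-zero, there would exist $N\subseteq T^1(i)$ with $S(v)^{\oplus m}\subsetneq N$ and $N/S(v)^{\oplus m}\cong S(v)$. Now I use that $v$ is a sink in $Q^1$, so $S(v)=P^1(v)$ is projective in $\modu A^1$; the extension therefore splits, giving $N\cong S(v)^{\oplus m+1}\subseteq \soc(T^1(i))$, which contradicts the maximality of $m$ as the length of the $S(v)$-isotypic component of $\soc(T^1(i))$. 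This is the only delicate step.

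Finally, having $U(i)$ indecomposable in $\mcF(\Delta)\cap\mcF(\nabla)=\add T$, we have $U(i)\cong T(j)$ for some $j$. Lemma \ref{lem-delta-hatT-appro} provides a left $\mcF(\nabla)$-approximation $\Delta(i)\to U(i)$. Since $\Delta(i)\to T(i)$ is also such an approximation (Proposition \ref{pro-ch-tilt}) with $T(i)\in\mcF(\nabla)$, the two approximations factor through each other, yielding morphisms $\varphi:T(i)\to U(i)$ and $\psi:U(i)\to T(i)$ whose composite $\psi\varphi\in\End_A(T(i))$ fixes $\Delta(i)\to T(i)$. As $T(i)$ has local endomorphism algebra, $\psi\varphi$ cannot be nilpotent (else $\Delta(i)\to T(i)$ would be zero), so $\psi\varphi$ is an isomorphism; then $\varphi$ is split mono into the indecomposable $U(i)$, forcing $U(i)\cong T(i)$.
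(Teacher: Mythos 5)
Your proof follows the paper's argument exactly: indecomposability of the push-out via Lemma \ref{lem-push-out-ind}, membership in $\mcF(\Delta)\cap\mcF(\nabla)=\add T$ via Proposition \ref{pro-push-out-int}, and identification with $T(i)$ via the left $\mcF(\nabla)$-approximation property of Lemma \ref{lem-delta-hatT-appro}; you moreover supply the verification of the hypotheses of Lemma \ref{lem-push-out-ind} that the paper leaves implicit, and you are right that Lemma \ref{lem-push-out-appro} is really a \emph{left} approximation statement. The one imprecise point is your justification that the $S(v)$-socle of $T^2(v)$ is a single copy of $S(v)$ --- the bottom term of a $\nabla^2$-filtration does not control the whole socle of $T^2(v)$, which need not be simple --- but the multiplicity-one statement you actually need is immediate: since $v$ is a sink, $S(v)$ is projective, so the length of the $S(v)$-socle equals $[T^2(v):S(v)]=[S(v):S(v)]+[X^2(v):S(v)]=1+0=1$, because $X^2(v)\in\mcF(\Delta^2(j)\mid j\lhd v,\ j\neq v)$ has no composition factor $S(v)$.
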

\begin{proof}
By Lemmas \ref{lem-push-out-appro}, \ref{lem-delta-hatT-appro} and \ref{lem-push-out-ind}, $T^{(\ell)}(i)$ is indecomposable.
By Lemma \ref{lem-delta-hatT-appro}, $T^{(\ell)}(i)$ gives a left $\mcF(\nabla)$-approximation of $\Delta(i)$.
Therefore, $T^{(\ell)}(i)$ is isomorphic to $T(i)$.
\end{proof}
\begin{cor}\label{cor-dec-ch-tilt}
Using the same notion as Theorem \ref{thm-dec-ch-tilt} and let $e^j=\sum_{i\in Q^j_0}e_i$ for $j=1,2$
Then we have $T(i)e^1 \cong  T^1(i)$ and $T(i)e^2 \cong  T^2(v)^{\oplus\ell}$.
\end{cor}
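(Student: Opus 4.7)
The plan is to obtain the corollary as a direct consequence of Theorem \ref{thm-dec-ch-tilt} combined with the information already extracted inside the proof of Proposition \ref{pro-push-out-int}. First I would invoke Theorem \ref{thm-dec-ch-tilt} to identify the indecomposable summand $T(i)$ of the characteristic tilting module with the push-out module $U(i)$ constructed in diagram (\ref{diagram-push-out-U}), where the integer $\ell$ in the Corollary is precisely the length $m$ of the $S(v)$-socle of $T^1(i)$ used in the push-out construction. Hence the two claimed isomorphisms reduce to computing $U(i)e^1$ and $U(i)e^2$, which is exactly the content of the second assertion of Proposition \ref{pro-push-out-int}.

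For completeness, I would briefly recall the argument that gives those two identities: the push-out (\ref{diagram-push-out-U}) yields the short exact sequence
\[
0 \longrightarrow S(v)^{\oplus \ell} \longrightarrow T^2(v)^{\oplus \ell} \oplus T^1(i) \longrightarrow U(i) \longrightarrow 0
\]
of $A$-modules, and the functor $(-)e^j \colon \modu A \to \modu A^j$ is exact. Multiplying on the right by $e^1$, one uses that $T^1(i)$ is supported on $Q_0^1$, so $T^1(i)e^1 = T^1(i)$, whereas $T^2(v)$ is an $A^2$-module supported on $Q_0^2$ with $T^2(v)e^1 = T^2(v)e_v \cong S(v)$ (because $v$ is a sink of $Q^2$, so $[T^2(v):S(v)]=1$). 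The resulting short exact sequence splits off the $S(v)^{\oplus \ell}$ summands and gives $U(i)e^1 \cong T^1(i)$. Symmetrically, since $e^1 A (1-e^1)=0$ (because $Q^1 \sqcup Q^2$ is a deconcatenation at the sink $v$), applying $(-)e^2$ yields $U(i)e^2 \cong T^2(v)^{\oplus \ell}$.

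There is no genuine obstacle here: all the delicate work (indecomposability of the push-out, its membership in $\mcF(\Delta)\cap\mcF(\nabla)$, and the approximation properties of $\beta$) has already been carried out in Proposition \ref{pro-push-out-int}, Lemmas \ref{lem-push-out-appro}--\ref{lem-push-out-ind} and Theorem \ref{thm-dec-ch-tilt}. The only thing worth emphasising in the write-up is the bookkeeping that $\ell$ in the statement of the Corollary agrees with the multiplicity $m$ appearing in the push-out defining $U(i)$, so that the isomorphisms of Proposition \ref{pro-push-out-int} transport via Theorem \ref{thm-dec-ch-tilt} to the required isomorphisms for $T(i)$.
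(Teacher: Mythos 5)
Your proposal is correct and is essentially the paper's own argument: the paper likewise identifies $T(i)$ with the push-out via Theorem \ref{thm-dec-ch-tilt}, extracts the short exact sequence $0 \to S(v)^{\oplus\ell}\to T^2(v)^{\oplus \ell}\oplus T^1(i) \to T(i) \to 0$, and applies the exact functors $(-)e^1$ and $(-)e^2$ (this is exactly the second assertion of Proposition \ref{pro-push-out-int}). The only cosmetic point is that the computation of $T(i)e^2$ rests on the socle inclusion $S(v)^{\oplus\ell}\to T^1(i)$ becoming an isomorphism after multiplying by $e_v$ (since $T^1(i)e_v\cong S(v)^{\oplus\ell}$), rather than on $e^1A(1-e^1)=0$, but this does not affect the validity of your argument.
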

\begin{proof}
There exists a short exact sequence $0 \to S(v)^{\oplus\ell}\to T^2(v)^{\oplus \ell}\oplus T^1(i) \to T(i) \to 0$.
By multiplying $e^1$ or $e^2$, we have the assertion.
\end{proof}
\fi 
\section{Path algebras of type \texorpdfstring{$\A$}{TEXT}}\label{section-An}
\subsection{Path algebra of an equioriented quiver of type $\A$}

Let $A_n = 1\to 2 \to \dots \to n$ be an equioriented quiver of type $\A$. It was first noticed by Gabriel that tilting modules over $\Lambda_{n}\coloneqq \kk A_n $ are counted by the $n$-th Catalan number $c_n = \frac{1}{n+1} { \binom{2n}{n} }$ \cite{G81}. In this subsection we show that the number of different quasi-hereditary structures on the path algebra $\Lambda_n$ coincides with $c_n$. 

Recall that \emph{binary trees} can be defined inductively as follows. A \emph{binary tree} $T$ is either the empty set or a tuple $(r,L,R)$ where $r$ is a singleton set, called the root of $T$, and $L$ and $R$ are two binary trees. The empty set has no vertex but has one leaf. The set of leaves of $T = (r,L,R)$ is the disjoint union of the set of leaves of $L$ and $R$. The \emph{size} of the tree is its number of vertices (equivalently the number of leaves minus $1$). It is classical that binary trees are counted by the Catalan numbers. 

A  \emph{binary search tree} is a binary tree labeled by integers such that if a vertex $x$ is labeled by $k$, then the vertices of the left subtree (resp. right subtree) of $x$ are labeled by integers less than (resp. superior to) $k$.
If $T$ is a binary tree with $n$ vertices, there is a unique labeling of the vertices by each of the integers $1,2,\dots, n$ that makes it a binary search tree.
This procedure is sometimes called the \emph{in-order traversal} of the tree or simply as the in-order algorithm (recursively visit left subtree, root and right subtree).
The first vertex visited by the algorithm is labeled by $1$, the second by $2$ and so on, see  \cref{fig:bin-search-tree}.

\begin{figure}[h]
\centering
\scalebox{.7}{\begin{tikzpicture}
  [ level distance=8mm,
   level 1/.style={sibling distance=15mm},
   level 2/.style={sibling distance=10mm},
   level 3/.style={sibling distance=5mm},
   inner/.style={circle,draw=black,fill=black!15,inner sep=0pt,minimum size=4mm,line width=1pt},
   leaf/.style={},                      
   edge from parent/.style={draw,line width=1pt}]
  \node [inner] {4}
     child {node [inner] {2}
       child {node [inner] {1}
         child {node [leaf] {}}
         child {node [leaf] {}}
       }
       child {node [inner] {3}
         child {node [leaf] {}}
         child {node [leaf] {}}
       }
     }
     child {node [inner] {5}
       child {node [leaf] {}
		}
       child {node [leaf] {}}
     };
\end{tikzpicture}}
\caption{Binary search tree of size 5 labeled by the in-order algorithm.}
\label{fig:bin-search-tree}
\end{figure}

Let $T$ be a binary tree of size $n$ viewed as a binary search tree. Then $T$ induces a poset $\lhd_T$ on $\{1,2,\dots,n\}$ by setting $i\lhd_T j$ if $i$ labels a vertex in the subtree of the vertex labeled by $j$.
For example, $\lhd_T$ of the above binary search tree is the transitive closure of $\{1 \lhd_T 2,\, 3 \lhd_T 2,\, 2 \lhd_T 4,\, 5 \lhd_T 4\}$.

\begin{pro}\label{char_tree_poset}
Let $\lhd$ be a partial order on $\{1,2,\dots, n\}$. Then there is a binary tree $T$ such that $\lhd = \lhd_T$ if and only if the following two conditions hold.
\begin{enumerate}
    \item For every $i<j$ incomparable with respect to $\lhd$, there exists $k$ such that $i<k<j$ and $i\lhd k$ and $j\lhd k$.
    \item For every $i<j<k$, if $i\lhd k$ then $j\lhd k$ and if $k\lhd i$ then $j\lhd i$. 
\end{enumerate}
\end{pro}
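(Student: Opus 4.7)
The plan is to prove both directions by induction on $n$, the only non-trivial content being the inductive construction of the tree from the poset.

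For necessity, assume $\lhd=\lhd_T$. If $i<j$ are incomparable, let $k$ label their lowest common ancestor; then $i$ sits in the left subtree and $j$ in the right subtree of $k$, so the binary-search-tree property gives $i<k<j$ with $i\lhd_T k$ and $j\lhd_T k$, proving (1). For (2) I use the fundamental fact that the in-order labelling visits each subtree consecutively, so the labels appearing in the subtree of any vertex form a contiguous interval of integers. Both clauses follow immediately, since any $j$ strictly between $i$ and $k$ as integers must lie in the subtree of $k$ (resp.\ of $i$) whenever $i\lhd k$ (resp.\ $k\lhd i$).

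For sufficiency, I first observe that (1) forces a unique maximum: two distinct maximal elements $i<j$ would be incomparable, and (1) would furnish some $k>j$ with $j\lhd k$, contradicting maximality. Let $r$ denote this maximum, which will be the root of the tree. The decisive step is that no $\lhd$-relation crosses $r$ other than the forced relations $i\lhd r$. Indeed, if $i\lhd j$ with $i<r<j$, applying (2) to the triple $i<r<j$ gives $r\lhd j$, contradicting maximality of $r$; the case $j<r<i$ is symmetric, using the second clause of (2). Hence $\lhd$ splits as a disjoint union of its restrictions $\lhd|_L$ to $\{1,\dots,r-1\}$ and $\lhd|_R$ to $\{r+1,\dots,n\}$, together with the relations $i\lhd r$. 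Both restrictions inherit (1) and (2): (2) is automatic on any subset, and the witness $k$ in (1) is squeezed between $i$ and $j$ as integers and so stays in the same block. After relabelling the right block as $\{1,\dots,n-r\}$, the induction hypothesis produces binary trees $T_L$ and $T_R$ realising these restrictions, and then $T=(r,T_L,T_R)$ is a binary tree whose in-order labelling places $r$ at the root; a direct comparison of relations shows $\lhd=\lhd_T$.

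The main obstacle is the decomposition step at $r$: it genuinely requires both hypotheses acting together, with (1) guaranteeing the existence and uniqueness of the maximum, and (2) forbidding any $\lhd$-relation from bridging the two sides of $r$. The rest of the argument is a routine induction, though some care is needed when checking that conditions (1) and (2) are preserved after shifting the right block down to $\{1,\dots,n-r\}$.
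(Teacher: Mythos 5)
Your proof is correct. The paper gives no argument for this proposition at all---it simply defers to \cite[Proposition 2.21]{CPP19}---so your self-contained induction is a genuine addition rather than a reproduction of the paper's reasoning. Both directions are sound: for necessity, the lowest-common-ancestor argument for (1) and the fact that the in-order labels of any subtree form a contiguous interval for (2) are exactly the right tools; for sufficiency, the decomposition at the unique maximum $r$, with condition (2) ruling out any relation bridging $\{1,\dots,r-1\}$ and $\{r+1,\dots,n\}$, is the standard recursive construction of a binary search tree from such a poset, and the verification that (1) and (2) pass to the two blocks (the witness $k$ of (1) being squeezed between $i$ and $j$) is as routine as you claim. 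Two small points. First, when you establish uniqueness of the maximal element, condition (1) produces a witness $k$ with $i<k<j$, so $k<j$ rather than $k>j$ as you wrote; the contradiction is unaffected, since $j\lhd k$ with $k\neq j$ already violates maximality of $j$. Second, it is worth making explicit the (easy, finite-poset) step from \emph{unique maximal element} to \emph{maximum}: the construction needs $x\lhd r$ for every $x$, which follows because in a finite poset every element lies below some maximal element.
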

\begin{proof}
See \cite[Proposition 2.21]{CPP19}.
\end{proof}
\begin{rem}
Condition $(1)$ is equivalent to the following weaker condition: for every $i<j$ incomparable there exists $k$ such that $i<k<j$ and $i\lhd k$ or $j\lhd k$.
\end{rem}

In the proof of the following lemma, we denote by $[i, j]$ the interval in $\{1, 2, \dots, n\}$ in numerical order $\leq$, that is $[i, j]=\{k \in \mathbb{Z} \mid i \leq k \leq j\}$ for $i, j \in\{1, 2, \dots, n\}$.

\begin{lem}\label{lem:binary-trees-adapted}
\begin{enumerate}
\item Let $T$ be a binary tree of size $n$. Then $\lhd_T$ is an adapted poset to $\Lambda_n$.
\item If $\lhd$ is an adapted poset to $\Lambda_n$, then there is a binary tree $T$ such that $\lhd \sim \lhd_T$.
\end{enumerate}
\end{lem}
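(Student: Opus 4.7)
The plan is to use Proposition \ref{char_tree_poset} as the translator between binary trees and partial orders on $\{1,\ldots,n\}$ satisfying its two explicit conditions. Throughout, the key structural fact I will exploit is that $\Lambda_n$ is hereditary with uniserial projectives, so every indecomposable $\Lambda_n$-module with simple top $S(i)$ and simple socle $S(j)$ (taking $i \leq j$ in the natural order) is uniserial with composition factors exactly $\{S(k) \mid i \leq k \leq j\}$.

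For (1), given a binary tree $T$, I would apply Proposition \ref{char_tree_poset}(1): for any $i < j$ that are incomparable in $\lhd_T$, it produces $k$ with $i < k < j$, $i \lhd_T k$, and $j \lhd_T k$. By the observation above, $S(k)$ is automatically a composition factor of any module with simple top $S(i)$ and simple socle $S(j)$, so the condition of Definition \ref{dfn_adapted} is verified (in fact in the stronger ``and'' form, not merely the ``or'' form of Lemma \ref{weak-adapted}).

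For (2), by Proposition \ref{pro-mini-adapted} one may replace $\lhd$ by its equivalent minimal adapted order $\lhd_m$, so it suffices to produce $T$ with $\lhd_m = \lhd_T$. Condition (1) of Proposition \ref{char_tree_poset} for $\lhd_m$ is immediate from the adapted property applied to the uniserial module with top $S(i)$ and socle $S(j)$. The main obstacle lies in Condition (2), which I would derive from the following key claim: for every $r \in \{1,\ldots,n\}$, the principal downset $D(r) \coloneqq \{i \mid i \lhd_m r\}$ is an interval of $\{1,\ldots,n\}$. Granting this, if $i < j < k$ and $i \lhd_m k$ then $\{i, k\} \subseteq D(k)$ forces $j \in D(k)$, and analogously if $k \lhd_m i$.

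The claim itself rests on the uniserial structure of projectives and injectives of $\Lambda_n$. Since $\Delta(r)$ is the maximal quotient of the uniserial $P(r)$ whose composition factors lie below $r$, one has $\Dec(\lhd)_r = [r, m_r]$, where $m_r$ is the largest integer with $[r, m_r] \subseteq D(r)$; dually $\Inc(\lhd)_r = [m'_r, r]$. Hence the set of direct predecessors of $r$ in $\Dec(\lhd) \cup \Inc(\lhd)$ is the interval $[m'_r, m_r]$, and one has $D(r) = \bigcup_{\ell \in [m'_r, m_r]} D(\ell)$. I would then prove the interval property by strong induction on $|D(r)|$: the base case $|D(r)|=1$ is trivial, and for the inductive step antisymmetry of $\lhd_m$ (a partial order by Lemma \ref{lem_Dec_Inc}) gives $|D(\ell)| < |D(r)|$ whenever $\ell \in [m'_r, m_r] \setminus \{r\}$, so the induction hypothesis applies to each such $D(\ell)$. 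Each $D(\ell)$ is then an interval meeting $[m'_r, m_r]$ at the point $\ell$; their union with $[m'_r, m_r]$ is therefore an interval, which is exactly $D(r)$.
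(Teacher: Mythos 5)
Your proof is correct. Part (1) is essentially identical to the paper's argument: both identify indecomposable $\Lambda_n$-modules with intervals and read off adaptedness from condition (1) of Proposition \ref{char_tree_poset}. For part (2) you follow the same skeleton (pass to $\lhd_m$ via Proposition \ref{pro-mini-adapted}, then verify the two conditions of Proposition \ref{char_tree_poset}), but you handle the crucial condition (2) differently. The paper analyses the generators of $\lhd_m$ directly: it characterises the decreasing and increasing relations combinatorially, then runs a case analysis on composites (decreasing $\circ$ decreasing, increasing $\circ$ decreasing, etc.) to show that $\Dec(\lhd)\cup\Inc(\lhd)$ is already transitively closed and that each of its relations individually satisfies condition (2). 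You instead work with the transitive closure as given and prove by strong induction on $|D(r)|$ that every principal downset $D(r)$ is an interval, using that the direct predecessors of $r$ form the interval $[m'_r,m_r]$ and that antisymmetry makes the induction legitimate; condition (2) then falls out immediately. Your route avoids the composite case analysis at the cost of the induction, while the paper's route yields the extra (and later-used) fact that $\lhd_m=\Dec(\lhd)\cup\Inc(\lhd)$ with no further closure needed. One cosmetic point: your displayed identity $D(r)=\bigcup_{\ell\in[m'_r,m_r]}D(\ell)$ includes the circular term $\ell=r$; the identity you actually use, $D(r)=[m'_r,m_r]\cup\bigcup_{\ell\in[m'_r,m_r]\setminus\{r\}}D(\ell)$, is the correct one and is what your final sentence argues from.
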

\begin{proof}
We first show (1).
The indecomposable $\Lambda_n$-modules can be identified with usual intervals in $\{1,2,\dots,n\}$. Then the assertion follows from Proposition \ref{char_tree_poset}. 

We next show (2).
Let $\lhd$ be an adapted poset to $\Lambda_n$ and let $\lhd_m$ be the minimal adapted poset equivalent to $\lhd$. Then by Proposition \ref{pro-mini-adapted}, we have $\lhd_m = (\Dec(\lhd)\cup \Inc(\lhd))^{tc}$. 
We may assume that $\lhd=\lhd_m$.
The relation $j\lhd i$ is \emph{decreasing} if $[\Delta(i):S(j)]\neq 0$.
Since $P(i)$ has basis the set of paths starting at $i$, this implies that there is a path from $i$ to $j$. Our choice of orientation implies that $i\leq j$.
In addition for every $i\leq k \leq j$, the simple module $S(k)$ is a composition factor of $\Delta(i)$, so $k\lhd i \in \Dec(\lhd)$.
This proves that the decreasing relations, and by a similar argument the increasing relations, satisfy the assertions $(2)$ of Proposition \ref{char_tree_poset}.
Conversely a relation $j\lhd i$, such that $i\leq j$ and any $k\in [i,j]$ satisfies $k\lhd i$, is decreasing.
Analogously, a relation $j \lhd i$ is increasing if and only if $j \leq i$ and any $k \in [j, i]$ satisfies $k \lhd i$.
It is then easy to see that the relation obtained as transitivity of two decreasing (increasing) relations is also decreasing (increasing).
Moreover if $i \lhd j \in \Inc(\lhd)$ and $j\lhd k \in \Dec(\lhd)$ with $k\neq j$, then we have $k<i$, because if $k\in [i,j]$, then $k\lhd j$ which contradicts the antisymmetry of $\lhd$.
For $x\in [k,i]$ we have $x\lhd k$, so $i\lhd k$ is decreasing.
With a similar argument, the relation obtained as transitivity of a decreasing relation and an increasing one is increasing.
This shows that $\lhd_m = \Dec(\lhd)\cup \Inc(\lhd)$ and that every relation satisfies (2) of Proposition \ref{char_tree_poset}.
Because $\lhd_m$ is adapted it satisfies $(1)$, so there is a binary tree $T$ such that $\lhd_m = \lhd_T$.
\end{proof}

\begin{pro}\label{prop:equiv-adapted-posets-binary-trees}
Let $n\in \mathbb{N}$.
The map sending a binary tree $T$ to the equivalence class of the adapted poset $\lhd_T$ is a bijection between the set of binary trees of size $n$ and the set of equivalence classes of adapted posets for $\Lambda_n$. 
\end{pro}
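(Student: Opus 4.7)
The plan is to combine the two parts of Lemma \ref{lem:binary-trees-adapted} (which give well-definedness and surjectivity) with a proof that $\lhd_T$ is itself a \emph{minimal} adapted poset for every binary tree $T$, so that injectivity follows from the uniqueness of minimal adapted posets in each equivalence class (Proposition \ref{pro-mini-adapted}).

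First I would note that Lemma \ref{lem:binary-trees-adapted}(1) immediately gives that the assignment $T \mapsto [\lhd_T]$ is well-defined, and Lemma \ref{lem:binary-trees-adapted}(2) gives that it is surjective. So the content is injectivity: if $\lhd_{T_1} \sim \lhd_{T_2}$ then $T_1 = T_2$.

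The key observation I would exploit is the \emph{interval property} of binary search trees: the vertices of any subtree of $T$ (labelled by the in-order traversal) form a contiguous interval in $\{1,2,\dots,n\}$. Equivalently, if $i \lhd_T j$ then every $k$ lying between $i$ and $j$ also satisfies $k \lhd_T j$. Using this, I would show that $\lhd_T$ coincides with $\Dec(\lhd_T) \cup \Inc(\lhd_T)$. Indeed, suppose $i \lhd_T j$ with $i \neq j$. If $j < i$, then by the interval property every $k$ with $j \leq k \leq i$ satisfies $k \lhd_T j$; recalling from the discussion in the proof of Lemma \ref{lem:binary-trees-adapted} that $P(j)$ has composition factors exactly $\{S(k) : j \leq k \leq n\}$, this forces $[\Delta(j) : S(i)] \neq 0$, so $i \lhd_T^D j$. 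Symmetrically, if $i < j$ then the dual argument using $I(j) = [1,j]$ shows $i \lhd_T^I j$. Hence $\lhd_T = \Dec(\lhd_T) \cup \Inc(\lhd_T)$, and this set is already transitive, so by Proposition \ref{pro-mini-adapted} we conclude that $\lhd_T$ is the unique minimal adapted poset in its equivalence class.

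Finally, if $\lhd_{T_1} \sim \lhd_{T_2}$, then by the previous paragraph both $\lhd_{T_1}$ and $\lhd_{T_2}$ equal the minimal adapted poset of the class, so $\lhd_{T_1} = \lhd_{T_2}$. It then suffices to recover $T$ from $\lhd_T$: the root of $T$ is the (unique) maximum element of $\lhd_T$ (since every vertex lies in the subtree of the root), and once the root $k$ is fixed, the left and right subtrees of $T$ are recovered as the induced posets on $\{1,\dots,k-1\}$ and $\{k+1,\dots,n\}$ respectively. Recursing on subtrees identifies $T_1$ and $T_2$ and completes the injectivity.

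The main obstacle I anticipate is a clean verification of the claim $\lhd_T \subseteq \Dec(\lhd_T) \cup \Inc(\lhd_T)$; the interval property of binary search trees is what makes this go through, and it must be invoked carefully so as not to conflate vertices in the left and right subtrees of a given node. Once that is in place, the rest of the argument is essentially bookkeeping.
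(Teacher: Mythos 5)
Your proof is correct, but it routes injectivity differently from the paper. The paper's argument is very short: it observes that the composition factors of $\Delta(i)$ are indexed by the right subtree of $i$ and those of $\nabla(i)$ by the left subtree, so the pair $(\Delta,\nabla)$ — which is exactly the data of the equivalence class — determines every subtree and hence the tree itself. You instead first upgrade that same subtree computation to the statement $\lhd_T=\Dec(\lhd_T)\cup\Inc(\lhd_T)$, conclude via Lemma~\ref{lem_Dec_Inc} and Proposition~\ref{pro-mini-adapted} that $\lhd_T$ is the unique minimal adapted order in its class, and then recover $T$ from the \emph{poset} $\lhd_T$ (root $=$ unique maximum, recurse on $\{1,\dots,k-1\}$ and $\{k+1,\dots,n\}$). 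Both arguments hinge on the identical key fact about subtrees and (co)standard modules, so neither is more elementary; but your detour through minimality buys something the paper only gets later: it establishes Lemma~\ref{lem:minimal-adapted-for-An} directly, whereas the paper deduces that lemma afterwards from Proposition~\ref{prop:equiv-adapted-posets-binary-trees} together with Lemma~\ref{lem:binary-trees-adapted}. Your version thus slightly streamlines the logical order of this subsection. The one point to state explicitly when writing it up is the containment $\Dec(\lhd_T)\cup\Inc(\lhd_T)\subseteq\lhd_T$ (immediate from the definition of (co)standard modules), so that your displayed equality of relations — and hence the identity $(\lhd_T)_m=\lhd_T$ — is fully justified; with that, the argument is complete.
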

\begin{proof}
We already know that this map is surjective, we need to see that it is injective. For that we explain how we can recover the tree for the set of standard and costandard modules.

Let $T$ be a binary tree. Then $\lhd_T$ is an adapted poset to $\Lambda_n$ by Lemma \ref{lem:binary-trees-adapted} and $(\Lambda_n,\lhd_T)$ is a quasi-hereditary algebra. It is easy to see that the composition factors of the $\Delta(i)$ are indexed by the elements in the left subtree of $i$ and the composition factors of $\nabla(i)$ are indexed the elements of the right subtree. It follows that two different trees induce two non-equivalent posets. 
\end{proof}
\begin{rem}
The surjectivity of the map in Proposition \ref{prop:equiv-adapted-posets-binary-trees} can be found in the proof of \cite[Proposition 2.44]{CPP19}. For the convenience of the reader we sketch two constructions of the binary tree associated to a \emph{minimal} adapted poset.  It has a greatest element $m$ and each element covers at most two elements. If $x$ is covered by $y$, and $x<y$ for the usual ordering of the integers, then $x$ is a left child of $y$ and it is a right child otherwise. Hence starting with the maximal element and going down in the Hasse diagram of the poset we construct the desired binary seach tree.  

Alternatively, the minimal adapted orders for $\Lambda_n$ are particular cases of \emph{interval posets} in the sense of \cite{CP15} (in fact they are examples of \emph{exceptional} interval posets in the sense of \cite{rognerud20}). Hence, we can use the bijection of \cite[Theorem 2.8]{CP15} which gives a nice algorithm to reconstruct the tree starting only from the increasing (or decreasing) relations of the minimal poset. The construction is purely combinatorial: the Hasse diagram of the poset of increasing relations is a planar forest that we can transform into a binary tree using the so-called Knuth correspondence. We refer to \cite{CP15} for more details. 
\end{rem}
\begin{lem}\label{lem:minimal-adapted-for-An}
Let $ T $ be a binary tree of size $ n $. Then $ \lhd_{T} $ is a minimal adapted order to $ \Lambda_{n} $.
\end{lem}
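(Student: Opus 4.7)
The plan is to apply Proposition \ref{pro-mini-adapted}. By Lemma \ref{lem:binary-trees-adapted}(1), $\lhd_T$ is already adapted to $\Lambda_n$, so the minimal adapted order in its equivalence class is $(\lhd_T)_m = (\Dec(\lhd_T)\cup\Inc(\lhd_T))^{\sf tc}$ and it is contained in $\lhd_T$. It thus suffices to show that every relation $i\lhd_T j$ with $i\neq j$ already lies in $\Dec(\lhd_T)\cup\Inc(\lhd_T)$; combining this with $(\lhd_T)_m\subseteq\lhd_T$ will force $\lhd_T = (\lhd_T)_m$.

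The main geometric ingredient I will record is that in a binary search tree labelled by in-order traversal, the set of labels in the subtree rooted at any vertex $j$ is a contiguous interval $[a_j,b_j]$ with $a_j\leq j\leq b_j$; this is immediate by induction on the size of $T$. On the module side, since $\Lambda_n$ is the path algebra of $1\to 2\to\dots\to n$, the projective $P(j)$ is uniserial with composition factors $S(j),S(j+1),\dots,S(n)$, and dually $I(j)$ is uniserial with composition factors $S(1),\dots,S(j)$.

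Combining the two, I would identify the standard and costandard modules explicitly. For $k\geq j$, the condition $k\lhd_T j$ is equivalent to $k\in[j,b_j]$, so the largest quotient of the uniserial module $P(j)$ whose composition factors all satisfy $k\lhd_T j$ is precisely the quotient with composition factors $\{S(k):k\in[j,b_j]\}$; this is $\Delta(j)$. Dually, $\nabla(j)$ has composition factors $\{S(k):k\in[a_j,j]\}$.

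With these descriptions the conclusion is immediate: for $i\lhd_T j$ with $i\neq j$, the label $i$ belongs to the subtree of $j$, so either $i\in[j+1,b_j]$, placing $S(i)$ as a composition factor of $\Delta(j)$ and giving $(i,j)\in\Dec(\lhd_T)$, or $i\in[a_j,j-1]$, placing $S(i)$ as a composition factor of $\nabla(j)$ and giving $(i,j)\in\Inc(\lhd_T)$. No step is a serious obstacle; the only point that requires attention is the alignment of two different interval structures (subtrees of a binary search tree and composition factor sequences of uniserial $\Lambda_n$-modules), after which the statement is essentially forced.
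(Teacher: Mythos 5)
Your proof is correct, and it takes a genuinely more self-contained route than the paper's. The paper argues indirectly: given any $\lhd'\sim\lhd_T$, it invokes \cref{lem:binary-trees-adapted}(2) to produce a tree $T'$ with $\lhd'\sim\lhd_{T'}$, uses the injectivity of $T\mapsto[\lhd_T]$ from \cref{prop:equiv-adapted-posets-binary-trees} to force $T=T'$, and then appeals to the \emph{proof} (not just the statement) of \cref{lem:binary-trees-adapted}(2) to conclude that $\lhd'$ extends $\lhd_T$. You instead verify directly that $\lhd_T=(\lhd_T)_m$ by computing $\Dec(\lhd_T)$ and $\Inc(\lhd_T)$ explicitly — aligning the contiguous label-interval of each subtree with the uniserial structure of $P(j)$ and $I(j)$ — and then cite \cref{pro-mini-adapted} for minimality. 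The computational core (composition factors of $\Delta(j)$ and $\nabla(j)$ are the right and left subtree labels of $j$) is the same fact the paper establishes inside the proofs of \cref{prop:equiv-adapted-posets-binary-trees} and \cref{lem:binary-trees-adapted}, but your organization avoids the detour through the injectivity of the tree bijection and does not rely on unpacking another lemma's proof, at the cost of redoing the standard/costandard computation explicitly. Both arguments are sound.
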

\begin{proof}
Let $ \lhd' $ be an adapted poset to $ \Lambda_{n} $ such that $ \lhd_{T}\sim \lhd' $. Then there exists a binary tree $ T' $ such that $ \lhd'\sim \lhd_{T'} $, by \cref{lem:binary-trees-adapted}. Thus $ T=T' $ by \cref{prop:equiv-adapted-posets-binary-trees}, and the proof of \cref{lem:binary-trees-adapted} (2) shows that $ \lhd_{T} $ is extended by $ \lhd' $, which shows the claim.
\end{proof}

If $\lhd$ is an adapted order to $\Lambda_n$, the pair $(\Lambda_n,\lhd)$ is a quasi-hereditary algebra so it has a characteristic tilting module $T$ which is characterized by $\add(T) = \mathcal{F}(\Delta)\cap \mathcal{F}(\nabla)$. Since $\Lambda_n$ is a hereditary algebra, the module $T$ is a tilting module. Moreover, the tilting module only depends on the equivalence class of the partial order. So we have a map $\operatorname{char}$ from the set of equivalence classes of adapted partial orders to the set of isomorphism classes of tilting modules for $\Lambda_n$ which sends the equivalence class of $\lhd$ to the characteristic tilting module of $(\Lambda_n,\lhd)$.

\begin{thm}\label{thm_Baptiste_bijection}
We have a commutative diagram of bijections
\[
\xymatrix@C=-1mm@R=1cm{
&\{ \hbox{Binary trees of size n} \}\ar[dl]_{T\mapsto \lhd_T}\ar[dr]^{\phi} & \\
\{\hbox{Adapted partial orders to } \Lambda_n \}/\sim \ar[rr]^{\operatorname{char}}&& \{ \hbox{Tilting modules over } \Lambda_n \}/\cong
}
\]
where $\phi$ is the classical bijection between binary trees and tilting modules for $\Lambda_n$.
\end{thm}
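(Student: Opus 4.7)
The plan is to reduce the theorem to three short tasks. First, \cref{prop:equiv-adapted-posets-binary-trees} already supplies the left bijection $T \mapsto [\lhd_T]$; second, Gabriel's theorem provides the classical bijection $\phi$ between binary trees of size $n$ and tilting $\Lambda_n$-modules; hence it suffices to verify that the triangle commutes, i.e.\ that $\operatorname{char}([\lhd_T])$ coincides with the tilting module $\phi(T)$. Once commutativity is established, bijectivity of $\operatorname{char}$ follows automatically from the bijectivity of the other two maps.

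To verify commutativity, I would compute the characteristic tilting module of $(\Lambda_n, \lhd_T)$ explicitly. Denote by $M[a,b]$ the indecomposable $\Lambda_n$-module with composition factors $S(a), S(a+1), \ldots, S(b)$. For a vertex $v$ of $T$ whose subtree is labelled by the interval $[a_v, b_v]$, the fact that $P(v)=M[v,n]$ and that the largest quotient whose composition factors $S(j)$ satisfy $j \lhd_T v$ is $M[v, b_v]$ immediately gives $\Delta(v)= M[v, b_v]$; dually $\nabla(v)= M[a_v, v]$. The key claim is that $M[a_v, b_v]$ lies in $\mcF(\Delta) \cap \mcF(\nabla)$, which I would establish by induction on the size of the subtree rooted at $v$. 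The short exact sequence
\[
0 \to M[v, b_v] \to M[a_v, b_v] \to M[a_v, v-1] \to 0
\]
combines with the inductive hypothesis applied to the left subtree---yielding a $\Delta$-filtration of $M[a_v, v-1]$---to produce a $\Delta$-filtration of $M[a_v, b_v]$. A $\nabla$-filtration is built dually using the right subtree.

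Since the $n$ modules $\{M[a_v,b_v] : v \in T\}$ are pairwise non-isomorphic indecomposables all lying in $\mcF(\Delta)\cap \mcF(\nabla)$, and by \cref{pro-ch-tilt} the characteristic tilting module of $(\Lambda_n,\lhd_T)$ decomposes as a direct sum of exactly $n$ pairwise non-isomorphic indecomposable summands belonging to this intersection, one concludes that the characteristic tilting module equals $\bigoplus_{v\in T} M[a_v,b_v]$. This is precisely $\phi(T)$ under the classical description of $\phi$, so the diagram commutes. The main obstacle is checking that the inductive hypothesis can be applied, i.e.\ that the standard and costandard modules associated to a vertex $w$ in a subtree of $T$ are unchanged when one restricts to the subalgebra supported on that subtree; this follows from the fact that, by the construction of $\lhd_T$ via the in-order traversal, the subtree rooted at $w$ (and hence its induced order) is identical whether viewed inside $T$ or inside any larger subtree containing $w$.
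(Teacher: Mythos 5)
Your proposal is correct and follows essentially the same route as the paper's proof: identify $\Delta(v)=M[v,b_v]$ and $\nabla(v)=M[a_v,v]$ from the binary search tree, prove by induction on subtree size via the two short exact sequences that each $M[a_v,b_v]$ lies in $\mcF(\Delta)\cap\mcF(\nabla)$, and conclude that these $n$ pairwise non-isomorphic indecomposables exhaust the summands of the characteristic tilting module, which is $\phi(T)$. The "obstacle" you flag at the end is not really an issue, since the induction can be run entirely with the global $\mcF(\Delta)$ and $\mcF(\nabla)$ over $\Lambda_n$ without restricting to subalgebras.
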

\begin{proof}
In the proof of \cref{prop:equiv-adapted-posets-binary-trees} we determined the set of standard and costandard modules from the binary tree $T'$. We claim that the indecomposable direct summand $T(i)$ of the characteristic tilting module $T$ is the indecomposable module with composition factors indexed by the interval consisting of $i$ and the label of its subtrees (left and right). Since the map $\phi$ sends $T'$ to the module constructed in this way (see \cite[Section~9]{H06}), the proof follows from this claim. 

We denote by $M(i)$ the indecomposable module with composition factors indexed by the interval consisting of $i$ and the label of its subtrees (left and right).
By induction on the size of the subtrees we show that the module $M(i)$ is in $\mathcal{F}(\Delta)\cap \mathcal{F}(\nabla)$. This is clear for the subtrees of size one since in this case $T(i)=S(i)=\Delta(i) = \nabla(i)$.

In the general case, if $i_l$ (resp. $i_r$) denotes the left (resp. right) child of $i$ we have two exact sequences
\[
0 \to \Delta(i) \to M(i) \to M(i_l)\to 0
\]
and 
\[
0\to M(i_r) \to M(i) \to \nabla(i) \to 0.
\]
If $i$ has no left (resp. right) child then we let $M(i_l) = 0$ (resp. $M(i_r) = 0$) and we still have the two exact sequences.
By induction $M(i_l) \in \mathcal{F}(\Delta)$ and $M(i_r)\in \mathcal{F}(\nabla)$, so $M(i)\in \mathcal{F}(\Delta)\cap \mathcal{F}(\nabla)$. The result follows. 
\end{proof}

\begin{cor}\label{cor-An-Tamari}
Let $n\geq 1 $. Then $\qhstr(\Lambda_n)$ and the Tamari lattice of size $n$ are isomorphic as partially ordered sets.
We have that $|\qhstr(\Lambda_n)| = c_n = \frac{1}{n+1} { \binom{2n}{n} }$ is the Catalan number.
\end{cor}
\begin{proof}
As explained in the proof of Lemma \ref{lem:binary-trees-adapted} our notion of decreasing and increasing relations coincide with the one of \cite{CPP19}. The result is then a consequence of \cite[Proposition~41]{CPP19}. Note that $\operatorname{char}$ is a morphism of posets by Lemma \ref{lem-qhstr-poset}. Moreover, it is well known that $\phi$ is also a morphism of posets if the set of binary trees is endowed with the usual partial ordered induced by rotations. 
\end{proof}
\subsection{Path algebras of type \texorpdfstring{$\A$}{TEXT}: general case}
Let $Q$ be a quiver whose underlying graph is of type $\A_n$ and $I=Q_0$.
In this subsection, we classify all the quasi-hereditary structures on $\kk  Q$ and its characteristic tilting modules.

\begin{thm}\label{thm-A-dec}
Let  $Q^1\sqcup Q^2 \sqcup \dots \sqcup Q^{\ell}$ be an iterated deconcatenation of $ Q $ such that each $ Q^i $ is an equioriented quiver of type $ \A_{n_{i}} $ for some $ n_{i}\in \Z_{\geq 1} $. Then there is a bijection
\[
\qhstr(\kk Q) \longrightarrow  \prod_{i=1}^{\ell}\qhstr(\Lambda_{n_{i}})  \]
given by 
$[\lhd] \mapsto \big( [\lhd|_{Q_0^i} ] \big)_{i=1}^{\ell}$. Moreover, if $ \lhd $ is a minimal adapted order, then there exists a binary tree $ T_{i} $ of size $ n_i $  such that $ \lhd|_{Q_0^i}=\lhd_{T_{i}} $, for each $ 1\leq i\leq \ell $.
\end{thm}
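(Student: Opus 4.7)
The plan is to assemble this theorem as a direct corollary of results already established in Section~3 and Section~4.1, so there is essentially no new idea required. First I would note that since the underlying graph of $Q$ is of type $\A_n$, the algebra $\kk Q$ is hereditary and carries no admissible relations; in particular the factor algebras appearing in Theorem~\ref{thm_qhstr_divide} simplify as $A^i = \kk Q / \langle e_u \mid u \in Q_0\setminus Q_0^i\rangle \cong \kk Q^i = \Lambda_{n_i}$, because deconcatenating a type $\A$ quiver at a sink or a source preserves all arrows inside each component. Applying Theorem~\ref{thm_qhstr_divide} to the iterated deconcatenation $Q^1\sqcup\dots\sqcup Q^{\ell}$ then yields the claimed isomorphism of posets
\[
\qhstr(\kk Q) \longrightarrow \prod_{i=1}^{\ell}\qhstr(\Lambda_{n_i}),\qquad [\lhd]\mapsto \bigl([\lhd|_{Q_0^i}]\bigr)_{i=1}^{\ell}.
\]

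For the moreover part, assume $\lhd$ is a minimal adapted order to $\kk Q$. Iterating Lemma~\ref{lem-sink-source-minimal} at each sink or source involved in the iterated deconcatenation, the restriction $\lhd|_{Q_0^i}$ is a minimal adapted order to $\Lambda_{n_i}$ for each $i$. On the equioriented side, Proposition~\ref{prop:equiv-adapted-posets-binary-trees} gives a bijection between binary trees of size $n_i$ and equivalence classes of adapted orders to $\Lambda_{n_i}$, and Lemma~\ref{lem:minimal-adapted-for-An} shows that the representative $\lhd_{T_i}$ coming from a binary tree $T_i$ is always the unique minimal element of its equivalence class. Composing these two statements produces a bijection between binary trees of size $n_i$ and minimal adapted orders to $\Lambda_{n_i}$, so there exists a (unique) binary tree $T_i$ such that $\lhd|_{Q_0^i} = \lhd_{T_i}$.

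The only point deserving a brief verification is that the hypothesis of the theorem is not vacuous: every quiver of type $\A_n$ does admit an iterated deconcatenation into equioriented type $\A$ pieces. This is elementary: one simply deconcatenates iteratively at each internal sink and source, and this procedure terminates because each deconcatenation step strictly decreases the number of internal sinks and sources of the remaining components, ending precisely when every piece is equioriented. There is no genuine obstacle in the proof; the work has been done in Theorems~\ref{thm_qhstr_divide}, Lemmas~\ref{lem-sink-source-minimal}, \ref{lem:minimal-adapted-for-An} and Proposition~\ref{prop:equiv-adapted-posets-binary-trees}, and the role of the current theorem is to package them together for type $\A$.
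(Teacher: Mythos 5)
Your proposal is correct and follows exactly the paper's own route: the bijection is Theorem~\ref{thm_qhstr_divide} applied to the iterated deconcatenation, and the moreover part is the combination of Lemma~\ref{lem-sink-source-minimal}, Lemma~\ref{lem:minimal-adapted-for-An} and Proposition~\ref{prop:equiv-adapted-posets-binary-trees} together with the uniqueness of minimal adapted orders. The extra remark on the existence of such a deconcatenation is a harmless addition.
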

\begin{proof}
The bijection follows from \cref{thm_qhstr_divide}. The second assertion is consequence of Lemmas \ref{lem-sink-source-minimal} and  \ref{lem:minimal-adapted-for-An} and \cref{prop:equiv-adapted-posets-binary-trees}.
\end{proof}
Since any indecomposable $\kk Q$-module is determined by its composition factor, the following theorem gives a complete construction of characteristic tilting modules from minimal adapted orders.
\begin{thm}\label{thm-A-ch-tilting}
Let $\lhd$ be a minimal adapted order to $\kk Q$ and $T=\bigoplus_{i\in I}T(i)$ be the characteristic tilting module associated to $\lhd$.
Then for vertices $i,j\in I$, $S(j)$ is a composition factor of $T(i)$ if and only if $ j\lhd i$ holds.
\end{thm}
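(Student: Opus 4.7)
The plan is to proceed by induction on the number $\ell$ of equioriented components in the iterated deconcatenation $Q = Q^1 \sqcup \cdots \sqcup Q^{\ell}$ provided by Theorem \ref{thm-A-dec}. For the base case $\ell = 1$, the quiver $Q$ is equioriented of type $\A_n$, and by Proposition \ref{prop:equiv-adapted-posets-binary-trees} together with Lemma \ref{lem:minimal-adapted-for-An} the minimal adapted order $\lhd$ equals $\lhd_T$ for a unique binary tree $T$. The construction of the characteristic tilting module via the two short exact sequences in the proof of Theorem \ref{thm_Baptiste_bijection} identifies $T(i)$ with the indecomposable interval module whose composition factors are indexed by $i$ together with the labels of the left and right subtrees at $i$, which is exactly $\{j \in Q_0 : j \lhd_T i\}$.

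For the inductive step ($\ell \geq 2$), pick a non-endpoint sink or source $v$ of $Q$ and let $Q = Q' \sqcup Q''$ be the resulting single-step deconcatenation. Set $\lhd' = \lhd|_{Q'_0}$ and $\lhd'' = \lhd|_{Q''_0}$. By Lemma \ref{lem-sink-source-minimal} these are minimal adapted orders with $\lhd = \lhd(\lhd', \lhd'')$, and since $Q'$ and $Q''$ each admit an iterated deconcatenation into strictly fewer than $\ell$ equioriented components, the induction hypothesis applies to the quasi-hereditary algebras $(\kk Q', \lhd')$ and $(\kk Q'', \lhd'')$. Assume for definiteness that $v$ is a sink; the case where $v$ is a source is obtained by dualizing via Lemma \ref{lem-dual}, using that the standard duality $D$ interchanges the characteristic tilting modules of $\kk Q$ and $(\kk Q)^{\op}$ while preserving composition factors.

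By Theorem \ref{thm-dec-ch-tilt}, for $i \in Q'_0$ the summand $T(i)$ is the push-out of $T''(v)^{\oplus m} \leftarrow S(v)^{\oplus m} \to T'(i)$, where $m$ is the length of the $S(v)$-socle of $T'(i)$; the analogous formula for $i \in Q''_0 \setminus \{v\}$ is obtained by swapping the roles of $Q'$ and $Q''$. The key observation is that $m \in \{0, 1\}$, with $m = 1$ if and only if $v \lhd' i$. Indeed, $\kk Q'$ is a hereditary algebra of type $\A$, so the indecomposable module $T'(i)$ is an interval representation whose dimension vector is $\{0,1\}$-valued, and its socle decomposes as $\bigoplus S(w)$ with $w$ ranging over the sinks of $Q'$ lying in the support of $T'(i)$. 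By the inductive hypothesis this support equals $\{j \in Q'_0 : j \lhd' i\}$, and $v$, being a sink of $Q$, is automatically a sink of every subquiver of $Q'$ containing it, so $m = 1$ exactly when $v$ lies in this support, i.e., when $v \lhd' i$.

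The conclusion now follows from the short exact sequence $0 \to S(v)^{\oplus m} \to T''(v)^{\oplus m} \oplus T'(i) \to T(i) \to 0$ induced by the push-out, which gives
\[
[T(i) : S(k)] \;=\; m\,[T''(v) : S(k)] + [T'(i) : S(k)] - m\,\delta_{k,v}.
\]
Combining this formula with the inductive description of composition factors of $T'(i)$ and $T''(v)$ and the definition of $\lhd(\lhd', \lhd'')$, a direct case analysis depending on whether $k$ lies in $Q'_0$, in $Q''_0 \setminus \{v\}$, or equals $v$ shows that $[T(i) : S(k)] \neq 0$ if and only if $k \lhd i$. I expect the main subtlety to lie in the verification of the characterization of $m$: Theorem \ref{thm-dec-ch-tilt} alone allows arbitrary socle multiplicities, and it is the hereditary type $\A$ structure of $\kk Q$ that forces the socle of $T'(i)$ to be multiplicity-free at $v$ and thus $m \leq 1$.
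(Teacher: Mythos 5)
Your proposal is correct and follows essentially the same route as the paper's proof: induction on the number of equioriented components of the iterated deconcatenation, the base case via the binary-tree description of Theorem \ref{thm_Baptiste_bijection}, dualizing when the deconcatenation vertex is a source, and the push-out description of $T(i)$ from Theorem \ref{thm-dec-ch-tilt} (the paper reads off the composition factors via the idempotent truncations $T(i)e^k$ of Proposition \ref{pro-push-out-int}, whereas you use the equivalent short exact sequence $0\to S(v)^{\oplus m}\to T''(v)^{\oplus m}\oplus T'(i)\to T(i)\to 0$). Your extra observation that $m\le 1$ in type $\A$ is a correct refinement but not needed for the argument.
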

\begin{proof}
The only if part follows from Proposition \ref{pro-ch-tilt}.
Assume that $Q$ has an iterated deconcatenation with $\ell$ components such that each quiver is an equioriented quiver of type $\A$.
We show the if part by an induction on $\ell$.
If $\ell=1$, then the assertion holds by the proof of Theorem \ref{thm_Baptiste_bijection}.
Assume that $\ell>1$.
We have a deconcatenation $Q^1 \sqcup Q^2$ of $Q$ at a vertex $v\in Q_0$ such that $Q^1$ is an equioriented quiver of type $\A$, and $Q^2$ has an iterated deconcatenation with $\ell-1$ components such that each quiver is an equioriented quiver of type $\A$.
By Lemma \ref{lem-sink-source-minimal}, the adapted orders $\lhd|_{Q_0^1}$ and $\lhd|_{Q_0^2}$ are also minimal.
If $v$ is a source, then apply Lemma \ref{lem-dual}, and we may assume that $v$ is a sink.
For $k=1,2$ and $i\in Q^k_0$, let $T^k(i)$ be the indecomposable direct summand of the characteristic tilting $\kk Q^k$-module associated to an adapted order $\lhd|_{Q^k_0}$.
Let $e^k=\sum_{u\in Q^k_0}e_u$ and $\bar{1}=2$, $\bar{2}=1$.
If $i\in Q_0^k$, then by Proposition \ref{pro-push-out-int} and Theorem \ref{thm-dec-ch-tilt} we have $T(i)e^k \cong  T^{\bar{k}}(v)$, $T(i)e^{\bar{k}} \cong  T^{\bar{k}}(v)$ if $[T(i):S(v)]\neq 0$ and $T(i)e^{\bar{k}}=0$ if $[T(i):S(v)]= 0$.
The assertion follows by induction on $\ell$ and Theorem \ref{thm_Baptiste_bijection}.
\end{proof}
We finish this section by giving a concrete example.
\begin{exa}
 Let $ A=\kk  Q $, where $Q=1 \to 2 \leftarrow 3 \leftarrow 4 \to 5$ as before. Then we have that
\[\scalebox{.7}{\begin{tikzpicture}
  \node   (a) at (0,0)  {2};
  \node   (b) at (0,1)  {1};
  \draw[->,line width =1pt,>=stealth] (a) -- (b);
\end{tikzpicture} \qquad
\begin{tikzpicture}
  \node   (a) at (0,0)  {2};
  \node   (b) at (.5,1)  {3};
  \node   (c) at (1,0)  {4};
  \draw[->,line width =1pt,>=stealth] (a) -- (b);
  \draw[->,line width =1pt,>=stealth] (c)--(b);
\end{tikzpicture}\qquad
\begin{tikzpicture}
  \node   (a) at (0,0)  {4};
  \node   (b) at (0,1)  {5};
  \draw[->,line width =1pt,>=stealth] (a) -- (b);
\end{tikzpicture} 
}\] are the Hasse diagrams of some minimal adapted posets to $ \kk(1\to 2) $, $\kk( 2\leftarrow 3 \leftarrow 4) $ and $ \kk(4\to 5) $, respectively. Then, by \cref{lem-sink-source-minimal} we have that the concatenation of the last Hasse diagrams \[\scalebox{.7}{\begin{tikzpicture}
  \node   (a) at (0,1)  {1};
  \node   (b) at (.5,0)  {2};
  \node   (c) at (1,1)  {3};
  \node   (d) at (1.5,0)  {4};
  \node   (e) at (2,1)  {5};
  
  \draw[->,line width =1pt,>=stealth] (b) -- (a);
  \draw[->,line width =1pt,>=stealth] (b)--(c);
  \draw[->,line width =1pt,>=stealth] (d)--(c);
  \draw[->,line width =1pt,>=stealth] (d)--(e);
\end{tikzpicture}}\] is the corresponding minimal adapted poset to $ A $. In this case $ |\qhstr A|=20 $. In \cref{fig:ex-poset-qhstr-A} we depict the Hasse diagram of $ \qhstr(A) $. The vertices correspond to minimal adapted orders to $ A $ which represent all the quasi-hereditary structures on $ A $. Note that if a total order is a minimal adapted order, then it is the unique element in its equivalence class.
\newcommand\myscalebox{1.2}
\tikzstyle{poset} = [line join=bevel, xscale=.5, yscale=.4, inner sep=.3mm]
\tikzstyle{posetline} = [black,->, >=stealth, line width=.6]

\newcommand{\nueve}{
\scalebox{\myscalebox}{\begin{tikzpicture}[poset]
\node (node_4) at (66.0bp,55.5bp) [draw,draw=none] {$5$};
  \node (node_3) at (51.0bp,6.5bp) [draw,draw=none] {$4$};
  \node (node_2) at (36.0bp,55.5bp) [draw,draw=none] {$3$};
  \node (node_1) at (21.0bp,104.5bp) [draw,draw=none] {$2$};
  \node (node_0) at (6.0bp,55.5bp) [draw,draw=none] {$1$};
  \draw [posetline] (node_2) ..controls (31.877bp,68.969bp) and (28.698bp,79.353bp)  .. (node_1);
  \draw [posetline] (node_3) ..controls (46.877bp,19.969bp) and (43.698bp,30.353bp)  .. (node_2);
  \draw [posetline] (node_0) ..controls (10.123bp,68.969bp) and (13.302bp,79.353bp)  .. (node_1);
  \draw [posetline] (node_3) ..controls (55.123bp,19.969bp) and (58.302bp,30.353bp)  .. (node_4);
\end{tikzpicture}}}

\newcommand{\unopos}{
\scalebox{\myscalebox}{\begin{tikzpicture}[poset]
\node (node_4) at (6.0bp,202.5bp) [draw,draw=none] {$5$};
  \node (node_3) at (6.0bp,153.5bp) [draw,draw=none] {$4$};
  \node (node_2) at (6.0bp,104.5bp) [draw,draw=none] {$3$};
  \node (node_1) at (6.0bp,55.5bp) [draw,draw=none] {$2$};
  \node (node_0) at (6.0bp,6.5bp) [draw,draw=none] {$1$};
  \draw [posetline] (node_3) ..controls (6.0bp,166.82bp) and (6.0bp,176.9bp)  .. (node_4);
  \draw [posetline] (node_0) ..controls (6.0bp,19.822bp) and (6.0bp,29.898bp)  .. (node_1);
  \draw [posetline] (node_2) ..controls (6.0bp,117.82bp) and (6.0bp,127.9bp)  .. (node_3);
  \draw [posetline] (node_1) ..controls (6.0bp,68.822bp) and (6.0bp,78.898bp)  .. (node_2);
\end{tikzpicture}}}

\newcommand{\dos}{
\scalebox{\myscalebox}{\begin{tikzpicture}[poset]
\node (node_4) at (36.0bp,104.5bp) [draw,draw=none] {$5$};
  \node (node_3) at (21.0bp,153.5bp) [draw,draw=none] {$4$};
  \node (node_2) at (6.0bp,104.5bp) [draw,draw=none] {$3$};
  \node (node_1) at (6.0bp,55.5bp) [draw,draw=none] {$2$};
  \node (node_0) at (6.0bp,6.5bp) [draw,draw=none] {$1$};
  \draw [posetline] (node_0) ..controls (6.0bp,19.822bp) and (6.0bp,29.898bp)  .. (node_1);
  \draw [posetline] (node_4) ..controls (31.877bp,117.97bp) and (28.698bp,128.35bp)  .. (node_3);
  \draw [posetline] (node_2) ..controls (10.123bp,117.97bp) and (13.302bp,128.35bp)  .. (node_3);
  \draw [posetline] (node_1) ..controls (6.0bp,68.822bp) and (6.0bp,78.898bp)  .. (node_2);
\end{tikzpicture}}}

\newcommand{\tres}{
\scalebox{\myscalebox}{\begin{tikzpicture}[poset]
\node (node_4) at (36.0bp,104.5bp) [draw,draw=none] {$5$};
  \node (node_3) at (36.0bp,55.5bp) [draw,draw=none] {$4$};
  \node (node_2) at (6.0bp,104.5bp) [draw,draw=none] {$3$};
  \node (node_1) at (6.0bp,55.5bp) [draw,draw=none] {$2$};
  \node (node_0) at (6.0bp,6.5bp) [draw,draw=none] {$1$};
  \draw [posetline] (node_3) ..controls (27.619bp,69.188bp) and (20.975bp,80.042bp)  .. (node_2);
  \draw [posetline] (node_0) ..controls (6.0bp,19.822bp) and (6.0bp,29.898bp)  .. (node_1);
  \draw [posetline] (node_3) ..controls (36.0bp,68.822bp) and (36.0bp,78.898bp)  .. (node_4);
  \draw [posetline] (node_1) ..controls (6.0bp,68.822bp) and (6.0bp,78.898bp)  .. (node_2);
\end{tikzpicture}}}

\newcommand{\cuatro}{
\scalebox{\myscalebox}{\begin{tikzpicture}[poset]
\node (node_4) at (36.0bp,6.5bp) [draw,draw=none] {$5$};
  \node (node_3) at (36.0bp,55.5bp) [draw,draw=none] {$4$};
  \node (node_2) at (21.0bp,104.5bp) [draw,draw=none] {$3$};
  \node (node_1) at (6.0bp,55.5bp) [draw,draw=none] {$2$};
  \node (node_0) at (6.0bp,6.5bp) [draw,draw=none] {$1$};
  \draw [posetline] (node_3) ..controls (31.877bp,68.969bp) and (28.698bp,79.353bp)  .. (node_2);
  \draw [posetline] (node_0) ..controls (6.0bp,19.822bp) and (6.0bp,29.898bp)  .. (node_1);
  \draw [posetline] (node_4) ..controls (36.0bp,19.822bp) and (36.0bp,29.898bp)  .. (node_3);
  \draw [posetline] (node_1) ..controls (10.123bp,68.969bp) and (13.302bp,79.353bp)  .. (node_2);
\end{tikzpicture}}}

\newcommand{\cinco}{
\scalebox{\myscalebox}{\begin{tikzpicture}[poset]
\node (node_4) at (21.0bp,153.5bp) [draw,draw=none] {$5$};
  \node (node_3) at (21.0bp,104.5bp) [draw,draw=none] {$4$};
  \node (node_2) at (36.0bp,6.5bp) [draw,draw=none] {$3$};
  \node (node_1) at (21.0bp,55.5bp) [draw,draw=none] {$2$};
  \node (node_0) at (6.0bp,6.5bp) [draw,draw=none] {$1$};
  \draw [posetline] (node_2) ..controls (31.877bp,19.969bp) and (28.698bp,30.353bp)  .. (node_1);
  \draw [posetline] (node_1) ..controls (21.0bp,68.822bp) and (21.0bp,78.898bp)  .. (node_3);
  \draw [posetline] (node_0) ..controls (10.123bp,19.969bp) and (13.302bp,30.353bp)  .. (node_1);
  \draw [posetline] (node_3) ..controls (21.0bp,117.82bp) and (21.0bp,127.9bp)  .. (node_4);
\end{tikzpicture}}}

\newcommand{\seis}{
\scalebox{\myscalebox}{\begin{tikzpicture}[poset]
\node (node_4) at (51.0bp,55.5bp) [draw,draw=none] {$5$};
  \node (node_3) at (36.0bp,104.5bp) [draw,draw=none] {$4$};
  \node (node_2) at (36.0bp,6.5bp) [draw,draw=none] {$3$};
  \node (node_1) at (21.0bp,55.5bp) [draw,draw=none] {$2$};
  \node (node_0) at (6.0bp,6.5bp) [draw,draw=none] {$1$};
  \draw [posetline] (node_2) ..controls (31.877bp,19.969bp) and (28.698bp,30.353bp)  .. (node_1);
  \draw [posetline] (node_1) ..controls (25.123bp,68.969bp) and (28.302bp,79.353bp)  .. (node_3);
  \draw [posetline] (node_0) ..controls (10.123bp,19.969bp) and (13.302bp,30.353bp)  .. (node_1);
  \draw [posetline] (node_4) ..controls (46.877bp,68.969bp) and (43.698bp,79.353bp)  .. (node_3);
\end{tikzpicture}}}

\newcommand{\siete}{
\scalebox{\myscalebox}{\begin{tikzpicture}[poset]
\node (node_4) at (36.0bp,104.5bp) [draw,draw=none] {$5$};
  \node (node_3) at (36.0bp,55.5bp) [draw,draw=none] {$4$};
  \node (node_2) at (36.0bp,6.5bp) [draw,draw=none] {$3$};
  \node (node_1) at (6.0bp,104.5bp) [draw,draw=none] {$2$};
  \node (node_0) at (6.0bp,55.5bp) [draw,draw=none] {$1$};
  \draw [posetline] (node_0) ..controls (6.0bp,68.822bp) and (6.0bp,78.898bp)  .. (node_1);
  \draw [posetline] (node_2) ..controls (36.0bp,19.822bp) and (36.0bp,29.898bp)  .. (node_3);
  \draw [posetline] (node_3) ..controls (36.0bp,68.822bp) and (36.0bp,78.898bp)  .. (node_4);
  \draw [posetline] (node_3) ..controls (27.619bp,69.188bp) and (20.975bp,80.042bp)  .. (node_1);
\end{tikzpicture}}}

\newcommand{\ocho}{
\scalebox{\myscalebox}{\begin{tikzpicture}[poset]
\node (node_4) at (51.0bp,6.5bp) [draw,draw=none] {$5$};
  \node (node_3) at (36.0bp,55.5bp) [draw,draw=none] {$4$};
  \node (node_2) at (21.0bp,6.5bp) [draw,draw=none] {$3$};
  \node (node_1) at (21.0bp,104.5bp) [draw,draw=none] {$2$};
  \node (node_0) at (6.0bp,55.5bp) [draw,draw=none] {$1$};
  \draw [posetline] (node_0) ..controls (10.123bp,68.969bp) and (13.302bp,79.353bp)  .. (node_1);
  \draw [posetline] (node_4) ..controls (46.877bp,19.969bp) and (43.698bp,30.353bp)  .. (node_3);
  \draw [posetline] (node_2) ..controls (25.123bp,19.969bp) and (28.302bp,30.353bp)  .. (node_3);
  \draw [posetline] (node_3) ..controls (31.877bp,68.969bp) and (28.698bp,79.353bp)  .. (node_1);
\end{tikzpicture}}}

\newcommand{\diez}{
\scalebox{\myscalebox}{\begin{tikzpicture}[poset]
\node (node_4) at (36.0bp,6.5bp) [draw,draw=none] {$5$};
  \node (node_3) at (36.0bp,55.5bp) [draw,draw=none] {$4$};
  \node (node_2) at (36.0bp,104.5bp) [draw,draw=none] {$3$};
  \node (node_1) at (21.0bp,153.5bp) [draw,draw=none] {$2$};
  \node (node_0) at (6.0bp,104.5bp) [draw,draw=none] {$1$};
  \draw [posetline] (node_2) ..controls (31.877bp,117.97bp) and (28.698bp,128.35bp)  .. (node_1);
  \draw [posetline] (node_3) ..controls (36.0bp,68.822bp) and (36.0bp,78.898bp)  .. (node_2);
  \draw [posetline] (node_0) ..controls (10.123bp,117.97bp) and (13.302bp,128.35bp)  .. (node_1);
  \draw [posetline] (node_4) ..controls (36.0bp,19.822bp) and (36.0bp,29.898bp)  .. (node_3);
\end{tikzpicture}}}

\newcommand{\once}{
\scalebox{\myscalebox}{\begin{tikzpicture}[poset]
\node (node_4) at (36.0bp,153.5bp) [draw,draw=none] {$5$};
  \node (node_3) at (36.0bp,104.5bp) [draw,draw=none] {$4$};
  \node (node_2) at (36.0bp,55.5bp) [draw,draw=none] {$3$};
  \node (node_1) at (21.0bp,6.5bp) [draw,draw=none] {$2$};
  \node (node_0) at (6.0bp,55.5bp) [draw,draw=none] {$1$};
  \draw [posetline] (node_2) ..controls (36.0bp,68.822bp) and (36.0bp,78.898bp)  .. (node_3);
  \draw [posetline] (node_1) ..controls (16.877bp,19.969bp) and (13.698bp,30.353bp)  .. (node_0);
  \draw [posetline] (node_3) ..controls (36.0bp,117.82bp) and (36.0bp,127.9bp)  .. (node_4);
  \draw [posetline] (node_1) ..controls (25.123bp,19.969bp) and (28.302bp,30.353bp)  .. (node_2);
\end{tikzpicture}}}

\newcommand{\doce}{
\scalebox{\myscalebox}{\begin{tikzpicture}[poset]
\node (node_4) at (66.0bp,55.5bp) [draw,draw=none] {$5$};
  \node (node_3) at (51.0bp,104.5bp) [draw,draw=none] {$4$};
  \node (node_2) at (36.0bp,55.5bp) [draw,draw=none] {$3$};
  \node (node_1) at (21.0bp,6.5bp) [draw,draw=none] {$2$};
  \node (node_0) at (6.0bp,55.5bp) [draw,draw=none] {$1$};
  \draw [posetline] (node_4) ..controls (61.877bp,68.969bp) and (58.698bp,79.353bp)  .. (node_3);
  \draw [posetline] (node_2) ..controls (40.123bp,68.969bp) and (43.302bp,79.353bp)  .. (node_3);
  \draw [posetline] (node_1) ..controls (16.877bp,19.969bp) and (13.698bp,30.353bp)  .. (node_0);
  \draw [posetline] (node_1) ..controls (25.123bp,19.969bp) and (28.302bp,30.353bp)  .. (node_2);
\end{tikzpicture}}}

\newcommand{\trece}{
\scalebox{\myscalebox}{\begin{tikzpicture}[poset]
\node (node_4) at (66.0bp,55.5bp) [draw,draw=none] {$5$};
  \node (node_3) at (58.0bp,6.5bp) [draw,draw=none] {$4$};
  \node (node_2) at (36.0bp,55.5bp) [draw,draw=none] {$3$};
  \node (node_1) at (13.0bp,6.5bp) [draw,draw=none] {$2$};
  \node (node_0) at (6.0bp,55.5bp) [draw,draw=none] {$1$};
  \draw [posetline] (node_3) ..controls (51.92bp,20.042bp) and (47.188bp,30.582bp)  .. (node_2);
  \draw [posetline] (node_1) ..controls (11.097bp,19.822bp) and (9.6574bp,29.898bp)  .. (node_0);
  \draw [posetline] (node_3) ..controls (60.187bp,19.896bp) and (61.857bp,30.125bp)  .. (node_4);
  \draw [posetline] (node_1) ..controls (19.356bp,20.042bp) and (24.304bp,30.582bp)  .. (node_2);
\end{tikzpicture}}}

\newcommand{\catorce}{
\scalebox{\myscalebox}{\begin{tikzpicture}[poset]
\node (node_4) at (36.0bp,6.5bp) [draw,draw=none] {$5$};
  \node (node_3) at (36.0bp,55.5bp) [draw,draw=none] {$4$};
  \node (node_2) at (36.0bp,104.5bp) [draw,draw=none] {$3$};
  \node (node_1) at (6.0bp,55.5bp) [draw,draw=none] {$2$};
  \node (node_0) at (6.0bp,104.5bp) [draw,draw=none] {$1$};
  \draw [posetline] (node_3) ..controls (36.0bp,68.822bp) and (36.0bp,78.898bp)  .. (node_2);
  \draw [posetline] (node_4) ..controls (36.0bp,19.822bp) and (36.0bp,29.898bp)  .. (node_3);
  \draw [posetline] (node_1) ..controls (6.0bp,68.822bp) and (6.0bp,78.898bp)  .. (node_0);
  \draw [posetline] (node_1) ..controls (14.381bp,69.188bp) and (21.025bp,80.042bp)  .. (node_2);
\end{tikzpicture}}}

\newcommand{\quince}{
\scalebox{\myscalebox}{\begin{tikzpicture}[poset]
\node (node_4) at (36.0bp,153.5bp) [draw,draw=none] {$5$};
  \node (node_3) at (36.0bp,104.5bp) [draw,draw=none] {$4$};
  \node (node_2) at (21.0bp,6.5bp) [draw,draw=none] {$3$};
  \node (node_1) at (21.0bp,55.5bp) [draw,draw=none] {$2$};
  \node (node_0) at (6.0bp,104.5bp) [draw,draw=none] {$1$};
  \draw [posetline] (node_2) ..controls (21.0bp,19.822bp) and (21.0bp,29.898bp)  .. (node_1);
  \draw [posetline] (node_1) ..controls (25.123bp,68.969bp) and (28.302bp,79.353bp)  .. (node_3);
  \draw [posetline] (node_1) ..controls (16.877bp,68.969bp) and (13.698bp,79.353bp)  .. (node_0);
  \draw [posetline] (node_3) ..controls (36.0bp,117.82bp) and (36.0bp,127.9bp)  .. (node_4);
\end{tikzpicture}}}

\newcommand{\dieciseis}{
\scalebox{\myscalebox}{\begin{tikzpicture}[poset]
\node (node_4) at (6.0bp,55.5bp) [draw,draw=none] {$5$};
  \node (node_3) at (6.0bp,104.5bp) [draw,draw=none] {$4$};
  \node (node_2) at (36.0bp,6.5bp) [draw,draw=none] {$3$};
  \node (node_1) at (36.0bp,55.5bp) [draw,draw=none] {$2$};
  \node (node_0) at (36.0bp,104.5bp) [draw,draw=none] {$1$};
  \draw [posetline] (node_2) ..controls (36.0bp,19.822bp) and (36.0bp,29.898bp)  .. (node_1);
  \draw [posetline] (node_1) ..controls (27.619bp,69.188bp) and (20.975bp,80.042bp)  .. (node_3);
  \draw [posetline] (node_4) ..controls (6.0bp,68.822bp) and (6.0bp,78.898bp)  .. (node_3);
  \draw [posetline] (node_1) ..controls (36.0bp,68.822bp) and (36.0bp,78.898bp)  .. (node_0);
\end{tikzpicture}}}

\newcommand{\diecisiete}{
\scalebox{\myscalebox}{\begin{tikzpicture}[poset]
\node (node_4) at (36.0bp,104.5bp) [draw,draw=none] {$5$};
  \node (node_3) at (21.0bp,55.5bp) [draw,draw=none] {$4$};
  \node (node_2) at (21.0bp,6.5bp) [draw,draw=none] {$3$};
  \node (node_1) at (6.0bp,104.5bp) [draw,draw=none] {$2$};
  \node (node_0) at (6.0bp,153.5bp) [draw,draw=none] {$1$};
  \draw [posetline] (node_2) ..controls (21.0bp,19.822bp) and (21.0bp,29.898bp)  .. (node_3);
  \draw [posetline] (node_1) ..controls (6.0bp,117.82bp) and (6.0bp,127.9bp)  .. (node_0);
  \draw [posetline] (node_3) ..controls (16.877bp,68.969bp) and (13.698bp,79.353bp)  .. (node_1);
  \draw [posetline] (node_3) ..controls (25.123bp,68.969bp) and (28.302bp,79.353bp)  .. (node_4);
\end{tikzpicture}}}

\newcommand{\dieciocho}{
\scalebox{\myscalebox}{\begin{tikzpicture}[poset]
\node (node_4) at (36.0bp,6.5bp) [draw,draw=none] {$5$};
  \node (node_3) at (21.0bp,55.5bp) [draw,draw=none] {$4$};
  \node (node_2) at (6.0bp,6.5bp) [draw,draw=none] {$3$};
  \node (node_1) at (21.0bp,104.5bp) [draw,draw=none] {$2$};
  \node (node_0) at (21.0bp,153.5bp) [draw,draw=none] {$1$};
  \draw [posetline] (node_4) ..controls (31.877bp,19.969bp) and (28.698bp,30.353bp)  .. (node_3);
  \draw [posetline] (node_2) ..controls (10.123bp,19.969bp) and (13.302bp,30.353bp)  .. (node_3);
  \draw [posetline] (node_1) ..controls (21.0bp,117.82bp) and (21.0bp,127.9bp)  .. (node_0);
  \draw [posetline] (node_3) ..controls (21.0bp,68.822bp) and (21.0bp,78.898bp)  .. (node_1);
\end{tikzpicture}}}

\newcommand{\diecinueve}{
\scalebox{\myscalebox}{\begin{tikzpicture}[poset]
\node (node_4) at (36.0bp,55.5bp) [draw,draw=none] {$5$};
  \node (node_3) at (21.0bp,6.5bp) [draw,draw=none] {$4$};
  \node (node_2) at (6.0bp,55.5bp) [draw,draw=none] {$3$};
  \node (node_1) at (6.0bp,104.5bp) [draw,draw=none] {$2$};
  \node (node_0) at (6.0bp,153.5bp) [draw,draw=none] {$1$};
  \draw [posetline] (node_2) ..controls (6.0bp,68.822bp) and (6.0bp,78.898bp)  .. (node_1);
  \draw [posetline] (node_3) ..controls (16.877bp,19.969bp) and (13.698bp,30.353bp)  .. (node_2);
  \draw [posetline] (node_1) ..controls (6.0bp,117.82bp) and (6.0bp,127.9bp)  .. (node_0);
  \draw [posetline] (node_3) ..controls (25.123bp,19.969bp) and (28.302bp,30.353bp)  .. (node_4);
\end{tikzpicture}}}

\newcommand{\veinte}{
\scalebox{\myscalebox}{\begin{tikzpicture}[poset]
\node (node_4) at (6.0bp,6.5bp) [draw,draw=none] {$5$};
  \node (node_3) at (6.0bp,55.5bp) [draw,draw=none] {$4$};
  \node (node_2) at (6.0bp,104.5bp) [draw,draw=none] {$3$};
  \node (node_1) at (6.0bp,153.5bp) [draw,draw=none] {$2$};
  \node (node_0) at (6.0bp,202.5bp) [draw,draw=none] {$1$};
  \draw [posetline] (node_2) ..controls (6.0bp,117.82bp) and (6.0bp,127.9bp)  .. (node_1);
  \draw [posetline] (node_3) ..controls (6.0bp,68.822bp) and (6.0bp,78.898bp)  .. (node_2);
  \draw [posetline] (node_4) ..controls (6.0bp,19.822bp) and (6.0bp,29.898bp)  .. (node_3);
  \draw [posetline] (node_1) ..controls (6.0bp,166.82bp) and (6.0bp,176.9bp)  .. (node_0);
\end{tikzpicture}}}
 \begin{figure}[h]
\centering
\scalebox{.7}{
\begin{tikzpicture}[line join=bevel, xscale=2.1,yscale=2.1, inner sep=0.6mm, dib/.style={draw,line width =.5pt}]
  \node (n15) at (218.0bp,157bp) [dib] {\quince};
  \node (n19) at (130.0bp,55.5bp) [dib] {\diecinueve};
  \node (n9) at (110.0bp,6.5bp) [dib] {\nueve}; 
  \node (n10) at (85.0bp,55.5bp) [dib] {\diez};
  \node (n8) at (140.0bp,104.5bp) [dib] {\ocho};
  \node (n14) at (6.0bp,153.5bp) [dib] {\catorce};
  \node (n6) at (170.0bp,153.5bp) [dib] {\seis};
  \node (n5) at (179.0bp,104.5bp) [dib] {\cinco};
  \node (n17) at (218.0bp,104.5bp) [dib] {\diecisiete}; 
  \node (n4) at (6.0bp,104.5bp) [dib] {\cuatro};
  \node (n3) at (42.0bp,55.5bp) [dib] {\tres};
  \node (n12) at (110.0bp,251.5bp) [dib] {\doce};
  \node (n11) at (110.0bp,202.5bp) [dib] {\once};
  \node (n18) at (115.0bp,155bp) [dib] {\dieciocho};
  \node (n20) at (98.0bp,104.5bp) [dib] {\veinte};
  \node (n2) at (55.0bp,202.5bp) [dib] {\dos};
  \node (n7) at (173.0bp,55.5bp) [dib] {\siete};
  \node (n1) at (75.0bp,148.5bp) [dib,fill=white] {\unopos};
  \node (n16) at (170.0bp,202.5bp) [dib] {\dieciseis};
  \node (n13) at (50.0bp,104.5bp) [dib] {\trece};
  
  \begin{scope}[cab/.style={-open triangle 45, 
  							preaction={draw=white,-,line width=4pt},
							line width=1pt,
							shorten >=1mm,shorten <=1mm}]
	\begin{pgfonlayer}{bg} 
	
	\draw [black,cab] (n9) -- (n3.south);
	\draw [black,cab] (n9) -- (n7.south);
	\draw [black,cab] (n9) -- (n19.south);
	\draw [black,cab] (n9) -- (n10.south);
	
	\draw [black,cab] (n3) -- (n4.south);
	\draw [black,cab] (n3.north) -- (n13);
	\draw [black,cab] ([yshift=-2mm]n3.north east) ..controls (75bp,90bp)  .. (n1.south);
	
	\draw [black,cab] ([yshift=-3mm]n10.north west) -- (n4);
	\draw [black,cab] (n10.north) -- ([yshift=-1.5mm]n20.west); 
	\draw [black,cab] ([yshift=-3mm]n10.north east) -- (n8);
	
	\draw [black,cab] (n19.west) -- ([yshift=2mm]n13.south east);
	\draw [black,cab] (n19.north) -- (n20);
	\draw [black,cab] ([yshift=-4mm]n19.north east) -- (n17.south west);
	
	\draw [black,cab] (n7) -- (n8);
	\draw [black,cab] (n7) -- (n5.south);
	\draw [black,cab] (n7) -- (n17.south);
	
	\draw [black,cab] (n4.north east) -- ([xshift=-1mm]n2.south);
	\draw [black,cab] (n4) -- (n14);

	\draw [black,cab] (n13) -- (n14.south east);
    \draw [black,cab] (n13.north) ..controls (60bp,170bp)  .. (n11.west);  
	
	\draw [black,cab] ([yshift=-1mm]n20.west) -- (n14);

	\draw [black,cab] ([xshift=-1mm]n8.north) -- (n18.south east);
	\draw [black,cab] ([xshift=1mm]n8.north) -- ([yshift=4mm]n6.south west);
	
	 \draw [black,cab] ([yshift=4mm]n5.west) -- ([yshift=-3mm]n1.east);
	\draw [black,cab] (n5.north) -- (n6.south);
	\draw [black,cab] (n5) -- (n15);
	
	\draw [black,cab] ([yshift=-2.5mm]n17.north west) -- ([yshift=-2mm]n18.east);
	\draw [black,cab] (n17) -- (n15);	
		
	\draw [black,cab] (n14.north) ..controls (24bp,217bp)  .. (n12.west);
	
	\draw [black,cab] (n1) -- ([xshift=1mm]n2.south);
	\draw [black,cab] (n1) -- (n11);
	
	\draw [black,cab] (n18) -- (n16);
	
	\draw [black,cab] (n6.north west) -- ([yshift=-5mm]n2.east);
	\draw [black,cab] (n6) -- (n16);
	
	\draw [black,cab] (n15) -- (n11);
	\draw [black,cab] (n15.north) -- (n16);
	
	\draw [black,cab] (n2.north) -- (n12);
	\draw [black,cab] (n11) -- (n12);
	\draw [black,cab] (n16.north) -- (n12);
	
	\draw [black,cab] (n20.north) -- ([yshift=-1mm]n18.west);
	\end{pgfonlayer}
	
\end{scope}
\end{tikzpicture}}
\caption[Poset of quasi-hereditary structures on A]{Poset of quasi-hereditary structures on $ A=\kk  (\tikz[baseline=-0.5ex,xscale=.7,inner sep=.7mm,>=stealth,
line width = .5pt]{ \node (1) at (1,0) {1}; \node (2) at (2,0) {2}; \node (3) at (3,0) {3}; \node (4) at (4,0) {4}; \node (5) at (5,0) {5};
\draw[->] (1)--(2); \draw[->] (3)--(2); \draw[->] (4)--(3); \draw[->] (4)--(5); })$.}
\label{fig:ex-poset-qhstr-A}
\end{figure}
\end{exa}

\section{Path algebras of types \texorpdfstring{$\D$}{TEXT} and \texorpdfstring{$\E$}{TEXT}}\label{section-DE-count}
In this section, we count the number of quasi-hereditary structures on $\kk Q$ for a quiver $Q$ of Dynkin type $\D$ and $\E$.
\subsection{Idempotent reduction}
Let $A$ be a finite dimensional algebra and $\{S(i)\}_{i\in I}$ the set of isomorphism classes of simple $A$-modules.
Fix $i\in I$ and a corresponding idempotent $e_i$ in $A$.
For a partial order $\lhd$ on $I\setminus\{i\}$, we denote by $\lhd^{\prime}$ a partial order on $I$ such that $\lhd^{\prime}|_{I\setminus\{i\}}=\lhd$ and $i$ is a unique maximal element.
\begin{lem}\label{lem_idempotent_adapted}
If $\lhd$ is an adapted order to $A/\langle e_i \rangle$, then $\lhd^{\prime}$ is an adapted order to $A$.
\end{lem}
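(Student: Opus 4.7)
The plan is to verify Dlab--Ringel's adapted condition directly, exploiting the fact that $i$ is maximal for $\lhd'$. Let $M$ be any indecomposable $A$-module with simple top $S(j)$ and simple socle $S(k)$, where $j$ and $k$ are incomparable with respect to $\lhd'$. The first observation is that since $i$ is the unique $\lhd'$-maximal element, it is comparable with every element of $I$; therefore $j,k \in I\setminus\{i\}$, and $j,k$ are also incomparable with respect to $\lhd$.

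Next I would split into two cases depending on whether $S(i)$ appears in $M$. If $[M:S(i)] \neq 0$, then $l=i$ already works: by maximality $j \lhd' i$ and $k \lhd' i$, and $[M:S(l)] = [M:S(i)] \neq 0$, so the adapted condition is witnessed. Otherwise $[M:S(i)]=0$, which (using that $S(j)e_i = 0$ for $j\neq i$ and arguing by composition length) is equivalent to $Me_i = 0$; hence $M$ is naturally a module over $A/\langle e_i\rangle$, with the same top and socle.

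At this point I would invoke the hypothesis that $\lhd$ is adapted to $A/\langle e_i \rangle$. Since $j,k$ are $\lhd$-incomparable in $I\setminus\{i\}$, there exists $l \in I \setminus \{i\}$ with $j\lhd l$ and $k\lhd l$ and $[M:S(l)]\neq 0$. Because $\lhd' $ restricts to $\lhd$ on $I\setminus\{i\}$, the same $l$ satisfies $j\lhd' l$ and $k\lhd' l$, so the adapted condition for $\lhd'$ holds in this case as well.

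There is no real obstacle here; the lemma essentially amounts to checking that adding a new unique maximum to a poset does not create new incomparable pairs and that modules involving the new top element are automatically handled by the maximality. One minor technical point worth recording carefully in the write-up is the identification of $A$-modules $M$ with $Me_i=0$ as exactly the $A/\langle e_i \rangle$-modules, which can be proved by induction on composition length using that $S(j)e_i=0$ precisely when $j\neq i$.
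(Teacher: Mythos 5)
Your proof is correct and follows essentially the same route as the paper's: a case split on whether $Me_i=0$, using the maximality of $i$ as the witness when $S(i)$ occurs in $M$, and the adaptedness of $\lhd$ on $A/\langle e_i\rangle$ otherwise. The only difference is that you spell out the (correct but routine) justifications that $j,k\neq i$ and that $Me_i=0$ identifies $M$ with an $A/\langle e_i\rangle$-module, which the paper leaves implicit.
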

\begin{proof}
Let $M$ be an $A$-module such that $\Top M \cong  S(j)$ and $\soc M \cong  S(k)$ for some $j, k\in I\setminus\{i\}$, and $j, k$ are incomparable with respect to $\lhd^{\prime}$.
If $M$ is an $A/\langle e_i \rangle$-module, then there is nothing to show, since $\lhd$ is an adapted order to $A/\langle e_i \rangle$.
On the other hand, if $Me_i\neq 0$, then $j \lhd^{\prime} i$, $k \lhd^{\prime} i$ implies that $\lhd^{\prime}$ is adapted to $A$.
\end{proof}
\begin{lem}\label{lem_idempotent_qh}
Let $\lhd_1, \lhd_2$ be partial orders on $I\setminus\{i\}$ which define quasi-hereditary structures on $A/\ideal{e_i}$.
Assume that both $\lhd_1^{\prime}, \lhd_2^{\prime}$ define quasi-hereditary structures on $A$.
Then $\lhd_1 \sim \lhd_2$ if and only if $\lhd_1^{\prime} \sim \lhd_2^{\prime}$.
\end{lem}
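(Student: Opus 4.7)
The plan is to reduce the equivalence to Lemma \ref{lem_two_qh_str_equiv}, which says that two orderings giving quasi-hereditary structures are equivalent if and only if their sets of standard modules coincide and their sets of costandard modules coincide. To do this, I will show that the (co)standard modules of $(A,(I,\lhd_j^{\prime}))$ are completely determined, weight by weight, by the (co)standard modules of $(A/\ideal{e_i},(I\setminus\{i\},\lhd_j))$, with the component at weight $i$ being the same for $j=1,2$.

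First I would handle the weight $i$. Since $i$ is the unique maximal element of $\lhd_j^{\prime}$, every $k\in I$ satisfies $k\lhd_j^{\prime}i$, so the condition defining the standard module is trivially satisfied on every composition factor of $P(i)$; hence $\Delta_j^{\prime}(i)=P(i)$, independent of $j$. Dually, $\nabla_j^{\prime}(i)=I(i)$, again independent of $j$.

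Next I would handle weights $k\in I\setminus\{i\}$. Since $i$ is maximal with respect to $\lhd_j^{\prime}$ and $(A,(I,\lhd_j^{\prime}))$ is quasi-hereditary, Lemma \ref{lem-qh-maximal} gives that $(A/\ideal{e_i},(I\setminus\{i\},\lhd_j^{\prime}|_{I\setminus\{i\}}))=(A/\ideal{e_i},(I\setminus\{i\},\lhd_j))$ is quasi-hereditary and the standard $A/\ideal{e_i}$-module at weight $k$ is precisely $\Delta_j^{\prime}(k)$ regarded as an $A/\ideal{e_i}$-module. Thus $\Delta_j^{\prime}(k)=\Delta_j(k)$ under the fully faithful embedding $\modu(A/\ideal{e_i})\hookrightarrow\modu A$. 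For costandards, I would apply Lemma \ref{lem-dual} to pass to $A^{\op}$ (where the ordering is reversed but $i$ remains the unique extremum in $\lhd_j^{\prime}$; here one must be mindful that the dual of the maximal $i$ is minimal, so one uses the order-ideal version of Lemma \ref{lem-qh-maximal}, or equivalently a direct verification using that any submodule of $I(k)$ whose composition factors have weight $\lhd_j^{\prime}k$ is annihilated by $e_i$ and hence is a submodule of $I_{A/\ideal{e_i}}(k)$). Either way, one obtains $\nabla_j^{\prime}(k)=\nabla_j(k)$ for $k\neq i$.

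Combining these four items, $\Delta_1^{\prime}=\Delta_2^{\prime}$ if and only if $\Delta_1=\Delta_2$, and likewise $\nabla_1^{\prime}=\nabla_2^{\prime}$ if and only if $\nabla_1=\nabla_2$. Applying Lemma \ref{lem_two_qh_str_equiv} to both algebras yields the desired equivalence $\lhd_1\sim\lhd_2\iff\lhd_1^{\prime}\sim\lhd_2^{\prime}$. The only subtle step is the costandard comparison for $k\neq i$, since the stated Lemma \ref{lem-qh-maximal} is phrased only for standards; this should pose no real obstacle, being either a direct check or an application of the duality in Lemma \ref{lem-dual}.
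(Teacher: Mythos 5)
Your proposal is correct and follows essentially the same route as the paper: identify $\Delta_j^{\prime}(i)=P(i)$ (hence independent of $j$) using that $i$ is the unique maximal element, identify $\Delta_j^{\prime}(k)$ for $k\neq i$ with the standard modules of $A/\ideal{e_i}$ via Lemma \ref{lem-qh-maximal}, and conclude by comparing standard modules. The paper's proof is shorter only because, by Lemma \ref{lem_two_qh_str_equiv}, agreement of the standard modules alone already characterizes $\sim$ for quasi-hereditary structures, so your explicit costandard comparison is harmless but not needed.
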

\begin{proof}
By Lemma \ref{lem-qh-maximal}, it is clear that $\lhd_1^{\prime} \sim \lhd_2^{\prime}$ implies $\lhd_1 \sim \lhd_2$.
Since $i$ is a unique maximal element in $I$, $\Delta_1^{\prime}(i) = P(i) = \Delta_2^{\prime}(i)$ holds.
Thus the converse is also true.
\end{proof}
Then we concentrate on path algebras.
Let $Q$ be a finite acyclic quiver and fix $i\in Q_0$.
By Lemmas \ref{lem_idempotent_adapted} and \ref{lem_idempotent_qh}, we have the following well-defined injective map:
\[
\iota_i \colon  \qhstr(\kk Q/\ideal{e_i}) \longrightarrow \qhstr(\kk Q) \qquad [\lhd] \mapsto [\lhd^{\prime}].
\]
\begin{lem}\label{lem_idempotent_path_algebra}
For a finite acyclic quiver $Q$, we have
$$
\qhstr(\kk Q) = \bigcup_{i\in Q_0}\mathsf{Im}(\iota_i),
$$
and each $\mathsf{Im}(\iota_i)$ bijectively corresponds to $\qhstr(\kk Q/\ideal{e_i})$.
\end{lem}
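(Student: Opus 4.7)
The second assertion (bijection onto the image) is essentially free: by Lemma \ref{lem_idempotent_qh}, the map $\iota_i$ is well-defined and satisfies $[\lhd_1] = [\lhd_2]$ if and only if $[\lhd_1'] = [\lhd_2']$, so it is injective. The content of the lemma is thus the equality $\qhstr(\kk Q) = \bigcup_{i\in Q_0}\mathsf{Im}(\iota_i)$. One inclusion is trivial from the definition of $\iota_i$. For the other, I plan to show that every quasi-hereditary structure on $\kk Q$ admits a representative in which some vertex $i$ is a maximum, and then to peel off this $i$.

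The key input is the Dlab--Ringel fact recalled in the introduction: every quasi-hereditary structure contains at least one total order. So, given $[\lhd] \in \qhstr(\kk Q)$, I first select a total order $\lhd_t$ with $[\lhd_t] = [\lhd]$. Being a total order on the finite set $Q_0$, it has a unique maximum $i \in Q_0$. I then set $\lhd_0 \coloneqq \lhd_t|_{Q_0 \setminus \{i\}}$. By Lemma \ref{lem-qh-maximal}, since $i$ is maximal in $(Q_0, \lhd_t)$, the pair $(\kk Q / \langle e_i \rangle, \lhd_0)$ is a quasi-hereditary algebra, so $[\lhd_0]\in \qhstr(\kk Q/\langle e_i\rangle)$. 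By construction, $\lhd_0'$ restricts to $\lhd_0$ on $Q_0 \setminus \{i\}$ and has $i$ as unique maximal element; since $\lhd_t$ is a total order enjoying both these properties, $\lhd_0' = \lhd_t$. Hence $\iota_i([\lhd_0]) = [\lhd_t] = [\lhd]$, which exhibits $[\lhd]$ in $\mathsf{Im}(\iota_i)$.

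There is essentially no obstacle: the only nontrivial ingredient is the existence of a total order in each equivalence class (Dlab--Ringel), and the identification of this total order with $\lhd_0'$ for $\lhd_0$ its restriction to $Q_0 \setminus \{i\}$, which is immediate. The injectivity and well-definedness of $\iota_i$, together with the hereditariness of $\kk Q$ via Proposition \ref{pro-hered-adapted} (so that adaptedness is enough to guarantee the quasi-hereditary property on both sides), then package everything into the bijection $\mathsf{Im}(\iota_i) \cong \qhstr(\kk Q/\langle e_i\rangle)$ claimed.
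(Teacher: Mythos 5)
Your proposal is correct and follows essentially the same route as the paper: the paper also passes to a total-order representative of the given quasi-hereditary structure (the Dlab--Ringel fact), takes its unique maximum $i$, restricts to $Q_0\setminus\{i\}$, and observes that $\iota_i$ applied to this restriction recovers $[\lhd]$. You merely spell out the intermediate justifications (Lemma \ref{lem-qh-maximal}, the identification $\lhd_0'=\lhd_t$, and injectivity via Lemma \ref{lem_idempotent_qh}) that the paper leaves implicit.
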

\begin{proof}
Let $\lhd$ be a partial order on $Q_0$ which defines a quasi-hereditary structure on $\kk Q$.
Then we may assume that $\lhd$ is a total order with a unique maximal element $i$.
Then we have a total order $\lhd|_{Q_0\setminus\{i\}}$ on $Q_0\setminus\{i\}$ and we have $\iota_i([\lhd|_{Q_0\setminus\{i\}}])=[\lhd]$.
\end{proof}
\subsection{Decomposition of the set of quasi-hereditary structures}
Throughout this subsection, let $r, s, t\in\Z_{\geq 1}$ and $Q=Q(r,s,t)$ the following quiver:
\begin{align}\label{quiver-DE}
\begin{tikzpicture}[baseline=0]
\node(al)at(1,0){$a_r$};
\node(al-1)at(2.5,0){$a_{r-1}$};
\node(acdots)at(4,0){$\cdots$};
\node(a1)at(5.5,0){$a_1$};
\node(0)at(7,0){$a_0$};
\node(b1)at(8,1){$b_1$};
\node(bcdots)at(9.5,1){$\cdots$};
\node(bm)at(11,1){$b_s$};
\node(c1)at(8,-1){$c_1$};
\node(ccdots)at(9.5,-1){$\cdots$};
\node(cn)at(11,-1){$c_t$};
\draw[thick, ->] (al)--(al-1);
\draw[thick, ->] (al-1)--(acdots);
\draw[thick, ->] (acdots)--(a1);
\draw[thick, ->] (a1)--(0);
\draw[thick, ->] (0)--(b1);
\draw[thick, ->] (0)--(c1);
\draw[thick, ->] (b1)--(bcdots);
\draw[thick, ->] (c1)--(ccdots);
\draw[thick, ->] (bcdots)--(bm);
\draw[thick, ->] (ccdots)--(cn);
\end{tikzpicture}
\end{align}
Recall that we have $\qhstr(\kk Q) = \bigcup_{i}\mathsf{Im}(\iota_i)$, where $i$ runs over all the vertices of the quiver $Q$.
Each $\mathsf{Im}(\iota_i)$ bijectively corresponds to $\qhstr(\kk Q/\ideal{e_i})$. 
Let $Q_0^b \coloneqq  \{b_k \mid 1 \leq k \leq s \}$ and $Q_0^c \coloneqq \{ c_k \mid 1 \leq k \leq t \}$.
We begin with the following lemma.
\begin{lem}\label{lem-D-int}
Let $i,j$ be vertices of $Q$ with $i\neq j$.
If $\mathsf{Im}(\iota_i) \cap \mathsf{Im}(\iota_j) \neq \emptyset$, then we have $i \in Q_0^b$ and $j \in Q_0^c$, or $j \in Q_0^b$ and $i \in Q_0^c$.
\end{lem}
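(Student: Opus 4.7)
The plan is to exploit the following general principle: when a vertex $u$ is declared the unique maximum of some $\lhd$, the standard module $\Delta_{\lhd}(u)$ is forced to be the whole projective $P(u)$. Combined with the characterization $\Delta_1=\Delta_2$ of equivalent orders given by Lemma \ref{lem_two_qh_str_equiv}, this yields very strong constraints on the composition factors of the projectives at $i$ and $j$.

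Concretely, I would pick $[\lhd]\in\mathsf{Im}(\iota_i)\cap\mathsf{Im}(\iota_j)$ and, by definition of the maps $\iota_i$ and $\iota_j$, extract representatives $\lhd_1,\lhd_2$ of $[\lhd]$ in which $i$ (respectively $j$) is the unique maximum. Since $i$ is $\lhd_1$-maximum, $\Delta_{\lhd_1}(i)=P(i)$, hence by Lemma \ref{lem_two_qh_str_equiv} also $\Delta_{\lhd_2}(i)=P(i)$. But $\Delta_{\lhd_2}(i)$ is the largest quotient of $P(i)$ whose composition factors are $\lhd_2$-below $i$, and the unique $\lhd_2$-maximum $j$ is distinct from $i$, so $j$ cannot be $\lhd_2$-below $i$. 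Therefore $[P(i):S(j)]=0$, and by the symmetric argument $[P(j):S(i)]=0$.

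What remains is a short combinatorial analysis of $Q=Q(r,s,t)$. Since $\kk Q$ is the path algebra of an acyclic quiver, $[P(u):S(v)]\neq 0$ is equivalent to the existence of a directed path from $u$ to $v$. So $i$ and $j$ must be mutually unreachable in $Q$. Inspecting the picture \eqref{quiver-DE}, any vertex $a_k$ of the tail reaches every vertex strictly below it (including all of $Q_0^b\cup Q_0^c$), and is reached from any $a_m$ with $m>k$; hence no vertex of $Q$ is reachability-incomparable with an $a$-vertex. This forces $\{i,j\}\subseteq Q_0^b\cup Q_0^c$. Each arm being linearly oriented, any two distinct vertices of $Q_0^b$ (or of $Q_0^c$) are reachability-comparable, so $\{i,j\}$ must meet both arms. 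Up to relabelling we get $i\in Q_0^b$ and $j\in Q_0^c$.

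I do not foresee any genuine obstacle. The whole argument reduces to (i) the elementary observation that $\Delta(i)=P(i)$ whenever $i$ is the unique maximum, and (ii) a direct reading of the quiver. The only point deserving a little care is the very first step: checking that the unique-maximum convention used in the definition of $\iota_u$ actually forces $\Delta(u)=P(u)$, which is immediate from the definition of the standard module but should be spelled out.
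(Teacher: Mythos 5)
Your proof is correct and follows essentially the same route as the paper's: both extract representatives of the common quasi-hereditary structure with $i$ (resp.\ $j$) as unique maximum, deduce $\Delta(i)=P(i)$ and $\Delta(j)=P(j)$, use maximality to rule out $S(j)$ in $P(i)$ and $S(i)$ in $P(j)$, and finish by inspecting the shape of $Q(r,s,t)$. The only cosmetic difference is that the paper phrases the last step via the absence of a common composition factor of $P(i)$ and $P(j)$, while you phrase it via mutual unreachability of $i$ and $j$; for this quiver the two are equivalent and the inspection is the same.
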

\begin{proof}
Let $\mfq \in \mathsf{Im}(\iota_i) \cap \mathsf{Im}(\iota_j)$ and denote by $\Delta$ the set of standard $\kk Q$-modules associated to $\mfq$.
For each $\ell=i, j$, by the definition of $\iota_{\ell}$, there exist a partial order $\lhd_{\ell}^{\prime}$ on $Q_0$ such that $[\lhd_{\ell}^{\prime}]=\mfq$ and $\ell$ is a unique maximal element of $\lhd_{\ell}^{\prime}$.
By extending $\lhd_{\ell}^{\prime}$ and Lemma \ref{extension_adapted}, we may assume that $\lhd_{\ell}^{\prime}$ is totally ordered for each $\ell=i, j$.
We have $\Delta(i) = P(i)$ and $\Delta(j) = P(j)$.

Assume that $P(i)$ and $P(j)$ have a common composition factor $S(k)$.
Then we can take $k=i$ or $k=j$ because of the shape of the quiver $Q$.
If $k=i$, then $\Delta(j)=P(j)$ implies that $i \lhd_i^{\prime} j$, which is a contradiction.
Similarly, $k=j$ induces a contradiction.
Thus $P(i)$ and $P(j)$ have no common composition factor.
This implies the assertion.
\end{proof}
Then we observe the intersection $\mathsf{Im}(\iota_i) \cap \mathsf{Im}(\iota_j)$ for $i\in Q_0^b$ and $j\in Q_0^c$.
For each vertex $i\in Q_0$, we have a fully faithful functor $\modu (\kk Q/\ideal{e_i}) \to \modu \kk Q$.
By this functor, we regard $\kk Q/\ideal{e_i}$-modules as $\kk Q$-modules.
\begin{lem}
\label{lem-D-int-rep}
Let $i\in Q_0^b$, $j\in Q_0^c$ and $\mfq_i\in\qhstr(\kk Q/\ideal{e_i})$.
The following statements are equivalent.
\begin{enumerate}
\item[(1)]
There exists a partial order $\lhd$ on $Q_0\setminus\{i\}$ such that $[\lhd]=\mfq_i$ and a vertex $j$ is a unique maximal element by $\lhd$.
\item[($1^{\prime}$)]
There exists a partial order $\lhd$ on $Q_0\setminus\{i\}$ such that $[\lhd]=\mfq_i$ and a vertex $j$ is a maximal element by $\lhd$.
\item[(2)]
$\iota_i(\mfq_i) \in \mathsf{Im}(\iota_j)$.
\end{enumerate}
\end{lem}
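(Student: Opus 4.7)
The key input, valid because $i=b_l\in Q_0^b$ and $j=c_m\in Q_0^c$ lie in the two disjoint branches of $Q$, is that no path in $Q$ joins $i$ and $j$; hence $S(i)$ is not a composition factor of $P(j)$ or $I(j)$, and dually $S(j)$ is not a composition factor of $P(i)$ or $I(i)$. For (1)$\Rightarrow$(2), I would refine $\lhd$ by Lemma \ref{extension_adapted} to a total order on $Q_0\setminus\{i\}$ still having $j$ as maximum, and apply $\iota_i$ to obtain a total order $\lhd^{\prime}$ on $Q_0$ in which $i$ is the global maximum and $j$ sits immediately below. Setting $\mu\coloneqq\lhd^{\prime}|_{Q_0\setminus\{j\}}$, the lift $\iota_j([\mu])$ is represented by the total order $\mu^{\prime}$ obtained from $\lhd^{\prime}$ by swapping $i$ and $j$ at the top. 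For $k\notin\{i,j\}$ the sets $\{x : x\lhd^{\prime} k\}$ and $\{x : x\lhd_{\mu^{\prime}} k\}$ coincide, so the standard and costandard modules at $k$ agree. For $k\in\{i,j\}$, maximality of $k$ in one of the two orders combined with the vanishing statements above forces $\Delta(k)=P(k)$ and $\nabla(k)=I(k)$ in both orders. Lemma \ref{lem_two_qh_str_equiv} then yields $\lhd^{\prime}\sim\mu^{\prime}$, so $\iota_i(\mfq_i)=[\mu^{\prime}]=\iota_j([\mu])\in\mathsf{Im}(\iota_j)$.

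For (2)$\Rightarrow$(1), pick $\mfq_j$ with $\iota_j(\mfq_j)=\iota_i(\mfq_i)$ and a total-order representative $\lhd_j$ of $\mfq_j$; let $\lhd_j^{\prime}\coloneqq\iota_j(\lhd_j)$ (so $j$ is maximal) and $\mu\coloneqq\lhd_j^{\prime}|_{Q_0\setminus\{i\}}$, so that $j$ is maximal in $\mu$ by construction. Let $\mu^{\prime}$ denote the extension of $\mu$ to $Q_0$ with $i$ as new unique maximum. Fixing any representative $\lhd_i^{\prime}$ of $\iota_i(\mfq_i)$ in which $i$ is maximal, we have $\lhd_j^{\prime}\sim\lhd_i^{\prime}$, and in particular $[\Delta_{\lhd_j^{\prime}}(k):S(i)]=0$ for every $k\neq i$. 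For $k\neq i$, the sets $\{x : x\lhd_{\lhd_j^{\prime}}k\}$ and $\{x : x\lhd_{\mu^{\prime}}k\}$ differ only in the possible presence of $i$ in the former; but since the trace of $P(i)$ in $\Delta_{\lhd_j^{\prime}}(k)$ vanishes (its $S(i)$-multiplicity being zero), forming the further quotient by this trace leaves the module unchanged, giving $\Delta_{\lhd_j^{\prime}}(k)=\Delta_{\mu^{\prime}}(k)$. A dual argument, using a representative with $i$ maximal, handles the costandard modules. By Lemma \ref{lem_two_qh_str_equiv}, $\lhd_j^{\prime}\sim\mu^{\prime}$, so $\mu^{\prime}$ is a representative of $\iota_i(\mfq_i)$; Lemma \ref{lem-qh-maximal} (applied with $i$ maximal in $\mu^{\prime}$) shows $[\mu]\in\qhstr(\kk Q/\ideal{e_i})$, and the injectivity of $\iota_i$ (Lemma \ref{lem_idempotent_qh}) gives $[\mu]=\mfq_i$.

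The hard step is the move in (2)$\Rightarrow$(1) of sliding $i$ from its arbitrary position in $\lhd_j^{\prime}$ all the way to the top: since the intervening vertices need not all lie in $Q_0^c$, a chain of adjacent swaps as in the (1)$\Rightarrow$(2) argument is not available. This obstacle is bypassed by the module-theoretic observation that $\Delta_{\lhd_j^{\prime}}(k)$ already contains no $S(i)$, so enlarging the forbidden set of composition factors to include $i$ leaves the standard modules untouched, independently of how many vertices $i$ must be moved past.
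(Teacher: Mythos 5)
Your proof is correct and follows essentially the same route as the paper: both directions hinge on restricting a representative with $j$ (resp.\ $i$) on top to $Q_0\setminus\{i\}$ and on the key fact that $S(i)$ is not a composition factor of $\Delta(k)$ for $k\neq i$, so deleting $i$ from the order leaves the (co)standard modules unchanged. The only cosmetic difference is that the paper verifies this last point by tracking the unique path from $k$ to $v$ in the quiver, while you argue abstractly with largest quotients and then descend via injectivity of $\iota_i$; the underlying observation is identical.
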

\begin{proof}
Clearly, (1) implies ($1^{\prime}$).
For ($1^{\prime}$) to (1), take a total order on $I$ such that it is a refinement of $\lhd$ and $j$  is a unique maximal element.
By Lemma \ref{extension_adapted}, the total order also defines $\mfq_i$.

We show (1) implies (2).
Let $\lhd$ be a partial order as in (1).
Let $\lhd'$ be a partial order on $Q_0$ such that $\lhd'|_{Q_0\setminus\{i\}}=\lhd$ and $k \lhd' i$ holds for any $k\in Q_0$.
We denote by $\tilde{\lhd}$ a partial order on $Q_0$ obtained by switching $i$ and $j$ with respect to $\lhd$, that is, for $k,\ell\in Q_0\setminus\{j\}$, $k\tilde{\lhd} \ell$ holds if and only if $k \lhd' \ell$ holds, and $i \tilde{\lhd} j$ holds, and $k $.
Since $i\in Q_0^b$, $j\in Q_0^c$ and the shape of the quiver $Q$,  $\tilde{\lhd}$ and $\lhd'$ define the same quasi-hereditary structure on $\kk Q$, which is equal to $\iota_i(\mfq_i):=[\lhd']$.
By Lemma \ref{lem-qh-maximal}, $\tilde{\lhd}|_{Q_0\setminus\{j\}}$ defines a qu

asi-hereditary structure on $\kk Q/\ideal{e_j}$, denote the structure by $\mfq'$.
We have $\iota_i(\mfq_i)=[\lhd']=[\tilde{\lhd}] =\iota_j(\mfq')$.

We show (2) implies (1).
Assume that there exists $\mfq_j\in\qhstr(\kk Q/\ideal{e_j})$ with $\iota_i(\mfq_i) = \iota_j(\mfq_j)$.
For $\ell=i,j$, let $\lhd_{\ell}$ be a partial order on $Q_0\setminus\{\ell\}$ such that $[\lhd_{\ell}]=\mfq_{\ell}$.
We have $[\lhd_{\ell}^\p]=\iota_{\ell}(\mfq_{\ell})$.
We denote by $\Delta$ the standard $\kk Q$-modules with respect to $\iota_i(\mfq_i)=\iota_j(\mfq_j)$.
Since $\iota_i(\mfq_i)=\iota_j(\mfq_j)$ is represented by both $\lhd_i^{\prime}$ and $\lhd_j^{\prime}$, for any $k\in Q_0\setminus\{i,j\}$, $\Delta(k)$ does not have composition factors $S(i)$ and $S(j)$, and $\Delta(i) = P(i)$, $\Delta(j) = P(j)$ hold.

We denote by $\lhd^i$ the partial order on $Q_0\setminus\{i\}$ obtained by restricting $\lhd_j^{\prime}$ to $Q_0\setminus\{i\}$.
We show that $[\lhd^i]=\mfq_i$ holds.
Let $\Delta_i$ and $\Delta^i$ the standard $\kk Q/\ideal{e_i}$-modules of $\lhd_i$ and $\lhd^i$, respectively.
We show that $\Delta_i(k) = \Delta(k) = \Delta^i(k)$ holds for any $k\in Q_0\setminus\{i\}$.
Since $\iota_i(\mfq_i)$ is represented by $\lhd_i^{\prime}$, we have $\Delta_i(k) = \Delta(k)$ for any $k\in Q_0\setminus\{i\}$ by Lemma \ref{lem-qh-maximal}.
Let $k \in Q_0\setminus\{i\}$.
Since $j$ is maximal by $\lhd^i$, $i\in Q_0^b$ and $j\in Q_0^c$, $\Delta^i(j) = P(j) = \Delta(j)$ holds.
Assume that $k\neq j$.
Again since $j$ is maximal by $\lhd^i$, $\Delta^i(k)$ does not have a composition factor $S(j)$.
Then we can show that for any $v \in Q_0\setminus\{i, j\}$, $S(v)$ is a composition factor of $\Delta^i(k)$ if and only if $S(v)$ is a composition factor of $\Delta(k)$.
In fact, since $\lhd^i$ is obtained by restricting $\lhd_j^{\prime}$, $\Delta^i(k)$ is a factor of $\Delta(k)$.
Namely, the only if part holds.
On the other hand, if $S(v)$ is a composition factor of $\Delta(k)$, then there exists a (unique) path $p$ in $Q$ from $k$ to $v$ such that each vertex $u$ of the path $p$ satisfies $u \lhd_j^{\prime} k$.
Since $S(i)$ is not a composition factor of $\Delta(k)$, the path $p$ does not factor through a vertex $i$.
Thus the path $p$ is a path in $Q\setminus\{i\}$.
This implies that $S(v)$ is a composition factor of $\Delta^i(k)$.
We have $\Delta(k) = \Delta^i(k)$.
\end{proof}
For $1 \leq k \leq t$, we denote by $\mcC_k$ a subset of $\qhstr(\kk Q/\ideal{e_{c_k}})$ consisting of $\mfq$ such that there exists no partial order $\lhd$ on $Q_0\setminus\{c_k\}$ such that $[\lhd]=\mfq$ and $\lhd$ has one of $b_1, \ldots, b_s$ as a maximal element.
Then by Lemmas \ref{lem_idempotent_path_algebra}, \ref{lem-D-int} and \ref{lem-D-int-rep}, we have the following proposition.
\begin{pro}\label{pro-counting-decomposition}
We have a bijection
$$
\bigsqcup_{i=0}^r\qhstr\left(\frac{\kk Q}{\ideal{e_{a_i}}}\right) \sqcup \bigsqcup_{j=1}^s\qhstr\left(\frac{\kk Q}{\ideal{e_{b_j}}}\right) \sqcup \bigsqcup_{k=1}^t\mcC_k \longrightarrow \qhstr(\kk Q),
$$
where the map is given by $\iota_i$.
\end{pro}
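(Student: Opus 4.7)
The plan is to verify the three properties required for the map to be a bijection: each restriction $\iota_i$ is injective, the pieces have pairwise disjoint images, and the union of images equals $\qhstr(\kk Q)$.

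Injectivity on each piece follows directly from Lemma \ref{lem_idempotent_qh} applied to each map $\iota_i$. For surjectivity, I will invoke Lemma \ref{lem_idempotent_path_algebra}, which yields $\qhstr(\kk Q) = \bigcup_{i\in Q_0} \mathsf{Im}(\iota_i)$. Given $\mfq \in \qhstr(\kk Q)$, I distinguish cases based on which vertex $i$ realizes $\mfq$: if $i=a_{\ell}$ or $i=b_j$, then $\mfq$ already lies in the image coming from that piece. If $i = c_k$ and the corresponding $\mfq_{c_k} \in \qhstr(\kk Q/\ideal{e_{c_k}})$ lies in $\mcC_k$, done; otherwise, by definition of $\mcC_k$ there exists a representing partial order with some $b_j$ as maximal element, and Lemma \ref{lem-D-int-rep} then yields $\mfq \in \mathsf{Im}(\iota_{b_j})$, so $\mfq$ is hit by the $\qhstr(\kk Q/\ideal{e_{b_j}})$ piece instead.

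For pairwise disjointness of images, Lemma \ref{lem-D-int} gives $\mathsf{Im}(\iota_u) \cap \mathsf{Im}(\iota_v) = \emptyset$ for any distinct vertices $u,v$ unless $\{u,v\}$ consists of one vertex in $Q_0^b$ and one in $Q_0^c$. This disposes of:
\begin{itemize}
\item two distinct $a$-pieces,
\item any $a$-piece against any $b$- or $c$-piece,
\item two distinct $b$-pieces,
\item two distinct $c$-pieces (hence the $\iota_{c_k}(\mcC_k)$'s are automatically pairwise disjoint).
\end{itemize}
The only remaining case is the intersection of $\mathsf{Im}(\iota_{b_j})$ with $\iota_{c_k}(\mcC_k)$. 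By Lemma \ref{lem-D-int-rep} (applied with $i=c_k$ and $j=b_j$), every element of $\iota_{c_k}(\mcC_k)$ fails to lie in $\mathsf{Im}(\iota_{b_j})$, precisely by the defining condition of $\mcC_k$. This closes the disjointness verification.

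The proposition is thus a structural consequence of the three preceding lemmas rather than an independent calculation. No new technical obstacle arises: Lemma \ref{lem-D-int} controls which images can overlap, Lemma \ref{lem-D-int-rep} characterizes those overlaps, and the set $\mcC_k$ is defined exactly to excise the ambiguity in favor of the $b$-vertex pieces. The only point worth writing carefully is the surjectivity case analysis, to ensure that the choice to prefer $\mathsf{Im}(\iota_{b_j})$ over $\mathsf{Im}(\iota_{c_k})$ when both contain $\mfq$ is compatible with the chosen disjointification, which is automatic since $\mcC_k$ was defined by removing precisely these cases.
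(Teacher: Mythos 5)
Your proof is correct and follows the same route as the paper, which simply cites Lemma \ref{lem-D-int-rep} (implicitly together with Lemmas \ref{lem_idempotent_path_algebra} and \ref{lem-D-int}) and leaves the entire injectivity/surjectivity/disjointness case analysis to the reader; you have merely written out those details. The only point worth noting is that your application of Lemma \ref{lem-D-int-rep} with $i=c_k$ and $j=b_j$ uses the lemma with the roles of $Q_0^b$ and $Q_0^c$ interchanged, which is legitimate by the evident symmetry of its proof and is clearly what the paper intends given how $\mcC_k$ is defined.
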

\subsection{Counting quasi-hereditary structures}
In this subsection, we count the number of quasi-hereditary structures on path algebras of Dynkin type $\D$.
Let $Q=Q(r,s,t)$ be the quiver defined in Eq.~\eqref{quiver-DE}.
We consider type $\D_n$ for an integer $n\geq 4$.
By taking the opposite quiver and Theorem \ref{thm_qhstr_divide}, it is enough to study the following two cases.
\begin{align*}
\begin{tikzpicture}[baseline=0]
\node(Q)at(0,0){$Q(n-3,1,1):$};
\node(al-1)at(2.5,0){$a_{n-3}$};
\node(acdots)at(4,0){$\cdots$};
\node(a1)at(5.5,0){$a_1$};
\node(0)at(7,0){$a_0$};
\node(b1)at(8,1){$b_1$};
\node(c1)at(8,-1){$c_1$};
\draw[thick, ->] (al-1)--(acdots);
\draw[thick, ->] (acdots)--(a1);
\draw[thick, ->] (a1)--(0);
\draw[thick, ->] (0)--(b1);
\draw[thick, ->] (0)--(c1);
\end{tikzpicture}
\end{align*}
\begin{align*}
\begin{tikzpicture}[baseline=0]
\node(Q)at(3.5,0){$Q(1,n-3,1):$};
\node(a1)at(5.5,0){$a_1$};
\node(0)at(7,0){$a_0$};
\node(b1)at(8,1){$b_1$};
\node(bcdots)at(9.5,1){$\cdots$};
\node(bm)at(11,1){$b_{n-3}$};
\node(c1)at(8,-1){$c_1$};
\draw[thick, ->] (a1)--(0);
\draw[thick, ->] (0)--(b1);
\draw[thick, ->] (0)--(c1);
\draw[thick, ->] (b1)--(bcdots);
\draw[thick, ->] (bcdots)--(bm);
\end{tikzpicture}
\end{align*}
In both cases, $t=1$, so we study $\mcC_1$ for both cases.
We denote by $c_n$ the $n$-th Catalan number $c_n = \frac{1}{n+1} { \binom{2n}{n} }$.
We begin with the following easy observation.
\begin{lem}\label{lem-An-adapted-maximal}
Let $Q=n \to \dots \to 2 \to 1$ be an equioriented quiver of type $\A_n$.
For $1\leq i \leq n$, let $\mathsf{qh}(i)$ be a subset of $\qhstr(\kk Q)$ consisting of $\mfq$ which is represented by a partial order having $i$ as a maximal element.
Then the following statements hold.
\begin{enumerate}
\item
Any adapted order to $\kk Q$ has a unique maximal element.
\item
The image of the map $\iota_i \colon  \qhstr(\kk Q/\ideal{e_i}) \to \qhstr(\kk Q)$ corresponds to $\mathsf{qh}(i)$, that is, $\mathsf{Im}(\iota_i) = \mathsf{qh}(i)$.
In particular, we have a decomposition
$\qhstr(\kk Q) = \bigsqcup_{i=1}^n\mathsf{Im}(\iota_i).$
\end{enumerate}
\end{lem}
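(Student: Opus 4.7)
My approach is to prove (1) using the explicit classification of indecomposable $\kk Q$-modules, and then derive (2) from (1) together with Lemma \ref{lem-qh-maximal}. The key observation underlying (1) is that for the equioriented quiver $Q = n \to \cdots \to 1$, the indecomposable $\kk Q$-modules are exactly the interval modules $M[a,b]$ for $1 \leq a \leq b \leq n$, where $M[a,b]$ has composition factors $S(a), S(a+1), \ldots, S(b)$ with top $S(b)$ and socle $S(a)$ (the orientation forces images of arrows to flow toward smaller indices). To prove (1), I will argue that if $i \neq j$ are both maximal in an adapted order $\lhd$, they must be $\lhd$-incomparable; assuming $i < j$ in the natural order and applying Definition \ref{dfn_adapted} to $M[i,j]$ yields $k$ with $i \lhd k$, $j \lhd k$, and $k \in \{i, i+1, \ldots, j\}$. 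Incomparability rules out $k = i, j$, and then $j \lhd k$ with $k \neq j$ contradicts the maximality of $j$. Existence of some maximal element is automatic from finiteness of $Q_0$.

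For the first half of (2), I will use that in a finite poset, having a unique maximal element is equivalent to having a greatest element (iteratively going up from any non-maximal element reaches the unique maximum). Hence $\iota_i$ attaches $i$ as the greatest element, which gives $\mathsf{Im}(\iota_i) \subseteq \mathsf{qh}(i)$ immediately. For the converse inclusion, given $\mfq \in \mathsf{qh}(i)$ represented by an adapted order $\lhd$ with $i$ maximal, part (1) promotes $i$ to being the greatest element; Lemma \ref{lem-qh-maximal} then provides a quasi-hereditary structure on $\kk Q/\ideal{e_i}$ via the restriction $\lhd|_{Q_0 \setminus \{i\}}$, and $\iota_i$ of this restriction recovers $\mfq$. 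This yields $\mathsf{Im}(\iota_i) = \mathsf{qh}(i)$.

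Finally, the equality $\qhstr(\kk Q) = \bigcup_{i=1}^n \mathsf{qh}(i)$ is immediate from (1) together with the cited result of Conde that every quasi-hereditary structure admits an adapted representative. For the disjointness $\mathsf{qh}(i) \cap \mathsf{qh}(j) = \emptyset$ when $i \neq j$, I will use that the greatest element of any adapted representative is in fact an invariant of the quasi-hereditary structure: if $\mfq$ were represented by both $\lhd_1$ with $i$ greatest and $\lhd_2$ with $j$ greatest, then $\Delta(i) = P(i)$ and $\Delta(j) = P(j)$ both hold. Assuming $i < j$ in the natural order, $S(i)$ is a composition factor of $P(j) = \Delta(j)$, which forces $i \lhd_1 j$ strictly by the quasi-hereditary axiom, contradicting that $i$ is greatest in $\lhd_1$. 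I do not foresee a serious obstacle; the subtlest technical point is verifying that ``greatest element'' is well-defined on the equivalence class $\mfq$, which is exactly what the $\Delta(i) = P(i)$ characterization delivers.
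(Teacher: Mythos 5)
Your proof is correct, but it reaches part (1) by a genuinely different route than the paper. The paper deduces (1) from the binary-tree classification of adapted orders (Theorem \ref{thm_Baptiste_bijection} via Lemma \ref{lem:binary-trees-adapted}): every adapted order to $\Lambda_n$ refines some $\lhd_T$, whose root is a greatest element, and a refinement preserves a greatest element. You instead argue directly from Definition \ref{dfn_adapted}: two distinct maximal elements $i<j$ would be incomparable, and applying the adapted condition to the uniserial interval module $M[i,j]$ produces $k\in[i,j]$ with $i\lhd k$ and $j\lhd k$, which is incompatible with maximality in every case. This is more elementary and self-contained (it does not need the Tamari-lattice machinery), at the cost of using the specific classification of indecomposables for the equioriented $\A_n$ quiver — which the paper also uses elsewhere, so nothing is lost. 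For part (2) the paper only says it ``follows from the definition of $\iota_i$''; your write-up supplies the details that are implicitly needed: that a unique maximal element in a finite poset is a greatest element, that Lemma \ref{lem-qh-maximal} makes $\lhd|_{Q_0\setminus\{i\}}$ a legitimate preimage under $\iota_i$, and — for the disjointness of the union — that the greatest element is an invariant of the equivalence class because $\Delta(i)=P(i)$ forces every $k<i$ to satisfy $k\lhd i$. All of these steps check out.
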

\begin{proof}
(1)
By Theorem \ref{thm_Baptiste_bijection}, any adapted order to $\kk Q$ is an extension of the adapted order induced from a binary tree.
Clearly, each partial order induced from binary trees has a unique maximal element.
(2) directly follows from the definition of $\iota_i$.
\end{proof}
\begin{lem}\label{lem-counting-Q(n-3,1,1)}
Let $Q=Q(n-3,1,1)$.
Then we have the following equalities.
\begin{enumerate}
\item
$|\mcC_1|=c_{n-1}-c_{n-2}$
\item
$|\qhstr(\kk Q)|=2c_n-3c_{n-1}$
\end{enumerate}
\end{lem}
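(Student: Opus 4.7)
Part (1) is a complement-counting argument. The quotient $\kk Q/\ideal{e_{c_1}}$ is the path algebra of the equioriented $\A_{n-1}$ quiver $a_{n-3}\to\cdots\to a_0\to b_1$, i.e., it equals $\Lambda_{n-1}$; hence Theorem~\ref{thm_Baptiste_bijection} gives $|\qhstr(\Lambda_{n-1})|=c_{n-1}$. By the definition of $\mcC_1$ together with Lemma~\ref{lem-An-adapted-maximal}(2), $\mcC_1$ is exactly the complement in $\qhstr(\Lambda_{n-1})$ of the image of the injective map $\iota_{b_1}\colon\qhstr(\Lambda_{n-1}/\ideal{e_{b_1}})\to\qhstr(\Lambda_{n-1})$ supplied by Lemma~\ref{lem_idempotent_path_algebra}. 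Since $\Lambda_{n-1}/\ideal{e_{b_1}}=\Lambda_{n-2}$, this image has cardinality $c_{n-2}$, yielding $|\mcC_1|=c_{n-1}-c_{n-2}$.

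Part (2) proceeds by strong induction on $n$, where I write $d_n\coloneqq|\qhstr(\kk Q(n-3,1,1))|$. For the base case $n=3$, the star $Q(0,1,1)$ (with arrows $a_0\to b_1$ and $a_0\to c_1$) admits an iterated deconcatenation at $a_0$ into two copies of equioriented $\A_2$, so $d_3=c_2^2=4=2c_3-3c_2$ by Theorems~\ref{thm_qhstr_divide} and~\ref{thm_Baptiste_bijection}. For $n\geq 4$, apply Proposition~\ref{pro-counting-decomposition}: the quotient $\kk Q/\ideal{e_{b_1}}$ equals $\Lambda_{n-1}$ and contributes $c_{n-1}$, while $|\mcC_1|=c_{n-1}-c_{n-2}$ by part (1). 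For each $0\leq i\leq n-3$, deleting $a_i$ disconnects $Q$ into either three components (for $i=0$: the equioriented $\A_{n-3}$ plus two isolated vertices, contributing $c_{n-3}$) or into two components (for $1\leq i\leq n-3$: an equioriented $\A_{n-3-i}$ together with the smaller quiver $Q(i-1,1,1)$, contributing $c_{n-3-i}\,d_{i+2}$). Combining these contributions via Theorem~\ref{thm_qhstr_divide} and reindexing $j=i+2$ (with the convention $c_0=1$ absorbing the $i=n-3$ term), one obtains the recursion
\begin{align*}
d_n=c_{n-3}+\sum_{j=3}^{n-1}c_{n-1-j}\,d_j+2c_{n-1}-c_{n-2}.
\end{align*}

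The inductive step concludes by substituting $d_j=2c_j-3c_{j-1}$ for $3\leq j\leq n-1$ and recognising the resulting expression as truncations of the Catalan convolutions $c_n=\sum_{k=0}^{n-1}c_k c_{n-1-k}$ and $c_{n-1}=\sum_{k=0}^{n-2}c_k c_{n-2-k}$. Subtracting the missing low-index terms $c_{n-1}+c_{n-2}+2c_{n-3}$ and $c_{n-2}+c_{n-3}$ respectively causes the $c_{n-3}$- and $c_{n-2}$-contributions to cancel, and the expression collapses to $2c_n-3c_{n-1}$. The main obstacle is the bookkeeping: correctly identifying the components obtained from each idempotent reduction---especially in the boundary cases $i=0$ and $i=n-3$---and tracking the boundary corrections in the Catalan convolutions; the Catalan arithmetic itself is routine.
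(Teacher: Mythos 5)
Your proposal is correct and follows essentially the same route as the paper: part (1) is the identical complement count $|\qhstr(\Lambda_{n-1})|-|\qhstr(\Lambda_{n-2})|$ via Lemma \ref{lem-An-adapted-maximal}, and part (2) is the same induction through Proposition \ref{pro-counting-decomposition} resolved by the Catalan convolution $c_{n}=\sum_k c_k c_{n-1-k}$. The only (harmless) difference is bookkeeping: you split off the $i=0$ term as $c_{n-3}$ explicitly, whereas the paper absorbs it into the sum using $2c_2-3c_1=1$ for the two isolated vertices; the arithmetic agrees.
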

\begin{proof}
(1)
Recall that $\mcC_1$ a subset of $\qhstr(\kk Q/\ideal{e_{c_1}})$ consisting of $\mfq$ such that there exist no partial orders on $Q_0\setminus\{c_1\}$ which represent $\mfq$ and have $b_1$ as a maximal element.
Therefore, by Lemma \ref{lem-An-adapted-maximal}, we have $\mcC_1 = \qhstr(\kk Q/\ideal{e_{c_1}}) \setminus \mathsf{Im}(\iota_{b_1})$.
Therefore, we have $|\mcC_1|=|\qhstr(\kk Q/\ideal{e_{c_1}})|-|\mathsf{Im}(\iota_{b_1})| = |\qhstr(\kk Q/\ideal{e_{c_1}})| - |\qhstr(\kk Q/\ideal{e_{c_1}, e_{b_1}})| = c_{n-1}-c_{n-2}$ by Corollary \ref{cor-An-Tamari}.

(2)
We show the equality by an induction on $n$.
Although we are considering $\D$ type, it is easy to see that the equality holds for $n=3$.
Assume that $n>3$.
We have the following equalities.
\begin{align}
|\qhstr(\kk Q)| & = \left|\bigsqcup_{i=0}^{n-3}\qhstr\left(\frac{\kk Q}{\ideal{e_{a_i}}}\right)\right| + \left|\qhstr\left(\frac{\kk Q}{\ideal{e_{b_1}}}\right)\right| + |\mcC_1| \label{equ-1-Q(n-3,1,1)} \\
& = \sum_{i=0}^{n-3}c_{n-i-3}(2c_{i+2}-3c_{i+1}) + c_{n-1} + c_{n-1} - c_{n-2} \label{equ-2-Q(n-3,1,1)} \\
& = 2(c_n - c_{n-1} -c_{n-2}) -3(c_{n-1} -c_{n-2}) +2c_{n-1} -c_{n-2} \label{equ-3-Q(n-3,1,1)} \\
& = 2c_n-3c_{n-1}.\nonumber
\end{align}
For the first and the second equalities (\ref{equ-1-Q(n-3,1,1)}) and (\ref{equ-2-Q(n-3,1,1)}), we use the statement (1), Theorem \ref{thm_Baptiste_bijection}, Proposition \ref{pro-counting-decomposition}, Lemma \ref{lem-An-adapted-maximal}, and an inductive hypothesis.
For the third equality (\ref{equ-3-Q(n-3,1,1)}), we use the well known equality $c_{n+1}=\sum_{i=0}^nc_{n-i}c_i$.
\end{proof}
\begin{lem}\label{lem-counting-Q(1,n-3,1)}
Let $Q=Q(1,n-3,1)$.
Then the following equalities hold.
\begin{enumerate}
\item
$|\mcC_1| = c_{n-2}+c_{n-3}$
\item
$|\qhstr(\kk Q)|=3c_{n-1}-c_{n-2}$
\end{enumerate}
\end{lem}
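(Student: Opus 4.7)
The plan is to mirror the strategy of Lemma \ref{lem-counting-Q(n-3,1,1)}: establish (1) by a direct count using the idempotent reduction at $c_1$, then deduce (2) by plugging into Proposition \ref{pro-counting-decomposition} and proceeding by induction on $n$, the combinatorial core being the Catalan recurrence $c_{m+1}=\sum_{i=0}^{m} c_i c_{m-i}$.

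For (1), the algebra $\kk Q/\ideal{e_{c_1}}$ has Gabriel quiver $a_1 \to a_0 \to b_1 \to \cdots \to b_{n-3}$, an equioriented $\A_{n-1}$. By Lemma \ref{lem-An-adapted-maximal}, every adapted order to such an algebra has a unique maximum, so $\qhstr(\kk Q/\ideal{e_{c_1}})=\bigsqcup_v \mathsf{Im}(\iota_v)$ indexed by its vertices $v$. By the definition of $\mcC_1$, removing the images where some $b_j$ is maximal leaves
\[
\mcC_1 = \mathsf{Im}(\iota_{a_0})\sqcup \mathsf{Im}(\iota_{a_1}).
\]
Lemma \ref{lem_idempotent_path_algebra} identifies each summand with $\qhstr$ of an iterated idempotent quotient. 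Removing $c_1$ and $a_0$ leaves an isolated vertex together with an equioriented $\A_{n-3}$, giving $c_1\cdot c_{n-3}=c_{n-3}$ by Theorem \ref{thm_qhstr_divide}, while removing $c_1$ and $a_1$ leaves an equioriented $\A_{n-2}$, giving $c_{n-2}$. Hence $|\mcC_1|=c_{n-3}+c_{n-2}$.

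For (2), I proceed by induction on $n\geq 4$, assuming the formula $|\qhstr(\kk Q(1,m-3,1))|=3c_{m-1}-c_{m-2}$ for every $m<n$ (the identity also matches the equioriented $\A_3$ case via the boundary convention $Q(1,0,1)=\A_3$, since $c_3=3c_2-c_1$). Proposition \ref{pro-counting-decomposition} splits the count as
\[
|\qhstr(\kk Q)| = \sum_{i=0}^{1}\left|\qhstr\left(\tfrac{\kk Q}{\ideal{e_{a_i}}}\right)\right| + \sum_{j=1}^{n-3}\left|\qhstr\left(\tfrac{\kk Q}{\ideal{e_{b_j}}}\right)\right| + |\mcC_1|.
\]
Deconcatenating at $a_0$ and applying Theorem \ref{thm_qhstr_divide} gives $c_{n-3}$ and $c_2\cdot c_{n-2}=2c_{n-2}$ for the two $a_i$ terms. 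Removing $b_j$ disconnects $Q$ into a copy of $Q(1,j-1,1)$ and an equioriented $\A_{n-3-j}$, so the induction hypothesis yields $|\qhstr(\kk Q/\ideal{e_{b_j}})|=(3c_{j+1}-c_j)\,c_{n-3-j}$, with $c_0=1$ covering the extremal case $j=n-3$.

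The main (and essentially only) obstacle is then a routine manipulation of Catalan sums. Reindexing $\sum_{j=1}^{n-3} c_{j+1} c_{n-3-j}$ and $\sum_{j=1}^{n-3} c_j c_{n-3-j}$ and subtracting off the missing low-index boundary terms of the Catalan recurrence evaluates them to $c_{n-1}-c_{n-2}-c_{n-3}$ and $c_{n-2}-c_{n-3}$, respectively. Combining these with the $c_{n-3}$, $2c_{n-2}$ terms and the value of $|\mcC_1|$ from (1), the $c_{n-2}$ and $c_{n-3}$ contributions cancel and we are left with exactly $3c_{n-1}-c_{n-2}$, closing the induction.
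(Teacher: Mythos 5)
Your proposal is correct and follows essentially the same route as the paper: part (1) via the disjoint decomposition of $\qhstr(\kk Q/\ideal{e_{c_1}})$ from Lemma \ref{lem-An-adapted-maximal} (leaving exactly the images of $\iota_{a_0}$ and $\iota_{a_1}$), and part (2) via Proposition \ref{pro-counting-decomposition}, induction on $n$, and the Catalan convolution identity. Your intermediate evaluations of the quotient quivers and of the two Catalan sums agree with the paper's computation.
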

\begin{proof}
(1)
Recall that $\mcC_1$ is a subset of $\qhstr(\kk Q/\ideal{e_{c_1}})$ consisting of $\mfq$ such that there exist no partial orders on $Q_0\setminus\{c_1\}$ which represent $\mfq$ and have one of $b_1, \ldots, b_{n-3}$ as a maximal element.
Therefore, by Lemma \ref{lem-An-adapted-maximal} and Corollary \ref{cor-An-Tamari}, we have
$|\mcC_1| = |\qhstr(\kk Q/\ideal{e_{c_1}, e_{a_1}})| + |\qhstr(\kk Q/\ideal{e_{c_1}, e_{a_0}})| = c_{n-2}+c_{n-3}$.

(2)
We show the equality by an induction on $n$.
For $n=3$, the equality holds.
Assume that $n>3$.
We have the following equalities.
\begin{align*}
|\qhstr(\kk Q)| & = \left|\bigsqcup_{i=1}^{n-3}\qhstr\left(\frac{\kk Q}{\ideal{e_{b_i}}}\right)\right| + \left|\qhstr\left(\frac{\kk Q}{\ideal{e_{a_1}}}\right)\right| + \left|\qhstr\left(\frac{\kk Q}{\ideal{e_{a_0}}}\right)\right| + |\mcC_1| \\
& = \sum_{i=1}^{n-3}(3c_{i+1}-c_{i})c_{n-i-3} + 2c_{n-2} + c_{n-3} + c_{n-2} + c_{n-3}\\
& = 3(c_{n-1}-c_{n-3}-c_{n-2})-(c_{n-2}-c_{n-3}) + 3c_{n-2} +2c_{n-3} \\
 & =3c_{n-1} - c_{n-2}.
\end{align*}
All equalities are deduced in a similar way as those of the proof of Lemma \ref{lem-counting-Q(n-3,1,1)}.
\end{proof}
We find the following correspondence to  \href{https://oeis.org/}{OEIS} sequences \cite{OEIS}.

\begin{table}[h!]
\renewcommand{\arraystretch}{1.3}
\centering
\begin{tabular}{@{} c x{2.3cm} x{2.5cm} c  @{}} 
\toprule
$Q$ & & Formula & {OEIS}   \\
\midrule
\multirow{2}{*}{$ Q(n-3,1,1) $} & $|\mcC_{1}|$ & $ {c_{n-1}-c_{n-2}} $ & $\operatorname{\href{https://oeis.org/A000245}{A000245}} (n-2) $      \\
& $  {|\qhstr (\kk Q)|} $ & $ 2c_{n}-3c_{n-1} $ & $\operatorname{\href{https://oeis.org/A070031}{A070031}} (n-2) $  \\
\cmidrule(lr){2-4}
\addlinespace[3pt]
$ Q(1,n-3,1) $& $|\mcC_{1}|$ & $ c_{n-2}+c_{n-3} $ & $\operatorname{\href{https://oeis.org/A005807}{A005807}} (n-3) $   \\
\bottomrule
\end{tabular}
\end{table}
The sequence $(3c_{n-1}-c_{n-2})_{n\geq 2}$ is not listed in \cite{OEIS} until the publication of this article.
\begin{exa}\label{typeE}
Here we give a list of the numbers of quasi-hereditary structures on path algebras of Dynkin types $\E_6$, $\E_7$, and $\E_8$.
By taking the opposite quiver and Theorem \ref{thm_qhstr_divide}, it is enough to calculate the following cases of $Q=Q(r,s,t)$.
\begin{enumerate}
\item
For $\E_6$, $(r,s,t)=(1,2,2)$ or $(2,2,1)$.
\item
For $\E_7$, $(r,s,t)=(1,3,2), (2,3,1)$ or $(3,2,1)$.
\item
For $\E_8$, $(r,s,t)=(1,4,2), (2,4,1)$ or $(4,2,1)$.
\end{enumerate}
\begin{table}[h!]
\renewcommand{\arraystretch}{1.2}
\centering
\begin{tabular}{@{} r c S[table-format = 1]  S[table-format = 1]  S[table-format = 3] @{}} 
\toprule
 & $\phantom{ii} (r,s,t)\phantom{ii} $ & $\phantom{ii}|\mcC_{1}|\phantom{ii}$ & $\phantom{ii}|\mcC_{2}|\phantom{ii}$ & $  {|\qhstr (\kk Q)|} $ \\
\midrule
\multirow{2}{*}{$\E_{6}$} & $ (1,2,2) $ & 7 & 19 & 106    \\
& $ (2,2,1) $ & 23 & 0 & 130 \\
\cmidrule(lr){2-5}
\addlinespace[2pt]
\multirow{3}{*}{$\E_{7}$} & $ (1,3,2) $ & 19 & 52 & 322 \\
& $ (2,3,1) $ & 66 & 0 & 416 \\
& $ (3,2,1) $ & 76 & 0 & 453 \\
\cmidrule(lr){2-5}
\addlinespace[2pt]
\multirow{3}{*}{$\E_{8}$} & $ (1,4,2) $ & 56 & 154 & 1020 \\
& $ (2,4,1) $ & 202 & 0 & 1368 \\
& $ (4,2,1) $ & 255 & 0 & 1584 \\
\bottomrule
\end{tabular}
\end{table}
For example, let $Q=Q(1,3,2)$.
Then $|\mcC_1|$, $|\mcC_2|$ and $|\qhstr(\kk Q)|$ are obtained as follows:
\begin{align*}
|\mcC_1| & = |\qhstr(\kk Q/\ideal{e_{c_1}})| - \sum_{i=1}^3 |\qhstr(\kk Q/\ideal{e_{c_1}, e_{b_i}})| \\
& = c_5 - c_2c_2 - c_3 - c_4 = 19,\\
|\mcC_2| & = |\qhstr(\kk Q/\ideal{e_{c_2}})| - \sum_{i=1}^3 |\qhstr(\kk Q/\ideal{e_{c_2}, e_{b_i}})| \\
& = (3c_5-c_4) - c_3c_2 - (3c_3-c_2) - (3c_4 - c_3) = 52,\\
|\qhstr(\kk Q)| & = \sum_{i=0}^1|\qhstr(\kk Q/\ideal{e_{a_i}})| + \sum_{i=1}^3|\qhstr(\kk Q/\ideal{e_{b_i}})| + |\mcC_1| + |\mcC_2| \\
& = c_3c_2 + c_4c_3 + c_4c_2 + (3c_4-c_3) + |\qhstr(\kk Q(1,2,2))| + 19 + 52 \\
& = 322.
\end{align*}
\end{exa}
\section{Lattice of quasi-hereditary structures}
\label{section-lattice-qhstr}
Throughout this section, all algebras are defined over a fixed field $\mathbf{k}$.
\subsection{Lattice properties and incidence algebras}
We denote by $\tilde{\rm D}_n$ the following quiver,
\begin{align}\label{quiver-D-tilde}
\begin{tikzpicture}[baseline=0]
\node(01+)at(0,0.8){$\circ$};
\node(01-)at(0,-0.8){$\circ$};
\node(al)at(1,0){$\circ$};
\node(al-1)at(2.5,0){$\circ$};
\node(acdots)at(4,0){$\cdots$};
\node(a1)at(5.5,0){$\circ$};
\node(0)at(7,0){$\circ$};
\node(b1)at(8,0.8){$\circ$};
\node(c1)at(8,-0.8){$\circ$};
\draw[thick, ->] (01+)--(al);
\draw[thick, ->] (01-)--(al);
\draw[thick, ->] (al)--(al-1);
\draw[thick, ->] (al-1)--(acdots);
\draw[thick, ->] (acdots)--(a1);
\draw[thick, ->] (a1)--(0);
\draw[thick, ->] (0)--(b1);
\draw[thick, ->] (0)--(c1);
\end{tikzpicture}
\end{align}
where the number of vertices is $n+1$.

Recall that for a finite acyclic quiver $Q$, the underling graph of $Q$ is a non-oriented graph (possibly having multiple edges) obtained by ignoring orientations of arrows of $Q$.
Such underlying graph is called a tree if there is no closed walks with its length greater two.

The main result of this section is the following theorem.
\begin{thm}\label{tree-qh-lattice}
Let $Q$ be a finite acyclic quiver whose underlying graph is a tree.
Then the set $\qhstr(\kk Q)$ of quasi-hereditary structures on $\kk Q$ is a lattice if and only if $Q$ does not have a quiver $\tilde{\rm D}_n$ as a subquiver for any $n\geq 4$.
\end{thm}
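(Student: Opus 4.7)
The plan is to derive this theorem as a consequence of the incidence algebra version, Theorem \ref{intro-thm-qh-lattice-poset} (\ref{thm-qh-lattice-poset}). The key observation is that for a tree quiver, the path algebra coincides with the incidence algebra of its associated poset, and the forbidden subquiver condition translates exactly into the forbidden subposet condition appearing in Theorem \ref{intro-thm-qh-lattice-poset}.

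\textbf{Step 1 (Reduction to incidence algebras).} For a finite quiver $Q$ whose underlying graph is a tree, I define a partial order on $Q_0$ by $i \leq j$ iff there is a path from $i$ to $j$ in $Q$, and denote the resulting poset by $P(Q)$. Since $Q$ is a tree, the path between any two comparable vertices is unique, so the natural surjection $\kk Q \to A(P(Q))$ from the path algebra to the incidence algebra is in fact an isomorphism. Moreover, the Hasse diagram of $P(Q)$ is exactly $Q$. Since $\kk Q$ is hereditary (as $Q$ is acyclic), $A(P(Q))$ is hereditary, so $P(Q)$ satisfies the hypothesis of Theorem \ref{thm-qh-lattice-poset}.

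\textbf{Step 2 (Combinatorial translation).} The heart of the argument is to verify that $Q$ admits some $\tilde{\rm D}_n$ (with $n\geq 4$) as a subquiver if and only if $P(Q)$ admits some $Z_n$ (with $n\geq 4$) as a full subposet. The forward direction is immediate: if $\tilde{\rm D}_n \subseteq Q$, the corresponding vertices in $P(Q)$, restricted to the induced order, form a copy of $Z_n$ as a full subposet; no extra comparabilities arise because, in a tree, any two vertices of this subset that are comparable in $P(Q)$ must be linked by a path already visible in the subquiver. For the reverse direction, suppose $Z_n$ embeds as a full subposet of $P(Q)$ with two minimal elements $x_1,x_2$, a chain $y_1 < y_2 < \cdots < y_k$ and two maximal elements $z_1,z_2$. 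Since $Q$ is a tree, the unique paths from $x_1$ and $x_2$ up to $y_1$ must converge at a common vertex $p_1$ (necessarily distinct from $x_1,x_2$), and the two paths agree only from $p_1$ onwards; otherwise $x_1,x_2$ would be comparable in $P(Q)$. Dually, the paths from $y_k$ down to $z_1$ and $z_2$ diverge from a common vertex $p_2$. Denoting by $u_\alpha$ the vertex adjacent to $p_1$ on the path from $x_\alpha$ to $p_1$ (taking $u_\alpha = x_\alpha$ if this path has length one), and symmetrically by $v_\beta$ the vertex adjacent to $p_2$ on the path from $p_2$ to $z_\beta$, the full subquiver of $Q$ supported on $\{u_1,u_2\}$, the vertices on the path from $p_1$ to $p_2$, and $\{v_1,v_2\}$ is isomorphic to $\tilde{\rm D}_m$ for some $m \geq 4$: all required arrows are present by construction, and no spurious ones can appear because $Q$ is a tree.

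\textbf{Step 3 (Conclusion).} Applying Theorem \ref{thm-qh-lattice-poset} to the hereditary incidence algebra $A(P(Q)) \cong \kk Q$, we obtain: $\qhstr(\kk Q)$ is a lattice iff $P(Q)$ does not contain $Z_n$ as a full subposet for any $n \geq 4$ iff (by Step 2) $Q$ does not contain $\tilde{\rm D}_n$ as a subquiver for any $n \geq 4$.

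The main obstacle in the proof is the reverse implication of Step 2. One must carefully exploit the fact that $Q$ is a tree to locate the correct convergence and divergence vertices $p_1$ and $p_2$, and to trim the paths from $x_\alpha$ to $p_1$ (respectively from $p_2$ to $z_\beta$) down to single edges when needed. The flexibility in choosing $m$ ($= k + $ length of trimmed segments $+ 2$) is essential, since the incoming paths at $p_1$ and outgoing paths at $p_2$ in $Q$ need not have length one.
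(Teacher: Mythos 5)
Your overall strategy is the same as the paper's: identify $\kk Q$ with the hereditary incidence algebra of the path order $(Q_0,\leadsto)$ and invoke Theorem \ref{thm-qh-lattice-poset}. The gap is in Step 2, and it is substantive. You describe a full subposet isomorphic to $Z_n$ as consisting of ``two minimal elements $x_1,x_2$, a chain $y_1<\cdots<y_k$ and two maximal elements $z_1,z_2$.'' That is not the structure of $Z_n$: by definition (diagram \eqref{zigzag}), $Z_n$ is a cyclic zigzag with $n/2$ pairwise incomparable minimal elements and $n/2$ pairwise incomparable maximal elements, and no chain in the middle. Only $Z_4$ (the crown with two minimal and two maximal elements) fits your description. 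As a result, your reverse implication only shows that a copy of $Z_4$ in $(Q_0,\leadsto)$ forces a $\tilde{\rm D}_m$ subquiver; it says nothing about $Z_n$ for $n\geq 6$. Since Theorem \ref{thm-qh-lattice-poset} requires excluding $Z_n$ for \emph{all} $n\geq 4$, your chain of equivalences in Step 3 is broken: a priori some $Z_6$ could embed as a full subposet even though $Q$ has no $\tilde{\rm D}_m$ subquiver, and then the lattice property would fail despite the subquiver condition being satisfied. (Your forward direction has the same mislabeling: the $n+1$ vertices of $\tilde{\rm D}_n$ do not form $Z_n$; it is the four extremal vertices that form $Z_4$.)

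What is missing is precisely the content of the paper's Lemma \ref{lem-Dn-Zn}(1): for a tree quiver $Q$, the poset $(Q_0,\leadsto)$ \emph{never} contains $Z_n$ as a full subposet for any $n\geq 6$. This is not automatic; the paper's proof constructs, for each source/sink $k$ of a hypothetical cyclic zigzag $1,\dots,n$, an auxiliary branching vertex $k'$ where the relevant paths merge or split, shows (using fullness and $n\geq 6$) that these $k'$ are themselves arranged in a cyclic zigzag of paths, and derives a cycle in $Q$, contradicting the tree hypothesis. Once that lemma is in place, the correct translation is the two-part statement: (i) $Z_n$ with $n\geq 6$ never embeds, and (ii) $Z_4$ embeds if and only if some $\tilde{\rm D}_m$ is a (full) subquiver --- which is the $k=0$ case of the merge/split construction you sketch. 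With those two points supplied, your Steps 1 and 3 go through as in the paper.
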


From now on we assume that a finite acyclic quiver $Q$ has no multiple arrows.
To show Theorem \ref{tree-qh-lattice}, by Proposition \ref{prop-multiple-free}, it is enough to study under this assumption.

Curiously, this characterization does not seem to have a natural generalization to the setting of general acyclic quivers. However, a tree quiver $Q$ can be naturally seen as a partial order $(Q_0,\leadsto)$ where $x \leadsto y$ if there is a (necessarily unique) path from $x$ to $y$. Then, the path algebra of $Q$ is isomorphic to the incidence algebra of $(Q_0,\leadsto)$. Our experimentation suggests that this may be a good setting for a generalization of Theorem \ref{tree-qh-lattice}. 

Let $n\in \mathbb{N}$ be an even integer and $Z_n$ be the following zigzag orientation of an affine Dynkin diagram of type $\A$.

\begin{align}\label{zigzag}
\begin{tikzpicture}[baseline=-1.3cm, vertex/.style 2 args = {label ={ [label distance=-3mm] #1: {$#2$}},circle} ]
\node(0)at(0,0)[ vertex={120}{1}]{$\circ$}; 
\node(1)at(1,0)[ vertex={50}{2}]{$\circ$}; 
\node(2)at(1.75,-0.87)[vertex={0}{3}]{$\circ$}; 
\node(3)at(1.75,-1.87)[vertex={0}{4}] {$\circ$}; 
\node(nm2)at(0,-2.6)[vertex={200}{n-2}]{$\circ$};  
\node(nm1)at(-0.75,-1.87)[vertex={180}{n-1}]{$\circ$}; 
\node(n)at(-0.75,-0.87)[vertex={180}{n\,}]{$\circ$}; 
\draw[thick, ->] (0)--(1);
\draw[thick, ->] (2)--(1);
\draw[thick, ->] (2)--(3);
\draw[thick, ->] (nm1)--(nm2);
\draw[thick, ->] (nm1)--(n);
\draw[thick, ->] (0)--(n);
\draw[thick,dotted] (nm2) to [out=-10,in=-120](3);
\end{tikzpicture}
\end{align}
We may view $Z_n$ as a quiver, or as the Hasse diagram of a poset. Even for tree quivers, the language of posets is more flexible: for example, it is easy to see that $Z_4$ is not a subquiver of $\widetilde{D}_4$ but it is a (full) subposet of $(\widetilde{D}_4,\leadsto)$.

We remind the reader that $(P,\leq_P)$ is a \emph{subposet} of $(Q,\leq_Q)$ if $ P \subseteq Q $ and the inclusion $P\hookrightarrow Q$ is a morphism of posets. Now we recall the following central definitions for this section.

\begin{dfn}
Let $(P,\leq_P)$ and $(Q,\leq_Q)$ be two posets and $\phi \colon  P\to Q$ a morphism of posets. Then,
\begin{enumerate}
    \item The map $\phi$ is \emph{full} if for any $p_1, p_2\in P$,  $ p_1\leq_{P} p_2$ holds if and only if  $\phi(p_1)\leq_{Q}\phi(p_2)$ holds.
    \item If $\phi$ is full and injective, the image of $\phi$ is called a \emph{full subposet} of $(P,\leq_P)$.
\end{enumerate}
\end{dfn}

We remark that some authors call \emph{weak} subposets to our subposets, and full subposets are also known as \emph{induced} subposets.

Let $(P,\leq)$ be a poset. For $i\leq j$ in $P$, an interval $[i,j]$ is always considered with respect to $\leq$, that is, $[i,j]\coloneqq \{k\in P \mid i \leq k \leq j \}$.

\begin{dfn}\label{def:incidence-alg}
Let $(P,\leq)$ be a finite poset. The \emph{incidence algebra} of $P$ over $\kk$, denoted by $A(P)$ is the $\kk$-vector space with basis the set of intervals $[i,j]$ in the poset $P$ with multiplication induced by $[i,j]\cdot[k,l]=[i,l]$ if $j=k$, and $0$ otherwise.
\end{dfn}

Then, we propose the following generalization of Theorem \ref{tree-qh-lattice}.
\begin{conj}\label{conj-qh-lattice-poset}
Let $(P,\leq)$ be a finite poset. Then, the poset of quasi-hereditary structures on the incidence algebra of $(P,\leq)$ is a lattice if and only if $Z_n$ is not isomorphic to a full subposet of $(P,\leq)$ for any $n\geq 4$.
\end{conj}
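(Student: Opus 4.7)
\textbf{Proof proposal for Conjecture \ref{conj-qh-lattice-poset}.}

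The plan is to prove both implications separately, drawing heavily on the hereditary case (Theorem~\ref{intro-thm-qh-lattice-poset}) which will serve both as the base case for the ``only if'' direction via idempotent reduction and as a structural model for the ``if'' direction. Throughout, I write $A=A(P)$ for brevity and $H(P)$ for the Hasse diagram of $P$ viewed as a quiver, so that $A$ is a quotient of $\kk H(P)$ by an admissible ideal generated by commutativity relations on parallel paths.

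For the \emph{only if} direction, suppose $Z_n$ embeds as a full subposet of $P$ for some $n\geq 4$. The key observation is that for a full subposet $P'\subseteq P$, the idempotent $e_{P'}=\sum_{i\in P'}e_i$ satisfies $e_{P'}A e_{P'}\cong A(P')$ (fullness is exactly what guarantees no new morphisms appear via paths through $P\setminus P'$). Applied to $P'=Z_n$, the hereditary case of the conjecture (which is \cref{intro-thm-qh-lattice-poset}) shows $\qhstr(A(Z_n))$ is not a lattice, so it admits two quasi-hereditary structures $[\lhd_1],[\lhd_2]$ with either two incomparable minimal upper bounds or no common upper bound at all. I would lift these to $\qhstr(A)$ by choosing any adapted extension that uses a fixed total order on $P\setminus Z_n$ placing those vertices above everything in $Z_n$, then verify via \cref{lem_two_qh_str_equiv} and the idempotent reduction machinery of Section~5.1 that the non-lattice behavior on $Z_n$ is preserved by any such extension. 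The existence of two non-comparable upper bounds on $Z_n$ should force, after extension, two distinct candidates for an upper bound in $\qhstr(A)$ whose meet would restrict to a common upper bound below both extensions on $Z_n$ --- contradicting the hypothesis.

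For the \emph{if} direction, assume $P$ contains no copy of $Z_n$ as a full subposet. The natural strategy is to compare $\qhstr(A(P))$ with $\qhstr(\kk H(P))$ via the surjection of algebras $\kk H(P)\twoheadrightarrow A(P)$. Both algebras share the same simple modules indexed by $P$, and I expect the notions of minimal adapted order to coincide: a relation $j\lhd^D i$ is decreasing iff $S(j)$ appears in $\Delta(i)$, which is governed by directed paths from $i$ to $j$ in $H(P)$, and the commutativity relations in $A(P)$ collapse parallel paths but do not change which simples appear. Thus I would establish a natural order-preserving injection $\qhstr(A(P))\hookrightarrow \qhstr(\kk H(P))$ sending $[\lhd]\mapsto[\lhd]$, checking that being a quasi-hereditary structure on one side implies the same on the other (using \cref{simple-def-qh} and the compatibility of $\mathcal{F}(\Delta)$ with idempotent reductions). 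Since the absence of $Z_n$ as a full subposet of $P$ is exactly the condition that rules out $\tilde D_n$ as a subquiver of $H(P)$ when $H(P)$ is a tree, one would then invoke \cref{intro-tree-qh-lattice} to conclude $\qhstr(\kk H(P))$ is a lattice (in the tree case) or invoke the general hereditary statement directly.

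The main obstacle is precisely the last step, because $H(P)$ need not be a tree and $\kk H(P)$ need not be hereditary either when $P$ has relations of length greater than the covering relations: in general the Hasse diagram of $P$ can contain oriented cycles only if $P$ is non-trivial, but even when $H(P)$ is acyclic the absence of $Z_n$ as a full subposet of $P$ is subtler than the absence of $\tilde D_n$ as a subquiver of $H(P)$ --- for instance, $Z_4$ is a full subposet of $(\tilde D_4,\leadsto)$ but not a subquiver of $\tilde D_4$. This mismatch means the reduction to \cref{intro-tree-qh-lattice} is not mechanical; I would instead need to prove \cref{intro-thm-qh-lattice-poset} in full generality (without the hereditary hypothesis), likely by generalizing the explicit meet/join construction sketched in Subsection~5.2 of the paper to describe the meet of two minimal adapted orders $\lhd_1,\lhd_2$ on $A(P)$ as the transitive closure of $\Dec(\lhd_1)\cap\Dec(\lhd_2)$ together with $\Inc(\lhd_1)\cap\Inc(\lhd_2)$. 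Showing this produces an adapted order defining a quasi-hereditary structure, whenever $P$ avoids $Z_n$, is the true combinatorial heart of the argument, and I expect it to require a careful case analysis on the position of the four or more extremal vertices of a hypothetical $Z_n$ obstruction.
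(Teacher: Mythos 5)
First, be aware that the statement you are addressing is left \emph{open} in the paper: the authors only establish it when $A(P)$ is hereditary, i.e.\ when $P$ is diamond-free (\cref{thm-qh-lattice-poset}), so your plan must supply new arguments wherever the hereditary hypothesis is used, and at present it does not. For the ``only if'' direction, the lifting machinery you invoke is exactly the paper's \cref{qh_embedding}, \cref{interval-preserving} and \cref{thm-qh-not-lattice}, but all three are proved only for hereditary $A(P)$: the proof that $\widehat{\lhd}$ defines a quasi-hereditary structure is the one-line remark that on a hereditary algebra every adapted order gives a quasi-hereditary structure (\cref{pro-hered-adapted}), and the fullness and order-preservation of the embedding rest on the hereditary-only converse in \cref{lem-surj-Dec}(3). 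For a general incidence algebra you would have to prove separately that the projective $A(P)$-modules are filtered by the lifted standard modules and that the embedding is full and interval-preserving; none of this follows from the idempotent identity $e_{P'}Ae_{P'}\cong A(P')$ alone.

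The ``if'' direction has two more serious problems. The proposed comparison with $\kk H(P)$ collapses precisely in the non-hereditary case: if $P$ contains a diamond then $A(P)\neq\kk H(P)$, the multiplicities $[P(i):S(j)]$ differ (they count paths in $H(P)$ but are $0$ or $1$ in $A(P)$), so adapted orders and standard modules do not transfer between the two algebras; and even if a full embedding $\qhstr(A(P))\hookrightarrow\qhstr(\kk H(P))$ existed, a full subposet of a lattice need not be a lattice, so nothing would follow. Your fallback --- generalizing the explicit meet --- is the right idea, but the formula you write, $\bigl((\Dec(\lhd_1)\cap\Dec(\lhd_2))\cup(\Inc(\lhd_1)\cap\Inc(\lhd_2))\bigr)^{\sf tc}$, is not the correct candidate: by \cref{lem-surj-Dec}, going down in $\preceq$ shrinks $\Dec$ and enlarges $\Inc$, so the meet must take the \emph{union} of the increasing relations, $\bigl((\Dec(\lhd_1)\cap\Dec(\lhd_2))\cup\Inc(\lhd_1)\cup\Inc(\lhd_2)\bigr)^{\sf tc}$ as in \cref{thm-lattice-tree}; your symmetric formula would make the meet and join candidates coincide. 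Finally, even with the correct formula, the proofs of \cref{pro-deltap-antisymmetric} and \cref{pro-deltap-meet} use diamond-freeness at every step (for instance \cref{dec-transitive} is explicitly noted to fail without it), so the combinatorial heart of the general case remains entirely open --- which is exactly why the paper states this as a conjecture rather than a theorem.
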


Actually, the setting of incidence algebras is not only a good setting for a generalization of our result, it also simplifies our arguments.
So we prove Theorem \ref{tree-qh-lattice} as a corollary of the following theorem. 

\begin{thm}\label{thm-qh-lattice-poset}
Let $(P,\leq)$ be a finite poset. We assume that the incidence algebra $A(P)$ of $(P,\leq)$ is hereditary. Then, the poset of quasi-hereditary structures on $A(P)$ is a lattice if and only if $Z_n$ is not isomorphic to a full subposet of $(P,\leq)$ for any $n\geq 4$.
\end{thm}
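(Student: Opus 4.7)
The plan is to analyze both directions through the decreasing and increasing relations $\Dec(\lhd)$ and $\Inc(\lhd)$. Since $A(P)$ is hereditary by assumption, Lemma~\ref{lem-surj-Dec}(3) identifies the ordering on $\qhstr(A(P))$ with containment of these sets: $[\lhd_1] \preceq [\lhd_2]$ iff $\Dec(\lhd_1) \subseteq \Dec(\lhd_2)$ iff $\Inc(\lhd_2) \subseteq \Inc(\lhd_1)$. The natural candidates for a join $[\lhd_1] \vee [\lhd_2]$ and a meet $[\lhd_1] \wedge [\lhd_2]$ should therefore satisfy
\[
\Dec(\lhd_\vee) = \Dec(\lhd_1) \cup \Dec(\lhd_2), \qquad \Inc(\lhd_\vee) = \Inc(\lhd_1) \cap \Inc(\lhd_2),
\]
and symmetrically for the meet, with the roles of union and intersection swapped. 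My strategy is to prove that these candidates actually produce quasi-hereditary structures exactly when no $Z_n$ is a full subposet of $P$.

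For the ``if'' direction, I would assume $(P,\leq)$ contains no $Z_n$ as a full subposet for $n \geq 4$. Given minimal adapted orders $\lhd_1, \lhd_2$ representing two quasi-hereditary structures, I define $\lhd_\vee$ as the transitive closure of $(\Dec(\lhd_1) \cup \Dec(\lhd_2)) \cup (\Inc(\lhd_1) \cap \Inc(\lhd_2))$, and $\lhd_\wedge$ symmetrically. I would then verify, in order: $(a)$ the relation $\lhd_\vee$ is antisymmetric, hence a genuine partial order; $(b)$ it is adapted to $A(P)$, whence by Proposition~\ref{pro-hered-adapted} it automatically defines a quasi-hereditary structure since $A(P)$ is hereditary; $(c)$ the associated $\Dec$ and $\Inc$ sets are exactly the prescribed ones, so $[\lhd_\vee]$ is genuinely the least common upper bound. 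The no-$Z_n$ hypothesis is used crucially in $(a)$: any minimal oriented cycle in the generating relation must alternate between segments coming from $\Dec$ (paths in the Hasse diagram followed ``downwards'' with respect to the order we are building) and segments coming from $\Inc$ (paths followed ``upwards''), and a combinatorial analysis will show that such an alternating cycle in $P$ is tantamount to a full embedding of some $Z_n$ into $(P, \leq)$.

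For the ``only if'' direction, I would argue contrapositively. Given a full embedding $Z_n \hookrightarrow (P, \leq)$, I construct two minimal adapted orders $\lhd_1, \lhd_2$ whose restrictions to $Z_n$ realize, via $\Dec$ and $\Inc$, the two complementary ``halves'' of the zigzag cycle, and which agree on a fixed chosen quasi-hereditary structure outside $Z_n$. The relation $(\Dec(\lhd_1) \cup \Dec(\lhd_2)) \cup (\Inc(\lhd_1) \cap \Inc(\lhd_2))$ will then contain an oriented cycle around $Z_n$, so it cannot be the $\Dec \cup \Inc$ of any minimal adapted order. A direct case analysis will produce two distinct minimal common upper bounds of $[\lhd_1]$ and $[\lhd_2]$ in $\qhstr(A(P))$, obtained by breaking this cycle at two different edges of $Z_n$; since they are incomparable, the join does not exist and $\qhstr(A(P))$ is not a lattice.

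The hard part will be step $(a)$ of the ``if'' direction, namely translating the combinatorial no-$Z_n$ hypothesis into the exclusion of oriented cycles in transitive closures of mixed $\Dec/\Inc$ relations. This requires understanding precisely which pairs $(i,j) \in P \times P$ can appear in $\Dec(\lhd)$ or $\Inc(\lhd)$ for some quasi-hereditary structure, and how these sets are constrained by the Hasse diagram of $P$ when $A(P)$ is hereditary; it is exactly here that the minimality of $Z_n$ as a forbidden configuration enters. Once this combinatorial bookkeeping is in place, steps $(b)$ and $(c)$ will follow routinely from Proposition~\ref{pro-hered-adapted} together with the definitions of $\Dec$ and $\Inc$ in terms of composition factors of (co)standard modules.
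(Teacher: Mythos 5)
Your ``if'' direction is essentially the paper's proof of Theorem \ref{thm-lattice-tree}: the candidate meet and join are exactly the transitive closures of $\bigl(\Dec(\lhd_1)\cap\Dec(\lhd_2)\bigr)\cup\Inc(\lhd_1)\cup\Inc(\lhd_2)$ and its dual, the crux is antisymmetry, and the no-$Z_n$ hypothesis enters precisely through the alternating-cycle analysis you describe (the paper's Lemma \ref{lem-path-exist} and Proposition \ref{pro-deltap-antisymmetric}). Your steps (b) and (c) correspond to Lemma \ref{lem-comp-fac-deltap} and Proposition \ref{pro-deltap-meet}; note that (c) is a genuine verification, not bookkeeping, since a priori the union of two $\Dec$ sets need not be realized exactly by any adapted order.

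The gap is in your ``only if'' direction. Exhibiting two structures $[\lhd_1],[\lhd_2]$ on $A(P)$ whose restrictions to $Z_n$ behave badly, and then producing two incomparable upper bounds ``by breaking the cycle at two different edges,'' does not by itself show the join fails to exist in $\qhstr(A(P))$: you must rule out a \emph{third} upper bound of $[\lhd_1],[\lhd_2]$ lying strictly below both of your candidates, and such an element need not be visible on $Z_n$ at all, since $\qhstr(A(P))$ is much larger than $\qhstr(A(Z_n))$. The paper resolves this by first proving $\qhstr(\kk Z_n)$ is not a lattice intrinsically (Proposition \ref{pro-an-zz}, via a set $\Theta$ that any join would be forced to have as its standard modules but which no adapted order realizes), and then constructing a \emph{full, interval-preserving} embedding $\qhstr(A(Q))\hookrightarrow\qhstr(A(P))$ for any full subposet $Q$ (Propositions \ref{qh_embedding} and \ref{interval-preserving}, built from the recollement functors $L$ and $(-)e_Q$); interval-preservation is exactly the property that lets one pull a hypothetical join in $\qhstr(A(P))$ back to a join in $\qhstr(A(Z_n))$. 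Without an argument of this kind --- or a direct $\Theta$-style forcing argument carried out in $A(P)$ itself rather than in $A(Z_n)$ --- your ``direct case analysis'' does not close. You also locate the hard work in the ``if'' direction only; in the paper the embedding machinery for the ``only if'' direction is of comparable weight.
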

\begin{proof}
The only if part follows from Proposition \ref{pro-an-zz} and Theorem \ref{thm-qh-not-lattice}.
The if part follows from Theorem \ref{thm-lattice-tree}.
\end{proof}

We prove Theorem \ref{thm-qh-not-lattice} in Subsection \ref{qh-full-subposet} and show Theorem \ref{thm-lattice-tree} in Subsection \ref{subsection-lattice-tree}. For Theorems \ref{tree-qh-lattice} and \ref{thm-qh-lattice-poset} we need the following Lemmas. The precise comparison between Theorem \ref{tree-qh-lattice} and Theorem \ref{thm-qh-lattice-poset} is given in Remark \ref{comp-thms}. 

\begin{lem}\label{lem-Dn-Zn}
Let $Q$ be a finite quiver whose underlying graph is a tree.
Then the following statements hold.
\begin{enumerate}
\item
$Z_n$ is not a full subposet of $(Q_0, \leadsto)$ for any even $n\geq 6$.
\item
$Z_4$ is not a full subposet of $(Q_0, \leadsto)$ if and only if $\tilde{D}_n$ is not a full subquiver of $Q$ for any $n\geq 4$.
\end{enumerate}
\end{lem}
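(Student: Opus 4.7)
For (2), the ``if'' direction is immediate: a subquiver of $Q$ isomorphic to $\tilde D_n$ is connected, hence a subtree of the tree $Q$, so the unique undirected path in $Q$ between any two of its vertices lies inside this subquiver. The partial order $\leadsto$ restricted to the four leaves of $\tilde D_n$ (two sources and two sinks) therefore coincides with the internal partial order of $\tilde D_n$, which is a copy of $Z_4$. For the ``only if'' direction, let $v_1,v_3$ (minimal) and $v_2,v_4$ (maximal) represent $Z_4$ as a full subposet of $(Q_0,\leadsto)$. The Steiner subtree $S\subseteq Q$ spanned by these four vertices, after suppressing degree-$2$ internal vertices, is either a $4$-star or a caterpillar (two degree-$3$ internal vertices joined by a chain). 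A case analysis of the arrow directions imposed by the relations $v_i\leadsto v_j$, together with the acyclicity of $Q$, shows that the star topology yields $\tilde D_4$, and the only caterpillar topology consistent with the relations has grouping $\{v_1,v_3\}\mid\{v_2,v_4\}$, yielding $\tilde D_{k+4}$, where $k\geq 1$ is the length of the connecting chain in $Q$.

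For (1), suppose for contradiction that some even $n\geq 6$ and vertices $v_1,\ldots,v_n\in Q_0$ realise $Z_n$ as a full subposet of $(Q_0,\leadsto)$, with indices of the $v_i$ read modulo $n$. For $k\in\{1,\ldots,n/2\}$, let $m_k$ denote the median in the tree $Q$ of $\{v_{2k-1},v_{2k+1},v_{2k}\}$; the relations $v_{2k\pm 1}\leadsto v_{2k}$ imply $v_{2k-1},v_{2k+1}\leadsto m_k\leadsto v_{2k}$. For each odd index $i=2k+1$, let $d_i$ be the median of $\{v_{2k+1},m_k,m_{k+1}\}$, equivalently the divergence point of the two directed paths $v_i\leadsto m_k$ and $v_i\leadsto m_{k+1}$ out of $v_i$ in $Q$.

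The crux is to verify that the $2n$ vertices in $\{v_i\}\cup\{m_k\}\cup\{d_j\}$ are pairwise distinct. Each potential coincidence ($m_k=m_\ell$ with $k\neq\ell$, $d_i=m_k$, or $d_i=d_j$ with $i\neq j$) would, via the directed paths constructed above, force a relation $v_a\leadsto v_b$ between indices that are not cyclically adjacent in $Z_n$, contradicting fullness of the subposet. The hypothesis $n\geq 6$ is essential here: for $n=4$ the medians collapse ($m_1=m_2$) and the argument breaks down. Once distinctness is established, the median property ensures that the tree paths in $Q$ from each $v_{2k+1}$ to $d_{2k+1}$, from each $d_{2k+1}$ to $m_k$ and $m_{k+1}$, and from each $m_k$ to $v_{2k}$ are pairwise internally disjoint. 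Contracting each such path to a single edge realises these $2n$ vertices as a subtree of $Q$ with $2n$ edges, but this subtree contains the $n$-cycle $d_1,m_1,d_3,m_2,\ldots,d_{n-1},m_{n/2},d_1$ running through alternating divergences and medians, contradicting the tree property. The main obstacle is therefore the distinctness verification, which requires a short but careful case analysis, precisely exploiting $n\geq 6$.
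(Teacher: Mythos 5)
Your argument is correct and follows essentially the same route as the paper: in both, one attaches to each sink of the zigzag the convergence point of its two incoming paths and to each source the divergence point of its two outgoing paths, uses fullness of the subposet (which forbids composite relations $v_a \leadsto v_b$ with indices differing by $3$ --- exactly the step that requires $n\geq 6$) to separate these auxiliary vertices, and then exhibits a closed non-backtracking walk through them, contradicting the tree hypothesis; your Steiner-tree case analysis for (2) is in fact more explicit than the paper's one-line appeal to the argument of (1). One small correction: the $2n$ listed vertices need not all be distinct, since nothing prevents $m_k=v_{2k}$ or $d_{2k+1}=v_{2k+1}$, but this is harmless because your closing cycle only runs through the $m_k$ and the $d_j$, and those are precisely the coincidences your case analysis rules out; note also that global pairwise internal disjointness of the connecting paths is more than you need (and would require re-invoking fullness for non-adjacent pairs), whereas local disjointness at each $m_k$ and $d_j$, which does follow from the median and divergence properties, already yields a non-backtracking closed walk and hence a cycle in the underlying tree.
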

\begin{proof}
(1)
Assume that there exists a subset $Z=\{1,2,\dots,n\}$ of $Q_0$ such that $(Z, \leadsto|_{Z})$ is isomorphic to $Z_n$.
So we have paths in $Q$ of the form:
\[
n \flowsfroma 1 \leadsto 2 \flowsfroma \cdots \leadsto n-2 \flowsfroma n-1 \leadsto n.
\]

For the rest of this proof, integers are considered modulo $n$.
For each odd $k$, there exists a vertex $k^\p\in Q_0$ and three paths $k \leadsto k^\p$, $k^\p \leadsto (k-1)$ and $k^\p \leadsto (k+1)$ such that $k^\p \leadsto (k-1)$ and $k^\p \leadsto (k+1)$ have no common vertices except $k^\p$.
Similarly, for each even $k$, there exist a vertex $k^\p$ and three paths $(k-1) \leadsto k^\p$, $(k+1) \leadsto k^\p$ and $k^\p \leadsto k$ such that $(k-1) \leadsto k^\p$ and $(k+1) \leadsto k^\p$ have no common vertices except $k^\p$.
In particular, we have the following paths:
\begin{align*}
\begin{tikzpicture}[baseline=-0.5]
\node(j0)at(0,0){$n^\p$};
\node(i0)at(0,-1){$n$};
\node(i1)at(1,0){$1$};
\node(j1)at(1,-1){$1^\p$};
\node(j2)at(2,0){$2^\p$};
\node(i2)at(2,-1){$2$};
\node(i3)at(3,0){$3$};
\node(j3)at(3,-1){$3^\p$};
\node(j4)at(4,0){$\cdots$};
\node(i4)at(4,-1){$\cdots$};
\node(jl-1)at(6,0){$(n-2)^\p$};
\node(il-1)at(6,-1){$(n-2)$};
\node(il)at(8,0){$(n-1)$};
\node(jl)at(8,-1){$(n-1)^\p$};
\node(jl+1)at(10,0){$n^\p$};
\node(il+1)at(10,-1){$n$};
\draw [thick, ->, decorate, decoration={snake,amplitude=.4mm,segment length=2mm,post length=1mm}]
(j0) -- (i0);
\draw [thick, ->, decorate, decoration={snake,amplitude=.4mm,segment length=2mm,post length=1mm}]
(i1) -- (j0);
\draw [thick, ->, decorate, decoration={snake,amplitude=.4mm,segment length=2mm,post length=1mm}]
(i1) -- (j1);
\draw [thick, ->, decorate, decoration={snake,amplitude=.4mm,segment length=2mm,post length=1mm}]
(j1) -- (i0);
\draw [thick, ->, decorate, decoration={snake,amplitude=.4mm,segment length=2mm,post length=1mm}]
(i1) -- (j2);
\draw [thick, ->, decorate, decoration={snake,amplitude=.4mm,segment length=2mm,post length=1mm}]
(j1) -- (i2);
\draw [thick, ->, decorate, decoration={snake,amplitude=.4mm,segment length=2mm,post length=1mm}]
(j2) -- (i2);
\draw [thick, ->, decorate, decoration={snake,amplitude=.4mm,segment length=2mm,post length=1mm}]
(i3) -- (j2);
\draw [thick, ->, decorate, decoration={snake,amplitude=.4mm,segment length=2mm,post length=1mm}]
(j3) -- (i2);
\draw [thick, ->, decorate, decoration={snake,amplitude=.4mm,segment length=2mm,post length=1mm}]
(i3) -- (j3);
\draw [thick, ->, decorate, decoration={snake,amplitude=.4mm,segment length=2mm,post length=1mm}]
(i3) -- (j4);
\draw [thick, ->, decorate, decoration={snake,amplitude=.4mm,segment length=2mm,post length=1mm}]
(j3) -- (i4);
\draw [thick, ->, decorate, decoration={snake,amplitude=.4mm,segment length=2mm,post length=1mm}]
(j4) -- (jl-1);
\draw [thick, ->, decorate, decoration={snake,amplitude=.4mm,segment length=2mm,post length=1mm}]
(i4) -- (il-1);
\draw [thick, ->, decorate, decoration={snake,amplitude=.4mm,segment length=2mm,post length=1mm}]
(jl-1) -- (il-1);
\draw [thick, ->, decorate, decoration={snake,amplitude=.4mm,segment length=2mm,post length=1mm}]
(il) -- (jl-1);
\draw [thick, ->, decorate, decoration={snake,amplitude=.4mm,segment length=2mm,post length=1mm}]
(il) -- (jl);
\draw [thick, ->, decorate, decoration={snake,amplitude=.4mm,segment length=2mm,post length=1mm}]
(il) -- (jl+1);
\draw [thick, ->, decorate, decoration={snake,amplitude=.4mm,segment length=2mm,post length=1mm}]
(jl) -- (il+1);
\draw [thick, ->, decorate, decoration={snake,amplitude=.4mm,segment length=2mm,post length=1mm}]
(jl) -- (il-1);
\draw [thick, ->, decorate, decoration={snake,amplitude=.4mm,segment length=2mm,post length=1mm}]
(jl+1) -- (il+1);
\end{tikzpicture}
\end{align*}

Let $k$ be an odd integer.
Since $Q$ is tree, both $k^\p$ and $(k+1)^\p$ appear in the path $k \leadsto (k+1)$.
If there exists a path from $(k+1)^\p$ to $k^\p$, then there exists a path from $(k+2)$ to $(k-1)$.
This is a contradiction, since $Z_n$ is a full subposet of $(Q_0, \leadsto)$ and $n\geq 6$.
Therefore we assume that there exists a path from $k^\p$ to $(k+1)^\p$.
Similarly, for even $k$, we assume that there exists a path from $(k+1)^\p$ to $k^\p$.
We have paths as follows:
$$
n^\p \flowsfroma 1^\p \leadsto 2^\p \flowsfroma \cdots \leadsto (n-2)^\p \flowsfroma (n-1)^\p \leadsto n^\p.
$$
Because of the construction of $k^\p$, any two adjacent paths have no common vertices.
This is a contradiction, since $Q$ is tree.

(2)
Assume that $\tilde{D}_n$ is a full subquiver of $Q$ for some $n$.
Let $Z=\{1,2,3,4\}$ be the four vertices of $\tilde{D}_n$ such that each of them is a sink or a source.
Then $(Z, \leadsto|_Z)$ is isomorphic to $Z_4$.
Conversely, assume that $Z_4$ is a full subposet of $(Q_0, \leadsto)$.
Then the same argument in the proof of (1) implies that $\tilde{D}_n$ is a full subquiver of $Q$ for some $n$.
\end{proof}

We first observe under which conditions an incidence algebra is hereditary and we collect some basic properties of such an incidence algebra. For that, we recall the next concept.

Let $(P,\leq)$ be a finite poset. We say that $P$ is \emph{diamond-free} if it does not have four elements $a$, $b$, $c$, and $d$ forming a diamond suborder with $a\leq b \leq d$ and $a \leq c \leq d$ and with $b$ and $c$ incomparable.
The Hasse diagram of this type of poset is also called a \emph{multi-tree}. 

\begin{lem}\label{lem-incidence-hereditary}
Let $(P,\leq)$ be a finite poset. Then
\begin{enumerate}
\item The algebra $A(P)$ is hereditary if and only if $P$ is a diamond-free poset. 
\item If $(Q,\leq)$ is a full subposet of $(P,\leq)$ and $A(P)$ is a hereditary algebra, then $A(Q)$ is a hereditary algebra. 
\end{enumerate}
\end{lem}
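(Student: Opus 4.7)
The plan is to prove (1) via a quiver presentation of $A(P)$, reducing hereditariness to a combinatorial condition on the Hasse diagram of $P$, and then to deduce (2) as an immediate consequence.

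For (1), let $H(P)$ denote the Hasse diagram of $P$, viewed as a finite acyclic quiver with vertex set $P$ and arrows the cover relations $i \lessdot j$. The assignment sending a directed path $x_0 \to x_1 \to \cdots \to x_n$ in $H(P)$ to the interval $[x_0, x_n]$ extends to a surjective $\kk$-algebra morphism $\pi \colon \kk H(P) \to A(P)$, because every comparable pair $i \leq j$ admits a saturated chain in $P$. A standard computation identifies $H(P)$ with the Gabriel quiver of $A(P)$, so $A(P)$ is hereditary if and only if $A(P) \cong \kk H(P)$, equivalently if and only if $\pi$ is an isomorphism. Comparing the natural bases of source and target, $\pi$ is an isomorphism precisely when for every $i \leq j$ in $P$ there is a unique directed path in $H(P)$ from $i$ to $j$.

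The remaining step is to show that this unique-chain condition is equivalent to diamond-freeness. For one direction, a diamond $a \leq b, c \leq d$ in $P$ with $b, c$ incomparable yields two distinct saturated chains from $a$ to $d$ by refining $a \leq b \leq d$ and $a \leq c \leq d$. Conversely, given two distinct $H(P)$-paths from some $i$ to some $j$, let $a$ be the last vertex at which they coincide before diverging, and let $b, c$ be the immediate successors of $a$ along the two respective paths, so $a \lessdot b$, $a \lessdot c$, and $b \neq c$. Both $b$ and $c$ lie below $d := j$. If $b \leq c$ held, then $a \lessdot c$ together with $a \lessdot b \leq c$ would force $b = a$ or $b = c$, a contradiction; the case $c \leq b$ is symmetric. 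Hence $b, c$ are incomparable and $(a,b,c,d)$ forms a diamond, completing the equivalence.

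Statement (2) follows at once from (1): if $(Q, \leq)$ is a full subposet of $(P, \leq)$, then any diamond in $Q$ remains a diamond in $P$, since the order relations on $Q$ are inherited from $P$ and fullness ensures that incomparability in $Q$ coincides with incomparability in $P$. Thus diamond-freeness is inherited by full subposets, and the conclusion follows from (1). The only subtle point in the plan is the identification of $H(P)$ with the Gabriel quiver of $A(P)$, but this is classical (a direct computation of $\mathrm{Ext}^1$ between simple $A(P)$-modules), and I would invoke it as background rather than reprove it.
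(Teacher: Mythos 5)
Your proof is correct, and for the direction ``diamond-free $\Rightarrow$ hereditary'' it coincides with the paper's: both exploit the presentation of $A(P)$ as a quotient of $\kk H(P)$ by the commutativity relations, which become vacuous exactly when every interval carries a unique saturated chain. Where you diverge is in the converse. The paper exhibits a concrete witness: given a diamond $a\leq b,c\leq d$, the submodule of $P(a)$ supported on the elements above $b$ or $c$ has top $S(b)\oplus S(c)$, so its projective cover is $P(b)\oplus P(c)$, in which $S(d)$ occurs twice while it occurs only once in $P(a)$; hence $P(a)$ has a non-projective submodule. You instead invoke the classical fact that a split basic algebra with acyclic Gabriel quiver and admissible presentation $\kk Q/I$ is hereditary iff $I=0$ (equivalently, that a nonzero minimal relation forces $\Ext^2$ between simples to be nonzero), and then translate $I\neq 0$ into the existence of two distinct Hasse paths, which you correctly convert into a diamond via the first point of divergence. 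Both converses are valid; the paper's is self-contained and elementary, while yours is more uniform (everything is reduced to the unique-path condition) at the cost of citing the Gabriel-quiver criterion as background. Your deduction of (2) from (1) -- fullness guarantees that incomparability in $Q$ agrees with incomparability in $P$, so diamonds in $Q$ are diamonds in $P$ -- is exactly what the paper leaves implicit.
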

\begin{proof}
Recall that the incidence algebra of a finite poset is isomorphic to the quotient of the path algebra of its Hasse quiver modulo the relations of total commutativity.
As a consequence, if $(P,\leq)$ is a diamond-free poset, there is no relation to mod out, so the incidence algebra is hereditary.
Conversely, assume that there are $a,b,c$ and $d$ in $P$ such that $a\leq b \leq d$, $a\leq c\leq d$ and $b$ and $c$ are incomparable with respect to $\leq$.
Let $M$ be the submodule of $P_a$ supported by the elements larger or equal than $b$ or $c$. Its top is $S_b$ and $S_c$, so its projective cover is $P_b\oplus P_c$.
However $S_d$ appears twice as a composition factor in this direct sum but only once in $P_a$. 
So $P_a$ has a submodule which is not projective and the algebra is not hereditary. The second statement of the lemma is a direct consequence of the first. 
\end{proof}

\begin{lem}\label{diamond-zz-free}
Let $(P,\leq)$ be a connected finite poset which is diamond-free and has no $Z_n$ as full subposet for any $n\geq 4$. Then, its Hasse diagram is a tree.
\end{lem}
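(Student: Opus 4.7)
I will argue the contrapositive: assuming the Hasse diagram $H$ of $(P,\leq)$ contains a cycle, I will produce either a diamond or a full subposet $Z_n$ with $n\geq 4$ in $P$, contradicting the hypotheses. Let $C$ be a cycle of minimum length in $H$, with cyclically indexed vertices $v_0,\ldots,v_{k-1}$. Since a Hasse diagram contains no triangles ($a\lessdot b\lessdot c$ forbids $a\lessdot c$), one has $k\geq 4$; and since going around $C$ the net change in poset height is zero, $k$ is even. Let $\ell$ denote the number of local minima on $C$, which equals the number of local maxima.

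\emph{Case $\ell=1$.} The cycle consists of two distinct saturated chains from a common bottom $N$ up to a common top $M$. The two covers $a,b$ of $N$ on these two chains are distinct and incomparable (any relation $a\leq b$ or $b\leq a$ would violate the covering property at $N$), and both lie below $M$. This is a diamond with $N\leq a,b\leq M$, contradicting diamond-freeness.

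\emph{Case $\ell\geq 2$.} Label the local minima $N_1,\ldots,N_\ell$ and maxima $M_1,\ldots,M_\ell$ in cyclic order so that $N_i$ lies on $C$ between $M_{i-1}$ and $M_i$ (indices mod $\ell$). The ascending subpaths of $C$ yield $N_i\leq M_{i-1}$ and $N_i\leq M_i$ for every $i$. I claim that the subposet $S=\{N_1,\ldots,N_\ell,M_1,\ldots,M_\ell\}$ of $P$ is a full subposet isomorphic to $Z_{2\ell}$, which contradicts the hypothesis and finishes the proof. Verifying this amounts to ruling out every extra comparability: $N_i\leq M_j$ for $j\notin\{i-1,i\}$, $N_i\leq N_j$ or $M_i\leq M_j$ for $i\neq j$, and $M_i\leq N_j$. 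In each such configuration one locates a saturated chain in $H$ realising the extra comparability and combines it with suitable arcs of $C$; the resulting closed walk contains either a cycle in $H$ strictly shorter than $C$ (contradicting the minimality of $|C|$) or a four-element diamond (contradicting diamond-freeness).

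The main obstacle is the verification in the case $\ell\geq 2$ that no extra comparabilities occur among the elements of $S$. A careful case analysis is needed, invoking both diamond-freeness and the minimality of $|C|$; the most delicate situation is $N_i\leq M_j$ with $j\notin\{i-1,i\}$, where one must transport the witnessing ascending chain along $C$ and extract from the resulting closed walk either a strictly shorter cycle or a diamond.
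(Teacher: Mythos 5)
Your argument is essentially the paper's own proof, which is likewise only a sketch: a cycle in the Hasse diagram with exactly one source and one sink yields a diamond, and one with $\ell\geq 2$ sources and sinks has those extrema inducing a full subposet isomorphic to $Z_{2\ell}$. The fullness verification you flag as outstanding is left implicit in the paper as well; the one caution is that minimising the \emph{length} of $C$ need not produce a shorter cycle from an extra comparability (the witnessing monotone chain can be long), so minimising the number of local minima of the cycle is the more robust measure for that case analysis.
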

\begin{proof}
We only sketch the proof. Assume that there is a cycle in the unoriented Hasse diagram of $(P,\leq)$. Since $(P,\leq)$ is a poset, it is neither an oriented cycle nor a $3$-cycle. The cycle has at least one source and one sink. If it has exactly one source and one sink, then it is not diamond-free. So, it has at least two sources and two sinks, and the number of sources is equal to the number of sinks. Then, the subset consisting of all the sources and sinks of the cycle induces a full subposet isomorphic to $Z_{n}$ for some $n\geq 4$.
\end{proof}

\begin{rem}\label{comp-thms}
As a consequence of Lemmas \ref{lem-Dn-Zn}, \ref{diamond-zz-free} and Theorem \ref{thm-qh-lattice-poset} we see that:
\begin{enumerate}
    \item An \emph{hereditary} incidence algebra of a finite poset $(P,\leq)$ has a \emph{lattice} of quasi-hereditary structures if and only if the Hasse quiver of $(P,\leq)$ is a tree which does not have $\tilde{D}_{n}$ as subquivers for any $n\geq 4$. 
    \item On the other hand, Theorem \ref{thm-qh-lattice-poset} is slightly stronger than Theorem \ref{tree-qh-lattice} since it allows to treat more general quivers such as (\ref{zigzag}). 
\end{enumerate}
\end{rem}

We list easy properties about the incidence algebra of a diamond-free poset.
All the properties are easy to show and we omit the proof, and we use the following lemma without referring.
\begin{lem}
Let $(P,\leq)$ be a finite diamond-free poset and $i,j\in P$.
The following statements hold.
\begin{enumerate}
\item 
If $i\leq j$, then $([i,j],\leq)$ is a total order. In other words, in the Hasse quiver of $(P,\leq)$ there is a unique path from $i$ to $j$. 
\item
If  $i\leq j$, then there exists an $A(P)$-module
with simple top $S(i)$ and simple socle $S(j)$, and such that a simple module $S(k)$ is a composition factor of it if and only if $i\leq k \leq j$ holds.
\item
Let $L$ be a submodule of an indecomposable projective $A(P)$-module.
Then any non-zero indecomposable direct summand of $L$ has a simple top.
\end{enumerate}
\end{lem}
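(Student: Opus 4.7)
The plan is to treat the three items in turn, using the diamond-free hypothesis together with the identification of $A(P)$ with the path algebra of its Hasse quiver. For (1), suppose $k, \ell \in [i,j]$ are incomparable; then the quadruple $(i,k,\ell,j)$ with $i \leq k, \ell \leq j$ forms a diamond in $P$, contradicting the hypothesis. Hence $[i,j]$ is a chain $i = k_0 < k_1 < \cdots < k_m = j$. No element of $P$ can lie strictly between consecutive $k_s$ and $k_{s+1}$ (such an element would belong to $[i,j]$ yet be distinct from every $k_r$), so this chain is a directed path in the Hasse quiver, and it is the unique such path: any competing path from $i$ to $j$ in the Hasse quiver consists of elements of $[i,j]$ arranged along a strictly increasing sequence of covers, which by the total ordering of $[i,j]$ must coincide with $k_0 < \cdots < k_m$.

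For (2), combining the diamond-free condition with (1) shows that between any two comparable elements there is a unique directed path in the Hasse quiver $Q$ of $P$. Comparing bases and multiplications, one sees that $A(P) \cong \kk Q$, so right $A(P)$-modules correspond to representations of $Q$. I would construct $M$ as the representation with $M_k = \kk$ for $k \in [i,j]$ and $M_k = 0$ otherwise, taking identity maps on the arrows of $Q$ internal to $[i,j]$ and the forced zero maps elsewhere. The composition factors are then exactly $\{S(k) \mid k \in [i,j]\}$; the top is $S(i)$ because for each $s \geq 1$ the Hasse arrow $k_{s-1} \to k_s$ gives a surjection $M_{k_{s-1}} \twoheadrightarrow M_{k_s}$, while all arrows entering $i$ come from outside $[i,j]$ and hence from a zero summand, and dually the socle is $S(j)$.

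For (3), \cref{lem-incidence-hereditary} gives that $A(P)$ is hereditary, so the submodule $L$ of the indecomposable projective $A(P)$-module is itself projective. Since $A(P)$ is finite-dimensional, Krull--Schmidt applies and $L$ decomposes as a direct sum of indecomposable projectives, each of the form $P(a) = e_a A(P)$ with simple top $S(a)$; any indecomposable direct summand of $L$ is isomorphic to one of these and therefore has a simple top. No serious obstacle arises in any of the three parts; the only slightly subtle step is the path-algebra identification in (2), but this becomes transparent once (1) rules out commutativity relations, and (3) is then almost immediate from heredity.
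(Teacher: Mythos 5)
Your proof is correct. The paper deliberately omits a proof of this lemma (``All the properties are easy to show and we omit the proof''), so there is nothing to compare against; your arguments---the diamond contradiction for (1), the identification $A(P)\cong \kk Q$ with $Q$ the Hasse quiver and the uniserial representation supported on the chain $[i,j]$ for (2), and heredity plus Krull--Schmidt for (3)---are exactly the intended routine verifications, and each step checks out.
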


We end this subsection by showing the following key result.

\begin{pro}\label{pro-an-zz}
For an even integer $n\geq 4$, the poset of quasi-hereditary structures on the path algebra $\kk Z_n$ is not a lattice. 
\end{pro}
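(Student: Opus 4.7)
Plan. The strategy is to exhibit two quasi-hereditary structures on $\kk Z_n$ admitting at least two incomparable minimal common upper bounds in $\qhstr(\kk Z_n)$, which precludes the existence of a join.

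Write $n=2m$ with $m\geq 2$, and label the vertices so that the odd vertices $1,3,\dots,n-1$ are the sources and the even vertices $2,4,\dots,n$ are the sinks. Then $P(2k-1)=\{2k-1,\,2k-2,\,2k\}$ with indices taken modulo $n$ (so $P(1)=\{1,2,n\}$), while $P(2k)=S(2k)$. I take $\alpha$ to be the quasi-hereditary structure represented by the total order $n-1\lhd n-2\lhd\cdots\lhd 2\lhd 1\lhd n$, and $\beta$ to be the one represented by $1\lhd n\lhd n-1\lhd\cdots\lhd 3\lhd 2$. Both define quasi-hereditary structures since $\kk Z_n$ is hereditary and total orders are adapted (Proposition \ref{pro-hered-adapted}). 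A direct computation using the positions in each total order gives $\Dec(\alpha)=\{(2k,2k-1):1\leq k\leq m-1\}$ and $\Dec(\beta)=\{(2k,2k-1):2\leq k\leq m\}$, whose union is the ``cyclic'' set $C:=\{(2k,2k-1):1\leq k\leq m\}$.

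The crux is that $C$ is not itself the $\Dec$-set of any quasi-hereditary structure on $\kk Z_n$. Indeed, if it were, realized by an adapted partial order $\lhd$, then $\Delta(2k-1)=\{2k-1,2k\}$ for every $k$, hence $S(2k-2)$ is not a factor of $\Delta(2k-1)$ for $k\geq 2$ (so $2k-2\not\lhd 2k-1$) and $S(n)$ is not a factor of $\Delta(1)$ (so $n\not\lhd 1$). For each arrow $2k-1\to 2k-2$ (resp.\ $1\to n$), the length-two uniserial module supported on it has only the two composition factors $S(2k-1),S(2k-2)$ (resp.\ $S(1),S(n)$); no third composition factor is available to play the role of $k$ in the adaptedness condition, so the endpoints must be comparable, forcing $2k-1\lhd 2k-2$ (resp.\ $1\lhd n$). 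Combining these adapted-forced relations with the decreasing relations $2k\lhd 2k-1$ coming from $C$ produces the closed chain
\[
1\lhd n\lhd n-1\lhd n-2\lhd\cdots\lhd 3\lhd 2\lhd 1,
\]
contradicting antisymmetry. By Lemma \ref{lem-surj-Dec}(3) every common upper bound of $\alpha,\beta$ must then have $\Dec$-set strictly containing $C$.

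Finally I exhibit two incomparable minimal common upper bounds. Adding the single relation $(n,1)$ to $C$ corresponds to setting $\Delta(1)=P(1)$; the adaptedness obstruction at the arrow $1\to n$ is removed because now $n\lhd 1$, and the resulting quasi-hereditary structure $U_1$ is represented by the total order $n<n-1<\cdots<2<1$. Symmetrically, adding $(2,3)$ corresponds to $\Delta(3)=P(3)$ and yields the total order $2<1<n<n-1<\cdots<4<3$, giving $U_2$. Both are valid quasi-hereditary structures (total orders on a hereditary algebra), both are common upper bounds of $\alpha$ and $\beta$, and their $\Dec$-sets contain $(n,1)$ respectively $(2,3)$ but not the other, so by Lemma \ref{lem-surj-Dec}(3) they are incomparable in $\qhstr(\kk Z_n)$. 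Each is minimal because any strictly smaller upper bound of $\alpha,\beta$ would have $\Dec$-set equal to $C$, which was shown impossible. Hence $\alpha\vee\beta$ does not exist, so $\qhstr(\kk Z_n)$ is not a lattice. The main technical point requiring care is the adaptedness-forcing step: one must verify that on $Z_n$ no indecomposable with simple top $S(2k-1)$ and simple socle $S(2k-2)$ has a third composition factor, which follows from the explicit structure of the indecomposable projective and injective modules of $\kk Z_n$ listed above.
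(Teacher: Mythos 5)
Your proof is correct, and while it shares the paper's central obstruction, the execution is genuinely different. Both arguments hinge on the same ``cyclic'' standard system --- your $C$ and the paper's $\Theta$ are mirror images of each other --- and on the same adaptedness computation showing that realizing it would force the closed chain $1\lhd n\lhd n-1\lhd\cdots\lhd 2\lhd 1$; both also use a pair of auxiliary structures (your $U_1,U_2$, the paper's $\lhd_z,\lhd_t$) to pin the would-be join down to that system. The difference is in how the pinning is done. The paper works with filtration categories: it proves $\mathcal{F}(\Delta_x)\cap\mathcal{F}(\Delta_y)=\mathcal{F}(\Theta)$ by an induction on the length of a filtered module (the bulk of its proof) and then sandwiches $\mathcal{F}(\Delta_J)$ between $\mathcal{F}(\Delta_z)\cup\mathcal{F}(\Delta_t)$ and $\mathcal{F}(\Theta)$. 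You bypass that computation entirely by working at the level of $\Dec$-sets via Lemma \ref{lem-surj-Dec}(3) (legitimate here because $\kk Z_n$ is hereditary, and because on this algebra a $\Dec$-set determines the standard modules and hence the quasi-hereditary structure), and you phrase the conclusion as the existence of two incomparable minimal common upper bounds rather than as a sandwich on $\mathcal{F}(\Delta_J)$. Your route is shorter and purely combinatorial; the paper's yields the explicit description of $\mathcal{F}(\Delta_x)\cap\mathcal{F}(\Delta_y)$ as a by-product. One small remark: your closing caveat is stronger than necessary --- adaptedness is a condition quantified over \emph{every} module with the given top and socle, so a single witness (the length-two uniserial supported on the arrow) admitting no third composition factor already forces comparability; you do not need to check all indecomposables with that top and socle.
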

\begin{proof}
We assume that the quiver is labeled as in $(\ref{zigzag})$. This means that the sources of the quiver are labeled by odd integers and the sinks by even integers. Moreover we view the labels as elements of $\mathbb{Z}/n\mathbb{Z}$ in order to have a cyclic labeling.

The projective indecomposable $\kk Z_n$-modules indexed by sinks are simple and the projective indecomposable module indexed by a source $i$ has top $S(i)$ and socle $S(i-1)\oplus S(i+1)$.
The injective indecomposable $\kk Z_n$-modules indexed by sources are simple and the projective indecomposable module indexed by a sink $i$ has source $S(i)$ and top $S(i-1)\oplus S(i+1)$.
For vertices $i, j$, we denote by $E^i_{j}$, if it exists, a (unique) uniserial module of length two with top $S(i)$ and socle $S(j)$.

For each vertex $s$, we consider a total order $\lhd_s$ defined by  
\[
s\lhd_s s+1\lhd_s \cdots \lhd_s s-1.
\]
This ordering is total, so this is adapted.
Since the algebra is hereditary, this represents a quasi-hereditary structure of $\kk Z_n$.
We denote by $\Delta_s$ (resp. $\nabla_s$) the corresponding set of standard (resp. costandard) modules.
We show that $\lhd_1$ and $\lhd_3$ does not admit a join.

By direct calculation, we have that for each odd $i$ and each even $i'$:
\[
\Delta_{i}(j)=\left\{
    \begin{array}{ll}
    S(j) & \mbox{$j$ is even, or $j=i$} \\
    E^j_{j-1} & \mbox{$j\neq i$ is odd} 
    \end{array}
\right.
,
\quad
\Delta_{i'}(j)=\left\{
    \begin{array}{ll}
    S(j) & \mbox{$j$ is even} \\
    E^{j}_{j-1} & \mbox{$j\neq i'-1$ is odd} \\
    P(j) & j=i'-1
    \end{array}
\right.
\]
Therefore these total orders represent different quasi-hereditary structures and we have $[\lhd_{i}] \preceq [\lhd_{i'}]$ for each odd $i$ and each even $i'$ by Lemma \ref{lem-surj-Dec}.
Assume that there exists a join of $\lhd_1$ and $\lhd_3$, and we denote it by $\lhd_J$.
Since $\lhd_J$ is a join of $\lhd_1$ and $\lhd_3$, $[\lhd_{\ell}] \preceq [\lhd_J] \preceq [\lhd_{i'}]$ holds for $\ell=1,3$ and each even $i'$.
Again by Lemma \ref{lem-surj-Dec}, for each vertex $j$, there exist surjective morphisms $\Delta_{i'}(j) \to \Delta_J(j) \to \Delta_{\ell}(j)$ for $\ell=1,3$ and each even $i'$.
Since we have surjective morphisms
\[
P(1) = \Delta_2(1) \twoheadrightarrow \Delta_J(1) \twoheadrightarrow \Delta_3(1) = E^1_n,
\]
we have that $\Delta_J(1)=P(1)$ or $\Delta_J(1)=E^1_{n}$.
In both case, $\Delta_J(j)=S(j)$ if $j$ is even and $\Delta_J(j)=E^j_{j-1}$ if $j\neq 1$ is odd, because of $\Delta_2(j) \to \Delta_J(j) \to \Delta_1(j)$. This implies that $j-1\lhd_J j$ for $j\neq 1$ odd. 
Assume that $\Delta_J(1)=E^1_{n}$.
Since for each odd $i$, $P(i)$ is filtered by $\Delta_J(i)$ and $\Delta_J(i+1)$, by Definition \ref{dfn-qh-alg} (3), $i \lhd_J i+1$ holds. 
This implies that $1 \lhd_J 2 \lhd_J \dots \lhd_J n \lhd_J 1$, which is a contradiction.
Thus we have $\Delta_J(1)=P(1)$.
On the other hand, we have surjective morphisms
\[
E^1_n = \Delta_n(1) \twoheadrightarrow \Delta_J(1) \twoheadrightarrow \Delta_3(1) = E^1_n.
\]
This implies that $\Delta_J(1)=E^1_n$, which is a contradiction.
Therefore $\lhd_1$ and $\lhd_3$ do not admit a join.
\end{proof}
\subsection{Quasi-hereditary structures of full subposets}\label{qh-full-subposet}
In this subsection, we construct a morphism between the sets of quasi-hereditary structures on the incidence algebras of two finite posets such that one is a full subposet of the other (Proposition \ref{qh_embedding}).
This map will be used to prove the `only if' part of Theorem \ref{thm-qh-lattice-poset}, see Theorem \ref{thm-qh-not-lattice}.

We fix, for all this subsection, the following setting and notation: Let $(P,\leq)$ be a finite poset and $(Q,\leq)$ be a full subposet. We fix $\lhd$ an adapted poset to the incidence algebra of $Q$. We let $R = P\setminus Q$. The relation $\leq$ induces a poset structure on $R$.

We consider the binary relation $\hl $ on $P$ defined as follows: 

\begin{enumerate}
\item For $r_1,r_2\in R$, $r_1 \hl  r_2$ if and only if $r_1 \leq r_2$.
\item For $q_1,q_2\in Q$, $q_1 \hl  q_2$ if and only if $q_1 \lhd q_2$. 
\end{enumerate}
In other words $\hl $ restricts as $\leq$ on $R$ and $\lhd$ on $Q$. There is \emph{no} relations of the form $q\hl  r$ with $r\in R$ and $q\in Q$. The relations $r\hl q$ are of two possible shapes:
\begin{enumerate}
\item[(3)] If $r\leq q$ then $r \hl  q$ if and only if there exists $q_1\in [r,q]\cap Q$ such that $q_1\lhd q$ and $[r,q_1]\cap Q = \{q_1\}$.
\item[(4)] If $q\leq r$ then $r\hl  q$ if and only if there exists $q_1\in [q,r]\cap Q$ such that $q_1\lhd q$ and $[q_1,r]\cap Q = \{q_1\}$.
\end{enumerate}
These two conditions may look unnatural for now, but we will see in Lemma \ref{standard-lift} that they induce a natural set of standard modules. 
\begin{lem}\label{lem:hl-poset}
The transitive cover of $\hl $ is a partial order on $P$. 
\end{lem}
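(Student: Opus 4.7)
Since the transitive cover is tautologically transitive, the only content of the lemma is reflexivity and antisymmetry. Reflexivity is immediate: for each $p\in P$, either $p\in R$ so that $p\leq p$ gives $p\hl p$ via clause~(1), or $p\in Q$ so that $p\lhd p$ gives $p\hl p$ via clause~(2). Hence the transitive cover is also reflexive, and the real work is antisymmetry.

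\textbf{The key structural observation} is that every one of the four defining clauses of $\hl$ produces a relation of type $R\to R$, $Q\to Q$, or $R\to Q$. Indeed, clauses (3) and (4) both read ``$r\hl q$'' with $r\in R$ and $q\in Q$. In particular there is \emph{no} relation of the form $q\hl r$ with $q\in Q$ and $r\in R$. The consequence is that for any chain $x_0\hl x_1\hl\cdots\hl x_n$, if some $x_i$ lies in $Q$ then $x_{i+1}$ must also lie in $Q$, and inductively all later terms of the chain lie in $Q$. In other words, chains in $\hl$ may start inside $R$, jump into $Q$ at most once, and once inside $Q$ they never return to $R$.

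\textbf{Deducing antisymmetry.} Suppose $x\hl^{\sf tc} y$ and $y\hl^{\sf tc} x$. Concatenating the two witnessing chains produces a chain $x=z_0\hl z_1\hl\cdots\hl z_m=x$. If $x\in Q$, then by the structural observation every $z_j$ lies in $Q$, so the chain becomes $x\lhd z_1\lhd\cdots\lhd z_m=x$ in the poset $(Q,\lhd)$. Transitivity of $\lhd$ then gives $x\lhd z_j$ and $z_j\lhd x$ for every $j$, and antisymmetry of $\lhd$ forces $z_j=x$; in particular $y=x$. If instead $x\in R$, then the chain cannot enter $Q$, since it would have to come back to $R$ at the end to reach $x$, contradicting the structural observation. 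Thus every $z_j$ lies in $R$ and antisymmetry of $\leq$ applied to $x\leq z_1\leq\cdots\leq z_m=x$ yields $y=x$ in the same way.

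\textbf{On obstacles.} There is no real obstacle in this argument; the antisymmetry is built directly into the asymmetric shape of the defining clauses (no $Q\to R$ arrows), combined with antisymmetry of the partial orders $\leq$ on $R$ and $\lhd$ on $Q$. The more delicate content — namely, that this partial order represents the correct quasi-hereditary structure on the larger incidence algebra — will have to be established separately, in particular that the conditions (3) and (4) are precisely what is needed so that standard modules lift correctly, as hinted at by the forward reference to Lemma~\ref{standard-lift}.
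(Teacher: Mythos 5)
Your proof is correct and follows essentially the same route as the paper's: the key point in both is that the defining clauses admit no relation from $Q$ to $R$, so a chain that enters $Q$ never leaves it, and antisymmetry then reduces to that of $\lhd$ on $Q$ or of $\leq$ on $R$ depending on where the cycle starts. Your concatenation of the two witnessing chains into a single cycle is a minor streamlining of the same argument.
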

\begin{proof}
For this proof we use the symbol $\hl $ for the relation described above and $\hl ^{tc}$ for its transitive cover. Let us verify that it is an antisymmetric relation. Let $x,y \in P$ with $x\hl ^{tc} y$. Then, there is $a_1,\dots,a_n \in P$ such that
\[
x \hl  a_1 \hl  \cdots \hl  a_n \hl  y.
\] 
If there is $i\in \{1,\dots ,n\}$ such that $a_i \in Q$, then $a_{i+1},\dots , a_n$ and $y$ are in $Q$ and $a_i \lhd y$ by transitivity of the partial order $\lhd$ on the elements of $Q$. So, if $x\in Q$, then $y\in Q$ and we have $x\lhd y$. If $y\hl ^{tc} x$, then by the same argument $y\lhd x$ and $y=x$ since $\lhd$ is antisymmetric. If $x\in R$ and there exits $i\in \{1,2,\dots ,n\}$ such that $a_i \in Q$, then $y\in Q$ and $y \mathrel{\widehat{\ntriangleleft}}^{tc} x$.

\noindent If $x\in R$ and all the $a_i$'s are in $R$, then $x\leq y$. If $y\hl ^{tc} x$, then there exist $b_1,\dots , b_r$ in $P$ such that $y\hl  b_1 \hl  b_2 \dots  \hl  b_r \hl  x$. Since $x\in R$, all the $b_i$'s are in $R$ and $y\leq x$. So $x=y$ by antisymmetry of $\leq$. 

\noindent This shows that $\hl ^{tc}$ is antisymmetric and it is by construction reflexive and transitive. 
\end{proof}
From now on the symbol $\hl$ is used for the partial order of Lemma \ref{lem:hl-poset}. \begin{lem}
Let $(P,\leq)$ be a finite poset and $(Q,\leq)$ be a full subposet of $(P,\leq)$. Let $\lhd$ be an adapted poset to $A(Q)$. Then $\hl $ is an adapted poset to $A(P)$. 
\end{lem}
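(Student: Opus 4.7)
By Lemma~\ref{weak-adapted}, it suffices to produce, for every $A(P)$-module $M$ with $\Top M\cong S(i)$ and $\soc M\cong S(j)$ and with $i,j$ incomparable for $\hl$, some $k\in P\setminus\{i,j\}$ such that $[M:S(k)]\neq 0$ and either $i\hl k$ or $j\hl k$.

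First I would record a structural remark about the incidence algebra $A(P)$. Any $A(P)$-module $M$ with simple top $S(i)$ is a quotient of $P(i)$, and submodules of $P(i)$ correspond to upward-closed subsets of $\{x\in P: x\geq i\}$. The requirement that $\soc M\cong S(j)$ (a unique maximum in the complement of that upward-closed set) forces the composition factors of $M$ to be exactly $\{S(k): k\in[i,j]_P\}$, each with multiplicity one. In particular $i\leq j$ in $P$ and $[M:S(k)]\neq 0\iff k\in[i,j]_P$.

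I would then split into four cases according to whether $i,j$ lie in $Q$ or $R$. The case $i,j\in R$ is vacuous because $i\leq j$ together with clause~(1) of the definition of $\hl$ gives $i\hl j$, contradicting incomparability. The case $i,j\in Q$ is handled by pulling back the adaptedness of $\lhd$ on $A(Q)$: the same structural remark applied inside $Q$ provides an $A(Q)$-module $N_Q$ with composition factors indexed by $[i,j]_Q$ and with simple top $S_Q(i)$ and simple socle $S_Q(j)$. Adaptedness of $\lhd$ on $A(Q)$ produces $k\in[i,j]_Q\subseteq[i,j]_P$ with $i\lhd k$ and $j\lhd k$, so clause~(2) gives $i\hl k$, $j\hl k$ and $[M:S(k)]\neq 0$.

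The mixed cases are the main obstacle, and this is where clauses~(3) and~(4) of the definition of $\hl$ come into play. Focus on $i\in R$, $j\in Q$. Applying clause~(3) with the candidate $q_1=j$ to the failure of $i\hl j$ forces $[i,j]_P\cap Q\neq\{j\}$. I would then pick $k$ to be any element of $[i,j]_P\cap Q$ minimal for $\leq_P$; the previous inequality forces $k\neq j$, and minimality directly gives $[i,k]_P\cap Q=\{k\}$, so clause~(3) applied with $q_1=k$ yields $i\hl k$. Since $k\in[i,j]_P$ we have $[M:S(k)]\neq 0$, and $k\neq i$ because $k\in Q$ while $i\in R$, so $k$ is the required witness. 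The dual mixed case $i\in Q$, $j\in R$ runs identically using clause~(4) and a $\leq_P$-maximal element of $[i,j]_P\cap Q$. Conceptually, clauses~(3) and~(4) are designed precisely so that the first (respectively last) $Q$-element encountered on any chain from $i$ to $j$ in $P$ is automatically above $i$ (respectively above $j$) in $\hl$, which is exactly what adaptedness requires of the modules that could potentially cause trouble.
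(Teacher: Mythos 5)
Your proof is correct and follows essentially the same route as the paper's: the same case split according to whether $i,j$ lie in $Q$ or $R$, with the mixed case handled by taking a $\leq_P$-minimal (resp.\ maximal) element $k$ of $[i,j]_P\cap Q$ and reading off $i\hl k$ (resp.\ $j\hl k$) from clause (3) (resp.\ (4)). The only cosmetic difference is that you stop at this $k$ by invoking the weak criterion of Lemma \ref{weak-adapted}, whereas the paper applies adaptedness of $\lhd$ once more to climb to a common upper bound; both are valid.
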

\begin{proof}
Let $x,y\in P$ such that $x\leq y$. If $x$ and $y$ are both in $R$, then $x\hl  y$. If $x$ and $y$ are both in $Q$, there exists $q_1 \in [x,y]\cap Q$ such that $x\lhd q_1$ and $y\lhd q_1$. Since $\hl $ coincides with $\lhd$ on the elements of $Q$, we have $x \hl  q_1$ and $y \hl  q_1$. 

We assume that $x =r \in R$ and $y=q\in Q$ and $r$ and $q$ are incomparable with respect to $\hl $. Let $q_1 \in Q$ such that $r \leq q_1 \leq q$ and $q_1$ is minimal for this property. Then $r\hl  q_1$. Now the poset $\lhd$ is adapted so there exists $q_2 \in [q_1,q[$ such that $q_{1}\lhd q_2$ and $q\lhd q_2$. By transitivity, we have $r\hl  q_2$ and $q\hl  q_2$. The case $x \in Q$ and $y\in R$ is similar. 
\end{proof}
Let us describe the increasing and decreasing relations for $\hl $.
\begin{lem}\label{decreasing}
Let $r\in R$ and $q\in Q$. Then 
\begin{enumerate}
\item $r\hl q$ is increasing if and only if $r\leq q$ and 
\[
\forall q_1\in [r,q]\cap Q,\ q_1\lhd q.
\]
\item $r\hl q$ is decreasing if and only if $q\leq r$ and 
\[
\forall q_1\in [q,r]\cap Q,\ q_1\lhd q.
\]
\end{enumerate}
\end{lem}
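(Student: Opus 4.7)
The plan is to prove statement (2) on decreasing relations in detail; statement (1) on increasing relations will follow by a completely dual argument with rule (3) replacing rule (4) and the injective envelope replacing the indecomposable projective.

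The first step is to translate the condition that $r\hl q$ is decreasing into a combinatorial statement about intervals in $(P,\leq)$. By Definition \ref{def:incidence-alg}, the indecomposable projective right $A(P)$-module $P(q)$ has basis $\{[q,j] : q\leq j\}$, so each simple $S(j)$ with $j \geq q$ appears exactly once as a composition factor. From the right action one checks that submodules of $P(q)$ correspond bijectively to upward-closed subsets of $\{j \geq q\}$, so quotients correspond to downward-closed subsets. Hence $\Delta(q)$ corresponds to the largest downward-closed subset of $\{j \geq q\}$ contained in $\{j \geq q : j \hl q\}$. This yields the reformulation: $r\hl q$ is decreasing if and only if $q\leq r$ and every $s$ with $q\leq s\leq r$ satisfies $s \hl q$.

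It then suffices to prove that the condition ``$\forall s \in [q, r],\ s\hl q$'' is equivalent to ``$\forall q_1 \in [q,r]\cap Q,\ q_1 \lhd q$''. The forward direction is immediate: any transitive-closure chain witnessing $q_1 \hl q$ with both endpoints in $Q$ must remain in $Q$ throughout (as noted in the proof of Lemma \ref{lem:hl-poset}, since there are no base relations starting in $Q$ and ending in $R$), so $q_1 \lhd q$. For the backward direction, fix $s \in [q,r]$. If $s \in Q$, the hypothesis gives $s \lhd q$ directly. If $s \in R$, I choose $q_1$ maximal in $[q,s]\cap Q$, a nonempty set since $q \in Q$; maximality forces $[q_1,s]\cap Q = \{q_1\}$, and rule (4) applied with the trivial witness $q_1 \lhd q_1$ then produces the base relation $s \hl q_1$. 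Since $q_1 \in [q,r]\cap Q$, the hypothesis yields $q_1 \lhd q$, and transitivity gives $s \hl q$.

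The main obstacle is purely organizational: one must juggle three layers simultaneously—the base relation produced by rules (1)--(4), its transitive closure (also written $\hl$), and the submodule structure of the incidence algebra projectives. Once this bookkeeping is set up, the equivalence in (2) reduces to a single application of rule (4) to a well-chosen maximal element of $[q,s] \cap Q$, and (1) is its exact dual using the analogous maximal element of $[s,q]\cap Q$ together with rule (3).
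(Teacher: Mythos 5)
Your proof is correct and follows essentially the same route as the paper's: translate ``decreasing'' into the statement that $q\leq r$ and every $x\in[q,r]$ satisfies $x\hl q$, handle the forward direction by the fact that $\hl$ restricts to $\lhd$ on $Q$, and for the converse pick the maximal element $q_1$ of $[q,x]\cap Q$ so that $[q_1,x]\cap Q=\{q_1\}$ and rule (4) applies (the paper applies rule (4) in one step with witness $q_1\lhd q$, where you factor through $x\hl q_1\hl q$; both are fine). One cosmetic remark: in the dual case (1) the relevant element is the \emph{minimal} element of $[s,q]\cap Q$ (the one closest to $s$), not the maximal one, so that $[s,q_1]\cap Q=\{q_1\}$ holds as rule (3) requires.
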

\begin{proof}
We only prove the second point since the proof of the first is similar. If $r\hl  q$ is decreasing, then we have $q\leq r$ and $\forall x\in [q,r]$, we have $x\hl  q$. This is in particular true for $x\in Q$. Since $\hl $ restricts as $\lhd$ on the elements of $Q$, we have $x\lhd q$. Conversely assume that $q\leq r$ and $\forall q_1\in [q,r]\cap Q, q_1\lhd q$. Let $x\in [q,r]$. Let $q_1\in Q$ such that $q\leq q_1\leq x$ and $q_1$ is maximal for this property. Then, $q_1 \lhd q$ by hypothesis, and $x\hl  q$ by definition of $\hl $.
\end{proof}

\begin{rem}
We see that our rather technical definition of $\hl $ leads to natural increasing and decreasing relations: they are the relations that are increasing or decreasing when forgetting the elements which are not in $Q$.
\end{rem}
To describe precisely the standard modules associated with this partial order we consider the functors in the right part of the classical idempotent recollement associated to the idempotent $e_Q = \sum_{q\in Q}[q,q] \in A(P)$ corresponding to $Q$. 

\begin{lem}\label{lem:centralizer-IncAlg}
The algebra $e_{Q}A(P)e_Q$ is isomorphic to $A(Q)$. 
\end{lem}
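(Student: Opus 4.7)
The plan is to establish the isomorphism by exhibiting an explicit $\kk$-linear map on bases and checking it respects multiplication, with the crucial ingredient being the fullness of the subposet $Q \subseteq P$.

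First I would recall that $A(P)$ has a $\kk$-basis consisting of the symbols $[i,j]$ with $i \leq j$ in $P$, and similarly $A(Q)$ has basis the symbols $[q,q']$ with $q \leq q'$ in $Q$. Since $e_Q = \sum_{q\in Q}[q,q]$ is a sum of orthogonal idempotents, a direct computation using the multiplication rule $[i,j]\cdot[k,\ell] = \delta_{jk}[i,\ell]$ shows that $e_Q A(P) e_Q$ has $\kk$-basis given by the intervals $[q,q']$ where $q, q' \in Q$ and $q \leq q'$ in $P$.

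Second I would invoke the hypothesis that $(Q,\leq)$ is a \emph{full} subposet of $(P,\leq)$: this means that for $q, q' \in Q$, the relation $q \leq q'$ in $P$ holds if and only if $q \leq q'$ in $Q$. Consequently the basis of $e_Q A(P) e_Q$ described above is in natural bijection with the basis of $A(Q)$ via the map $[q,q'] \mapsto [q,q']$. I would then extend this bijection $\kk$-linearly to obtain a $\kk$-linear isomorphism $\varphi \colon A(Q) \to e_Q A(P) e_Q$.

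Finally I would check that $\varphi$ respects multiplication. For basis elements $[q_1,q_2]$ and $[q_3,q_4]$ of $A(Q)$, the product in $A(Q)$ is $\delta_{q_2 q_3}[q_1,q_4]$, and the product of their images in $A(P)$ (hence in $e_Q A(P) e_Q$) is given by the same formula $\delta_{q_2 q_3}[q_1,q_4]$; both sides lie in the image of $\varphi$ by the previous paragraph. The unit maps to $\sum_{q\in Q}[q,q] = e_Q$, which is the unit of $e_Q A(P) e_Q$. There is no real obstacle here since the argument is entirely formal once fullness is used to identify the index sets of the bases; the only point worth emphasising is precisely that without fullness, $e_Q A(P) e_Q$ could contain extra basis elements $[q,q']$ coming from comparisons in $P$ that are not comparisons in $Q$, and the isomorphism would fail.
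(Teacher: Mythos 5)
Your proof is correct and follows essentially the same route as the paper: compute the basis of $e_Q A(P) e_Q$ from the multiplication rule, then use fullness of $Q$ in $P$ to identify it with the basis of $A(Q)$. Your additional verification that the bijection respects multiplication and units is a welcome (if routine) elaboration of what the paper leaves implicit.
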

\begin{proof}
The incidence algebra of $P$ over $\kk$ has basis the set of intervals $[x,y]$ such that $x\leq_P y$ are two elements of $P$. Multiplying on the right and the left by $e_Q$, we have a basis consisting of intervals $[x,y]$ such that $x,y\in Q$ and $x\leq_P y$. Since $Q$ is a full subposet of $P$, this is exactly the set of intervals $[x,y]$ such that $x,y\in Q$ and $x\leq_Q y$. 
\end{proof}
There is a `restriction' functor $(-)e_Q \colon  \operatorname{mod}A(P) \to \operatorname{mod}A(Q)$ which sends an $A(P)$-module $M$ to $Me_{Q}$. It has a left adjoint $L = -\otimes_{A(Q)} e_Q A(P)$. We recall without proofs the following well-known, and easy to check, properties of these functors: 
\begin{enumerate}
\item The functor $(-)e_Q$ is exact.
\item The functor $L$ is right exact, fully faithful and sends projective modules to projective modules. 
\item $(-)e_Q \circ L \cong \operatorname{Id}_{\operatorname{mod}A(Q)}$.
\end{enumerate}

\begin{lem}\label{standard-lift}
Let $(P,\leq)$ be a finite poset and $(Q,\leq)$ be a full subposet. Let $\lhd$ be an adapted poset to $A(Q)$. We denote by $\Delta$ its set of standard modules. We denote by $\widehat{\Delta}$ the set of standard modules for $A(P)$ corresponding to $\hl $. Then,
\begin{enumerate}
\item If $r\in P\setminus Q$, then $\widehat{\Delta}(r) = S(r)$. 
\item If $q\in Q$, then $\widehat{\Delta}(q) = L(\Delta(q))$. 
\end{enumerate}
\end{lem}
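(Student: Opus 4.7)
The plan is to compute $\widehat\Delta(q)$ and $L(\Delta(q))$ explicitly as quotients of $P(q) = e_q A(P)$. The key observation is that, as a right module over the incidence algebra $A(P)$, the module $P(q)$ has $\kk$-basis $\{[q,z] \mid z \in P,\ z \geq q\}$, and its $A(P)$-submodules correspond bijectively to the upper sets of $\{z \in P \mid z \geq q\}$ (and analogously for $e_q A(Q)$ in place of $P(q)$). Under this correspondence, the kernel of $P(q) \twoheadrightarrow \widehat\Delta(q)$ is spanned by those $[q,z]$ with $z$ in the upper closure (in $P$) of $\{z \geq q \mid z \not\hl q\}$; likewise the kernel $K$ of $e_q A(Q) \twoheadrightarrow \Delta(q)$ is generated as an $A(Q)$-module by the set $Y_K$ of minimal elements of $\{z \in Q \mid z \geq q,\ z \not\lhd q\}$ in $Q$.

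For statement (1), when $r \in R$, I would argue by inspecting the four defining cases of $\hl$ that no direct relation has its left-hand side in $Q$ and its right-hand side in $R$, and within $R$ the relation $r_1 \hl ' r_2$ requires $r_1 \leq r_2$. Consequently any chain realizing $z \hl r$ must consist entirely of $R$-elements forming an increasing sequence ending at $r$, so $z \leq r$. Combined with $z \geq r$ (which is necessary for $S(z)$ to be a composition factor of $P(r)$), this forces $z = r$, and hence $\widehat\Delta(r) = S(r)$.

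For statement (2), applying the right-exact functor $L$ to $K \hookrightarrow e_q A(Q) \twoheadrightarrow \Delta(q)$ and using $L(e_x A(Q)) \cong P(x)$ for $x \in Q$ identifies $L(\Delta(q))$ with $P(q)/N$, where $N$ is the $A(P)$-submodule of $P(q)$ generated by $\{[q,y] \mid y \in Y_K\}$; explicitly, $N$ is spanned by $\{[q,z] \mid z \geq q \text{ in } P,\ \exists y \in Y_K \text{ with } y \leq z\}$. Comparing with the kernel of $\widehat\Delta(q)$ from the previous paragraph, the proof reduces to the equivalence, for $z \geq q$ in $P$,
\[
(\exists y \in Y_K \text{ with } y \leq z \text{ in } P) \iff (\exists w \in P \text{ with } q \leq w \leq z \text{ and } w \not\hl q).
\]
The forward implication is immediate, since any $y \in Y_K$ lies in $Q$ and satisfies $y \not\lhd q$, hence $y \not\hl q$.

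The backward implication is where the main subtlety lies, and it splits into two cases. If $w \in Q$, then $w \not\hl q$ gives $w \not\lhd q$, and one may take $y$ to be a minimal element (in $Q$) of $\{z' \in Q \mid z' \geq q,\ z' \not\lhd q\}$ lying below $w$. The delicate case is $w \in R$: here I would pick $q_1$ to be a maximal element of the nonempty finite set $Q \cap [q,w]_P$; maximality immediately forces $[q_1,w]_P \cap Q = \{q_1\}$, so if $q_1 \lhd q$ held, then case (4) in the definition of $\hl$ would directly produce $w \hl q$, contradicting the hypothesis. Hence $q_1 \not\lhd q$, and choosing $y \in Y_K$ with $y \leq q_1$ in $Q$ yields $y \leq q_1 \leq w \leq z$, as required.
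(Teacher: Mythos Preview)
Your argument is correct and follows essentially the same route as the paper: compute $L(\Delta(q))$ via a projective presentation of $\Delta(q)$ (your $Y_K$ is the paper's set $\mathfrak{m}$ of $\leq$-minimal $q'\in Q$ with $q\leq q'$ and $q'\centernot\lhd q$), and compare with the kernel of $P(q)\twoheadrightarrow\widehat\Delta(q)$. The only presentational difference is that the paper packages the characterisation of the composition factors of $\widehat\Delta(q)$ into a separate lemma (Lemma~\ref{decreasing}: $S(z)$ occurs in $\widehat\Delta(q)$ iff every $q_1\in[q,z]\cap Q$ satisfies $q_1\lhd q$), whereas you establish the needed equivalence directly by your case split $w\in Q$ / $w\in R$; your $w\in R$ case is precisely an inline proof of the relevant direction of that lemma via rule~(4) in the definition of~$\hl$.
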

\begin{proof}
Let $r\in R$. If $x\hl  r$, then $x\in R$ and $x\leq r$. Since the composition factors of $P(r)$ are indexed by the elements larger than $r$ in $P$, we have the first point. 

Let $q\in Q$. We know that the composition factors of $\widehat{\Delta}(q)$ are indexed by the elements $x\in P$ such that $x\lhd q$ is a decreasing relation. Lemma \ref{decreasing} gives us the decreasing relations. It remains to compute the composition factors of $L(\Delta(q))$ and check that the two sets coincide. Recall that $\Delta(q)$ is the largest quotient of $P(q)$ having composition factors indexed by elements $q'$ such that $q'\lhd q$. Let $\mathfrak{m}$ be the set of $q'$ such that $q\leq q'$, $q'\centernot{\lhd} q$ and $q'$ is minimal for this property. Then,
\[
\bigoplus_{x\in \mathfrak{m}} P(x) \to P(q) \to \Delta(q) \to 0,
\]
is the beginning of a (minimal) projective presentation of $\Delta(q)$. Applying the functor $L$ we get that
\[
\bigoplus_{x\in \mathfrak{m}} P(x) \to P(q) \to L(\Delta(q)) \to 0,
\]
is the beginning of a projective presentation of $L(\Delta(q))$. Let us also recall that all the non-zero morphisms between projective indecomposable modules are injective. So, to compute the composition factors of $L(\Delta(q))$ we have to compute the composition factors of $P(q)$ which are not in $P(x)$ for $x\in \mathfrak{m}$. If $S(y)$ is a composition factor of $P(x)$ for $x\in \mathfrak{m}$, then $q\leq x\leq y $ and $x\centernot{\lhd} q$. Conversely we assume that there is $q_1\in Q$ such that $q\leq q_1 \leq y$ and $q_1\centernot{\lhd} q$. If $q_1$ is minimal for this property $q_1\in \mathfrak{m}$. If not, let $q_2$ such that $q\leq q_2 \leq q_1$ such that $q_2 \centernot{\lhd} q$ and $q_2$ is minimal for this property. Then $q_2\in \mathfrak{m}$ and $S(y)$ is a composition factor of $P(q_2)$. This shows that the composition factors of $L(\Delta(q))$ are indexed by the elements $x$ such that $q\leq x$ and $\forall q_1\in [q,x] \cap Q$, $q_1\lhd q$. The result now follows from Lemma \ref{decreasing}. 
\end{proof}

\begin{pro}\label{qh_embedding}
Let $(P,\leq)$ be a finite poset such that $A(P)$ is a hereditary algebra. Let $(Q,\leq)$ be a full subposet of $Q$. Then $\lhd \mapsto \hl $ induces a well-defined full embedding of posets from $\qhstr(A(Q))$ to $\qhstr(A(P))$. 
\end{pro}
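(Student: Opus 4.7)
The plan is to deduce all four required properties (well-definedness, that the image consists of quasi-hereditary structures, injectivity, and the full preservation of $\preceq$) from Lemma~\ref{standard-lift} combined with the exactness properties of the adjoint pair $L = -\otimes_{A(Q)} e_Q A(P)$ and $(-)e_Q$. Throughout, I will exploit the fact that for hereditary algebras the relation $\preceq$ on $\qhstr$ admits a concrete combinatorial characterisation via surjections of standard modules (Lemma~\ref{lem-surj-Dec}(3)).

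First I would verify that $\hl$ indeed defines a quasi-hereditary structure on $A(P)$: we already established that $\hl$ is an adapted order to $A(P)$, and since $A(P)$ is hereditary by hypothesis, Proposition~\ref{pro-hered-adapted} immediately yields that $(A(P),\hl)$ is quasi-hereditary. Next, by Lemma~\ref{standard-lift}, the set $\widehat{\Delta}$ of standard $A(P)$-modules is determined by the set $\Delta$ of standard $A(Q)$-modules via the formulas $\widehat{\Delta}(r)=S(r)$ for $r\in R=P\setminus Q$ and $\widehat{\Delta}(q)=L(\Delta(q))$ for $q\in Q$. Consequently, if $\lhd_1\sim\lhd_2$, then $\Delta_1=\Delta_2$ forces $\widehat{\Delta}_1=\widehat{\Delta}_2$, so $\hl_1\sim\hl_2$. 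This takes care of well-definedness. For injectivity, I would observe that $(-)e_Q\circ L\cong \operatorname{Id}$, so applying the exact functor $(-)e_Q$ to the equality $\widehat{\Delta}_1=\widehat{\Delta}_2$ recovers $\Delta_1(q)=\Delta_2(q)$ for every $q\in Q$, hence $\lhd_1\sim\lhd_2$.

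It remains to show that the map preserves and reflects $\preceq$. By Lemma~\ref{lem-incidence-hereditary}(2), since $(Q,\le)$ is a full subposet of $(P,\le)$, the incidence algebra $A(Q)\cong e_Q A(P)e_Q$ (see Lemma~\ref{lem:centralizer-IncAlg}) is also hereditary, so Lemma~\ref{lem-surj-Dec}(3) applies to both $A(P)$ and $A(Q)$. If $[\lhd_1]\preceq[\lhd_2]$, this criterion produces surjections $\Delta_2(q)\twoheadrightarrow\Delta_1(q)$ for all $q\in Q$; since $L$ is right exact, applying it yields surjections $\widehat{\Delta}_2(q)\twoheadrightarrow\widehat{\Delta}_1(q)$, and for $r\in R$ both sides equal $S(r)$. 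Re-applying Lemma~\ref{lem-surj-Dec}(3) gives $[\hl_1]\preceq[\hl_2]$. Conversely, if $[\hl_1]\preceq[\hl_2]$, the exact functor $(-)e_Q$ turns surjections $\widehat{\Delta}_2(p)\twoheadrightarrow\widehat{\Delta}_1(p)$ indexed by $p\in Q$ into surjections $\Delta_2(q)\twoheadrightarrow\Delta_1(q)$, and Lemma~\ref{lem-surj-Dec}(3) for $A(Q)$ delivers $[\lhd_1]\preceq[\lhd_2]$.

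There is no serious obstacle: every ingredient is already prepared. The one point that requires care is to remain within the hereditary setting on both sides, which is exactly what Lemma~\ref{lem-incidence-hereditary}(2) guarantees; this is what allows us to use the surjection criterion of Lemma~\ref{lem-surj-Dec}(3) symmetrically and turns the whole argument into a formal manipulation with the adjoint pair $(L,(-)e_Q)$.
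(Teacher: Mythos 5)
Your proposal is correct and follows essentially the same route as the paper's proof: quasi-heredity of $(A(P),\hl)$ from heredity of $A(P)$, well-definedness and injectivity from Lemma~\ref{standard-lift} together with $(-)e_Q\circ L\cong\operatorname{Id}$, and order preservation/reflection from Lemma~\ref{lem-surj-Dec} via the right exactness of $L$ and exactness of $(-)e_Q$. Your version merely spells out more explicitly the point, left implicit in the paper, that $A(Q)$ is also hereditary (Lemma~\ref{lem-incidence-hereditary}(2)), which is needed to apply the surjection criterion in the reflecting direction.
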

\begin{proof}
Since the algebra $A(P)$ is hereditary, all the adapted orders give rise to quasi-hereditary structures. 

Lemma \ref{standard-lift} implies that the mapping is compatible with the equivalence relation. Lemma \ref{standard-lift} and $(-)e_Q\circ L \cong \operatorname{Id}_{\operatorname{mod}A(Q)}$ implies that it is injective. Lemma \ref{lem-surj-Dec} and right exactness of $L$ implies that it is a morphism of posets and $(-)e_Q\circ L \cong \operatorname{Id}_{\operatorname{mod}A(Q)}$ implies that it is full.
\end{proof}

\begin{dfn}
Let $\phi \colon  (O_1,\leq_1) \to (O_2,\leq_2)$ be a morphism of posets. Then $\phi$ is \emph{interval-preserving} if for every $x,y\in O_1$ and $z\in O_2$ we have $\phi(x)\leq_2 z\leq_2 \phi(y)$ if and only if there is $z'\in O_1$ such that $x\leq_1 z'\leq_1 y$ and $\phi(z')=z$. 

\end{dfn}
\begin{pro}\label{interval-preserving}
The morphism $[\lhd]\mapsto [\hl ]$ from $\qhstr(A(Q))$ to $\qhstr(A(P))$ of Proposition \ref{qh_embedding} is interval-preserving.
\end{pro}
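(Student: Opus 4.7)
The ``if'' direction is immediate from the fact that $\phi$ is a morphism of posets (Proposition~\ref{qh_embedding}). For the ``only if'' direction, suppose $[\widehat{\lhd}_1]\preceq[\sigma]\preceq[\widehat{\lhd}_2]$ in $\qhstr(A(P))$; the plan is to construct a concrete preimage $[\lhd_3]\in\qhstr(A(Q))$ of $[\sigma]$ sitting between $[\lhd_1]$ and $[\lhd_2]$, by reading off the standard modules on $Q$ from those of $\sigma$.

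First, I would apply Lemma~\ref{lem-surj-Dec} to the sandwich to obtain, for each $p\in P$, a chain of surjections $\widehat{\Delta}_2(p)\twoheadrightarrow\Delta_\sigma(p)\twoheadrightarrow\widehat{\Delta}_1(p)$. For $r\in R$, Lemma~\ref{standard-lift} gives $\widehat{\Delta}_j(r)=S(r)$, so $\Delta_\sigma(r)=S(r)$. For $q\in Q$, set $\Delta_3(q):=\Delta_\sigma(q)e_Q$; applying the exact functor $(-)e_Q$ to the chain yields $\Delta_2(q)\twoheadrightarrow\Delta_3(q)\twoheadrightarrow\Delta_1(q)$ in $\modu A(Q)$. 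The next step is to verify that $\{\Delta_3(q)\}_{q\in Q}$ is the family of standard modules of some partial order $\lhd_3$ on $Q$, adapted to $A(Q)$. Since $Q$ inherits diamond-freeness from $P$, Lemma~\ref{lem-incidence-hereditary} gives that $A(Q)$ is hereditary, so Proposition~\ref{pro-hered-adapted} promotes $\lhd_3$ to a quasi-hereditary structure; the displayed surjections combined with Lemma~\ref{lem-surj-Dec} then yield $[\lhd_1]\preceq[\lhd_3]\preceq[\lhd_2]$.

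The main obstacle is the remaining verification that $\phi([\lhd_3])=[\sigma]$, equivalently $\widehat{\Delta}_3(p)=\Delta_\sigma(p)$ for every $p\in P$. The case $p=r\in R$ is settled above. For $p=q\in Q$, Lemma~\ref{standard-lift} gives $\widehat{\Delta}_3(q)=L(\Delta_3(q))=L(\Delta_\sigma(q)e_Q)$, and the counit of $L\dashv(-)e_Q$ produces a surjection $L(\Delta_\sigma(q)e_Q)\twoheadrightarrow\Delta_\sigma(q)$ (surjective because $\Delta_\sigma(q)$ has top $S(q)\in Q$ and is therefore generated by $\Delta_\sigma(q)e_Q$). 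The delicate point is to show this counit is also injective. Both sides are quotients of the indecomposable projective $P_P(q)$ of the diamond-free incidence algebra $A(P)$, and each is determined by the downward-closed subset of $\{p\in P:q\leq p\}$ indexing its composition factors: the subset for $L(\Delta_\sigma(q)e_Q)$ is the ``$L$-closure'' provided by Lemma~\ref{decreasing} applied to $\Delta_3(q)$, while that for $\Delta_\sigma(q)$ is $\{p:\forall p'\in[q,p],\ p'\,\sigma\,q\}$. They agree on $Q$ by construction, so the remaining task is to match their $R$-parts. For this I would take $r\in R$ satisfying the $L$-closure condition, use $\Dec(\sigma)\cap Q^2\subseteq\Dec(\widehat{\lhd}_2)$ and Lemma~\ref{decreasing} to place $S(r)$ among the composition factors of $\widehat{\Delta}_2(q)$, and then combine the sandwich of surjections with the adaptedness of $\sigma$ applied to uniserial submodules of $P_P(q)$ supported on $[q,r]$ to argue that $S(r)$ must survive in $\Delta_\sigma(q)$. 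Reconciling the $R$-composition factors through both ends of the sandwich is the genuinely technical step.
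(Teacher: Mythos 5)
Your overall route is the paper's route: restrict $\Delta_\sigma$ along $(-)e_Q$, show $\Delta_\sigma(q)=L(\Delta_\sigma(q)e_Q)$ for $q\in Q$, and conclude via \cref{standard-lift}, Lemma \ref{lem-surj-Dec} and heredity of $A(Q)$. The problem is that you explicitly defer the one step that carries all the content --- showing that no composition factor of $L(\Delta_\sigma(q)e_Q)$ indexed by $R$ is lost in $\Delta_\sigma(q)$ --- and the sketch you give for it does not contain the idea that closes it. Placing $S(r)$ among the composition factors of $\widehat{\Delta}_2(q)$ is of no use on its own, because the sandwich only gives a \emph{surjection} $\widehat{\Delta}_2(q)\twoheadrightarrow\Delta_\sigma(q)$, which says nothing about which factors survive. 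What the paper actually uses is the \emph{dual} half of Lemma \ref{lem-surj-Dec}: the injections $\widehat{\nabla}_2(p)\hookrightarrow\nabla_\sigma(p)\hookrightarrow\widehat{\nabla}_1(p)$. Since by construction of $\hl$ no element of $Q$ lies below an element of $R$, the module $\widehat{\nabla}_1(p)$ for $p\in R$ has no composition factor indexed by $Q$, hence neither does $\nabla_\sigma(p)$. Now take $y$ minimal with $q\leq y$ and $y\centernot{\,\sigma\,}q$ (these index the minimal projective presentation of $\Delta_\sigma(q)$); adaptedness of $\sigma$ and minimality force $q\,\sigma\,y$ to be an \emph{increasing} relation, so $S(q)$ is a composition factor of $\nabla_\sigma(y)$, and the previous observation forces $y\in Q$. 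That is exactly the statement that the $R$-part matches, i.e.\ $\Delta_\sigma(q)=L(\Delta_\sigma(q)e_Q)$. Your proposal never mentions costandard modules, and without this (or an equivalent) the "genuinely technical step" you flag remains open.

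Two smaller omissions: you do not identify $\lhd_3$ --- the paper takes the restriction $\sigma|_Q$ and computes its standard modules directly from the projective presentation (using that the obstruction set $\mathfrak{m}$ lies in $Q$, i.e.\ the same key step), so "verify that $\{\Delta_\sigma(q)e_Q\}$ is the family of standard modules of some adapted order" is not a separate routine check but another consequence of the missing argument; and you should note that a $\Delta_\sigma$-filtration of $P(q)$ passes through the exact functor $(-)e_Q$ (discarding the simple factors $\Delta_\sigma(r)=S(r)$, $r\in R$) to produce the required $\Delta_3$-filtration, rather than appealing only to Proposition \ref{pro-hered-adapted}. The "if" direction and the final comparison $[\lhd_1]\preceq[\lhd_3]\preceq[\lhd_2]$ are fine as you state them.
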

\begin{proof}
Let $\lhd$ be an adapted order such that $[\lhd_1] \preceq [\lhd] \preceq [\lhd_2]$. We use the following notation: $\Delta_1,\nabla_1, \Delta,\nabla,\Delta_2,\nabla_2$ for the set of standard and costandard modules respectively induced by $[\lhd_1], [\lhd]$ and $[\lhd_2]$. 

Using Lemma \ref{lem-surj-Dec}, for $x\in P$ we have: 
\[
\Delta_2(x)\twoheadrightarrow \Delta(x) \twoheadrightarrow \Delta_1(x),
\]
and 
\[
\nabla_2(x)  \hookrightarrow \nabla(x) \hookrightarrow \nabla_1(x).
\]
In particular for $r\in R \coloneqq  P\setminus Q$, we have $\Delta(r) = S(r)$ and there is no composition factor of $\nabla_{r}$ indexed by an element of $Q$ since the elements of $Q$ are not smaller for $\hl _1$ that the elements of $R$.  

For $q\in Q$, the beginning of a minimal projective presentation of $\Delta(q)$ is 
\begin{equation}\label{propre} 
\bigoplus_{y\in \mathfrak{m}} P(y) \to P(q) \to \Delta(q)\to 0,
\end{equation}
where $\mathfrak{m}$ is the set of $y\in P$ such that $q\leq y$, $y\centernot{\vartriangleleft}q$ and $y$ is minimal (with respect to $\leq)$ for this property. Since $\lhd$ is an adapted order, we have $q\lhd y$ or there is $q < x < y$ such that $q\lhd x$ and $y\lhd x$. The second possibility contradicts the minimality of $y$, so we have $q\lhd y$. Moreover, by minimality of $y$, if $q < x < y$, we have $x\lhd q \lhd y$. This implies that the relation $q\lhd y$ is \emph{increasing}. In other words, the simple module $S(q)$ is a composition factor of $\nabla(y)$. As explained above the composition factors of the costandard modules indexed by the elements of $R$ are in $R$, so we must have $y\in Q$. Then, by the presentation (\ref{propre}) and an isomorphism $L(e_QA(P)e_Q) \cong  e_QA(P)$, we obtain $\Delta(q) = L(\Delta(q)e_Q)$. 
 
In order to finish the proof, we have to show that there is a quasi-hereditary structure on $A(Q)$ which has $\{\Delta(q)e_Q\ ;\ q\in Q\}$ as set of standard modules. The obvious candidate is $\lhd|_{Q}$ the restriction of the poset $\lhd$ to the elements of $Q$. For $q\in Q$, we denote by $\Delta'(q)$ the standard module induced by the poset $\lhd|_{Q}$. Then the beginning of a minimal projective presentation of $\Delta'(q)$ is 
\[
\bigoplus_{q'\in \mathfrak{m}'}P(q') \to P(q) \to \Delta'(q)\to 0,
\]
 where $\mathfrak{m}'$ is the set of $q'\in Q$ such that $q\leq q'$, $q'\centernot{\lhd} q$ and $q'$ is minimal (with respect to $\leq$) for this property. As explained above, this set equals $\mathfrak{m}$. Since the functor $(-)e_Q$ is exact, by applying it to the exact sequence (\ref{propre}), we get $\Delta'(q) = \Delta(q)e_Q$. 
 
 Now we check that $[\lhd|_Q]$ is a quasi-hereditary structure for $A(Q)$. For $q\in Q$, we let 
\[ 0\subset M_0 \subset M_1\subset \cdots \subset M_n = P(q)\] 
be a $\Delta$-filtration. Since the functor $(-)e_Q$ is exact we have:
\[
0 \subset M_0e_Q \subset M_1 e_Q \subset \cdots \subset M_n e_Q = P(q)
\]
For $i\in \{1,\dots , n\}$ we have $(M_{i}e_Q) /(M_{i-1}e_Q) \cong \Delta(x)$ for $x\in P$. If $x\in R$, we have $\Delta(x) = S(x)$ and $\Delta(x)e_Q = 0$. So $M_{i}e_Q = M_{i-1}e_{Q}$. In this case, we remove $M_{i-1}$ from the sequence of submodules. By induction, we end up with
\[
0\subset M_{i_0}e_Q \subset M_{i_1}e_Q \subset \cdots \subset M_{i_r}e_Q = P(q)
\]
$M_{i_{j}}e_Q/M_{i_{j-1}}e_Q \cong \Delta(q)e_Q$ for some $q\in Q$. This proves that $[\lhd|_{Q}]$ is a quasi-hereditary structure for $A(Q)$ and $[\lhd] = ([\widehat{\lhd|_{Q}}])$. 

Finally we have to check that $[\lhd_1] \leq [\lhd|_{Q}] \leq [\lhd_2]$. This is an easy consequence of Lemma  \ref{lem-surj-Dec} and Lemma \ref{lem-incidence-hereditary}
\end{proof}

We state the following basic lemma.
\begin{lem}\label{lem-full-int-lattice}
Let $X, Y$ be posets.
Assume that there exists a morphism of posets $\phi : X \to Y$ which is a full embedding and which is interval-preserving.
If there does not exist a join of $a, b \in X$ and $U=\{c \in X \mid a, b \leq c\}$ is a non empty finite set, then there does not exist a join of $\phi(a), \phi(b) \in Y$.
\end{lem}
\begin{proof}
Since a join of $a, b$ does not exist, $U$ has at least two minimal elements, we denote them by $c, d\in U$.
We have $\phi(i) \leq \phi(j)$ for $i\in \{a, b\}$ and $j\in\{c, d\}$.
Assume that $\phi(a)$ and $\phi(b)$ admits a join in $Y$, we denote the join by $e\in Y$.
we have $\phi(i) \leq e \leq \phi(j)$ for $i\in \{a, b\}$ and $j\in\{c, d\}$.
Since $\phi$ is a full embedding and interval-preserving, there exists $f\in X$ such that $i \leq f \leq j$ for $i\in \{a, b\}$ and $j\in\{c, d\}$.
This contradicts to the minimality of $c, d$ in $U$.
Therefore $Y$ does not admits join.
\end{proof}

\begin{thm}\label{thm-qh-not-lattice}
Let $(P,\leq)$ be a finite poset such that $A(P)$ is a hereditary algebra. Let $(Q,\leq)$ be a full subposet of $(P,\leq)$ such that $\qhstr(A(Q))$ is not a lattice. Then $\qhstr(A(P))$ is not a lattice. 
\end{thm}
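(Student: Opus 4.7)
The plan is to use Proposition \ref{qh_embedding} and Proposition \ref{interval-preserving} to transport a failure of the lattice axiom from $\qhstr(A(Q))$ to $\qhstr(A(P))$ along the full, interval-preserving embedding $\phi \colon [\lhd] \mapsto [\widehat{\lhd}]$. I will argue by contradiction: assume $\qhstr(A(P))$ is a lattice, pick a pair in $\qhstr(A(Q))$ witnessing the failure of the lattice axiom, and show that the join of their images in $\qhstr(A(P))$ descends along $\phi$ to yield a join in $\qhstr(A(Q))$, contradicting the choice of the pair.

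First I would establish boundedness of $\qhstr(A(Q))$. Since $(P,\leq)$ is diamond-free (Lemma \ref{lem-incidence-hereditary}), the full subposet $(Q,\leq)$ is diamond-free too, so $A(Q)$ is again hereditary and moreover coincides with the path algebra of the Hasse quiver of $(Q,\leq)$, a finite acyclic quiver. The earlier example on path algebras of finite acyclic quivers then gives a unique maximal element $[\lhd_{\top}]$ and a unique minimal element in $\qhstr(A(Q))$. In any finite bounded poset, being a lattice is equivalent to having all binary joins (meets are then recovered as the join of the nonempty finite set of common lower bounds); hence the hypothesis that $\qhstr(A(Q))$ is not a lattice furnishes $[\lhd_1], [\lhd_2] \in \qhstr(A(Q))$ with no join.

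Now I assume for contradiction that $\qhstr(A(P))$ is a lattice, and set $[\mathfrak{J}] \coloneqq \phi([\lhd_1]) \vee \phi([\lhd_2])$. Since $\phi([\lhd_{\top}])$ is a common upper bound of $\phi([\lhd_1])$ and $\phi([\lhd_2])$ in $\qhstr(A(P))$, we have $\phi([\lhd_1]) \preceq [\mathfrak{J}] \preceq \phi([\lhd_{\top}])$. Proposition \ref{interval-preserving} then produces $[\lhd_J] \in \qhstr(A(Q))$ with $\phi([\lhd_J]) = [\mathfrak{J}]$ and $[\lhd_1] \preceq [\lhd_J] \preceq [\lhd_{\top}]$. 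Fullness of $\phi$ from Proposition \ref{qh_embedding} converts $\phi([\lhd_2]) \preceq [\mathfrak{J}] = \phi([\lhd_J])$ into $[\lhd_2] \preceq [\lhd_J]$, so $[\lhd_J]$ is a common upper bound of $[\lhd_1]$ and $[\lhd_2]$ in $\qhstr(A(Q))$. For any further common upper bound $[\lhd']$, the image $\phi([\lhd'])$ is a common upper bound of $\phi([\lhd_1]), \phi([\lhd_2])$, hence $[\mathfrak{J}] \preceq \phi([\lhd'])$; fullness again gives $[\lhd_J] \preceq [\lhd']$. Thus $[\lhd_J]$ is the join of $[\lhd_1], [\lhd_2]$ in $\qhstr(A(Q))$, contradicting the choice of the pair.

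The only subtlety is making sure the interval-preserving property applies, which requires pinning $[\mathfrak{J}]$ between two elements lying in the image of $\phi$; the top $[\lhd_{\top}]$ of $\qhstr(A(Q))$ supplies the needed upper endpoint. Everything else is formal bookkeeping with the full embedding, so I do not expect any serious obstacle beyond this one.
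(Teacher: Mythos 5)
Your proof is correct and rests on the same two pillars as the paper's: the full embedding of Proposition \ref{qh_embedding} and the interval-preserving property of Proposition \ref{interval-preserving}. The difference lies in the endgame. The paper keeps both the ``no join'' and ``no meet'' cases (treating the latter dually), takes two incomparable \emph{minimal} common upper bounds $[\lhd_x],[\lhd_y]$ of the offending pair, pulls the putative join of the images back through each of the intervals $[\,[\widehat{\lhd_i}],[\widehat{\lhd_s}]\,]$, and contradicts the incomparability of $[\lhd_x]$ and $[\lhd_y]$. You instead reduce to the ``no join'' case outright, using that $\qhstr(A(Q))$ is a finite bounded poset (which requires your observation that $A(Q)$ is again hereditary, hence the path algebra of a finite acyclic quiver), and then pull the join back through the single interval $[\phi([\lhd_1]),\phi([\lhd_\top])]$ and verify directly, via fullness, that the preimage is a join in $\qhstr(A(Q))$. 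This is a cleaner finish: it avoids the paper's somewhat terse ``repeat the same arguments with the other intervals'' step and the appeal to injectivity needed to identify the four pullbacks, at the small cost of having to justify boundedness of $\qhstr(A(Q))$ (which the paper in any case also invokes to guarantee that the set of common upper bounds is nonempty).
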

\begin{proof}
By Propositions \ref{qh_embedding} and \ref{interval-preserving}, there is a poset morphism $\qhstr(A(Q)) \to \qhstr(A(P))$ which is a full embedding and interval-preserving.
It is easy to check that $\leq^{\rm op}$ induces a unique maximal quasi-hereditary structure of $A(Q)$.
Let $a, b \in \qhstr(A(Q))$ be elements such that there does not exist a join of them in $\qhstr(A(Q))$.
Since $\qhstr(A(Q))$ admits a unique maximal element, the set $U =\{c \in \qhstr(A(Q)) \mid a, b \preceq c\}$ is a non empty finite set.
Therefore by Lemma \ref{lem-full-int-lattice}, $\qhstr(A(P))$ is not a lattice.
\end{proof}

\if() 
\subsection{old writing}
\old{In this section, we consider when the poset $(\qhstr(A), \preceq)$ becomes a lattice for a finite dimensional quasi-hereditary algebra $A$.
We give a complete answer in the case where $A$ is the path algebra of a tree quiver (Theorem \ref{thm-lattice-tree}).
We try to prove the following conjecture.
\begin{conj}
Let $Q$ be a finite quiver such that the underline graph is a tree. Then the poset of quasi-hereditary structures on $\kk Q$ is a lattice if and only if $Q$ does not contains $\widetilde{D_4}$ as a subquiver.
\end{conj}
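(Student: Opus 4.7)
The plan is to split the proof into its two implications and to leverage the machinery already assembled in the preceding sections. Since $Q$ is a tree, its path algebra $\kk Q$ coincides with the incidence algebra of the path poset $(Q_0,\leadsto)$; moreover $(Q_0,\leadsto)$ is diamond-free, so by Lemma~\ref{lem-incidence-hereditary} the algebra $\kk Q$ is hereditary. Hence everything developed in the hereditary incidence setting applies.

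For the only-if direction, suppose that $Q$ contains $\widetilde{D}_4$ as a subquiver. The four extremal vertices of $\widetilde{D}_4$ (its two sources and two sinks) form a full subposet of $(Q_0,\leadsto)$ isomorphic to the zigzag poset $Z_4$ --- this is precisely the content of the easy direction of Lemma~\ref{lem-Dn-Zn}(2). By Proposition~\ref{pro-an-zz} with $n=4$, the poset $\qhstr(\kk Z_4)$ is not a lattice. Applying Theorem~\ref{thm-qh-not-lattice} with $P=(Q_0,\leadsto)$ and the full subposet $Z_4$ then yields that $\qhstr(\kk Q)$ is not a lattice. This half needs only what has already been established.

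For the if direction, assume no $\widetilde{D}_n$ subquiver occurs (the correct strengthening of the stated hypothesis; for a tree this is essentially equivalent by Remark~\ref{comp-thms}). The plan is to construct meets and joins explicitly. Given $[\lhd_1],[\lhd_2]\in\qhstr(\kk Q)$, represent each by its minimal adapted order via Proposition~\ref{pro-mini-adapted}, so as the transitive closure of $\Dec(\lhd_i)\cup\Inc(\lhd_i)$. Propose as a candidate join the transitive closure of $(\Dec(\lhd_1)\cap \Dec(\lhd_2))\cup(\Inc(\lhd_1)\cup\Inc(\lhd_2))$, and dually for the meet. Lemma~\ref{lem-surj-Dec} ensures that the partial order on $\qhstr(\kk Q)$ is governed exactly by containment of decreasing relations and reverse containment of increasing ones, so these candidates are forced to be the only possible meet and join.

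The main obstacle will be to verify that the candidate relations are actually antisymmetric adapted orders defining a quasi-hereditary structure. Antisymmetry is the subtle point: the ``zigzag'' incompatibility exhibited in the proof of Proposition~\ref{pro-an-zz} is exactly the obstruction, and it is ruled out by the absence of any $\widetilde{D}_n$ subquiver. Concretely, I would first reduce to ``local'' pieces via the iterated deconcatenation isomorphism of Theorem~\ref{thm_qhstr_divide}, since products of lattices are lattices; it then suffices to verify the lattice property on trees arising between two consecutive branchings. The resulting pieces are equioriented type-$\A$ quivers (already Tamari lattices by Theorem~\ref{thm_Baptiste_bijection}) and the small $\D$-type configurations of Section~\ref{section-DE-count}, together with the compatibility check at glueing sinks and sources. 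The combinatorial verification that the union of two sets of increasing relations remains adapted --- using the forbidden subquiver hypothesis to ensure that any pair of sources sitting above common sinks comes equipped with a ``dominant'' source in both orders --- is the step I expect to be the most involved.
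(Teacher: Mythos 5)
Your ``only if'' direction is correct and is exactly the paper's route: $\widetilde{D}_4\subseteq Q$ gives $Z_4$ as a full subposet of $(Q_0,\leadsto)$ (Lemma \ref{lem-Dn-Zn}), Proposition \ref{pro-an-zz} shows $\qhstr(\kk Z_4)$ is not a lattice, and Theorem \ref{thm-qh-not-lattice} transports this to $\kk Q$. Note, however, that the statement as literally given is false, and your parenthetical that the strengthened hypothesis is ``essentially equivalent'' is not right: $\widetilde{D}_5$ contains no $\widetilde{D}_4$ as a subquiver (its maximal vertex degree is $3$), yet $\qhstr(\kk\widetilde{D}_5)$ is not a lattice. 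The correct hypothesis, and the one the paper proves (Theorem \ref{tree-qh-lattice}), excludes $\widetilde{D}_n$ for \emph{all} $n\geq 4$; equivalently, by Lemma \ref{lem-Dn-Zn}, it excludes $Z_4$ as a full subposet of $(Q_0,\leadsto)$. Also, relative to the paper's ordering convention ($[\lhd_1]\preceq[\lhd_2]$ forces $\Dec(\lhd_1)\subseteq\Dec(\lhd_2)$ and $\Inc(\lhd_2)\subseteq\Inc(\lhd_1)$, Lemma \ref{lem-surj-Dec}), your candidate $\bigl((\Dec_1\cap\Dec_2)\cup\Inc_1\cup\Inc_2\bigr)^{\sf tc}$ is the \emph{meet}, not the join; you have the two formulas swapped.

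The genuine gap is in the ``if'' direction. The proposed reduction via Theorem \ref{thm_qhstr_divide} does not work: deconcatenation is only available at sinks and sources, and a tree avoiding all $\widetilde{D}_n$ need not have any internal sinks or sources. For instance $1\to 2\to\cdots\to n-2$ with $n-2\to n-1$ and $n-2\to n$ is indecomposable under deconcatenation, is not of type $\A$, and is not one of the ``small'' configurations of Section \ref{section-DE-count} (which in any case are only counted there, never shown to carry a lattice). So the problem cannot be localized to pieces already handled, and the hard step you defer --- antisymmetry of the candidate order --- is precisely the content of the paper's proof. There the argument is global: one works with the incidence algebra of the diamond-free poset $(Q_0,\leadsto)$, shows via Lemma \ref{lem-path-exist} that any cycle of alternating $\Dec$/$\Inc$ relations would produce a full subposet $Z_n$ (excluded by hypothesis), deduces antisymmetry in Proposition \ref{pro-deltap-antisymmetric}, and then verifies in Proposition \ref{pro-deltap-meet} that the resulting order is adapted, quasi-hereditary, and realizes the meet (the join follows by the duality of Lemma \ref{lem-dual}). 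Without an argument of this kind your proof of the ``if'' direction is incomplete.
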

I will precise in the next lemma what I mean by $\widetilde{D_4}$.
\begin{lem}\label{lem-D4-not-lattice}
Let $Q$ be the following quiver.
\[
\xymatrix@=1em{
1\ar[rd] & & 4\\
&3\ar[ru]\ar[rd]&\\
2\ar[ru] && 5
}
\]
Then the poset of quasi-hereditary structures on $\kk Q$ is not a lattice.
\end{lem}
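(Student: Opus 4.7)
The plan is to prove that $\qhstr(\kk Q)$ is not a lattice by exhibiting two quasi-hereditary structures whose join fails to exist. The guiding intuition is that the subset $\{1,2,4,5\}\subset Q_0$ is a full subposet of $(Q_0,\leadsto)$ isomorphic to the zigzag poset $Z_4$, and one expects the non-lattice behaviour exhibited by $\kk Z_4$ in Proposition \ref{pro-an-zz} to propagate to $\kk Q$.

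Concretely, I will consider the two total orders
\[
\lhd_1\colon\, 3<1<4<2<5,\qquad \lhd_2\colon\, 3<2<5<1<4,
\]
in which the central vertex $3$ is placed as the minimum and the remaining four vertices are traversed in the two dual cyclic orders inherited from the $Z_4$-structure on $\{1,2,4,5\}$. Since $\kk Q$ is hereditary, Proposition \ref{pro-hered-adapted} ensures that both orders are adapted and hence define quasi-hereditary structures. A direct computation of the largest quotient of each $P(i)$ whose composition factors are allowed gives the standard modules
\[
\Delta_1 = \bigl(\,P(1)/(S(4)\oplus S(5)),\ P(2)/S(5),\ S(3),\ S(4),\ S(5)\,\bigr),
\]
\[
\Delta_2 = \bigl(\,P(1)/S(4),\ P(2)/(S(4)\oplus S(5)),\ S(3),\ S(4),\ S(5)\,\bigr),
\]
so in particular $[\lhd_1]\neq[\lhd_2]$ in $\qhstr(\kk Q)$.

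Next, I will analyse the upper bounds of $[\lhd_1]$ and $[\lhd_2]$ in $\preceq$. By Lemma \ref{lem-surj-Dec}, any upper bound $[\lhd_W]$ is characterised by the existence of surjections $\Delta_W(i)\twoheadrightarrow\Delta_1(i)$ and $\Delta_W(i)\twoheadrightarrow\Delta_2(i)$ for every $i$, and the set of admissible standard modules at each vertex is finite (being parametrised by certain submodules of $P(i)$). The constraints at $i=3,4,5$ force $\Delta_W(3)=S(3)$, $\Delta_W(4)=S(4)$, $\Delta_W(5)=S(5)$. I then exhibit two upper bounds coming from the total orders $\lhd_W^a\colon 3<4<2<5<1$ and $\lhd_W^b\colon 3<5<1<4<2$, with standard modules
\[
\Delta_{\lhd_W^a} = \bigl(\,P(1),\ P(2)/S(5),\ S(3),\ S(4),\ S(5)\,\bigr),\qquad \Delta_{\lhd_W^b} = \bigl(\,P(1)/S(4),\ P(2),\ S(3),\ S(4),\ S(5)\,\bigr).
\]
These are incomparable in $\preceq$, since $P(1)\twoheadrightarrow P(1)/S(4)$ but $P(2)/S(5)\not\twoheadrightarrow P(2)$.

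The main obstacle, and the crux of the argument, is to rule out a common upper bound strictly below both $[\lhd_W^a]$ and $[\lhd_W^b]$. The only natural candidate takes $\Delta_W(1)=P(1)/S(4)$ and $\Delta_W(2)=P(2)/S(5)$ together with $\Delta_W(3)=S(3)$ and simple standards at $4,5$. I will show this is incompatible with any partial order: the $\Delta_W$-filtration of $P(1)$ forces $1\lhd_W 4$, that of $P(2)$ forces $2\lhd_W 5$, while the composition factors of $\Delta_W(1)$ and $\Delta_W(2)$ give $5\lhd_W 1$ and $4\lhd_W 2$ respectively; combining these produces the cycle $1\lhd_W 4\lhd_W 2\lhd_W 5\lhd_W 1$, contradicting antisymmetry. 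Hence $[\lhd_W^a]$ and $[\lhd_W^b]$ are two incomparable minimal upper bounds of $[\lhd_1]$ and $[\lhd_2]$, so the join does not exist and $\qhstr(\kk Q)$ is not a lattice. Alternatively, the same conclusion is immediate from Theorem \ref{thm-qh-not-lattice} applied to the full subposet $\{1,2,4,5\}\simeq Z_4$ of $(Q_0,\leadsto)$ together with Proposition \ref{pro-an-zz}.
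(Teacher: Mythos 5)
Your proposal is essentially the paper's own argument for this lemma: the same two total orders $3\lhd_1 1\lhd_1 4\lhd_1 2\lhd_1 5$ and $3\lhd_2 2\lhd_2 5\lhd_2 1\lhd_2 4$, the same two covering orders $3\lhd 4\lhd 2\lhd 5\lhd 1$ and $3\lhd 5\lhd 1\lhd 4\lhd 2$, the same forced candidate $\Delta(1)=P(1)/S(4)$, $\Delta(2)=P(2)/S(5)$ with simple standards elsewhere, and the same cycle $1\lhd 4\lhd 2\lhd 5\lhd 1$ to rule it out; your use of the surjection criterion of Lemma \ref{lem-surj-Dec} in place of the paper's explicit computation of $\mcF(\Delta_1)\cap\mcF(\Delta_2)$ is a legitimate streamlining, valid because $\kk Q$ is hereditary. (The alternative you mention, via Theorem \ref{thm-qh-not-lattice} and Proposition \ref{pro-an-zz} applied to the full subposet $\{1,2,4,5\}\cong Z_4$, is exactly how the final version of the paper handles this case.)

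One sentence is wrong as stated: the surjection constraints do \emph{not} force $\Delta_W(3)=S(3)$ for an arbitrary upper bound $[\lhd_W]$ of $[\lhd_1]$ and $[\lhd_2]$ --- a surjection onto $S(3)$ is no constraint, and indeed the maximal quasi-hereditary structure is an upper bound with $\Delta(3)=P(3)$. This does not damage the proof, because you only need $\Delta_W(3)=S(3)$ for an upper bound that in addition lies below $[\lhd_W^a]$, where it follows from $S(3)=\Delta_{W^a}(3)\twoheadrightarrow\Delta_W(3)$; for the same reason ``the only natural candidate'' should read ``the only candidate'', since $\Delta_W(1)$ and $\Delta_W(2)$ are pinned down by the sandwich $\Delta_{W^b}(1)\twoheadrightarrow\Delta_W(1)\twoheadrightarrow\Delta_2(1)$ and $\Delta_{W^a}(2)\twoheadrightarrow\Delta_W(2)\twoheadrightarrow\Delta_1(2)$.
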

\begin{proof}
There are $45$ different quasi-hereditary structures on this poset, so we will not display it. In order to prove that it is not a lattice, we will consider two well chosen quasi-hereditary structures and show that they do not have a join. 

Let $\lhd_1$ be the total order defined by $3\lhd_1 1 \lhd_1 4\lhd_1 2 \lhd_1 5$ and $\lhd_2$ the total order defined by $3\lhd_2 2\lhd_2 5 \lhd_2 1\lhd_2 4$. These orders are adapted to $\kk Q$ since they are total orders and they have as respective sets of standards modules
\YKeditcomment{It seems that the definitions of $\lhd_1$ and $\lhd_2$ are typo.}
\BReditcomment{Yes it is correct now. Thank you. We can remove the lemma when we trust the next lemma}
\[
\stackMath
\Delta_1 = \{ \Delta_1(1) = \Shortstack{1 3},\ \Delta_1(2) = \Shortstack{2 3 4},\ \Delta_1(3)=3,\ \Delta_1(4)=4,\ \Delta_1(5)=5 \}
\]
and
\[
\stackMath
\Delta_2 = \{ \Delta_2(1) = \Shortstack{1 3 5},\ \Delta_2(2) = \Shortstack{2 3},\ \Delta_2(3)=3,\ \Delta_2(4)=4,\ \Delta_2(5)=5 \}.
\]
Here we represent the modules by their Loewy shape. This means for example that $\Delta_1(1)$ is the quotient of $P(1)$ by its socle. 

Since these two sets are not equal we see that the quasi-hereditary structures induced by $\lhd_1$ and $\lhd_2$ are not equal. 

Let $\Theta = \{\Theta(1)= \Shortstack{1 3 5},\ \Theta(2)=\Shortstack{2 3 4},\ \Theta(3) = 3, \Theta(4)=4,\ \Theta(5)=5\}.$ Then we claim that \[
\mathcal{F}(\Delta_1)\cap \mathcal{F}(\Delta_2) = \mathcal{F}(\Theta).
\]
The inclusion $\mathcal{F}(\Theta) \subseteq \mathcal{F}(\Delta_1)\cap \mathcal{F}(\Delta_2)$ is clear. We prove the other inclusion by induction on the length of the modules. Let $M\in \mathcal{F}(\Delta_1)\cap \mathcal{F}(\Delta_2)$. If $M$ is simple, it has to be $S(3)$, $S(4)$ or $S(5)$ and it is in $\mathcal{F}(\Theta)$. If $M$ has length $k\geq 1$. Since $M$ is in $\mathcal{F}(\Delta_1)$, there is a filtration 
\[
0 = M_0 \subset M_1 \subset \cdots \subset M_{n-1}\subset M_{n} = M
\]
such that the quotients are in $\Delta_1$. If $M/M_{n-1}\neq \Delta_1(1)$, then $M_{n}$ and $M_{n}/M_{n-1}$ are in $\mathcal{F}(\Delta_2)$ and since this category is stable by kernel of epimorphism, we see that $M_{n-1}\in \mathcal{F}(\Delta_1)\cap \mathcal{F}(\Delta_2)$. By induction we have $M_{n-1}\in \mathcal{F}(\Theta)$ and $M_{n}/M_{n-1}\in \mathcal{F}(\Theta)$. This shows that $M_n\in \mathcal{F}(\Theta)$. Now we assume that $M_n/M_{n-1} = \Delta_1(1)$. This implies in particular that the simple module $S(1)$ is at the top of $M$. Since $M\in \mathcal{F}(\Delta_2)$ and $S(1)$ is at the top of $M$, the module $\Delta_2(1)$ appears as a quotient of $M$ in a $\Delta_2$-filtration. More precisely, there exists $N\in\mathcal{F}(\Delta_2)$ such that $M/N\cong \Delta_2(1)$. We have $\Delta_2(1)\in \mathcal{F}(\Delta_1)$ and $M\in \mathcal{F}(\Delta_1)$ and this category is stable under kernel of epimorphism, so we see that $N\in \mathcal{F}(\Delta_1)\cap \mathcal{F}(\Delta_2)$. By induction we have $N\in \mathcal{F}(\Theta)$ and it follows that $M\in \mathcal{F}(\Theta)$. 

If $\lhd_1$ and $\lhd_2$ have a join $\widetilde{\lhd}$ with set of standard modules $\widetilde{\Delta}$, then $\lhd_1 \leq \widetilde{\lhd}$ and $\lhd_2 \leq \widetilde{\lhd}$ implies that 

\begin{equation}\label{eq-utile}
    \mathcal{F}(\widetilde{\Delta})\subseteq \mathcal{F}(\Delta_1)\cap \mathcal{F}(\Delta_2)=\mathcal{F}(\Theta).
\end{equation}

Let us consider two other quasi-hereditary structures on $\kk Q$. Let $\lhd_3$ be the total order defined by $3\lhd_3 4 \lhd_3 2 \lhd_3 5 \lhd_3 1 $ and $\lhd_4$ be the total order defined by $3\lhd_4 5\lhd_4 1\lhd_4 4\lhd_4 2 $. They have as respective sets of standards modules
\[
\stackMath
\Delta_3 = \{ \Delta_3(1) = \Shortstack{1 3 {45}},\ \Delta_3(2) = \Shortstack{2 3 4},\ \Delta_3(3)=3,\ \Delta_3(4)=4,\ \Delta_3(5)=5 \}
\]
and
\[
\stackMath
\Delta_4 = \{ \Delta_4(1) = \Shortstack{1 3 5},\ \Delta_4(2) = \Shortstack{2 3 {45}},\ \Delta_4(3)=3,\ \Delta_4(4)=4,\ \Delta_4(5)=5 \}.
\]
It is easy to check that $\lhd_1 \leq \lhd_3$, $\lhd_1 \leq \lhd_4$, $\lhd_2 \leq \lhd_3$ and $\lhd_2\leq \lhd_4$. 

Assume now that $\lhd_1$ and $\lhd_2$ have a join $\widetilde{\lhd}$. Then $\widetilde{\lhd}\leq \lhd_3$ and $\widetilde{\lhd}\leq \lhd_4$. This means that $\mathcal{F}(\Delta_3)\subseteq \mathcal{F}(\widetilde{\Delta})$ and $\mathcal{F}(\Delta_4)\subseteq \mathcal{F}(\widetilde{\Delta})$. This implies that the modules $\Shortstack{1 3 5}$ and $\Shortstack{2 3 4}$ are in $\mathcal{F}(\widetilde{\Delta})$, and we see that $\mathcal{F}(\Theta)\subseteq \mathcal{F}(\widetilde{\Delta})$. Together with $(\ref{eq-utile})$, this implies that $\mathcal{F}(\widetilde{\Delta}) = \mathcal{F}(\Theta)$. We prove that this implies that $\widetilde{\Delta} = \Theta$. The module $\widetilde{\Delta}(i)$ is $\Theta$-filtered, so it has a module $\Theta(j)$ has a quotient. Since $\widetilde{\Delta}(i)$ has simple top $S(i)$, we see that $\Theta(i)$ is a quotient of $\widetilde{\Delta}(i)$. Similarly the module $\theta(i)$ has simple top $S(i)$ and is $\widetilde{\Delta}$-filtered, so $\widetilde{\Delta}(i)$ is a quotient of $\theta(i)$ and this implies that $\widetilde{\Delta}(i)=\Theta(i)$.

We finish the proof by showing that $\Theta$ cannot be the set of standard modules for a partial order adapted to $\kk Q$. Let $\lhd$ be an adapted partial order with $\Theta$ as set of standard modules. The shape of $\Theta(3)$ implies that $4 \centernot{\lhd} 3$ and $5 \centernot{\lhd} 3$. The poset $\lhd$ is adapted, so we have $3\lhd 4$ and $3\lhd 5$. The shape of $\Theta(1)$ implies that $3\lhd 1$, $5\lhd 1$ and $4 \centernot{\lhd} 1$. Since the poset is adapted either $1\lhd 4$ or $1\lhd 3$ and $4\lhd 3$. The second possibility is not compatible with the fact that $3\lhd 4$, so we have $1\lhd 4$. By transitivity we have $5\lhd 4$. The shape of $\Theta(2)$ implies that $4\lhd 2$ and $5 \centernot{\lhd} 2$. But $4\lhd 2$ implies that $5\lhd 2$ contradicting the existence of such an adapted partial order. 
\end{proof}
\begin{lem}\label{lem-not-lattice-Dn}
Let $n\geq 4$ and $\widetilde{\rm D}_n^{(2,2)}$ be the quiver (\ref{quiver-D-tilde}) where the vertices are labeled from $1$ to $n+1$. The top left vertex has label $1$ the bottom left has label $2$ the top right $n$ and the bottom right has label $n+1$. Then the poset of quasi-hereditary structures on $\kk Q$ is not a lattice.
\end{lem}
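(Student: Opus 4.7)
The plan is to extend the argument of Lemma \ref{lem-D4-not-lattice} to general $n\geq 4$, by exhibiting two quasi-hereditary structures whose would-be join forces a set of standard modules that is incompatible with any adapted partial order. Label the vertices so that $1$ and $2$ are the left-hand sources, $3,4,\dots,n-1$ is the middle chain oriented $3\to 4\to\cdots\to n-1$, and $n$, $n+1$ are the right-hand sinks with arrows $(n-1)\to n$ and $(n-1)\to(n+1)$. Since $\kk Q$ is hereditary, Proposition \ref{pro-hered-adapted} makes every total order adapted, so I may freely define candidates by total orders on $\{1,\dots,n+1\}$.

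The four candidate total orders I will use are
$\lhd_1\colon 3<4<\cdots<(n-1)<1<n<2<(n+1)$,
$\lhd_2\colon 3<4<\cdots<(n-1)<2<(n+1)<1<n$,
$\lhd_3\colon 3<\cdots<(n-1)<n<2<(n+1)<1$, and
$\lhd_4\colon 3<\cdots<(n-1)<(n+1)<1<n<2$.
Write $\Theta(1)=P(1)/S(n)$ and $\Theta(2)=P(2)/S(n+1)$, both uniserial (of length $n-1$) along the paths $1\to 3\to\cdots\to(n-1)\to(n+1)$ and $2\to 3\to\cdots\to(n-1)\to n$ respectively, and set $\Theta(i)=S(i)$ for $3\le i\le n+1$. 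A direct computation of the largest quotient of each $P(i)$ whose composition factors lie strictly below $i$ yields $\Delta_1(2)=\Theta(2)$, $\Delta_2(1)=\Theta(1)$, $\Delta_3(1)=P(1)$, $\Delta_3(2)=\Theta(2)$, $\Delta_4(1)=\Theta(1)$, $\Delta_4(2)=P(2)$; the other standard modules are either simple or shorter uniserial truncations of these. In particular $[\lhd_1]\ne[\lhd_2]$, and Lemma \ref{lem-surj-Dec} together with heredity gives $[\lhd_1],[\lhd_2]\preceq[\lhd_3]$ and $[\lhd_1],[\lhd_2]\preceq[\lhd_4]$.

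Next I will show $\mcF(\Delta_1)\cap\mcF(\Delta_2)=\mcF(\Theta)$. The inclusion $\supseteq$ comes from the short exact sequences $0\to S(n+1)\to\Theta(1)\to\Delta_1(1)\to 0$ and $0\to S(n)\to\Theta(2)\to\Delta_2(2)\to 0$, which exhibit $\Theta(1)\in\mcF(\Delta_1)$ and $\Theta(2)\in\mcF(\Delta_2)$. The converse is proved by induction on length, exactly as in Lemma \ref{lem-D4-not-lattice}: in the problematic case of a $\Delta_1$-filtration whose top quotient is $\Delta_1(1)$ (which is not $\Delta_2$-filtered), the module $M$ must admit the top quotient $\Delta_2(1)=\Theta(1)$ in some $\Delta_2$-filtration, and since $\Theta(1)\in\mcF(\Delta_1)$, heredity ensures the remaining kernel lies in $\mcF(\Delta_1)\cap\mcF(\Delta_2)$, where the induction applies. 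Assuming now that $[\lhd_1]$ and $[\lhd_2]$ admit a join $[\widetilde{\lhd}]$, the upper-bound property gives $\mcF(\widetilde{\Delta})\subseteq\mcF(\Theta)$, while $[\widetilde{\lhd}]\preceq[\lhd_3],[\lhd_4]$ yields the reverse inclusion since $\Theta(1)\in\mcF(\Delta_4)$, $\Theta(2)\in\mcF(\Delta_3)$ and $S(i)\in\mcF(\Delta_3)$ for $i\geq 3$; comparing simple tops then forces $\widetilde{\Delta}=\Theta$.

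The final step is to derive a contradiction by showing that no adapted order $\lhd$ on $\{1,\dots,n+1\}$ can have $\Theta$ as its set of standard modules. Since $\Theta(i)=S(i)$ for $3\le i\le n-1$, adaptedness applied to the length-two modules supported on each edge of $Q$ forces $3\lhd 4\lhd\cdots\lhd(n-1)$ together with $(n-1)\lhd n$ and $(n-1)\lhd(n+1)$. The composition factors of $\Theta(1)$ give $n+1\lhd 1$ and $n\not\lhd 1$; adaptedness applied to the uniserial module with top $S(1)$ and socle $S(n)$ along the path $1\to 3\to\cdots\to(n-1)\to n$, whose only composition factors incomparable with $n$ would have to be larger than $n$, forces $1\lhd n$, whence $n+1\lhd n$ by transitivity. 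But the shape of $\Theta(2)$ requires $n\lhd 2$ while $n+1\not\lhd 2$, contradicting $n+1\lhd n\lhd 2$. Hence the join cannot exist and $\qhstr(\kk Q)$ is not a lattice. The main subtlety beyond bookkeeping is the inductive identification $\mcF(\Delta_1)\cap\mcF(\Delta_2)=\mcF(\Theta)$, where heredity is used critically to close $\mcF(\Delta_i)$ under kernels of surjections.
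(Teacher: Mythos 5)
Your proof is correct and takes essentially the same route as the paper's own argument for this lemma: the same two total orders $\lhd_1,\lhd_2$, the same set $\Theta$ with $\mathcal{F}(\Delta_1)\cap\mathcal{F}(\Delta_2)=\mathcal{F}(\Theta)$, the same auxiliary orders $\lhd_3,\lhd_4$ pinning down the would-be join's standard modules as $\Theta$, and the same final contradiction showing no adapted order realizes $\Theta$. (The published text ultimately routes this through the zig-zag poset $Z_4$ and the interval-preserving embedding instead, but your computation matches the paper's direct proof of the stated lemma, including the step of rearranging a $\Delta_2$-filtration to put $\Delta_2(1)$ on top when $S(1)$ is a top summand.)
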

\begin{proof}
Let us consider the following two total orders and let us show that they do not have a join in the poset of quasi-hereditary structures.

The first total order is defined by 
\[ 3\lhd_1 4 \lhd_1 \cdots \lhd_1 n-1 \lhd_1 1 \lhd_1 n \lhd_1 2\lhd_1 n+1.\]
The second total order is defined by
\[3\lhd_2 4 \lhd_2 \cdots \lhd_2 n-1 \lhd_2 2 \lhd_2 n+1 \lhd_2 1\lhd_2 n. \]
\BReditcomment{This is a straightforward generalization of the case $n=4$.}
The two sets of standard modules are easily computed. The only standard modules that are not simple are indexed by $1$ and $2$ and we have \[\Delta_1(1) = \Shortstack{1 3 {\vdots} n-1},\ \Delta_1(2)=\Shortstack{2 3 {\vdots} {n-1} n}, \Delta_2(1) = \Shortstack{1 3 {\vdots} {n-1} {n+1}},\ \Delta_2(2)=\Shortstack{2 3 {\vdots} n-1}.\]

ince these two sets are not equal we see that the quasi-hereditary structures induced by $\lhd_1$ and $\lhd_2$ are not equal. 

Let us define $\Theta(i)=S(i)$ for $i=3,\dots  n+1$, $\Theta(1)= \Delta_2(1)$ and $\Theta(2)= \Delta_1(2)$. We let $\Theta$ be the set consisting of the modules $\Theta(i)$ for $i=1,2,\dots , n+1$. 

Then we claim that \[
\mathcal{F}(\Delta_1)\cap \mathcal{F}(\Delta_2) = \mathcal{F}(\Theta).
\]
The inclusion $\mathcal{F}(\Theta) \subseteq \mathcal{F}(\Delta_1)\cap \mathcal{F}(\Delta_2)$ is clear. We prove the other inclusion by induction on the length of the modules. Let $M\in \mathcal{F}(\Delta_1)\cap \mathcal{F}(\Delta_2)$. If $M$ is simple, it has to be $S(3),S(4),
\dots , S(n)$ or $S(n+1)$ and it is in $\mathcal{F}(\Theta)$. If $M$ has length $k\geq 1$. Since $M$ is in $\mathcal{F}(\Delta_1)$, there is a filtration 
\[
0 = M_0 \subset M_1 \subset \cdots \subset M_{n-1}\subset M_{n} = M
\]
such that the quotients are in $\Delta_1$. If $M/M_{n-1}\neq \Delta_1(1)$, then $M_{n}$ and $M_{n}/M_{n-1}$ are in $\mathcal{F}(\Delta_2)$ and since this category is stable by kernel of epimorphism, we see that $M_{n-1}\in \mathcal{F}(\Delta_1)\cap \mathcal{F}(\Delta_2)$. By induction we have $M_{n-1}\in \mathcal{F}(\Theta)$ and $M_{n}/M_{n-1}\in \mathcal{F}(\Theta)$. This shows that $M_n\in \mathcal{F}(\Theta)$. Now we assume that $M_n/M_{n-1} = \Delta_1(1)$. This implies in particular that the simple module $S(1)$ is at the top of $M$. Since $M\in \mathcal{F}(\Delta_2)$ and $S(1)$ is at the top of $M$, the module $\Delta_2(1)$ appears as a quotient of $M$ in a $\Delta_2$-filtration. More precisely, there exists $N\in\mathcal{F}(\Delta_2)$ such that $M/N\cong \Delta_2(1)$. We have $\Delta_2(1)\in \mathcal{F}(\Delta_1)$ and $M\in \mathcal{F}(\Delta_1)$ and this category is stable under kernel of epimorphism, so we see that $N\in \mathcal{F}(\Delta_1)\cap \mathcal{F}(\Delta_2)$. By induction we have $N\in \mathcal{F}(\Theta)$ and it follows that $M\in \mathcal{F}(\Theta)$. 

If $\lhd_1$ and $\lhd_2$ have a join $\widetilde{\lhd}$ with set of standard modules $\widetilde{\Delta}$, then $\lhd_1 \leq \widetilde{\lhd}$ and $\lhd_2 \leq \widetilde{\lhd}$ implies that 

\begin{equation}\label{eq-utile}
    \mathcal{F}(\widetilde{\Delta})\subseteq \mathcal{F}(\Delta_1)\cap \mathcal{F}(\Delta_2)=\mathcal{F}(\Theta).
\end{equation}

Let us consider two other quasi-hereditary structures for $\kk Q$. Let $\lhd_3$ and $\lhd_4$ be the two total orders defined by 
\[ 3\lhd_3 \cdots \lhd_3 n-1 \lhd_3 n \lhd_3 2 \lhd_3 n+1 \lhd_3 1 \]
and 
\[ 3\lhd_4 \cdots \lhd_4 n-1 \lhd_4 n+1 \lhd_4 1\lhd_4 n \lhd_4 2 .\] 
As before the only standard modules which are not simple are indexed by $1$ and $2$ and we have $\Delta_3(1)=P(1)$, $\Delta_3(2)=\Theta(2)$, $\Delta_4(1) = \Theta(1)$ and $\Delta_4(2) = P(2)$. 

It is easy to check that $\lhd_1 \leq \lhd_3$, $\lhd_1 \leq \lhd_4$, $\lhd_2 \leq \lhd_3$ and $\lhd_2\leq \lhd_4$. 

Assume now that $\lhd_1$ and $\lhd_2$ have a join $\widetilde{\lhd}$. Then $\widetilde{\lhd}\leq \lhd_3$ and $\widetilde{\lhd}\leq \lhd_4$. This means that $\mathcal{F}(\Delta_3)\subseteq \mathcal{F}(\widetilde{\Delta})$ and $\mathcal{F}(\Delta_4)\subseteq \mathcal{F}(\widetilde{\Delta})$. This implies that the modules $\Theta(1)$ and $\Theta(2)$ are in $\mathcal{F}(\widetilde{\Delta})$, and we see that $\mathcal{F}(\Theta)\subseteq \mathcal{F}(\widetilde{\Delta})$. Together with $(\ref{eq-utile})$, this implies that $\mathcal{F}(\widetilde{\Delta}) = \mathcal{F}(\Theta)$. We have to check that this implies $\widetilde{\Delta} = \Theta$. The module $\widetilde{\Delta}(i)$ is $\Theta$-filtered, so it has a module $\Theta(j)$ has a quotient. Since $\widetilde{\Delta}(i)$ has simple top $S(i)$, we see that $\Theta(i)$ is a quotient of $\widetilde{\Delta}(i)$. Similarly the module $\Theta(i)$ has simple top $S(i)$ and is $\widetilde{\Delta}$-filtered, so $\widetilde{\Delta}(i)$ is a quotient of $\theta(i)$ and this implies $\widetilde{\Delta}(i)=\Theta(i)$.

We finish the proof by showing that $\Theta$ cannot be the set of standard modules for a partial order adapted to $\kk Q$. Let $\lhd$ be an adapted partial order with $\Theta$ as set of standard modules. The shape of $\Theta(1)$ implies that $i \lhd 1$ for $i \in \{3,\dots ,n-1,n+1\} $ and $n \centernot{\lhd} 1$. Since the partial order is adapted to $KQ$, we see that either $1\lhd n$ or there is $k\in \{3,\cdots,n-1\}$ such that $1\lhd k$ and $n\lhd k$. The second possibility contradicts $k\lhd 1$, so we have $1\lhd n$ and by transitivity $n+1\lhd n$. 

Let us look at the shape of $\Theta(2)$ we see that $3,4,\cdots n-1, n$ are smaller than $2$ and $n+1$ is not smaller than $2$ for $\lhd$. Since we have $n\lhd 2$, by transitivity we also have $n+1\lhd 2$ contradicting the existence of such an adapted partial order. 
\end{proof}
Let $n\in \mathbb{N}$ be an even integer and $Z_n$ be the following zigzag orientation of an affine Dynkin diagram of type $\A$.
\begin{align}\label{zigzag}
\begin{tikzpicture}[baseline=0,
                    vertex/.style args = {#1 #2}{
                                                label=#1:#2}]
\node(0)at(0,0){$\underset{\circ}{1}$};
\node(1)at(1,0){$\underset{\circ}{2}$};
\node(2)at(1.75,-0.87){$\underset{\circ}{3}$};
\node(3)at(1.75,-1.87)[label={[xshift=0.3cm, yshift=-0.5cm]$4$}] {$\circ$};
\node(nm2)[label={[xshift=-0.6cm, yshift=-0.5cm]$n-2$}]at(0,-2.6){$\circ$};
\node(nm1)[label={[xshift=-0.6cm, yshift=-0.5cm]$n-1$}]at(-0.75,-1.87){$\circ$};
\node(n)at(-0.75,-0.87){$\underset{\circ}{n}$};
\draw[thick, ->] (0)--(1);
\draw[thick, ->] (2)--(1);
\draw[thick, ->] (2)--(3);
\draw[thick, ->] (nm1)--(nm2);
\draw[thick, ->] (nm1)--(n);
\draw[thick, ->] (0)--(n);
\draw[thick,dotted] (3) to [out=250,in=325](nm2);
\end{tikzpicture}
\end{align}
\begin{pro}
Let $n\geq 4$ be an even integer. Let $K$ be a field and $Z_n$ the zigzag orientation of the affine Dynkin diagram defined in $(\ref{zigzag})$. Then the poset of quasi-hereditary structures on $KZ_n$ is not a lattice. 
\end{pro}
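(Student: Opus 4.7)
The plan is to exhibit two quasi-hereditary structures on $\kk Z_n$ that fail to admit a join, following the strategy used earlier in Proposition \ref{pro-an-zz}. Label the vertices cyclically so that odd-labelled vertices are the sources and even-labelled vertices are the sinks of $Z_n$, treating all labels modulo $n$. For each source $s$ I would define the total order $\lhd_s$ by $s \lhd_s s+1 \lhd_s s+2 \lhd_s \cdots \lhd_s s-1$. Since $\kk Z_n$ is hereditary any total order is adapted and produces a quasi-hereditary structure (Proposition \ref{pro-hered-adapted}). Picking two distinct sources $x$ and $y$, a direct computation shows that the standard modules at sinks and at $s$ itself are simple, while $\Delta_s(v)$ for a source $v\neq s$ is the uniserial length-two module with top $S(v)$ and socle $S(v-1)$. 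In particular $[\lhd_x]\neq[\lhd_y]$.

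The key candidate for the set of standard modules of a putative join is $\Theta=\{\theta(i)\}_{i\in Z_n}$ where $\theta(i)=S(i)$ for each sink $i$ and $\theta(i)$ is the uniserial length-two module with top $S(i)$ and socle $S(i-1)$ for each source $i$. I would prove $\mathcal{F}(\Delta_x)\cap\mathcal{F}(\Delta_y)=\mathcal{F}(\Theta)$ by induction on length. The inclusion $\supseteq$ is immediate because each $\theta(i)$ lies in $\Delta_x\cup\Delta_y$. For the reverse inclusion, take $M\in\mathcal{F}(\Delta_x)\cap\mathcal{F}(\Delta_y)$ and a $\Delta_x$-filtration with top factor $\Delta_x(i)$; if $i\neq x$, then $\Delta_x(i)\in\mathcal{F}(\Delta_y)$ and closure of $\mathcal{F}(\Delta_y)$ under kernels of epimorphisms lets induction apply to the remaining submodule; if $i=x$, switch to a $\Delta_y$-filtration of $M$ beginning with $\Delta_y(x)$ (a quotient since $S(x)$ lies in the top of $M$) and proceed similarly.

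To rule out the existence of a join, choose the sinks $z=x+1$ and $w=y+1$, and consider the total orders $\lhd_z,\lhd_w$ obtained by the analogous cyclic construction starting at $z$ and $w$ respectively. A direct inspection shows $\mathcal{F}(\Delta_z),\mathcal{F}(\Delta_w)\subseteq\mathcal{F}(\Delta_x)\cap\mathcal{F}(\Delta_y)$ and that every $\theta(i)$ occurs as some $\Delta_z(j)$ or $\Delta_w(j)$. If a join $[\lhd_J]$ of $[\lhd_x]$ and $[\lhd_y]$ existed, it would satisfy $\mathcal{F}(\Delta_z)\cup\mathcal{F}(\Delta_w)\subseteq\mathcal{F}(\Delta_J)\subseteq\mathcal{F}(\Theta)$, hence $\mathcal{F}(\Delta_J)=\mathcal{F}(\Theta)$; then each $\Delta_J(i)$ has simple top $S(i)$ and is $\Theta$-filtered, while $\theta(i)$ has simple top $S(i)$ and is $\Delta_J$-filtered, forcing $\Delta_J=\Theta$.

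The main obstacle --- and the step in which the cyclic nature of $Z_n$ is used decisively --- is to show that $\Theta$ cannot be the set of standard modules of any adapted order $\lhd$ on $Z_n$. The composition factors of $\theta(i)$ at each odd $i$ show that $i-1\lhd i$ and $i+1\not\lhd i$, so the adaptedness criterion of Lemma \ref{weak-adapted} forces $i\lhd i+1$. Concatenating these along the cycle would yield $1\lhd 2\lhd 3\lhd\cdots\lhd n\lhd 1$, contradicting antisymmetry. I expect this last cyclic argument, together with the careful bookkeeping of standard modules in the inductive description of $\mathcal{F}(\Delta_x)\cap\mathcal{F}(\Delta_y)$, to be the delicate parts; the rest is a direct comparison of explicitly described filtrations.
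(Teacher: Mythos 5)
Your proposal is correct and follows essentially the same route as the paper's proof of this statement: the same cyclic total orders $\lhd_x,\lhd_y$ at two sources, the same candidate set $\Theta$ with the identification $\mathcal{F}(\Delta_x)\cap\mathcal{F}(\Delta_y)=\mathcal{F}(\Theta)$ proved by induction on length, the same auxiliary orders starting at the sinks $x+1$ and $y+1$ to pin down $\Delta_J=\Theta$, and the same cyclic antisymmetry contradiction showing no adapted order realizes $\Theta$. No gaps.
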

\begin{proof}
We assume that the quiver is labeled as in $(\ref{zigzag})$. This means that the sources of the quiver are labeled by even integers and the sinks by even integers. Moreover we view the labels as elements of $\mathbb{Z}/n\mathbb{Z}$ in order to have a cyclic labeling.

The projective indecomposable $KZ_n$-modules indexed by sinks are simple and the projective indecomposable module indexed by a source $i$ has top $S(i)$ and socle $S(i-1)\oplus S(i+1)$. 

Let $x$ and $y$ be the labels of two distinct sources of $Z_n$. For $s\in \{x,y\}$ we consider the total order $\lhd_s$ defined by  
\[
s\lhd_s s+1\lhd_s \cdots \lhd_s s-1
\]
These two orderings are total, so they are adapted and since the algebra is hereditary they represent two quasi-hereditary structures. For $s\in\{x,y\}$, we denote by $\Delta_s$ the corresponding set of standard modules. The standard modules indexed by sinks are simple, the standard module indexed by $s$ is also simple and the standard modules indexed by the other sources $v$ are uniserial with top $S(v)$ and socle $S(v-1)$. Since $x$ and $y$ are distinct, we see that the orderings $\lhd_x$ and $\lhd_y$ represent two different quasi-hereditary structures.

For a sink $i$ we let $\theta(i) = S(i)$ and for a source $i$ we let $\theta(i)$ be the uniserial module with top $S(i)$ and socle $S(i-1)$ and we denote by $\Theta$ the set consisting of the modules $\theta(i)$ for $i\in \{1,2,\cdots,n\}$. 

For $s\in \{x,y\}$, we have $\mathcal{F}(\Theta)\subseteq \mathcal{F}(\Delta)$. Indeed, for $i\neq s$ we have $\Delta_s(i) = \theta(i)$ and $\theta(s)$ clearly has a $\Delta_s$-filtration. Conversely, we show by induction on the length of $M$ that if $M\in \mathcal{F}(\Delta_x)\cap \mathcal{F}(\Delta_y)$, then $M\in \mathcal{F}(\Theta)$. This is clearly true if $M$ is simple. Let 
\[
0 = M_0\subset M_1\subset \cdots \subset M_{n-1} \subset M_n = M
\]
be a $\Delta_x$-filtration of $M$. Then we have $M/M_{n-1} \cong \Delta_x(i)$ for some $i \in \{1,2,\cdots,n\}$. If $i\neq x$ then $\Delta_x(i) \in \mathcal{F}(\Delta_y)$. Since $M$ is also in $\mathcal{F}(\Delta_y)$ and this category is closed under kernel of epimorphism, we obtain that $M_{n-1} \in \mathcal{F}(\Delta_x)\cap \mathcal{F}(\Delta_y)$. By induction, the module $M_{n-1}$ is in $\mathcal{F}(\Theta)$. Since the category $\mathcal{F}(\Theta)$ is closed under extension and $\Delta_x(i)\in\mathcal{F}(\Theta)$, we conclude that $M\in \mathcal{F}(\Theta)$. If $i = x$, the $M$ has the simple module $S(x)$ as direct summand of its top. So there is a $\Delta_y$-filtration of $M$ where the standard module $\Delta_y(x)$ appears as first factor. That is, there is $N\in \mathcal{F}(\Delta_y)$ such that $M/N\cong \Delta_y(x)$. Since $x\neq y$, the module $\Delta_y(x)$ is $\Delta_x$-filtered and we use the same argument as above to conclude that $M\in \mathcal{F}(\Theta)$. 

Let us remark that there is no adapted order that have $\Theta$ as set of standard modules. Indeed, let us assume that there is such an ordering $\lhd$. The shape of the modules implies that if $i$ is an odd integer, we have $i-1\lhd i$ and $i+1\centernot{\lhd} i$. Since $\lhd$ is adapted, we must have $i\lhd i+1$. It follows that $n\lhd 1$ and $1\lhd 2 \lhd 3\lhd \cdots\lhd n$ contradicting the antisymmetry of $\lhd$. 

Let us finish the proof by showing that if the join $\lhd_J$ of $\lhd_x$ and $\lhd_y$ in the poset of quasi-hereditary structures exists, it must have $\Theta$ as set of standard modules, contradicting the fact that no adapted order have this set as standard modules. 

For this, we consider $z = x+1$ and $w = y+1$ the sinks that are respectively after $x$ and $y$ in the cyclic ordering of $Z_n$. As before the cyclic ordering gives two total orderings $\lhd_z$ and $\lhd_t$ where the minimal elements are respectively $z$ and $t$ and the maximal elements are respectively $x$ and $y$. For $s\in \{z,t\}$ the standard modules $\Delta_s$ are as follows: the standard modules indexed by sinks are simple, the standard module indexed by the source $s-1$ (which is respectively $x$ and $y$) are projective and the standard modules indexed another source $i$ are uniserial with top $S(i)$ and socle $S(i-1)$.  It is straightforward to check that $\mathcal{F}(\Delta_s)\subseteq \mathcal{F}(\Delta_x)\cap \mathcal{F}(\Delta_y)$ for $s\in\{z,t\}$.

Let us now assume that $\lhd_x$ and $\lhd_y$ have a join $\lhd_J$ in the poset of quasi-hereditary structures. Then we have
\[
 \mathcal{F}(\Delta_z)\cup\mathcal{F}(\Delta_t)\subseteq \mathcal{F}(\Delta_J) \subset \mathcal{F}(\Delta_x)\cap\mathcal{F}(\Delta_y) =\mathcal{F}(\Theta).
\]
Since every module $\theta(i)$ appears in $\Delta_z$ or $\Delta_t$, we have $\mathcal{F}(\Theta) = \mathcal{F}(\Delta_J)$. For $i\in\{1,2,\cdots, n\}$, we see $\theta(i)\in \mathcal{F}(\Delta_J)$ implies that $\Delta_J(i)$ is a quotient of $\theta(i)$ since the only module in $\Theta$ with top $S(i)$ is $\theta(i)$. Similarly $\Delta_J(i) \in \mathcal{F}(\Theta)$ implies that $\theta(i)$ is a quotient of $\Delta_J(i)$. So, $\Theta = \Delta_J$ and if the join of $\lhd_x$ and $\lhd_y$ exists, it has $\Theta$ as set of standard modules.  
\end{proof}
} 
\fi 

\subsection{Quasi-hereditary structures of diamond-free posets}\label{subsection-lattice-tree}
In this subsection we prove the reciprocity of Theorem \ref{thm-qh-lattice-poset}. 
\begin{thm}\label{thm-lattice-tree}
Let $(P,\leq)$ be a finite poset such that $A(P)$ is a hereditary algebra.
If $Z_{n}$ is not a full subposet of $(P,\leq)$ for any $n\geq 4$, then $\qhstr(A(P))$ is a lattice and the join and the meet of two quasi-hereditary structures $[\lhd_1]$ and $[\lhd_2]$ are represented by the following partial orders
\begin{align*}
\lhd_1 \wedge \lhd_2 = \Bigl(\bigl(\Dec(\lhd_1) \cap \Dec(\lhd_2)\bigr) \cup \Inc(\lhd_1) \cup \Inc(\lhd_2)\Bigr)^{\sf tc},\\
\lhd_1 \vee \lhd_2 = \Bigl(\Dec(\lhd_1) \cup \Dec(\lhd_2) \cup \bigl(\Inc(\lhd_1) \cap \Inc(\lhd_2)\bigr)\Bigr)^{\sf tc}.
\end{align*}
\end{thm}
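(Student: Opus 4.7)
The plan is to verify the explicit meet formula directly; the join formula then follows by duality.

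First I reduce to the connected case: since $\qhstr$ is compatible with algebra decompositions and the incidence algebra of a disjoint union of posets factors accordingly, we may assume $(P,\leq)$ is connected. Under our hypotheses, Lemma \ref{diamond-zz-free} gives that the Hasse diagram of $P$ is a tree, so $A(P)\cong \kk Q$ for a tree quiver $Q$. In this hereditary setting, Lemma \ref{lem-surj-Dec}(3) provides the combinatorial criterion $[\lhd_1]\preceq[\lhd_2]$ if and only if $\Dec(\lhd_1)\subseteq \Dec(\lhd_2)$, equivalently $\Inc(\lhd_2)\subseteq \Inc(\lhd_1)$. Moreover the tree structure yields the explicit description $(i,j)\in\Dec(\lhd)$ iff $j\leq i$ in $P$ and every vertex on the (unique) Hasse path from $j$ to $i$ is $\lhd j$, and dually for $\Inc(\lhd)$.

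Set $R\coloneqq(\Dec(\lhd_1)\cap \Dec(\lhd_2))\cup \Inc(\lhd_1)\cup \Inc(\lhd_2)$ and $\lhd\coloneqq R^{\sf tc}$. The critical step is to prove that $\lhd$ is antisymmetric. A nontrivial cycle in $\lhd$ would correspond to a closed sequence of $R$-relations whose projection onto the Hasse tree alternates between upward segments (one per Inc-step) and downward segments (one per Dec-step). In a tree every closed walk backtracks along each edge traversed, so such a cycle forces some Hasse edge to be traversed both upward by an Inc-step of, say, $\lhd_i$, and downward by a Dec-step (which by definition of $R$ lies in $\lhd_1\cap\lhd_2$). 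Reading off the $\lhd_k$-relations forced at the endpoints of that edge by the interval condition defining Inc and Dec, one produces two mutually opposite $\lhd_k$-relations for some $k\in\{1,2\}$, contradicting antisymmetry of $\lhd_k$. This is the \emph{main obstacle} of the proof, and it is precisely where the hypothesis that $Z_n$ is not a full subposet enters, through its consequence that the Hasse diagram is a tree.

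Once $\lhd$ is known to be a partial order, I verify it is adapted to $A(P)$ via Lemma \ref{weak-adapted}: any indecomposable module with simple top $S(i)$ and simple socle $S(j)$ has composition factors along a unique Hasse path between $i$ and $j$, and on this path the adaptedness of $\lhd_1$ (or $\lhd_2$) furnishes a witness vertex $k$ with $i\lhd k$ or $j\lhd k$. Proposition \ref{pro-hered-adapted} then promotes $\lhd$ to a quasi-hereditary structure. A direct unwinding using the tree description of $\Dec$ shows $\Dec(\lhd)=\Dec(\lhd_1)\cap \Dec(\lhd_2)$: the inclusion $\supseteq$ follows because if every vertex on the path from $j$ to $i$ is $\lhd_k j$ for both $k=1,2$, then each such vertex is $\lhd j$ by construction of $R$; the inclusion $\subseteq$ is the easy direction, obtained from Lemma \ref{lem-surj-Dec}(3) once $[\lhd]\preceq[\lhd_k]$ is in hand.

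The meet property now follows formally. From $\Dec(\lhd)=\Dec(\lhd_1)\cap \Dec(\lhd_2)$ we obtain $[\lhd]\preceq[\lhd_k]$ for $k=1,2$, and any $[\lhd']$ with $[\lhd']\preceq[\lhd_k]$ for both $k$ satisfies $\Dec(\lhd')\subseteq \Dec(\lhd_1)\cap \Dec(\lhd_2)=\Dec(\lhd)$, whence $[\lhd']\preceq[\lhd]$ by Lemma \ref{lem-surj-Dec}(3); so $[\lhd]$ is the greatest lower bound. The join formula is proved by the symmetric argument: one sets $\lhd_1\vee\lhd_2$ to be the transitive closure of $\Dec(\lhd_1)\cup \Dec(\lhd_2)\cup(\Inc(\lhd_1)\cap \Inc(\lhd_2))$, repeats the same tree-cycle argument to show antisymmetry and adaptedness, computes $\Inc(\lhd_1\vee\lhd_2)=\Inc(\lhd_1)\cap \Inc(\lhd_2)$, and deduces the universal property from the $\Inc$-side of Lemma \ref{lem-surj-Dec}(3).
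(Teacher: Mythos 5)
There is a genuine gap, and it sits exactly at the step you yourself flag as the main obstacle: the antisymmetry of the transitive closure. Your argument invokes only the consequence that the Hasse diagram is a tree, but that is strictly weaker than the theorem's hypothesis: a tree quiver can still contain $Z_4$ as a full subposet of the reachability order $(Q_0,\leadsto)$ (this happens precisely when it contains $\tilde{\rm D}_m$ as a subquiver, cf.\ Lemma \ref{lem-Dn-Zn}(2)), and for such quivers the conclusion is \emph{false}. Concretely, for $\tilde{\rm D}_4$ (vertices $1,2\to 3\to 4,5$) and the total orders $3\lhd_1 1\lhd_1 4\lhd_1 2\lhd_1 5$ and $3\lhd_2 2\lhd_2 5\lhd_2 1\lhd_2 4$, the join candidate $\bigl(\Dec(\lhd_1)\cup\Dec(\lhd_2)\cup(\Inc(\lhd_1)\cap\Inc(\lhd_2))\bigr)^{\sf tc}$ contains the cycle $1\lhd 4\lhd 2\lhd 5\lhd 1$, so it is not antisymmetric even though the Hasse diagram is a tree. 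Hence any antisymmetry proof that uses only the tree structure proves too much and cannot be correct. The specific mechanism you propose also does not work: if a Hasse edge with endpoints $u,v$ lies inside an Inc-segment $[i_k,i_{k+1}]$ and inside a Dec-segment $[i_{j+1},i_j]$, the interval conditions only yield relations of the form $u,v\lhd_m i_{k+1}$ and $u,v\lhd_m i_{j+1}$, i.e.\ relations pointing at the (generally distinct) endpoints of the two segments; no pair of mutually opposite $\lhd_k$-relations is produced. The paper's actual argument is global: Lemma \ref{lem-path-exist} uses the absence of $Z_n$ as a full subposet for \emph{all} even $n\geq 4$ to find a ``chord'' between elements two apart in the closed zigzag $i_0\geq i_1\leq i_2\geq\cdots$, and Proposition \ref{pro-deltap-antisymmetric} then shortens the cycle by an induction on the number of Dec-steps, with Lemma \ref{lem-I-sequence} handling the base cases. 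This content is missing from your proposal and cannot be replaced by the backtracking-in-a-tree observation.

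A secondary, more easily repaired issue: your derivation of $\Dec(\lhd)\subseteq\Dec(\lhd_1)\cap\Dec(\lhd_2)$ is circular, since you obtain it ``from Lemma \ref{lem-surj-Dec}(3) once $[\lhd]\preceq[\lhd_k]$ is in hand,'' while for a hereditary algebra $[\lhd]\preceq[\lhd_k]$ is \emph{equivalent} to that very containment. The paper proves the containment directly (Lemma \ref{lem-comp-fac-deltap}) from the antisymmetry of $\lhd'$ together with Lemma \ref{lem-deltad-deltai}: if some composition factor of $\Delta'(i)$ were not common to $\Delta_1(i)$ and $\Delta_2(i)$, one would produce a relation $i\lhd^{I_m}l$ with $l\neq i$ and $l\lhd' i$, contradicting antisymmetry. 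The remaining structure of your proposal (universal property from the $\Dec$/$\Inc$ criterion of Lemma \ref{lem-surj-Dec}, join by duality) does match the paper's, but it all rests on the two points above.
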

Our strategy for the proof is as follows: first we prove that the candidate for the meet is a partial order (this is the technical part, Proposition \ref{pro-deltap-antisymmetric}) and then we prove that it induces a quasi-hereditary structure and check that it gives a meet of $[\lhd_1]$ and $[\lhd_2]$ in the poset of quasi-hereditary structures (Proposition \ref{pro-deltap-meet}).
Since $\qhstr(A(Q))$ is a finite poset with a greatest element, this is enough to prove that it is a lattice. 

For the rest of this subsection, we fix a finite poset $(P,\leq)$ and fix two adapted orders $\lhd_1$ and $\lhd_2$ on $P$.
Let
\[
\lhdp = \Bigl(\bigl(\Dec(\lhd_1) \cap \Dec(\lhd_2)\bigr) \cup \Inc(\lhd_1) \cup \Inc(\lhd_2)\Bigr)^{\sf tc}.
\]
We denote by $\Delta_1$, $\Delta_2$ the set of standard $A(P)$-modules associated to $\lhd_1$ and $\lhd_2$, respectively.
For $i, j \in P$, we write $i \lhd^D j$ if $(i,j)\in\Dec(\lhd_1) \cap \Dec(\lhd_2)$ and write $i\lhd^{I_m}j$ if $(i,j)\in\Inc(\lhd_m)$ for $m=1,2$.

For $i,j\in P$ we have $j\lhd^D i$ if and only if $i\leq j$ and every $k\in [i,j]$ satisfies $k\lhd_1 i$ and $k\lhd_2 i$.
Similarly, for $m=1,2$, $j\lhd^{I_m}i$ holds if and only if $i\leq j$ and every $k\in [i,j]$ satisfies $k\lhd_m i$.

The following lemma is obvious, but it is important to realize  that without the diamond-free hypothesis it is false.
\begin{lem}\label{dec-transitive}
Let $(P,\leq)$ be a diamond-free poset. Then $\lhd^{D}$ is a transitive relation.
\end{lem}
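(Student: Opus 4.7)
The plan is to unpack the definition of $\lhd^D$ and exploit the fact that, in a diamond-free poset, every interval $[i,j]$ (for $i\leq j$) is totally ordered with respect to $\leq$. This was recorded just before the lemma as one of the basic properties of diamond-free posets, and it is exactly what allows the two chains of decreasing relations to be spliced together.

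Concretely, suppose $\ell \lhd^D j$ and $j \lhd^D i$. Reading off the definition, this means $i\leq j\leq \ell$ (so $i\leq \ell$ by transitivity of $\leq$), every $k\in[i,j]$ satisfies $k\lhd_1 i$ and $k\lhd_2 i$, and every $k\in[j,\ell]$ satisfies $k\lhd_1 j$ and $k\lhd_2 j$. To prove $\ell\lhd^D i$, I must show that every $k\in[i,\ell]$ satisfies $k\lhd_1 i$ and $k\lhd_2 i$.

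Here is where the diamond-free hypothesis enters. The interval $[i,\ell]$ is totally ordered, and both $j$ and $k$ lie in it, so either $k\leq j$ or $j\leq k$. In the first case $k\in[i,j]$ and the conclusion is immediate from the hypothesis on $j\lhd^D i$. In the second case $k\in[j,\ell]$, so $k\lhd_m j$ for $m=1,2$; combined with $j\lhd_m i$ (which holds because $j\in[i,j]$ and $j\lhd^D i$), transitivity of each partial order $\lhd_m$ yields $k\lhd_m i$. Either way, the required relations hold, so $\ell\lhd^D i$.

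The only potential obstacle is the diamond-freeness; without it, the interval $[i,\ell]$ could contain elements $k$ incomparable to $j$, and neither of the two hypotheses would directly give information about $k\lhd_m i$. This is exactly the failure mode alluded to in the remark after the lemma, and it is why the proof is short once the right structural fact about diamond-free posets is invoked.
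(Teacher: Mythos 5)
Your proof is correct and follows essentially the same route as the paper's: both reduce to the observation that, by diamond-freeness, any $k$ in the interval $[i,\ell]$ is comparable to $j$, and then split into the two cases $k\in[i,j]$ (immediate) and $k\in[j,\ell]$ (transitivity of $\lhd_1$ and $\lhd_2$). No gaps.
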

\begin{proof}
Assume that $k\lhd^Dj\lhd^Di$ for $i\leq j \leq k \in P$. 
Let $x\in [i,k]$.
Since $P$ is diamond-free, $x$ is either smaller than $j$ or larger than $j$. 
If $x$ is smaller than $j$, then $x \lhd_m i$ holds for $m=1,2$.
If $x$ is larger than $j$, then $x \lhd_m j \lhd_m i$ holds for $m=1,2$.
This implies $k \lhd^D i$.
\end{proof}

\begin{lem}\label{lem-deltad-deltai}
Let $i,j \in P$.
Assume that $i\leq j$ and any $k\in [i,j]\setminus\{j\}$ satisfies $k\lhd^Di$.
Then one of $j\lhd^Di$, $i\lhd^{I_1}j$ or $i\lhd^{I_2}j$ holds.
\end{lem}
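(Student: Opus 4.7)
The plan is a proof by contradiction, exploiting the diamond-free structure of $(P,\leq)$ together with the adaptedness of both $\lhd_1$ and $\lhd_2$.

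First I would reformulate each of the three candidate conclusions in light of the fact that $[i,j]$ is a chain. The hypothesis that $k\lhd^D i$ for every $k\in[i,j]\setminus\{j\}$ reduces to the assertion that $k\lhd_1 i$ and $k\lhd_2 i$ for all such $k$, because the auxiliary interval $[i,k]$ appearing in the definition of $\lhd^D$ is automatically contained in $[i,j]\setminus\{j\}$. Similarly, $j\lhd^D i$ amounts to adding $j\lhd_1 i$ and $j\lhd_2 i$ to this list, while $i\lhd^{I_m} j$ amounts to requiring $k\lhd_m j$ for every $k\in[i,j]$.

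Now I would assume that none of $j\lhd^D i$, $i\lhd^{I_1}j$, $i\lhd^{I_2}j$ holds, and drive towards a contradiction. The failure of $j\lhd^D i$ provides some $m\in\{1,2\}$ with $j\not\lhd_m i$; thanks to the symmetric role of $\lhd_1$ and $\lhd_2$ in the statement, I may assume $m=1$. The failure of $i\lhd^{I_1}j$ produces some $k\in[i,j]$ with $k\not\lhd_1 j$. Reflexivity forces $k\neq j$, so the hypothesis yields $k\lhd_1 i$. If we had $i\lhd_1 j$, transitivity would give $k\lhd_1 j$, a contradiction; hence $i\not\lhd_1 j$, and combined with $j\not\lhd_1 i$ we conclude that $i$ and $j$ are $\lhd_1$-incomparable.

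For the final step I would invoke adaptedness of $\lhd_1$. Since $(P,\leq)$ is diamond-free, $[i,j]$ is a chain and $A(P)$ admits a uniserial module $M$ with top $S(i)$, socle $S(j)$ and composition factors exactly $\{S(k)\mid k\in[i,j]\}$. Applied to $M$ with the $\lhd_1$-incomparable pair $(i,j)$, the defining property of an adapted order yields some $k\in[i,j]$ with $i\lhd_1 k$ and $j\lhd_1 k$; the $\lhd_1$-incomparability of $i$ and $j$ forces $k\notin\{i,j\}$, so $k\in[i,j]\setminus\{j\}$ and the hypothesis gives $k\lhd_1 i$. Together with $i\lhd_1 k$, antisymmetry of $\lhd_1$ forces $k=i$, the desired contradiction. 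The one subtlety I would flag is the case split $m\in\{1,2\}$ right after the failure of $j\lhd^D i$; it is handled symmetrically because the two candidate conclusions $i\lhd^{I_1}j$ and $i\lhd^{I_2}j$ mirror each other through $\lhd_1\leftrightarrow\lhd_2$.
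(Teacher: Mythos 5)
Your proof is correct and takes essentially the same route as the paper's: both reduce the three conclusions to interval conditions on the chain $[i,j]$, and both apply adaptedness of $\lhd_s$ to the uniserial module with top $S(i)$ and socle $S(j)$, using the hypothesis to force the adaptedness witness $k$ into $[i,j]\setminus\{j\}$ and hence back to $i$. The paper merely organizes this contrapositively (if $j\centernot{\lhd_s}i$ then $i\lhd^{I_s}j$, citing the argument from Proposition \ref{interval-preserving}), whereas you argue by contradiction, but the content is identical.
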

\begin{proof}
We used the argument that we proved in the proof of Proposition \ref{interval-preserving}: if $j$ is minimal for $\leq$ such that $j\centernot{\lhd_s}i$, then $i\lhd_1 j \in \Inc(\lhd_s)$ for $s = 1,2$. So if $j\centernot\lhd^Di$, then either $j\centernot{\lhd_1}i$ or $j\centernot{\lhd_2}i$ holds. In the first case we have $i\lhd^{I_1}j$ holds and in the second case $i\lhd^{I_2} j$ holds. 
\end{proof}
\begin{lem}\label{lem-I-sequence}
Assume that there exists a sequence of elements of $P$ such that 
$$
i_0 \lhd^{I_{x(0)}} i_1 \lhd^{I_{x(1)}} \cdots \lhd^{I_{x(\ell-1)}} i_{\ell},
$$
where $x(k)\in\{1, 2\}$ for $k=0,\ldots,\ell-1$.
If $i_{\ell}\lhd^D i_0$ holds, then $i_0=i_1=\cdots=i_{\ell}$ holds.
\end{lem}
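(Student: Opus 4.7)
The plan is to translate everything into inequalities in the poset $(P, \leq)$ and conclude by antisymmetry, using the explicit poset-theoretic characterizations of $\Dec$ and $\Inc$ recorded just before the lemma (which rely on the diamond-free hypothesis carried throughout this subsection). Specifically, each link $i_k \lhd^{I_{x(k)}} i_{k+1}$ in the chain corresponds to $(i_k, i_{k+1}) \in \Inc(\lhd_{x(k)})$, which by the characterization yields the poset inequality $i_{k+1} \leq i_k$. On the other hand, the hypothesis $i_\ell \lhd^D i_0$ corresponds to $(i_\ell, i_0) \in \Dec(\lhd_1) \cap \Dec(\lhd_2)$ and yields the opposite inequality $i_0 \leq i_\ell$.

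Stringing the $\Inc$-inequalities together gives
\[
i_\ell \leq i_{\ell-1} \leq \cdots \leq i_0,
\]
and combining with $i_0 \leq i_\ell$ forces $i_0 = i_\ell$ by antisymmetry of $(P, \leq)$. The resulting sandwich then collapses the whole chain, so $i_0 = i_1 = \cdots = i_\ell$, as required.

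There is no substantial obstacle once these poset-theoretic descriptions are in hand; the content of the lemma is really the observation that $\Inc$-steps and $\Dec$-steps travel in opposite directions along $(P, \leq)$, so any cycle mixing them must be degenerate. This is precisely the ingredient that will be needed in the sequel to establish antisymmetry of the candidate relation $\lhdp$ for the meet of $[\lhd_1]$ and $[\lhd_2]$.
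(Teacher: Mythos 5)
There is a genuine gap: you have the direction of the inequalities produced by increasing relations backwards. By definition $(i,j)\in\Inc(\lhd_m)$ means $[\nabla_m(j):S(i)]\neq 0$, and since $\nabla_m(j)$ is a submodule of $I(j)$, whose composition factors are the $S(k)$ with $k\leq j$, a link $i_k\lhd^{I_{x(k)}}i_{k+1}$ forces $i_k\leq i_{k+1}$, \emph{not} $i_{k+1}\leq i_k$. (The sentence just before the lemma does read as if the variables were the other way around, but that reading is inconsistent with the definition of $\Inc$, with Lemma~\ref{decreasing}, and with the way the present lemma is used in the proof of Proposition~\ref{pro-deltap-antisymmetric}, where a string of $I$-steps from $i_1$ to $i_{\delta(1)}$ yields $i_1\leq i_{\delta(1)}$.) Consequently the chain gives $i_0\leq i_1\leq\cdots\leq i_\ell$, while $i_\ell\lhd^D i_0$ gives $i_0\leq i_\ell$ as well: the two hypotheses produce the \emph{same} inequality in $(P,\leq)$, so antisymmetry of $\leq$ yields nothing and your argument does not close. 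Had the lemma really followed from antisymmetry of $\leq$ alone, it would not need the adapted orders at all, which should have been a warning sign.

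The correct argument applies antisymmetry to the adapted orders $\lhd_1,\lhd_2$ rather than to $\leq$. Since $i_0\leq i_1\leq i_\ell$, we have $i_1\in[i_0,i_\ell]$, and the hypothesis $i_\ell\lhd^D i_0$ says precisely that every $k\in[i_0,i_\ell]$ satisfies $k\lhd_1 i_0$ and $k\lhd_2 i_0$; in particular $i_1\lhd_{x(0)}i_0$. On the other hand, $i_0\lhd^{I_{x(0)}}i_1$ gives $i_0\lhd_{x(0)}i_1$. Antisymmetry of $\lhd_{x(0)}$ forces $i_0=i_1$, and the claim follows by induction on $\ell$. This is the paper's proof, and it is where the content of the lemma actually lies.
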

\begin{proof}
Since $i_{\ell}\lhd^{D} i_0$ holds, we have $i_{0}\leq i_{\ell}$.
On the other hand, we have $i_{0} \leq i_1\leq \cdots \leq i_{\ell}$. Since $i_1 \in [i_0,i_{\ell}]$ we have $i_1\lhd_1 i_0$ and $i_{1}\lhd_2 i_0$. By antisymmetry of $\lhd_1$ and $\lhd_2$, we have $i_1 = i_0$. 
By induction on $\ell$, we have the assertion.
\end{proof}
In Lemma \ref{lem-path-exist} and Proposition \ref{pro-deltap-antisymmetric}, we show that $\lhd^\p$ is a partial order on $P$ if $Z_{n}$ is not a full subposet of $(P,\leq)$ for any $n\geq 4$.
\begin{lem}\label{lem-path-exist}
Let $(P,\leq)$ be a finite poset and $\ell$ be an odd integer with $\ell\geq 3$.
Assume that $(P,\leq)$ does not have $Z_n$ as a full subposet for any even $n\geq 4$.
If there exist relations in $P$ of the form:
$$
i_0 \geq i_1 \leq i_2 \geq \cdots \leq i_{\ell-1} \geq i_{\ell} \leq i_0,
$$
then there exists $k\in \{0,1,\dots,\ell\}$ such that we have at least one of the following $i_k\leq i_{k+2}$, $i_{k+2}\leq i_{k}$ where $k$ considered modulo $\ell+1$.
\end{lem}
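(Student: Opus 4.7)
The plan is to prove the contrapositive by induction on the odd integer $\ell \geq 3$: assuming no pair $(i_k, i_{k+2})$ in the zigzag cycle is comparable, I will derive that $(P, \leq)$ contains a full subposet isomorphic to $Z_n$ for some even $n \geq 4$, which contradicts the standing hypothesis.

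For the base case $\ell = 3$, the four elements $i_0, i_1, i_2, i_3$ are pairwise distinct: any equality between two of them, combined with the zigzag relations and transitivity, would violate one of the incomparability assumptions $i_0 \not\sim i_2$ or $i_1 \not\sim i_3$. Moreover, the only comparabilities among them are the four zigzag relations $i_1 \leq i_0, i_2$ and $i_3 \leq i_0, i_2$: an extra comparability on a zigzag edge such as $i_0 \leq i_1$ would force an equality, and the diagonal comparabilities are excluded by the hypothesis. Hence $\{i_0, i_1, i_2, i_3\}$ is isomorphic to $Z_4$ as a full subposet of $(P, \leq)$, which is the desired contradiction.

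For the inductive step with $\ell \geq 5$, if the induced subposet on $\{i_0, \ldots, i_\ell\}$ has no ``chord'' comparability (meaning that $i_a$ and $i_b$ are comparable only when $(a,b)$ is cycle-adjacent) and no repeated element, then exactly as in the base case one verifies that it is precisely $Z_{\ell+1}$ as a full subposet, giving the contradiction. Otherwise, one exploits a chord or repetition to splice the original cycle into a strictly shorter zigzag cycle in $P$: for a peak-valley chord $i_a \geq i_b$ (with $a$ at a peak position, $b$ at a valley position, cycle distance $\geq 3$), the candidate shorter cycle is $i_0, \ldots, i_a, i_b, i_{b+1}, \ldots, i_\ell$, which remains alternating and of even length; other configurations, such as same-parity chords or comparabilities of the form $i_a \leq i_b$, reduce to this case by adjoining a neighboring valley or peak through derived relations such as $i_{a \pm 1} \leq i_a \leq i_b$. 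The main technical obstacle is verifying that the shortened cycle still satisfies the hypothesis that no consecutive same-parity pair is comparable, so that the inductive hypothesis applies; in the subcases where new comparabilities appear at the splice (the ``bridging'' pairs $(i_{a-1}, i_b)$ and $(i_a, i_{b+1})$, which are same-parity chords of the original cycle), one iterates the shortcut construction using these new chords, a process that terminates because the cycle length strictly decreases at each iteration.
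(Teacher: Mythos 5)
Your proposal is correct and follows essentially the same route as the paper: both arguments use the no-full-$Z_n$ hypothesis to find a chord in the zigzag cycle, normalize it to a valley-to-peak comparability by composing with an adjacent cycle relation, shortcut the cycle along it into a strictly shorter even zigzag cycle, and induct on the length. The only bookkeeping difference is that the paper controls the recursion with a strengthened inductive hypothesis (the witnessing distance-2 pair lies among three \emph{consecutive} elements of the original cycle), whereas you iterate the splice whenever a bridging pair is comparable --- and your termination claim does close up, since once the spliced cycle has length four its two diagonals are distance-2 pairs of the \emph{original} cycle and hence incomparable by assumption.
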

\begin{proof}
In this proof the labels are considered modulo $\ell +1$. We prove the result by induction on $\ell\geq 3$. 

We need a precise inductive hypothesis: let $Z = \{a_{0},\dots, a_{n}\}$ be a subposet consisting of $n+1$ consecutive elements of $ \{i_0,\dots,i_{\ell} \}$ and which induces a subposet isomorphic to $Z_{n+1}$. Then, there is $k\in \{0,1,\dots n+1\}$ such that $a_{k}\leq a_{k+2}$ or $a_{k+2} \leq a_{k}$ and $\{a_k,a_{k+1},a_{k+2}\}$ is a set of three consecutive elements of $\{i_0,\dots,i_{\ell} \}$.

If $n =3$, then we have a subposet isomorphic to $Z_{4}$ which by hypothesis is not full. So, we must have a relation between $a_{0}$ and $a_{2}$ or a relation between $a_{1}$ and $a_{3}$. 

We assume now that $n \geq 5$ and that we have a subposet $Z = \{a_{0},\dots, a_{n}\}$ isomorphic to $ Z_{n +1}$. Without loss of generality we can assume that $a_{i} \leq a_{i+1}$ when $i$ is odd. Since it is not full, there are two indices $k$ and $j$ with $k\notin \{j-1,j,j+1\}$ and a relation $a_{k} \leq a_{j}$ in $(P,\leq)$.   If $k$ is even, then $a_{k+1} \leq a_{k} \leq a_{j}$ and $a_{k-1} \leq a_{k} \leq a_{j}$. Moreover at least one of $a_{k+1}$ and $a_{k-1}$ is not in the set $\{a_{j-1},a_{j}, a_{j+1}\}$. Replacing $a_{k}$ by this element, we may assume $k$ to be odd. Similarly, if $j$ is even, then we have $a_{k}\leq a_{j} \leq a_{j+1}$ and $a_{k}\leq a_{j} \leq a_{j-1}$ and for at least one $j' \in \{j-1,j+1\}$, we have $k\notin \{j'-1,j',j'+1\}$. Changing $a_{j}$ by $a_{j'}$ we may assume that $j'$ is even. 

The relation $a_{k} \leq a_{j}$ splits $Z$ into two subposets $Z_1$ and $Z_2$ which are isomorphic to $Z_{m_1}$ and $Z_{m_2}$ for some integers $m_1$ and $m_2$. By assumption $k$ is odd and $j$ is even, so we have $4\leq m_1 < n$ and $4\leq m_2 < n$. Precisely, $Z_1$ is the set consisting of the elements after or equal to $a_{k}$ and before or equal to $a_{j}$ in the cyclic ordering and $Z_2$ consists of the elements after or equal to $a_{j}$ and before or equal $a_{k}$ in the cyclic ordering.  One of $Z_{1}$ or $Z_{2}$ satisfies the hypothesis of consecutive elements and by induction we have the result. 
\end{proof}
\begin{pro}\label{pro-deltap-antisymmetric}
Let $(P,\leq)$ be a diamond-free poset. Assume that $(P,\leq)$ does not have $Z_{n}$ as a full subposet.
Then for any two adapted orders $\lhd_1$ and $\lhd_2$ on $P$, a binary relation
\[
\lhdp = \Bigl(\bigl(\Dec(\lhd_1) \cap \Dec(\lhd_2)\bigr) \cup \Inc(\lhd_1) \cup \Inc(\lhd_2)\Bigr)^{\sf tc}.
\]
is a partial order on $P$.
\end{pro}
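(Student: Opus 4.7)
The relation $\lhd^{\prime}$ is reflexive, since $[\Delta_m(i):S(i)]\neq 0$ for every $i$ and $m$ gives $(i,i)\in\Dec(\lhd_1)\cap\Dec(\lhd_2)$, and it is transitive by construction. The substance of the proof is antisymmetry. Suppose $i\lhd^{\prime}j$ and $j\lhd^{\prime}i$ with $i\neq j$; concatenating the witnessing chains yields a closed walk
\[
i=c_0\lhd^{\ast_1}c_1\lhd^{\ast_2}\cdots\lhd^{\ast_N}c_N=i,\qquad \ast_k\in\{D,I_1,I_2\},
\]
passing through $j$, and I choose such a walk with $N$ minimal; the goal is to contradict minimality.

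The first step is to verify that, under the diamond-free hypothesis, both $\lhd^{D}$ (Lemma \ref{dec-transitive}) and each $\lhd^{I_m}$ are transitive: for $\lhd^{I_m}$, given $a\lhd^{I_m}b\lhd^{I_m}c$ the interval $[a,c]$ is a chain containing $b$, and for each $l\in[a,c]$ either $l\leq b$, whence $l\lhd_m b\lhd_m c$, or $b\leq l$, whence $l\lhd_m c$ directly. Minimality of $N$ therefore forbids two consecutive elementary relations of the same type. Grouping maximal runs of consecutive $\lhd^{I}$-steps (of possibly mixed subscripts) into single monotone-increasing segments in $(P,\leq)$, and leaving each $\lhd^{D}$-step as a single monotone-decreasing segment, the cycle presents itself as a turning-point sequence $p_0,p_1,\dots,p_{2t}=p_0$ with $p_{2k}\geq p_{2k+1}\leq p_{2k+2}$. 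If $t\leq 1$, the entire cycle consists of at most one $\lhd^{I}$-chain and one $\lhd^{D}$-step closing it, and Lemma \ref{lem-I-sequence} forces every vertex to coincide, contradicting $i\neq j$.

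Hence $t\geq 2$, and the turning-point sequence realises exactly the zig-zag pattern of Lemma \ref{lem-path-exist}. Since $(P,\leq)$ contains no $Z_n$ as a full subposet for $n\geq 4$, this lemma supplies an extra comparability between two peaks $p_{2k},p_{2k+2}$ or between two valleys $p_{2k+1},p_{2k+3}$. The main technical obstacle — and the heart of the proof — is to convert such a comparability into an honest elementary $\lhd^{D}$ or $\lhd^{I_m}$ shortcut that bypasses at least three consecutive segments of the reduced cycle. Here diamond-freeness is essential: the sub-interval between the two comparable turning points is a chain, so the interval-closure conditions defining $\lhd^{D}$ and $\lhd^{I_m}$ transfer from the surrounding elementary relations down to this shorter interval. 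When no direct $\lhd^{D}$ or $\lhd^{I_m}$ shortcut is immediately available, Lemma \ref{lem-deltad-deltai} furnishes one by certifying that an interval of $\lhd^{D}$-relations can be closed at its top endpoint into a single $\lhd^{D}$ or $\lhd^{I_m}$ relation. Producing the shortcut in each of the finitely many sub-cases yields a closed walk strictly shorter than $N$, contradicting minimality. Therefore $i=j$, so $\lhd^{\prime}$ is antisymmetric and hence a partial order.
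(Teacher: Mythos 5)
Your overall skeleton matches the paper's argument: reduce to showing that any cyclic chain of elementary relations $\lhd^{D},\lhd^{I_1},\lhd^{I_2}$ collapses to a point, normalise using transitivity, read off a zig-zag of turning points in $(P,\leq)$, invoke Lemma \ref{lem-path-exist} to obtain an extra comparability between two adjacent peaks or two adjacent valleys, and then shorten the cycle. Your observation that each $\lhd^{I_m}$ is itself transitive is correct and slightly cleaner than what the paper does (the paper only merges consecutive $\lhd^{D}$-steps). The base case $t\leq 1$ via Lemma \ref{lem-I-sequence} is also right.

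However, there is a genuine gap exactly where you locate "the heart of the proof": you never carry out the conversion of the comparability supplied by Lemma \ref{lem-path-exist} into a shortcut, you only assert that "producing the shortcut in each of the finitely many sub-cases yields a shorter closed walk". This step is not routine and is where most of the work lies. Concretely, after relabelling one must treat four cases for the first decreasing step $i_0\lhd^{D}i_1$ followed by the increasing run $i_1\leq\cdots\leq i_{\delta(1)}$ and the next decreasing step $i_{\delta(1)}\lhd^{D}i_{\delta(1)+1}$: namely $i_0\leq i_{\delta(1)}$, $i_{\delta(1)}\leq i_0$, $i_1\leq i_{\delta(1)+1}$, $i_{\delta(1)+1}\leq i_1$. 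In two of these cases (e.g.\ $i_0\leq i_{\delta(1)}$) diamond-freeness places $i_0$ inside some interval $[i_m,i_{m+1}]$ of the increasing run and the interval-closure characterisation of $\Inc$ yields $i_0\lhd^{I_{x}}i_{m+1}$, shortening the cycle; but in the other two cases (e.g.\ $i_{\delta(1)}\leq i_0$) one instead deduces a \emph{new decreasing} relation such as $i_{\delta(1)}\lhd^{D}i_1$ and must apply Lemma \ref{lem-I-sequence} a \emph{second} time to collapse the entire segment $i_1=\cdots=i_{\delta(1)}$ before the induction (or minimality) can be applied. Your text does not distinguish these situations, and your appeal to Lemma \ref{lem-deltad-deltai} is misplaced here: that lemma plays no role in this case analysis (it is used later, for Lemma \ref{lem-comp-fac-deltap}), whereas the repeated use of Lemma \ref{lem-I-sequence} is essential. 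Finally, with your minimal-counterexample framing you should also check that the shortened walk still witnesses a failure of antisymmetry (after a collapse such as $i_1=\cdots=i_{\delta(1)}$ the surviving walk could a priori consist of equal points, in which case one must propagate the equalities back to the original walk rather than contradict minimality of $N$); the paper avoids this by inducting on the number of decreasing steps and proving the stronger statement that every cyclic chain is constant.
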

\begin{proof}
By the definition, $\lhdp$ is reflexive and transitive.
We show that $\lhdp$ is antisymmetric.
It is enough to show the following claim:
if there a sequence of vertices of $P$ such that
\begin{align}\label{sequence-original}
i_0 \lhd^{X_0} i_1 \lhd^{X_1} \cdots \lhd^{X_{\ell-1}} i_{\ell} = i_0,
\end{align}
where $X_k\in\{D, I_1, I_2\}$ for $k=0,\ldots,\ell-1$, then we have $i_0=i_1=\cdots=i_{\ell-1}$.

By Lemma \ref{dec-transitive} $\lhd^D$ is transitive, so we may assume that there is no $k$ with $X_k = X_{k+1}=D$.
Let $d$ be the number of $k=0,\ldots,\ell-1$ such that $X_k = D$.
We prove the claim by an induction on $d$.
Assume that $d=0$, then we have $i_0 \leq i_1\leq \cdots \leq i_{\ell-1} \leq i_0$, so $i_0=i_1=\cdots=i_{\ell-1}$.
Assume that $d=1$.
Without loss of generality, we may assume that $X_0=D$.
By applying Lemma \ref{lem-I-sequence} to $i_1 \lhd^{X_1} \cdots \lhd^{X_{\ell-1}} i_{\ell} = i_0$, we have $i_0=i_1=\cdots=i_{\ell-1}$.

Assume that $d>1$.
Without loss of generality, we may assume that $X_0=D$.
There exists a function $\delta \colon  \{0,1,\ldots,d-1\} \to \{0,\ldots,\ell-1\}$ such that $X_{\delta(k)}=D$, $\delta$ is strictly increasing and $\delta(0)=0$.
Then we have a sequence
$$
i_{0} \lhd^D i_{1} \lhd^{X_1} \cdots \lhd^{X_{\delta(k)-1}} i_{\delta(k)} \lhd^D i_{\delta(k)+1} \lhd^{X_{\delta(k)+1}} \cdots \lhd^{X_{\ell-1}} i_{\ell} = i_0,
$$
and this sequence induces the following relations in $(P,\leq)$.
$$
i_{0} \geq i_{1} \leq i_{\delta(1)} \geq i_{\delta(1)+1} \leq \cdots \geq i_{\delta(d-1)+1} \leq i_{\ell} =i_0.
$$
Since there is no $k$ with $X_k=X_{k+1}=D$, and $d>1$, we have $\delta(d-1)+1\geq 3$.
By applying Lemma \ref{lem-path-exist}, without loss of generality, there is at least one of the following relation:
\[
i_0 \leq i_{\delta(1)}, \quad i_{\delta(1)} \leq i_0, \quad i_1 \leq i_{\delta(1)+1} , \quad i_{\delta(1)+1} \leq i_1.
\]

If $i_0 \leq i_{\delta(1)}$ holds, then $i_0 \in [i_1, i_{\delta(1)}]$ holds since $(P,\leq)$ is a diamond-free poset.
Then there exists an integer $ m \in \{1,\cdots, \delta(1)-1\}$ such that $i_0 \in [i_m, i_{m+1}]$.
Since $i_m\lhd^{X_m}i_{m+1}$ and $X_m\in\{I_1,I_2\}$, $i_0 \lhd^{X_m}i_{m+1}$ holds.
By removing vertices $i_1,i_2,\ldots,i_m$ from (\ref{sequence-original}), we have a sequence
$$
i_0 \lhd^{X_m}i_{m+1}\lhd^{X_{m+1}}\cdots\lhd^{X_{\ell-1}} i_\ell =i_0.
$$
In this sequence, the number of $k$ such that $X_k=D$ is $d-1$.
Thus by inductive hypothesis, we have $i_0=i_{m+1}=\cdots = i_{\ell-1}$.
Applying Lemma \ref{lem-I-sequence} to the sequence $i_1 \lhd^{X_1}i_2\lhd^{X_2}\cdots \lhd^{X_m} i_{m+1} =i_0$, we have the claim.

If we have the relation $i_{\delta(1)} \leq i_0$, then the elements $i_2, i_3,\ldots,i_{\delta(1)}$ appear in $[i_1,i_0]$.
In particular, $i_{\delta(1)} \lhd^D i_1$ holds.
By applying Lemma \ref{lem-I-sequence} to a sequence $i_1 \lhd^{X_1} i_2 \lhd^{X_2} \cdots \lhd^{X_{\delta(1)-1}} i_{\delta(1)}$, we have $i_1 = i_2 = \cdots = i_{\delta(1)}$.
By identifying $i_1 = i_2 = \cdots = i_{\delta(1)}$ in the sequence (\ref{sequence-original}), since $\lhd^D$ is transitive and by an inductive hypothesis, we have the claim.

If we have the relation $i_1 \leq i_{\delta(1)+1}$, then there exists an integer $m \in \{1,\dots,\delta(1)-1\}$ such that $i_{\delta(1)+1} \in [i_m,i_{m+1}]$.
We have $i_{\delta(1)+1}\lhd^{X_m}i_{m+1}$.
By applying Lemma \ref{lem-I-sequence} to a sequence
$$
i_{\delta(1)+1} \lhd^{X_m} i_{m+1} \lhd^{X_{m+1}} \cdots \lhd^{X_{\delta(1)-1}} i_{\delta(1)},
$$
we have $i_{m+1}=\cdots=i_{\delta(1)}=i_{\delta(1)+1}$.
By an inductive hypothesis, we have the claim.

If we have the relation $i_{\delta(1)+1}\leq i_1$, then $i_1 \in [i_{\delta(1)+1},i_{\delta(1)}]$.
In particular, $i_1\lhd^Di_{\delta(1)+1}$ holds.
Then by an inductive hypothesis, we have the claim.

We proved the claim and it induces that $\lhdp$ is antisymmetric.
\end{proof}

We show that the partial order $\lhd^\p$ gives a meet of $[\lhd_1]$ and $[\lhd_2]$.
\begin{lem}\label{lem-comp-fac-deltap}
Let $(P,\leq)$ be a diamond-free poset and $i,j\in P$.
Assume that $\lhdp$ is a partial order on $P$.
Let $\Deltap$ be the set of standard $A(P)$-modules associated to $\lhdp$.
Then $S(j)$ is a composition factor of $\Deltap(i)$ if and only if $S(j)$ is a composition factor of both $\Delta_1(i)$ and $\Delta_2(i)$.
\end{lem}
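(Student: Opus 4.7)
The ``if'' direction is immediate: if $S(j)$ is a composition factor of both $\Delta_1(i)$ and $\Delta_2(i)$, then $i \leq j$ in $P$ and every $k \in [i,j]$ satisfies $k \lhd_s i$ for $s = 1, 2$. Hence $(k,i) \in \Dec(\lhd_1) \cap \Dec(\lhd_2) \subseteq \lhdp$ for each such $k$, so $S(j)$ is a composition factor of $\Deltap(i)$.

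For the ``only if'' direction, assume $i \leq j$ in $P$ and every $k \in [i,j]$ satisfies $k \lhdp i$. I will show by induction on the length of the (totally ordered) chain $[i,j]$ that every $k \in [i,j]$ satisfies $k \lhd^D i$, i.e., $k \lhd_s i$ for $s = 1, 2$. The base case $j = i$ is trivial. For the inductive step, let $j'$ denote the covering predecessor of $j$ in $[i,j]$. The inductive hypothesis, applied to $j'$, yields $k \lhd^D i$ for every $k \in [i,j']$, and it remains to show $j \lhd^D i$.

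At this point the hypothesis of Lemma \ref{lem-deltad-deltai} is satisfied (since $[i,j] \setminus \{j\} = [i,j']$), so one of $j \lhd^D i$, $i \lhd^{I_1} j$, or $i \lhd^{I_2} j$ must hold. The last two alternatives can be excluded: either would give $i \lhdp j$ via the inclusion $\Inc(\lhd_1) \cup \Inc(\lhd_2) \subseteq \lhdp$, but the hypothesis already furnishes $j \lhdp i$, so the antisymmetry of $\lhdp$ established in Proposition \ref{pro-deltap-antisymmetric} would force $i = j$, contradicting $j' < j$. Therefore $j \lhd^D i$, completing the induction. The substantive work has already been carried out in the preceding proposition, which provides the antisymmetry; granting that, the present lemma reduces to a clean combination of Lemma \ref{lem-deltad-deltai} with the defining inclusion of $\Inc(\lhd_s)$ into $\lhdp$.
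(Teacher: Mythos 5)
Your proof is correct and follows essentially the same route as the paper's: both directions rest on the characterization of composition factors via the interval $[i,j]$, and the ``only if'' direction combines Lemma \ref{lem-deltad-deltai} with the antisymmetry of $\lhdp$ to rule out the increasing alternatives. The paper phrases this as ``take the maximal initial segment of $[i,j]$ on which $\lhd^{D}\,i$ holds and derive a contradiction'' rather than as an induction along the chain, but the substance is identical (and your write-up is, if anything, cleaner).
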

\begin{proof}
The if part is easy.
If $j=i$, then the assertion is clear.
So let $j\neq i$.
Assume that $S(j)$ is a composition factor of $\Deltap(i)$.
Then $i\leq j$ and any $k\in[i,j]$ satisfies $k\lhdp i$.
Let $m$ be the maximal element $k\in[i,j]$ such that $i_{k^{\prime}}\lhd^D i$ holds for any $i \leq k^{\prime} \leq k$.
If $k=j$, then $S(j)$ is a composition factor of both $\Delta_1(i)$ and $\Delta_2(i)$.

Suppose that $k\neq j$.
By Lemma \ref{lem-deltad-deltai}, the minimal element $l$ in $[k,j]$ satisfies either $i\lhd^{I_1}l$ or $i\lhd^{I_2}l$.
This implies $i=l$, since $\lhdp$ is antisymmetric.
This is a contradiction, since $i$ never equals to $l$.
Therefore we have $k=j$ and the assertion holds.
\end{proof}
\begin{lem}\label{lem-delta12-filtered}
Assume that $(P,\leq)$ is a diamond-free poset and that $\lhdp$ is a partial order on $P$.
Let $\Deltap$ be the set of standard $A(P)$-modules associated to $\lhdp$.
Then $\mcF(\Delta_1) \cup \mcF(\Delta_2) \subset \mcF(\Deltap)$ holds.
\end{lem}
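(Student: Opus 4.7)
Our plan is to prove, by symmetry, only the inclusion $\mcF(\Delta_1)\subseteq\mcF(\Deltap)$; since $\mcF(\Deltap)$ is closed under extensions, this in turn reduces to showing $\Delta_1(i)\in\mcF(\Deltap)$ for every $i\in P$.

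The first step is to record a canonical surjection $\Delta_1(i)\twoheadrightarrow\Deltap(i)$. By Lemma \ref{lem-comp-fac-deltap} the composition factors of $\Deltap(i)$ are precisely those simultaneously appearing in $\Delta_1(i)$ and $\Delta_2(i)$, which in the terminology of the paper translates to $\Dec(\lhdp)=\Dec(\lhd_1)\cap\Dec(\lhd_2)$; in particular $\Dec(\lhdp)\subseteq\Dec(\lhd_1)$. The equivalence of (a) and (b) in Lemma \ref{lem-surj-Dec}(1) (whose proof is purely in terms of composition factors and does not use any quasi-hereditary hypothesis on the orders) then supplies the desired surjection $\pi\colon\Delta_1(i)\twoheadrightarrow\Deltap(i)$.

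The second step is to note the hereditary properties of $A(P)$. Since $(P,\leq)$ is diamond-free, $A(P)$ is hereditary by Lemma \ref{lem-incidence-hereditary}; since $\lhd_1$ is adapted, $(A(P),\lhd_1)$ is then quasi-hereditary by Proposition \ref{pro-hered-adapted}, and standard modules over a hereditary algebra have projective dimension at most one, so $\mcF(\Delta_1)$ is closed under submodules.

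The third step is an induction on $(P,\lhd_1)$. The base case, when $i$ is $\lhd_1$-minimal, is immediate: $\Delta_1(i)=S(i)$, and the surjection $\pi$ forces $\Deltap(i)=S(i)$ as well. For the inductive step, write $K:=\ker\pi$, so that
\[
0\longrightarrow K\longrightarrow\Delta_1(i)\longrightarrow\Deltap(i)\longrightarrow 0
\]
is exact. Closure of $\mcF(\Delta_1)$ under submodules gives $K\in\mcF(\Delta_1)$; since $S(i)$ appears with multiplicity one in both $\Delta_1(i)$ and $\Deltap(i)$, the composition factors of $K$ are simple modules $S(j)$ with $j\lhd_1 i$ and $j\neq i$. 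Consequently every piece of any $\Delta_1$-filtration of $K$ is a module $\Delta_1(j)$ with $j$ strictly $\lhd_1$-smaller than $i$, which by the inductive hypothesis lies in $\mcF(\Deltap)$. Closure of $\mcF(\Deltap)$ under extensions then yields first $K\in\mcF(\Deltap)$ and finally $\Delta_1(i)\in\mcF(\Deltap)$, completing the induction.

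The main subtlety I expect is the use of the hereditary hypothesis: without closure of $\mcF(\Delta_1)$ under submodules, the kernel $K$ above need not be $\Delta_1$-filtered and the induction would break; fortunately the diamond-free assumption on $(P,\leq)$ gives precisely this property via Lemma \ref{lem-incidence-hereditary}. Apart from this point, the argument is routine bookkeeping with composition factors, powered by Lemmas \ref{lem-comp-fac-deltap} and \ref{lem-surj-Dec}.
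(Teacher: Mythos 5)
Your proof is correct, but it takes a different route from the paper's. The paper builds a $\Deltap$-filtration of $\Delta_1(i)$ from the top down: it peels off one layer at a time by showing that each indecomposable summand $L$ (with simple top $S(j)$) of a suitable submodule of $\Delta_1(i)$ contains all composition factors of $\Deltap(j)$ and hence surjects onto $\Deltap(j)$; the key computation is an interval argument in the diamond-free poset combined with Lemma \ref{lem-comp-fac-deltap}. You instead extract a single surjection $\Delta_1(i)\twoheadrightarrow\Deltap(i)$ from $\Dec(\lhdp)=\Dec(\lhd_1)\cap\Dec(\lhd_2)\subseteq\Dec(\lhd_1)$ and then run the induction on $(P,\lhd_1)$ that the paper itself uses to prove Lemma \ref{lem-surj-Dec}(3) for hereditary algebras; you rightly note that you cannot cite Lemma \ref{lem-surj-Dec}(3) verbatim, since its statement presupposes that both orders define quasi-hereditary structures, which for $\lhdp$ is only established afterwards (in Proposition \ref{pro-deltap-meet}, using the present lemma), so redoing the induction avoids a circularity. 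Both arguments lean on heredity in the same place — closure of $\mcF(\Delta_1)$ under submodules in yours, simplicity of tops of summands of submodules of projectives in the paper's — and all the auxiliary facts you invoke (Lemma \ref{lem-comp-fac-deltap}, the composition-factor characterization of surjections between standard modules, $[\Delta_1(i):S(i)]=[\Deltap(i):S(i)]=1$) are available without assuming $\lhdp$ quasi-hereditary. Your version is somewhat more modular and reuses existing machinery; the paper's is more self-contained and explicitly exhibits the filtration.
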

\begin{proof}
We show that $\Delta_1(i)$ belongs to $\mcF(\Deltap)$ for any $i\in P$.
It is enough to show that for a submodule $K$ of $\Delta_1(i)$, if $\Delta_1(i)/K$ belongs to $\mcF(\Deltap)$, then there exists a surjective morphism from $K$ to a product of some $\Deltap(j)$'s.
Assume that $L$ is an indecomposable direct summand of $K$.
Since $A(P)$ is hereditary, $L$ has a simple top $S(j)$ for some $j\in P$.
Let $k\in P$.
We show that $S(k)$ is a composition factor of $L$ if $S(k)$ is a composition factor of $\Deltap(j)$.
Since $A(P)$ is hereditary, we have that $S(k)$ is a composition factor of $L$ if and only if $S(k)$ is a composition factor of $\Delta_1(i)$ and $j\leq k$ holds. 

Since $L$ is a submodule of $\Delta_1(i)$, $i\leq j$ holds and any $u\in[i,j]$ satisfies $u\lhd_1 i$.
Assume that $S(k)$ is a composition factor of $\Deltap(j)$.
Then $j\leq k$ holds and any $v\in[j,k]$ satisfies $v\lhdp j$.
By Lemma \ref{lem-comp-fac-deltap}, such $v$ satisfies $v\lhd_1 j$.
Thus for the interval $[i,k]$, we have that any $u\in[i,k]$ satisfies $u\lhd_1 i$.
Thus $S(k)$ is a composition factor of $\Delta_1(i)$, and therefore $S(k)$ is a composition factor of $L$.
\end{proof}
\begin{pro}\label{pro-deltap-meet}
Let $(P,\leq)$ be a diamond-free poset.
For given two quasi-hereditary structures $[\lhd_1]$ and $[\lhd_2]$ on $A(P)$, assume that $\lhdp$ is a partial order on $P$.
Let $\Deltap$ be the set of standard $A(P)$-modules associated to $\lhdp$.
Then we have the following statements.
\begin{enumerate}
\item
The partial order $\lhdp$ induces a quasi-hereditary structure on $A(P)$.
\item
The meet of $[\lhd_1]$ and $[\lhd_2]$ in $\qhstr(A(P))$ exists and is represented by $\lhdp$.
\end{enumerate}
\end{pro}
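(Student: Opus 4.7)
The plan is to deduce part (2) as a short consequence of part (1) using machinery already developed, with the real work happening in part (1). For part (2), once $(A(P),\lhdp)$ is known to be quasi-hereditary, Lemma \ref{lem-comp-fac-deltap} identifies $\Dec(\lhdp)=\Dec(\lhd_1)\cap\Dec(\lhd_2)$; since $A(P)$ is hereditary by Lemma \ref{lem-incidence-hereditary}, the characterisation of $\preceq$ via inclusion of $\Dec$-sets in Lemma \ref{lem-surj-Dec}(3) immediately gives $[\lhdp]\preceq[\lhd_m]$ for $m=1,2$. Any common lower bound $[\lhd]$ then satisfies $\Dec(\lhd)\subseteq\Dec(\lhd_1)\cap\Dec(\lhd_2)=\Dec(\lhdp)$ and hence $[\lhd]\preceq[\lhdp]$, establishing the meet.

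For part (1), the plan is to combine heredity of $A(P)$ with Proposition \ref{pro-hered-adapted} to reduce the claim to showing that $\lhdp$ is adapted, and then to verify the weak form of the adapted axiom (Lemma \ref{weak-adapted}). Concretely, let $M$ be an indecomposable $A(P)$-module with simple top $S(i)$ and simple socle $S(j)$, with $i$ and $j$ incomparable in $\lhdp$; the task is to produce a composition factor $S(k)$ of $M$ with $i\lhdp k$ or $j\lhdp k$. I will first record the structural fact that any such $M$ has $i\leq j$ in $\leq$ and is uniserial with composition factors indexed by the interval $[i,j]$, which is a chain by diamond-freeness. The justification is that $M$ is a quotient of $P(i)$ and a submodule of $I(j)$, so its composition factors lie in $[i,j]$, and over the equioriented type-$\A$ subquiver supported on the chain $[i,j]$ the only indecomposable with the prescribed simple top and socle is the full interval module.

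With this in hand, for each $m\in\{1,2\}$ let $a_m$ denote the $\lhd_m$-maximum of the chain $[i,j]$. If $a_m=j$ for some $m$, then every $k\in[i,j]$ satisfies $k\lhd_m j$, so $(i,j)\in\Inc(\lhd_m)$ and hence $i\lhdp j$, contradicting the incomparability of $i$ and $j$ in $\lhdp$. If instead $a_m=i$ for both $m=1,2$, then every $k\in[i,j]$ satisfies $k\lhd_m i$ for both $m$, so $(j,i)\in\Dec(\lhd_1)\cap\Dec(\lhd_2)$ and $j\lhdp i$, again a contradiction. Hence for at least one $m$ the element $a_m$ lies strictly between $i$ and $j$; maximality of $a_m$ in $\lhd_m|_{[i,j]}$ then gives $(i,a_m)\in\Inc(\lhd_m)$, so $i\lhdp a_m$, with $a_m\in[i,j]$ a composition factor of $M$. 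This is the desired witness.

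The step I expect to be the main obstacle is the structural lemma on indecomposable modules with simple top and simple socle over a hereditary incidence algebra of a diamond-free poset: one must argue that such a module is not only supported in the chain $[i,j]$ but has full support there, ruling out disconnected-support possibilities. Once that is pinned down, the combinatorial case analysis above closes part (1), and part (2) then follows immediately as indicated.
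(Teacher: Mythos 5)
Your argument is correct, but your route to part (1) is genuinely different from the paper's. The paper proves directly that $P(i)\in\mcF(\Deltap)$ by establishing $\mcF(\Delta_1)\cup\mcF(\Delta_2)\subseteq\mcF(\Deltap)$ (its Lemma \ref{lem-delta12-filtered}, an argument on submodules of standard modules using heredity), and then verifies axiom (3) of Definition \ref{dfn-qh-alg} by decomposing the interval $[j,i]$ along a $\Deltap$-filtration of $P(i)$ and invoking Lemma \ref{lem-deltad-deltai}. You instead short-circuit both steps by reducing to adaptedness: since $A(P)$ is hereditary (Lemma \ref{lem-incidence-hereditary}), Proposition \ref{pro-hered-adapted} makes it enough to check the weak adapted axiom of Lemma \ref{weak-adapted}, which you do by classifying the relevant modules as full interval modules on a chain and running a three-way case analysis on the $\lhd_m$-greatest elements of that chain. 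This is shorter and arguably more transparent, at the cost of leaning harder on heredity; the paper's filtration lemma has the side benefit of exhibiting $\mcF(\Delta_m)\subseteq\mcF(\Deltap)$ explicitly, which is what it uses to see that $[\lhdp]$ is a lower bound, whereas you recover the lower-bound property from $\Dec(\lhdp)=\Dec(\lhd_1)\cap\Dec(\lhd_2)$ and Lemma \ref{lem-surj-Dec}(3) — for part (2) the two proofs are essentially identical. Your structural claim that an indecomposable module with simple top $S(i)$ and simple socle $S(j)$ is the full interval module on the chain $[i,j]$ is fine: it is the content of the paper's unnumbered lemma on diamond-free posets together with the observation that $A(P)/\ideal{e_k \mid k\notin[i,j]}$ is the path algebra of an equioriented quiver of type $\A$.

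One step you assert without proof and should justify: the existence of a $\lhd_m$-\emph{greatest} element $a_m$ of the chain $[i,j]$ (you need every $k\in[i,j]$ to satisfy $k\lhd_m a_m$, not merely that $a_m$ is maximal). This does hold, but it requires an argument: if $k_a<k_b$ were two distinct, hence $\lhd_m$-incomparable, maximal elements of $[i,j]$, the interval module with top $S(k_a)$ and socle $S(k_b)$ together with adaptedness of $\lhd_m$ would produce $k\in[k_a,k_b]\subseteq[i,j]$ with $k_a\lhd_m k$ and $k\neq k_a$, contradicting maximality. With that inserted, your case analysis is exhaustive and the proof closes.
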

\begin{proof}
(1)
We show that $(A(P),(P, \lhdp))$ satisfies (1), (2) and (3) of Definition \ref{dfn-qh-algebra}.
By the definition and Lemma \ref{lem-delta12-filtered}, (1) and (2) hold.
Clearly $(P(i):\Deltap(i))=1$ holds for any $i\in P$.
Assume that $(P(i):\Deltap(j))\neq 0$ for $j\leq i\in P$.
We show $i\lhd^\p j$.
Since $P(i)$ belongs to $\mcF(\Deltap)$, there exist the following sequence of submodules of $P(i)$:
\begin{align*}
M_{\ell+1} \subset M_{\ell} \subset \dots \subset M_0 = P(i)
\end{align*}
such that $M_k/M_{k+1} \cong  \Deltap(i_k)$ for $k=0,\dots,\ell$, $i_0=i$ and $i_{\ell}=j$.
Let $i^\p_k$ be the maximal element in $[i_{k-1}, i_k]\setminus\{i_k\}$ for $k=1,\dots,\ell$.
We have a disjoint union of intervals
\[
[i,j]=[i_0,i^\p_1] \sqcup [i_1,i^\p_2] \sqcup \dots \sqcup [i_{\ell-1}, i_{\ell}^\p] \sqcup \{i_{\ell}\}.
\]
For each $k=0,\dots,\ell-1$ and any $m \in [i_k, i_{k+1}^\p]$, $S(m)$ is a composition factor of $\Deltap(i_k)$.
Therefore by Lemma \ref{lem-comp-fac-deltap}, we can apply Lemma \ref{lem-deltad-deltai} to each interval $[i_{k-1}, i_k]=[i_{k-1}, i_k^\p]\sqcup \{i_k\}$.
We have that $i_{k-1}\lhd^{I_{\ast}}i_k$ for each $k=1,\dots,\ell$ and some $\ast\in\{1,2\}$.
Namely, we have $i\lhdp j$.

(2)
Let $\lhd_3$ be an adapted order on $P$ with $[\lhd_3]\preceq [\lhd_1]$ and $[\lhd_3]\preceq[\lhd_2]$.
Let $\Delta_3$ be the standard modules associated to $\lhd_3$.
By Lemma \ref{lem-qhstr-poset}, any composition factor of $\Delta_3(i)$ is a composition factor of both $\Delta_1(i)$ and $\Delta_2(i)$.
By Lemma \ref{lem-comp-fac-deltap}, there exists a surjection from $\Deltap(i)$ to $\Delta_3(i)$.
Therefore, we have $[\lhd_3]\preceq[\lhdp]$.
\end{proof}
Then we complete the proof of Theorem \ref{thm-lattice-tree}.
\begin{proof}[Proof of Theorem \ref{thm-lattice-tree}]
Let $(P,\leq)$ be a diamond-free poset such that $Z_{n}$ is not a full subposet for any $n\geq 4$.
For two quasi-hereditary structures $[\lhd_1]$ and $[\lhd_2]$, consider the following two binary relations on $P$:
\begin{align*}
\lhd^\p = \Bigl(\bigl(\Dec(\lhd_1) \cap \Dec(\lhd_2)\bigr) \cup \Inc(\lhd_1) \cup \Inc(\lhd_2)\Bigr)^{\sf tc}, \\
\lhd^{\p\p} = \Bigl(\Dec(\lhd_1) \cup \Dec(\lhd_2) \cup \bigl(\Inc(\lhd_1) \cap \Inc(\lhd_2)\bigr)\Bigr)^{\sf tc}.
\end{align*}
By Propositions \ref{pro-deltap-antisymmetric} and \ref{pro-deltap-meet}, $\lhd^\p$ defines a quasi-hereditary structure on $A(P)$ and $[\lhd^\p]$ is a meet of $[\lhd_1]$ and $[\lhd_2]$.

Let $B$ be the incidence algebra of the opposite poset $(P, \leq^{\op})$ of $(P,\leq)$.
It is easy to see that the opposite algebra of $A(P)$ is isomorphic to $B$.
Let $\Dec_B$ and $\Inc_B$ be a decreasing and a increasing over $B$, respectively.
By Lemma \ref{lem-dual} (1), for a partial order $\lhd$ on $P$ and $i,j\in P$, we have
\begin{itemize}
\item
$(i,j)\in \Dec(\lhd)$ if and only if $(j,i)\in \Inc_B(\lhd)$.
\item
$(i,j)\in \Inc(\lhd)$ if and only if $(j,i)\in \Dec_B(\lhd)$.
\end{itemize}
Therefore, we have the following equality of binary relations on $P$:
\[
\lhd^{\p\p} = \Bigl(\bigl(\Dec_B(\lhd_1) \cap \Dec_B(\lhd_2)\bigr) \cup \Inc_B(\lhd_1) \cup \Inc_B(\lhd_2)\Bigr)^{\sf tc}.
\]
By applying Propositions \ref{pro-deltap-antisymmetric} to $B$, $\lhd^{\p\p}$ is a partial order on $P$.
By Lemma \ref{lem-dual}, we have an anti-isomorphism of posets between $\qhstr(A(P))$ and $\qhstr(B)$ which is induced from an identity map of the set of partial orders on $P$.
Thus $\lhd_m$ defines a quasi-hereditary structure $[\lhd_m]_B$ on $B$ for $m=1,2$.
By Proposition \ref{pro-deltap-meet}, $\lhd^{\p\p}$ represents a meet of $[\lhd_1]_B$ and $[\lhd_2]_B$ in $\qhstr(B)$.
This implies that $\lhd^{\p\p}$ represents a join of $[\lhd_1]$ and $[\lhd_2]$ in $\qhstr(A(P))$.
\end{proof}
\section*{Acknowledgments}
We are grateful to the anonymous referees for their careful reading and for the suggestions which clearly improved the article. The first author is indebted to his advisor, Henning Krause, for his guidance and for suggesting the study of quasi-hereditary structures as PhD research project, resulting the classification of quasi-hereditary structures on path algebras of type $\A$ as part of his doctoral thesis. The last author wants to dedicate this article to Gabrielle. 
\printbibliography

\end{document}